\documentclass[11pt,reqno]{amsart}

\usepackage{amscd,amsmath,amssymb,amsfonts,verbatim,graphicx,enumitem,multirow}
\usepackage[colorlinks]{hyperref}
\usepackage{diagbox}
\newcommand{\cY}{\mathcal{Y}}
\newcommand{\cC}{\mathcal{C}}
\newcommand{\C}{\mathbb{C}}

\newcommand{\ddbar}{i\partial\overline{\partial}}
\newcommand{\ddbars}{i\partial_\sigma\overline{\partial}_\sigma}
\newcommand{\p}{\partial}

\renewcommand{\epsilon}{\varepsilon}
\renewcommand{\leq}{\leqslant}
\renewcommand{\geq}{\geqslant}

\def\Xint#1{\mathchoice
{\XXint\displaystyle\textstyle{#1}}
{\XXint\textstyle\scriptstyle{#1}}
{\XXint\scriptstyle\scriptscriptstyle{#1}}
{\XXint\scriptscriptstyle\scriptscriptstyle{#1}}
\!\int}
\def\XXint#1#2#3{{\setbox0=\hbox{$#1{#2#3}{\int}$ }
\vcenter{\hbox{$#2#3$ }}\kern-.6\wd0}}

\def\dashint{\Xint-}

\newtheorem{theorem}{Theorem}[section]
\newtheorem*{mtheorem}{Main Theorem}
\newtheorem{lemma}[theorem]{Lemma}
\newtheorem{corollary}[theorem]{Corollary}
\newtheorem{proposition}[theorem]{Proposition}

\newcounter{Ass}

\theoremstyle{definition}
\newtheorem{definition}[theorem]{Definition}
\newtheorem{remark}[theorem]{Remark}
\newtheorem{convention}[theorem]{Convention}
\newtheorem{claim}[theorem]{Claim}
\newtheorem{subclaim}[theorem]{Sub-Claim}
\newtheorem{ass}[Ass]{Assumption}

\numberwithin{equation}{section}

\textheight235mm
\textwidth171.5mm

\addtolength{\topmargin}{-16mm}
\addtolength{\oddsidemargin}{-22.25mm}
\addtolength{\evensidemargin}{-22.25mm}

\setcounter{figure}{0}
\setcounter{table}{0}

\begin{document}
	
\title{A continuous cusp closing process\\for negative K\"ahler-Einstein metrics}
	
\begin{abstract}
We give an example of a family of smooth complex algebraic surfaces of degree $6$ in $\mathbb{CP}^3$ developing an isolated elliptic singularity. We show via a gluing construction that the unique Kähler-Einstein metrics of Ricci curvature $-1$ on these sextics develop a complex hyperbolic cusp in the limit, and that near the tip of the forming cusp a Tian-Yau gravitational instanton bubbles off.
\end{abstract}
	
\author{Xin Fu}
\address{School of Science, Institute for Theoretical Sciences, Westlake University, Hangzhou 310030, China}
\email{fuxin54@westlake.edu.cn}
	
\author{Hans-Joachim Hein}
\address{Mathematisches Institut, Universit\"at M\"unster, 48149 M\"unster, Germany}
\email{hhein@uni-muenster.de}
	
\author{Xumin Jiang}
\address{School of Sciences, Great Bay University, Dongguan 523000, China \newline \hspace*{9pt}
Great Bay Institute for Advanced Study, Dongguan 523000, China}
\email{xjiang@gbu.edu.cn}

\date{\today}
	
\maketitle
	
\markboth{A continuous cusp closing process for negative K\"ahler-Einstein metrics}{Xin Fu, Hans-Joachim Hein and Xumin Jiang}
	
\setcounter{tocdepth}{3}
\tableofcontents
	
\section{Introduction}

\subsection{Statement of the Main Theorem}

Consider the family $\{\mathcal{X}_\sigma\}_{\sigma\in\Delta}$ of degree $6$ algebraic surfaces in $\mathbb{CP}^3$ given by the projective closures of the affine sextics
\begin{equation}
\{(z_1,z_2,z_3) \in \mathbb{C}^3: (z_1^6 + z_1^3) + (z_2^6 + z_2^3) + (z_3^6 + z_3^3) = \sigma\}.
\end{equation}
Then $\mathcal{X}_\sigma$ is smooth for $0 < |\sigma| \ll 1$ and the singular set $\mathcal{X}_0^{sing}$ is exactly the origin in $\mathbb{C}^3$. Since $\mathcal{X}_\sigma$ is a smooth surface of general type for $\sigma \neq 0$, it admits a unique Kähler-Einstein metric $\omega_{KE,\sigma}$ of Ricci curvature $-1$ by the Aubin-Yau theorem \cite{Aubin,Yau}. It is also known by \cite{BG,DFS,rkoba,Song} that on $\mathcal{X}_0^{reg}$ there exists a unique complete Kähler-Einstein metric $\omega_{KE,0}$ of Ricci curvature $-1$. By \cite{Song} we have that $\omega_{KE,\sigma} \to \omega_{KE,0}$ as $\sigma \to 0$, locally smoothly on $\mathbb{C}^3 \setminus {B_\varepsilon(0)}$ for every fixed $\varepsilon>0$.  By \cite{DFS,FHJ} the end of $(\mathcal{X}_0^{reg},\omega_{KE,0})$ is asymptotically complex hyperbolic at an optimal rate. This means it is asymptotic to the end of a finite-volume quotient of the complex hyperbolic plane, or in other words, of the unit ball in $\mathbb{C}^2$ equipped with the Bergman metric. In particular, the end of $(\mathcal{X}_0^{reg},\omega_{KE,0})$ is asymptotically locally symmetric, but, of course, $(\mathcal{X}_0^{reg}, \omega_{KE,0})$ cannot be a locally symmetric space.

We view the smoothing of the cuspidal Einstein manifold $(\mathcal{X}_0^{reg},\omega_{KE,0})$ by the family $(\mathcal{X}_\sigma,\omega_{KE,\sigma})$ as a kind of cusp closing process, somewhat analogous to Thurston's hyperbolic Dehn surgery and its many generalizations \cite{And,Bamler,HaJa,Hummel,Thu}, but in another way also quite different because here we have a continuous path of metrics on a fixed smooth $4$-manifold. Thus, the picture in our case is actually closer to the usual cuspidal degenerations of hyperbolic Riemann surfaces, which do not exist in higher-dimensional hyperbolic or complex-hyperbolic geometry due to Mostow rigidity. The most interesting difference from the Riemann surface case, and also to some extent from the hyperbolic Dehn surgery situation, is that in our case there is a nontrivial amount of curvature and topology disappearing into the tip of the cusp. Our purpose in this paper is to make this observation precise.

\begin{mtheorem}
Fix any $R > 0$. For $0 < |\sigma| \ll 1$ sufficiently small relative to $R$, restrict the metric $\omega_{KE,\sigma}$ to $\mathcal{X}_\sigma \cap B_{|\sigma|^{1/3}R}(0)$, multiply it by $|{\log |\sigma|}|^{3/2}$ and push it forward under the map
\begin{align}(z_1,z_2,z_3) \mapsto \sigma^{-\frac{1}{3}}(z_1(1+z_1^3)^{\frac{1}{3}}, z_2(1+z_2^3)^{\frac{1}{3}}, z_3(1+z_3^3)^{\frac{1}{3}}).\end{align}
Then as $\sigma \to 0$, the $C^{0}$ distance of the resulting Einstein metric on $\{z_1^3 + z_2^3 + z_3^3 = 1\} \cap B_{R/2}(0)$ to the Tian-Yau gravitational instanton is $O_{R,\varepsilon}(|{\log |\sigma|}|^{-1+\varepsilon})$ for all $\varepsilon>0$. This is the only bubble, and it accounts for the total loss of $L^2$-curvature and Euler characteristic $(108-9)$ in the degeneration.
\end{mtheorem}

\begin{figure}[!ht]
\caption{$(\mathcal{X}_\sigma, \omega_{KE,\sigma})$ for $0 < |\sigma| \ll 1$. The Main Theorem describes the red part.}\label{fig:mainthm}
\vspace{3mm}
\begin{center}
\includegraphics[width=170mm]{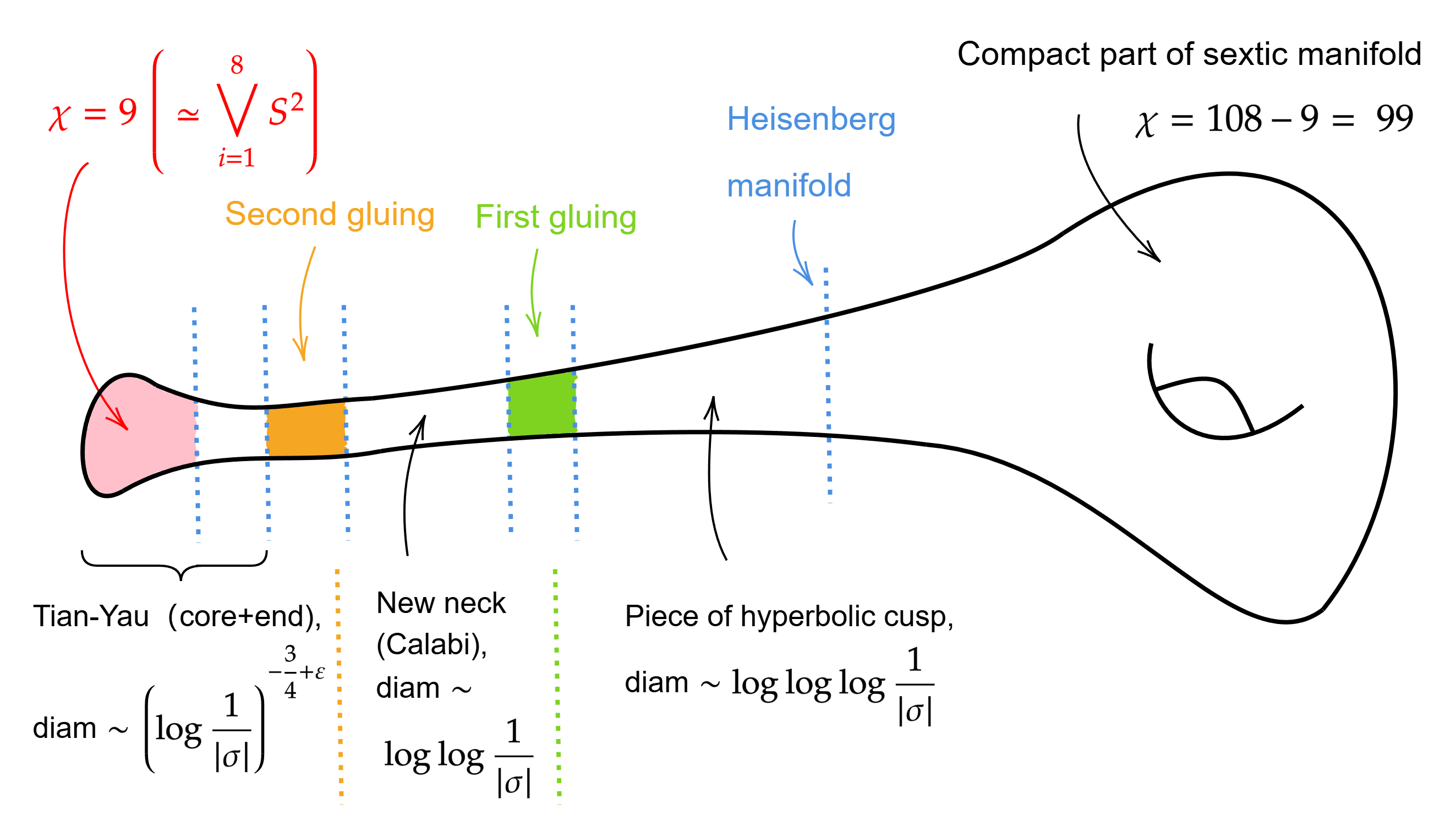}
\end{center}
\end{figure}

The Tian-Yau gravitational instanton is a complete Ricci-flat Kähler, hence hyper-Kähler metric of volume growth $O(r^{4/3})$ and curvature decay $O(r^{-2})$ on the smooth complex surface $\{z_1^3 + z_2^3 + z_3^3 = 1\}$. Its construction goes back to \cite{TY}. There exists more than one such metric even modulo automorphism and scaling. The one that appears in our theorem is characterized by being globally $\ddbar$-exact and by being asymptotic to a specific model Kähler form at infinity. We emphasize that this model form is completely determined. In particular, it does not even depend on an unknown scaling factor. 

See Figure \ref{fig:mainthm} for a rough sketch of the geometry and topology of the main regions of $(\mathcal{X}_\sigma,\omega_{KE,\sigma})$ shortly before the limit. The Main Theorem is of course proved by a gluing construction, which also yields estimates in every other region of $\mathcal{X}_\sigma$. In particular, we recover the locally smooth convergence of $\omega_{KE,\sigma}$ to $\omega_{KE,0}$ over $\mathcal{X}_0^{reg}$ from \cite{Song}. However, most likely none of our estimates are sharp.

\subsection{Possible generalizations}

We expect that all of our work in this paper goes through
\begin{itemize}
    \item[$\bullet$] for any flat family $\{\mathcal{X}_\sigma\}_{\sigma \in \Delta} \subset \mathbb{CP}^N \times \Delta$ of surfaces of general type, embedded by a fixed power of their canonical bundles, such that $\mathcal{X}_\sigma$ is smooth for $\sigma \neq 0$,
    \item[$\bullet$] all of the singularities of $\mathcal{X}_0$ are cones over elliptic curves,
    \item[$\bullet$] and the finite group ${\rm Aut}(\mathcal{X}_0)$ acts transitively on the set of singularities.
\end{itemize}
Under the first two bullet points it is known from \cite[Ch 9]{Pinkham} that the cones are of degree at most $9$. Moreover, any smoothing of such a cone is given by a del Pezzo surface of the same degree containing the elliptic curve as an anticanonical divisor. (For degrees $\geq 5$ this is also proved in \cite[Ch 9]{Pinkham}, while for degrees $\leq 4$ it follows from the realization of these cones as quasi-homogeneous complete intersections in \cite[Satz 1.9]{Saito} and from the standard deformation theory of complete intersections \cite{KS}.) We have chosen not to state our theorem in this generality because this would have forced us to introduce a lot of extraneous notation and technical arguments. The third bullet point is a much more pressing issue: as an artifact of our method, we are currently unable to deal with multiple independent singularities. Roughly speaking, each singularity contributes a $1$-dimensional obstruction space to the gluing, and we are currently not dealing with these obstructions in a systematic way---we exploit one ``accidental'' global degree of freedom, which restricts us to the case of a single singularity modulo ${\rm Aut}(\mathcal{X}_0)$.

In principle, the same kind of geometry also occurs in higher dimensions: every affine cone over a projective Calabi-Yau manifold admits Kähler-Einstein model metrics of cuspidal geometry. However, if the Calabi-Yau is flat, these cones are never smoothable in dimension $3$ and higher \cite{Kova}; in particular, they never occur as the infinity divisor in a Tian-Yau manifold that could model the smoothing near the tip of the cusp. For non-flat Calabi-Yaus, this smoothing issue disappears but the cuspidal model metrics have unbounded curvature, so it is impossible to prove (as done in \cite{DFS, FHJ} in the flat case) that the global Kähler-Einstein metric on $\mathcal{X}_0^{reg}$ is asymptotic to the cusp model. This is a cuspidal version of the well-known orbifold vs. non-orbifold dichotomy for isolated \emph{conical} singularities of Kähler-Einstein metrics. The non-orbifold conical case was solved in \cite{HS} using Donaldson-Sun theory \cite{DS2}, bypassing the $C^2$ estimate in the theory of the complex Monge-Amp\`ere equation. In the cuspidal case one would either need a cuspidal Donaldson-Sun theory, or a very strong $C^2$ estimate in unbounded curvature.

In a different direction, in dimension $2$, cones over elliptic curves are not the only singularities that can occur on canonically polarized degenerations of smooth surfaces of general type. Such singularities were classified in \cite[Thm 4.24]{KSB}. Examples include normal crossing singularities (see \cite{Mandel} for progress in this direction) as well as the cusps of Hilbert modular surfaces \cite[pp.54--57]{rkoba}. The great advantage of the $2$-dimensional case is that the natural cuspidal model metrics do have bounded curvature.

\subsection{Outline of the proof}\label{sec:outline}

There is a very extensive literature on gluing constructions in geometric analysis and more specifically in Kähler geometry, which we will not attempt to survey here. Tian-Yau spaces were used as singularity models in \cite{biqguen} and \cite{HSVZ} but the settings of these two papers are rather different from ours. In fact, our setting is in some sense a \emph{cubic analog} of the classical smoothing of Kähler-Einstein surfaces with nodal singularities via Eguchi-Hanson/Stenzel gravitational instantons. This gluing construction was carried out by Spotti \cite{spotti} and independently (in greater generality) by Biquard-Rollin \cite{biqroll}. The initial idea of our proof, dating back roughly 10 years, was that something similar can perhaps be done in the cubic situation, based on the following observation.

We identify the singularity $\{z_1^3 + z_2^3 + z_3^3 = 0\} \subset \mathbb{C}^3$ with the contraction of the zero section of the total space of a negative line bundle $L$ over the corresponding elliptic curve $E \subset \mathbb{CP}^2$. On $L$ we have a unique (up to scaling) Hermitian metric $h$ whose curvature form is minus the flat Kähler form $\omega_E$ representing the class $2\pi c_1(L')$, where $L' \to E$ denotes the line bundle dual to $L$. Thus, as a function on the cubic cone, $h = e^{-\varphi}|z|^2$, where $|z|$ is the standard Euclidean radius and $\varphi$ is a $0$-homogeneous function which, viewed as a function $\varphi: E \to \mathbb{R}$, satisfies $\ddbar\varphi =  \omega_{FS}|_E - \omega_E$. Then the asymptotic model of the Tian-Yau metric is $\ddbar (\log h)^{3/2}$ on $\{h > 1\}$ whereas the asymptotic model of the cusp metric is $-3 \ddbar \log(-{\log h})$ on $\{h < 1\}$. Define $t := \log h$. Since the complete Tian-Yau metric, i.e., the relevant gravitational instanton, lives on the smooth surface $\{z_1^3 + z_2^3 + z_3^3 = 1\}$ and since we are interested in the degeneration $\{z_1^3 + z_2^3 + z_3^3 = \sigma\}$ with $\sigma \to 0$, it is natural to pull back by the map $z \mapsto \sigma^{-1/3}z$ and thus replace $t$ by $t - T$ in the Tian-Yau potential, $T := (2/3)\log |\sigma|$. Then
\begin{align}\label{eq:naive_gluing}
    \omega_{cusp} = -\frac{3}{t}\omega_E + \frac{3}{t^2}i\partial t \wedge \overline{\partial} t\;\,(t < 0), \;\, \omega_{TY} = \frac{3}{2}(t-T)^{\frac{1}{2}}\omega_E + \frac{3}{4}(t-T)^{-\frac{1}{2}}i\partial t \wedge \overline{\partial}t\;\,(t > T).
\end{align}
Thus, both the tangential and the radial metric coefficients match up at $t = (2/3)T$ provided that we also rescale $\omega_{TY}$ by a factor of  $const \cdot |T|^{-3/2}$. This agreement is surprisingly good. In fact, it lets us write down a pre-glued metric on $\mathcal{X}_{\sigma}$ whose Ricci potential has sup norm $O(1)$ as $\sigma \to 0$.

However, to enter the gluing regime, it turns out that $O(1)$ needs to be improved to $o(1)$. Here a new idea is needed. By solving a Calabi ansatz we show that $\omega_{cusp}$ belongs to a $1$-parameter family $\omega_b$ ($b \in \mathbb{R})$ of radial Kähler-Einstein metrics on the cubic cone such that $\omega_{0} = \omega_{cusp}$ and $\omega_b$ undergoes a ``geometric transition'' at $b = 0$. For $b > 0$, $\omega_b$ extends to the total space of $L$ with an edge singularity along the zero section of $L$. These edge metrics were introduced in \cite{biqguen} and \cite{FHJ}. However, in this paper the case $b < 0$ is more relevant. For $b < 0$, $\omega_b$ is only defined on a subset $t > T$ of the cubic cone, $T \sim - const \cdot |b|^{-1/3}$, and it has a \emph{horn} singularity: as $t \to T$, $\omega_b \sim const \cdot |b|^{1/2} \cdot \ddbar (t-T)^{3/2}$.
This is now a perfect match for the end of the Tian-Yau space, leading to a pre-glued metric whose Ricci potential can be $O(|T|^{-(3/2)+\varepsilon})$ for any $\varepsilon>0$ if the gluing is done sufficiently close to $t = T$.

Since this decay of the Ricci potential almost matches the scaling factor of the Tian-Yau metric, which is the smallest scale in the construction, it follows from the maximum principle and from Savin's small perturbation theorem \cite{Savin} that the difference between $\omega_{KE,\sigma}$ and the pre-glued metric goes to zero everywhere except on the Tian-Yau cap. To get control on the cap we need to develop a weighted Hölder space theory as in \cite{biqroll,spotti}, but unlike in \cite{biqroll,spotti} the gluing is obstructed. This is because we are now dealing with three neck regions rather than one: the end of the Tian-Yau space, the cusp of $\mathcal{X}_0^{reg}$, and the new neck coming from the Calabi ansatz $\omega_b$ that interpolates between these two. There is a solution to the linearized PDE on the new neck that approaches $1$ on the Tian-Yau side and $0$ on the cusp side, and we have been unable to rule out this solution using any choice of weight. We use an ad hoc trick to get around this issue (which however prevents us from dealing with multiple independent singularities): the Ricci potential is only defined modulo a constant, and while changing this constant is the same as adding a constant to the solution of the Monge-Amp\`ere equation, the ``Einstein modulo obstructions'' metric in the sense of \cite{OzuchThesis} reacts in a nontrivial way to this change.

\subsection{Acknowledgments}

We thank O. Biquard and H. Guenancia for some very helpful discussions about the Calabi ansatz, B. Ammann, G. Tian and V. Tosatti for pointing out references \cite{Hummel}, \cite{CYcusp,TY0} and \cite{Kova}, respectively, and an anonymous referee for suggestions that greatly improved our exposition. XF was supported by National Key R\&D Program of China 2024YFA1014800
 and NSFC No. 12401073. HJH was partially supported by the German Research Foundation (DFG) under Germany's Excellence Strategy EXC 2044-390685587 ``Mathematics M\"unster:~Dynamics-Geometry-Structure" as well as by the CRC 1442 ``Geometry:~Deformations and Rigidity'' of the DFG.

\section{Building blocks of the gluing}

\subsection{Degenerations of projective surfaces of general type}

In this subsection, we give a family of canonically polarized surfaces $\mathcal X$ over the disk $\Delta$. Let $\mathcal X$ be a family of degree $6$ surfaces (hence canonically polarized) in $\mathbb{CP}^3$ as follows:
\begin{align}\label{eq:sextics}
\mathcal X_\sigma:=\{(Z_1^6+Z^3_1Z^3_0)+(Z_2^6+Z^3_2Z^3_0)+(Z_3^6+Z^3_3Z^3_0)=\sigma Z_0^6\}.
\end{align}
When $Z_0\neq 0$, in the affine coordinates $z_1=\frac{Z_1}{Z_0},z_2=\frac{Z_2}{Z_0},z_3=\frac{Z_3}{Z_0}$, $\mathcal X_\sigma$ is defined by \begin{equation}\label{SingModel}
(z_1^6+z_1^3)+(z_2^6+z^3_2)+(z_3^6+z_3^3)=\sigma.
\end{equation}
It is easy to see that this affine surface is smooth for $0 < |\sigma| \ll 1$, that the origin is its unique singular point for $\sigma = 0$, and that this singularity is locally analytically isomorphic (via $z_i \mapsto z_i(1 + z_i^3)^{1/3}$ for $i = 1,2,3$) to the singularity defined by the equation 
\begin{align}z_1^3+z_2^3+z_3^3=0.\end{align}
Also, when $Z_0=0$, $\mathcal X_\sigma$  is smooth for every $\sigma$. For instance, by symmetry, let us assume $Z_1\neq 0$. Then we may take affine coordinates $w_1=\frac{Z_0}{Z_1}, w_2=\frac{Z_2}{Z_1},w_3=\frac{Z_3}{Z_1}$. Then the singular locus of $\mathcal X_\sigma$ will be the common zeros of the following system of equations:
\begin{align}\label{eq:singularlocus}
\begin{split}
   1+w_1^3+w_2^6+w_2^3w_1^3+w_3^6+w_3^3w_1^3-\sigma w_1^6=0,\\
   3w_1^2 + 3w_2^3 w_1^2 + 3w_3^3 w_1^2 - 6\sigma w_1^5 = 0,\\
    6w_2^5+3w_2^2w_1^3=0,\\
6w_3^5+3w_3^2w_1^3=0.
\end{split}
\end{align}
Now observe that if  $w_1=0$, then it is easy to see that the above system has no common zeros for all values of $\sigma$. Hence when $Z_0=0$, $\mathcal X_\sigma$ is smooth for all $\sigma$.

Obviously $K_{\mathcal X_\sigma}\cong \mathcal O(2)|_{\mathcal X_\sigma}$. So let us denote the zero locus of the linear homogeneous polynomial $Z_0$ on $\mathcal{X}_\sigma$ by $\mathcal{D}_\sigma$. Also set $\mathcal Y_\sigma:=\mathcal X_\sigma\setminus \mathcal{D}_\sigma$. Then 
\begin{align}\label{eq:sextics_affine}
\mathcal Y_\sigma=\{(z_1^6+z_1^3)+(z_2^6+z^3_2)+(z_3^6+z_3^3)=\sigma\}\subset \mathbb C^3.
\end{align}
From what we said it is clear that there is a section $\Omega$ of the relative canonical bundle $K_{\mathcal X_\sigma/\Delta}$ whose restriction to $\mathcal Y_\sigma$ is a nonvanishing holomorphic volume form $\Omega_\sigma$ with a zero of order $2$ at $\mathcal{D}_\sigma$.

\begin{remark}\label{nonVHVunique}
We remark that up to scaling $\Omega_\sigma$ is the unique holomorphic volume form on the affine manifold $\mathcal{Y}_\sigma$ with zeros or poles along $\mathcal{D}_\sigma$.  Indeed, if $\hat \Omega_\sigma$ is any other such form, then $H_\sigma:=\Omega_\sigma/\hat \Omega_\sigma$ is a nowhere vanishing holomorphic function on $\mathcal Y_\sigma$ with zeros or poles along $\mathcal{D}_\sigma$.  Thus, either $H_\sigma$ or $1/H_\sigma$ extends to a holomorphic function on $\mathcal{X}_\sigma$ and is therefore constant. More generally, in order to conclude $H_\sigma = const$ it suffices to assume that $\hat\Omega_\sigma$ has at most polynomial growth near $\mathcal{D}_\sigma$. 
\end{remark}

By equation \eqref{SingModel}, we know that near the singularity, the family $\mathcal X_\sigma$ is locally analytically isomorphic to the following family of Tian-Yau spaces (this terminology will become clear in Section \ref{TYConstruction})
\begin{equation}\label{TYspace}
TY_\sigma:=\{z_1^3+z_2^3+z_3^3=\sigma\}.
\end{equation}
We remark here that $TY_0$ is obviously the affine cone over a smooth elliptic curve $E \subset \mathbb{CP}^2$. Thus, for the specific singularity $z=0$ in $TY_0$, there is a log resolution of singularities \begin{equation}\label{resolution}\pi:\hat L\to  TY_0\end{equation} with exceptional divisor $E$. Here $L \to E$ denotes a negative line bundle, $\hat{L}$ denotes the total space of $L$ and $\pi$ is the contraction of the zero section of $\hat{L}$. We also remark that $TY_\sigma$ is a family of affine varieties in $\mathbb C^3\times \mathbb C\subset\mathbb {CP}^3 \times \mathbb C$  and it can be  compactified in $\mathbb{CP}^3\times\mathbb C$ by adding the  divisor
\begin{equation}\label{infdiv}
E = \{Z_1^3+Z_3^3+Z_3^3=0\}
\end{equation}
independently of $\sigma$,  where $Z_i$ are the homogeneous coordinates on $\mathbb {CP}^3$.  

Next, we fix an identification of $TY_\sigma$ and $\mathcal {Y}_\sigma$ locally near the singularity.

\begin{definition}
We define a local analytic isomorphism of the  family $\Psi_\sigma: \mathcal Y_\sigma\to TY_\sigma $ as follows:
\begin{equation}\label{Psi}
    \Psi_\sigma(z_1,z_2,z_3):=(z_1(1+z_1^3)^{\frac{1}{3}},z_2(1+z_2^3)^{\frac{1}{3}},z_3(1+z_3^3)^{\frac{1}{3}}).
\end{equation}
From the implicit function theorem, it is clear that $\Psi_\sigma$ is invertible near $(z_1,z_2, z_3) =(0,0,0).$
\end{definition}

We will also need to identify $TY_0$ with $TY_1$ in some fixed neighborhood of infinity of the two spaces.  Of course, this can only be done diffeomorphically, not biholomorphically.
 
\begin{lemma}[{\cite[Lemma 5.5]{CH}}]\label{l:normproj}
There exist $R>0$ and a smooth function
$\nu:\C^{3}\setminus
\overline{B}_R\to\C$  such that 
\begin{align}\sum_{i=1}^{3}(z_{i}+\nu(z)\overline{z}_{i}^{2})^{3}=1\end{align}
for all $z\in TY_0 \setminus \overline{B}_R$.
Moreover, 
$\partial^k \nu(z) = O(|z|^{-4-k})$ as $|z|\to\infty$ for all $k \geq 0$.
\end{lemma}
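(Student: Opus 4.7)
The plan is to expand the defining identity into a polynomial equation in $\nu$ and solve it by the implicit function theorem, extracting the decay of $\nu$ and its derivatives from the scaling structure of the cubic cone. Expanding directly,
\begin{align*}
\sum_{i=1}^{3}(z_i+\nu\overline{z}_i^{\,2})^3 \,=\, \sum_{i=1}^3 z_i^3 + 3\nu\, A(z) + 3\nu^2\, B(z) + \nu^3\, C(z),
\end{align*}
where $A(z):=\sum_{i}|z_i|^4$, $B(z):=\sum_{i} z_i\overline{z}_i^{\,4}$ and $C(z):=\sum_{i}\overline{z}_i^{\,6}$. On $TY_0$ the first sum vanishes, so the required identity reduces to
\begin{align*}
3\nu\, A(z) + 3\nu^2\, B(z) + \nu^3\, C(z) \,=\, 1.
\end{align*}
The coefficient $A$ satisfies $A(z)\geq |z|^4/3$ on all of $\C^3$ by the power-mean inequality applied to $\sum|z_i|^2$, while $|B(z)|\leq |z|^5$ and $|C(z)|\leq |z|^6$. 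The leading behavior is thus $3\nu A\approx 1$, suggesting a solution with $\nu=O(|z|^{-4})$.

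To make this precise I pass to the scale-invariant picture. Write $z=r\xi$ with $r=|z|$ and $\xi\in \mathbb{S}^5$, and introduce the rescaled unknown $\tilde\nu:=r^{4}\nu$ and parameter $\epsilon:=r^{-1}$. Using the homogeneities $A(r\xi)=r^{4}A(\xi)$, $B(r\xi)=r^{5}B(\xi)$, $C(r\xi)=r^{6}C(\xi)$, the equation transforms into
\begin{align*}
F(\tilde\nu,\xi,\epsilon)\,:=\, 3\tilde\nu\, A(\xi) + 3\epsilon^{3}\tilde\nu^{2}\, B(\xi) + \epsilon^{6}\tilde\nu^{3}\, C(\xi) - 1 \,=\, 0,
\end{align*}
in which $A,B,C$ are smooth on $\mathbb{S}^5$ and $A\geq 1/3$ there. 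At $\epsilon=0$ the unique solution is $\tilde\nu_0(\xi)=1/(3A(\xi))$, and $\partial_{\tilde\nu}F|_{(\tilde\nu_0,\xi,0)}=3A(\xi)\geq 1$, so the implicit function theorem with parameters, together with the compactness of $\mathbb{S}^5$, furnishes a unique smooth solution $\tilde\nu(\xi,\epsilon)$ on $\mathbb{S}^5\times[0,\epsilon_0)$ for some $\epsilon_0>0$, whose derivatives of every order in $(\xi,\epsilon)$ are uniformly bounded.

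Setting $R:=1/\epsilon_0$ and defining $\nu(z):=|z|^{-4}\tilde\nu(z/|z|,\,1/|z|)$ for $z\in\C^{3}\setminus\overline{B}_R$ yields a smooth function on $\C^{3}\setminus\overline{B}_R$ that, on $TY_0$, solves the reduced cubic equation and hence the identity claimed in the lemma; off the cone $TY_0$ it is merely a smooth extension, as the statement allows. The bound $\partial^{k}\nu(z)=O(|z|^{-4-k})$ follows immediately from the $|z|^{-4}$ prefactor, the uniform boundedness of the derivatives of $\tilde\nu$, and the fact that each $z$-derivative of $z/|z|$ or $1/|z|$ contributes an extra factor $O(|z|^{-1})$. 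I do not foresee any substantive obstacle here: the one minor subtlety is that the equation is only required to hold on $TY_0$, but since the coefficients $A,B,C$ are globally defined on $\C^3$ and $A$ is bounded below on all of $\mathbb{S}^5$, running the IFT on the full sphere automatically delivers the ambient smooth extension with the correct decay.
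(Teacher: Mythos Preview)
Your proof is correct and follows essentially the same strategy as the paper: expand the cubic identity, reduce on $TY_0$ to a polynomial equation in $\nu$ with coefficients $A,B,C$, pass to spherical coordinates $\xi=z/|z|$ with a small parameter $1/|z|$, and apply the implicit function theorem uniformly over $\mathbb{S}^5$ using $A(\xi)\geq 1/3$. The only difference is your choice of rescaled unknown: you set $\tilde\nu=|z|^{4}\nu$, which builds the optimal decay into the prefactor and makes the bound $\partial^{k}\nu=O(|z|^{-4-k})$ immediate from the chain rule, whereas the paper takes $y=|z|\,\nu$ (so that the IFT is applied at the origin $y=0$) and then recovers the $|z|^{-4}$ decay in a separate step via the identity $\nu(z)|z|^{4}P(z)=1$. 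Your normalization is slightly more efficient; otherwise the two arguments coincide.
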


\begin{proof}
Taking complex coordinates $z=(z_1,z_2,z_3)$ on $S^5 \subset \C^{3}$, define a function $f$ by
\begin{equation}
f:S^5\times[0,\infty)\times\C\to\C,
\;\, f(z,r,y)=
3y\left(\sum_{i=1}^3|z_{i}|^{4}\right)+3y^{2}\left(\sum_{i=1}^3|z_{i}|^{2}\overline{z}^{3}_{i}\right)+
y^{3}
\left(\sum_{i=1}^3\overline{z}_{i}^{6}\right)-r^{3},
\end{equation}
and fix any point $p=(p_{1},p_2,p_3)\in S^{5}$. Since 
\begin{align}f(p,0,0)=0,\;\,
\frac{\p f}{\p y}(p,0,0)=3\sum_{i=1}^{3}|p_{i}|^{4}\neq 0,
\end{align}
the implicit function theorem asserts the existence of a unique
smooth function $g_{p}$, defined in some open neighborhood
$U_{p} \times [0,\epsilon_p)$ of $(p,0),$ such that
$g_{p}(p,0)=0$ and $f(z,r,g_{p}(z,r))=0$.
An obvious covering argument on $S^{5}$ then yields a smooth function $g: S^5 \times [0,\epsilon) \to \C$ satisfying
\begin{align}
g(z,0)=0, \quad f(z,r,g(z,r))=0.
\end{align}
We now set $R=\epsilon^{-1}$ and define $\nu:\C^{3}\setminus \overline{B}_R\to\C$ by
\begin{align}
\label{gz}\nu(z) = \frac{1}{|z|}g\left(\frac{z}{|z|},\frac{1}{|z|}\right).
\end{align}
The fact that $\nu$ satisfies $\sum_{i=1}^3(z_{i}+\nu(z)\overline{z}_{i}^{2})^{3}=1$
for each $z\in TY_0 \setminus
\overline{B}_{R}$ is straightforward to verify. 

Now we show that $\nu(z) = O(|z|^{-4})$. To see this, observe that $\nu(z)|z|^4P(z)=1$ on $\C^{3}\setminus\overline{B}_{R}$, where
\begin{equation}\label{PZ}
P(z)=3\left(\sum_{i=1}^3\frac{|z_{i}|^{4}}{|z|^{4}}\right)+3(\nu(z)|z|)\left(\sum_{i=1}^3\frac{|z_{i}|^{2}\overline{z}^{3}_{i}}{|z|^{5}}\right)+(\nu(z)|z|)^{2}
\left(\sum_{i=1}^3\frac{\overline{z}_{i}^{6}}{|z|^{6}}\right).
\end{equation}
It then suffices to note that $\nu(z)|z| = g(\frac{z}{|z|},\frac{1}{|z|}) \to 0$ uniformly as   $|z| \to \infty$.

In order to complete the proof of the lemma, we show that $\partial^k \nu(z)= O( |z|^{-4-k})$ for all $k \geq 1$. We only do this for $k = 1$. To this end, note that $\nu(z)=1/(|z|^4P(z))$. Thus, using the expression of $P(z)$ in \eqref{PZ}, it suffices to show that $\partial(\nu(z)|z|) = O(|z|^{-1})$.  But this follows from \eqref{gz} and from the fact that $g$ is a smooth function on $S^5\times [0,\epsilon)$. The lemma is proved.
\end{proof}

Now we define a diffeomorphism $\Phi$ from $ TY_0\setminus \overline{B}_{R}$ onto its image contained in $ TY_1$ as follows:
\begin{equation}\label{eq-Phi}\Phi(z_1,z_2,z_3):=(z_1+\nu(z)\overline{z}_1^2,z_2+\nu(z)\overline{z}_2^2,z_3+\nu(z)\overline{z}_3^2).\end{equation}
With this, we can trivialize the family $TY_\sigma$ diffeomorphically at infinity.

\begin{definition}\label{def:scaling_maps}
Let $\Phi$ be defined by \eqref{eq-Phi}, $m'_\sigma:TY_0\to TY_0$ by sending $z$ to $\sigma^{-1/3}z$ and $m_\sigma:TY_\sigma\to TY_{1}$ by sending $z$ to $\sigma^{-1/3}z$. Then we define a family of maps $\Phi_\sigma: TY_0\setminus B_{|\sigma|^{1/3}R}\to TY_\sigma$ by
\begin{equation}\label{eq-Phis}
\Phi_\sigma:=m^{-1}_\sigma\circ\Phi\circ m'_\sigma.
\end{equation} 
\end{definition}
  
\begin{lemma}\label{Phis}Let $z\in TY_0$ with $|z|\geq |\sigma|^{1/3}R$ and also set 
 $\nu_\sigma(z):=\sigma^{-1/3}\nu(\sigma^{-1/3}z)$.  Then
 \begin{equation}
 \label{eq-Phi_sigma} 
   \Phi_\sigma(z_1,z_2,z_3)=(z_1+\nu_{\sigma}(z)\overline{z}_1^2,z_2+\nu_{\sigma}(z)\overline{z}_2^2,z_3+\nu_{\sigma}(z)\overline{z}_3^2)
   \end{equation}  
with
$\partial^k \nu_{\sigma}(z) = O(|\sigma||z|^{-4-k})$ as $|\sigma|^{-1/3} |z| \to \infty$ for all $k \geq 0$.
\end{lemma}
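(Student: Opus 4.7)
The strategy is a direct calculation: unwind the composition $\Phi_\sigma = m_\sigma^{-1}\circ\Phi\circ m_\sigma'$ from Definition \ref{def:scaling_maps}, then apply the chain rule together with the decay estimates from Lemma \ref{l:normproj}. The assertion is essentially a scale-covariance statement: $\nu_\sigma$ is defined precisely so as to inherit the decay of $\nu$ under the rescaling $z \mapsto \sigma^{-1/3} z$ composed with its inverse.

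For the explicit formula \eqref{eq-Phi_sigma}, I would fix $z \in TY_0$ with $|z| \geq |\sigma|^{1/3}R$ and set $w := m_\sigma'(z) = \sigma^{-1/3}z$. Then $|w| = |\sigma|^{-1/3}|z| \geq R$, so $\Phi$ is defined at $w$ by \eqref{eq-Phi}, giving $\Phi(w)_i = w_i + \nu(w)\bar w_i^2$. Substituting $w_i = \sigma^{-1/3}z_i$, expanding the anti-holomorphic factor $\bar w_i^2$, and then applying $m_\sigma^{-1}$ (which multiplies each component by $\sigma^{1/3}$) produces an expression of the form $\Phi_\sigma(z)_i = z_i + c(\sigma)\,\nu(\sigma^{-1/3}z)\bar z_i^2$ for an explicit scalar $c(\sigma)$ of modulus $|\sigma|^{-1/3}$, which by the definition $\nu_\sigma(z) := \sigma^{-1/3}\nu(\sigma^{-1/3}z)$ is precisely what is claimed in \eqref{eq-Phi_sigma}.

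For the decay bound $\partial^k \nu_\sigma(z) = O(|\sigma||z|^{-4-k})$, I would combine the definition of $\nu_\sigma$ with the chain rule. In the base case $k=0$, Lemma \ref{l:normproj} gives $|\nu(\sigma^{-1/3}z)| = O(|\sigma^{-1/3}z|^{-4}) = O(|\sigma|^{4/3}|z|^{-4})$ in the regime $|\sigma|^{-1/3}|z| \to \infty$, so the prefactor $\sigma^{-1/3}$ yields $|\nu_\sigma(z)| = O(|\sigma||z|^{-4})$. For $k \geq 1$, each partial derivative (either $\partial_{z_j}$ or $\partial_{\bar z_j}$) applied to $\nu(\sigma^{-1/3}z)$ contributes an extra factor of modulus $|\sigma|^{-1/3}$ from the chain rule while upgrading the decay exponent of $\nu$ by one unit, again via Lemma \ref{l:normproj}. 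Combining $k$ such factors with the overall prefactor $\sigma^{-1/3}$ gives $|\partial^k\nu_\sigma(z)| \leq |\sigma|^{-(k+1)/3} \cdot O(|\sigma|^{(4+k)/3}|z|^{-4-k}) = O(|\sigma||z|^{-4-k})$ uniformly in the stated regime.

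I do not expect any substantive obstacle: the entire argument reduces to careful bookkeeping of the cube roots of $\sigma$ in the definitions of $m_\sigma$, $m_\sigma'$ and $\nu_\sigma$, and to a mechanical application of the chain rule together with the bounds already provided by Lemma \ref{l:normproj}.
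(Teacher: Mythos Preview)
Your proposal is correct and follows essentially the same approach as the paper: unwind the composition $m_\sigma^{-1}\circ\Phi\circ m_\sigma'$ to obtain the explicit formula, then apply the chain rule together with the decay $\partial^k\nu = O(|z|^{-4-k})$ from Lemma~\ref{l:normproj} to get $\partial^k\nu_\sigma(z) = |\sigma|^{-(1+k)/3}\cdot O((|\sigma|^{-1/3}|z|)^{-4-k}) = O(|\sigma||z|^{-4-k})$. Your remark that the scalar prefactor has modulus $|\sigma|^{-1/3}$ (rather than being literally $\sigma^{-1/3}$, due to the complex conjugate in $\overline{w}_i^2 = \overline{\sigma}^{-2/3}\overline{z}_i^2$) is in fact slightly more careful than the paper's own write-up, but this is immaterial for the estimate.
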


\begin{proof}
      By \eqref{eq-Phis}, for $z\in TY_0$,
      \begin{align}
    \Phi_\sigma(z_1,z_2, z_3) = (z_1+\sigma^{-\frac{1}{3}}\nu(\sigma^{-\frac{1}{3}}z) \overline{z}_1^2, z_2+\sigma^{-\frac{1}{3}}\nu(\sigma^{-\frac{1}{3}}z) \overline{z}_2^2, z_3+\sigma^{-\frac{1}{3}}\nu(\sigma^{-\frac{1}{3}}z) \overline{z}_3^2).
      \end{align}
       Recall that $\partial^k \nu(z) = O( |z|^{-4-k})$ as $|z|\to\infty$ for all $k \geq 0$.
       Thus,
       \begin{align}
           \partial^k \nu_\sigma(z) = \sigma^{-\frac{1+k}{3}}(\partial^k \nu)(\sigma^{-\frac{1}{3}}z) = O(|\sigma|^{-\frac{1+k}{3}}(|\sigma|^{-\frac{1}{3}}|z|)^{-4-k}) = O(|\sigma| |z|^{-4-k})
       \end{align}
       as $|\sigma|^{-1/3}|z| \to \infty$.
\end{proof}
  
\begin{remark}
By Lemma \ref{Phis}, $\Phi_\sigma$ is well-defined if $|z|\geq |\sigma|^{1/3}R$ for some sufficiently large but fixed $R \gg 1$. However, the gluing will later be carried out in a region where $|z|$ is much larger than this. More precisely, the gluing region is $|z| = |\sigma|^{1/3} e^{\zeta \,\cdot\, |{\log |\sigma|}|^\alpha}$ for some fixed $\alpha < \frac{1}{2}$ and for $\zeta \in [1,2]$. 
\end{remark}

Lastly, we use the above identifications to trivialize the family $\{\mathcal{X}_\sigma\}_{\sigma \in \Delta}$ diffeomorphically away from smaller and smaller neighborhoods of the singularity $(0,0,0) \in \mathcal{X}_0 \subset \mathbb{C}^3$.

\begin{lemma}\label{outdiff}
There exists a $C^5$ diffeomorphism
$G_\sigma$ from $\mathcal X_0\setminus \{|z| \leq 3|\sigma|^{1/3}R\}$ onto its image, which is contained in $\mathcal X_\sigma\setminus \{|z|\leq 2|\sigma|^{\frac{1}{3}}R\}$ and contains $\mathcal{X}_\sigma \setminus \{|z| \leq 4|\sigma|^{1/3}R\}$, such that the following hold:
\begin{itemize}
\item[$(1)$] $G_\sigma$ is smooth away from the hyperplane $\{Z_0=0\}$ and is the identity on this hyperplane.
\item[$(2)$] Fix $R_0 \ll 1$ so that $\Psi$ is defined on $|z|\leq 3R_0$. Then $G_\sigma = \Psi_\sigma^{-1}\circ\Phi_\sigma\circ\Psi_0$  for $|z|\leq R_0$.
\item[$(3)$] If we denote the complex structure of $\mathcal X_\sigma$ by $J_\sigma$, then  for any $r_0>0$ and any smooth metric $\omega_0$ on $\mathcal X_0\setminus \{|z| < r_0\}$ we have that
\begin{align}|\nabla^k_{\omega_0}
 (G^*_\sigma
J_\sigma-J_0)|_{\omega_0} =O_{r_0,\omega_0}(|\sigma|)\;\,\text{on}\;\,\mathcal X_0\setminus \{|z| < r_0\}
\end{align}
for all $\sigma$ sufficiently small depending on $r_0$ and for all $0 \leq k \leq 4$.
\end{itemize}
\end{lemma}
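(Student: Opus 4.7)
\emph{Proof plan.} I will construct $G_\sigma$ as the time-$\sigma$ flow of a horizontal vector field $V$ on the total space $\mathcal X \to \Delta$, where ``horizontal'' means $d\pi(V) = \partial_\sigma$. The vector field is obtained by interpolating between a near-singularity piece and an Ehresmann trivialization at infinity. On $\{|z| \leq 2R_0\}$, let $V_{\mathrm{near}}$ be the unique horizontal vector field whose time-$\sigma$ flow starting on $\mathcal X_0$ coincides with $\Psi_\sigma^{-1} \circ \Phi_\sigma \circ \Psi_0$; by Lemma \ref{Phis} this is well-defined and smooth on $\{|z| \geq \tfrac{3}{2}|\sigma|^{1/3}R\}$. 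Away from the origin, $\pi : \mathcal X \setminus \{0\} \to \Delta$ is a proper submersion, and an inspection of the defining equation (cf.\ \eqref{eq:singularlocus}) shows that the divisor $\mathcal D := \{Z_0 = 0\} \cap \mathcal X$ is $\sigma$-independent. Applying the Ehresmann/Thom isotopy lemma relative to $\mathcal D$ produces a smooth horizontal vector field $V_{\mathrm{far}}$ on $\mathcal X \setminus \{0\}$ that is tangent to $\mathcal D$ and whose vertical component vanishes identically along $\mathcal D$, so that its flow fixes $\mathcal D$ pointwise.

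\emph{Gluing and verification.} Pick a smooth cutoff $\chi : [0,\infty) \to [0,1]$ with $\chi(t) = 1$ for $t \leq R_0$ and $\chi(t) = 0$ for $t \geq 2R_0$, and set
\begin{equation*}
V := \partial_\sigma + \chi(|z|)\bigl(V_{\mathrm{near}} - \partial_\sigma\bigr) + \bigl(1 - \chi(|z|)\bigr)\bigl(V_{\mathrm{far}} - \partial_\sigma\bigr).
\end{equation*}
This is a horizontal vector field, smooth on $\mathcal X \setminus \{0\}$, and we define $G_\sigma$ as its time-$\sigma$ flow from $\mathcal X_0$. Property $(2)$ is immediate, since $V = V_{\mathrm{near}}$ on $\{|z| \leq R_0\}$. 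Property $(1)$ holds because $V = V_{\mathrm{far}}$ near $\mathcal D$ and the vertical component of $V_{\mathrm{far}}$ vanishes there. The domain/image sandwich in the near region follows from Lemma \ref{Phis}: since $\Phi_\sigma(z) - z = \nu_\sigma(z)\overline{z}^2 = O(|\sigma||z|^{-2})$, for $|\sigma|$ small every point of $\mathcal X_0$ with $|z| \geq 3|\sigma|^{1/3}R$ is sent into $\{|z| \geq 2|\sigma|^{1/3}R\}$, and every point of $\mathcal X_\sigma$ with $|z| \geq 4|\sigma|^{1/3}R$ is in the image. On the complement $\{|z| \geq R_0\}$ the flow $G_\sigma$ is uniformly $O(|\sigma|)$-close to the identity.

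\emph{Estimate $(3)$.} Because $V$ is $C^\infty$ on $\mathcal X \setminus \{0\}$ and projects to $\partial_\sigma$, the flow $G_\sigma$ depends smoothly on $(\sigma, p)$ for $p$ bounded away from the singular point, with $G_0 = \mathrm{id}$. The family of complex structures $J_\sigma$ on the fibers of $\pi$ is induced from the ambient complex structure on $\mathbb{CP}^3$ and therefore varies smoothly in $\sigma$, so a Taylor expansion in $\sigma$ together with the chain rule yields $G_\sigma^* J_\sigma - J_0 = O(|\sigma|)$ in every $C^k$-norm on $\mathcal X_0 \setminus \{|z| < r_0\}$, with constants depending on $r_0$, $k$, and $\omega_0$; this covers the range $k \leq 4$ claimed in the lemma.

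\emph{Main obstacle.} The analysis is essentially routine; the only step requiring a little care is the Ehresmann construction of $V_{\mathrm{far}}$ with the prescribed behavior on the \emph{fixed} divisor $\mathcal D$ (where the $\sigma$-independence of $\mathcal D_\sigma$ is used crucially), together with the verification that the cutoff-interpolated flow is a genuine diffeomorphism with the precise domain/image sizes. Both reduce to uniform $V$-bounds on the compact overlap annulus $\{R_0 \leq |z| \leq 2R_0\}$ and to $|\sigma|$ being small.
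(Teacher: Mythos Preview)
Your approach is correct and gives a cleaner, slightly stronger result than the paper's, but it is organized differently. The paper does not invoke Ehresmann abstractly: it writes down the explicit horizontal lift $V = (\nabla q/|\nabla q|^2,\,1)$ using the Euclidean gradient of $q = p(Z)/Z_0^6$ on the affine chart, then computes in coordinates near $\{Z_0=0\}$ that the vertical component of $V$ vanishes to order $6$ along the infinity divisor, so $V$ extends there only as a $C^5$ field---this is the origin of the $C^5$ in the statement. The paper also interpolates at the level of \emph{flows} (an isotopy $\tilde I_\sigma$ on a fixed annulus) rather than at the level of vector fields; this guarantees that $G_\sigma = \Psi_\sigma^{-1}\circ\Phi_\sigma\circ\Psi_0$ holds \emph{exactly} on $\{|z|\leq R_0\}$, whereas your cutoff on vector fields gives this only on $\{|z|\leq R_0 - O(|\sigma|)\}$ (trajectories starting near $|z|=R_0$ may drift into the transition annulus). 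That is easily fixed by shifting the cutoff radii. Your abstract relative-Ehresmann $V_{\mathrm{far}}$ is $C^\infty$, so you actually get a smooth $G_\sigma$, which is stronger than the paper's $C^5$; the $C^5$ is an artifact of the paper's specific gradient choice, not a genuine ceiling. One point you should make explicit: the existence of $V_{\mathrm{near}}$ requires that $\sigma\mapsto\Phi_\sigma$ be jointly smooth in $(\sigma,z)$ down to $\sigma=0$ for $z$ bounded away from the origin. This is true (apply the implicit function theorem directly to $\sum_i(z_i+y\bar z_i^2)^3=\sigma$ at $y=0$, $\sigma=0$), but it does not follow from Lemma~\ref{Phis} as stated, which only records decay bounds.
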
  

\begin{proof}
By construction, $\mathcal{X} = \{(Z,\sigma) \in \mathbb{CP}^3 \times \Delta: p(Z) = \sigma Z_0^6\}$ for a certain homogeneous sextic $p$. By \eqref{eq:singularlocus}, all fibers $\mathcal{X}_\sigma = \{Z \in \mathbb{CP}^3: (Z,\sigma) \in \mathcal{X}\}$ intersect the plane $\{Z_0 = 0\}$ transversely in the smooth curve $\{Z_0=0, Z_1^6 + Z_2^6 + Z_3^6 = 0\}$. Write $\mathbb{A}^3$ to denote the affine chart $\{Z_0 \neq 0\}$ and define
\begin{equation}
    q: \mathbb{A}^3 \to \mathbb{C}, \;\, q(Z) := p(Z)/Z_0^6.
\end{equation}
Let $\nabla q$ denote the type $(1,0)$ gradient of $q$ with respect to the standard Euclidean Kähler metric on $\mathbb{A}^3$ and let $|\nabla q|$ denote the Euclidean length of $\nabla q$. Then the smooth $(1,0)$-vector field
\begin{equation}
    V := \left(\frac{\nabla q}{|\nabla q|^2},1 \right) \in T^{1,0}(\mathbb{A}^3 \times \Delta)
\end{equation}
is defined at all points of the submanifold $\mathcal{X}' := (\mathcal{X}\setminus \mathcal{X}_0^{sing}) \cap (\mathbb{A}^3 \times \Delta)$ and is  tangent to $\mathcal{X}'$. In fact, the time-$\sigma$ flow of $V$ maps the fiber $\mathcal{X}'_\tau$ into the fiber $\mathcal{X}'_{\tau+\sigma}$ for every $\tau$.

\begin{claim}\label{claim:extend}
$V$ extends from $\mathcal{X}'$ to $\mathcal{X} \setminus \mathcal{X}_0^{sing}$ as a $C^5$ vector field vanishing along the infinity divisor.
\end{claim}

\begin{proof}[Proof of Claim \ref{claim:extend}] It suffices to check this in the chart $\{Z_1 \neq 0\}$. Define affine coordinates $w_1 = Z_0/Z_1, w_2 = Z_2/Z_1, w_3 = Z_3/Z_1$ on this chart. Then $d(1/w_1), d(w_2/w_1), d(w_3/w_1)$ are an orthonormal $(1,0)$-coframe with respect to the standard Euclidean metric on $\mathbb{A}^3 = \{w_1 \neq 0\}$. The following $(0,1)$-frame (written as vectors with respect to $\partial_{\overline{w}_1},\partial_{\overline{w}_2},\partial_{\overline{w}_3})$ is metrically dual to this coframe:
\begin{equation}
V_1 := (-w_1^2,-w_1w_2,-w_1w_3),\;\,V_2 := (0,w_1,0),\;\,V_3 := (0,0,w_1).
\end{equation}
With $\psi(t) := t^6 + t^3$ we then have that $q = \psi(1/w_1) + \psi(w_2/w_1) + \psi(w_3/w_1)$ and so
\begin{equation}
\overline{\nabla q} = \psi'\left(\frac{1}{w_1}\right)V_1 + \psi'\left(\frac{w_2}{w_1}\right)V_2 + \psi'\left(\frac{w_3}{w_1}\right)V_3.
\end{equation}
It follows that $|w_1|^{10}|\nabla q|^2$ is smooth and $\geq 36$ at $\{w_1 = 0\}$. Using this fact and the transversality of $\mathcal{X}$ to the hyperplane $\{Z_0 = 0\}\times \Delta$, we get that the $T^{1,0}\mathbb{CP}^3$ component of $V$ vanishes to order $6$ at the infinity divisor in $\mathcal{X}$. Thus, $V$ extends as a $C^5$ vector field.
\end{proof}

We may assume that $C^{-1}\leq |V|_{\omega_0} \leq C$ for some positive constant $C$ on the set $|z|\geq \frac{R_0}{2}$. Consider the flow $H_t$ induced by $V$ on $\mathcal X'\setminus \{|z| < \frac{R_0}{2}\}$. For $\sigma$ sufficiently small, $H_\sigma$ induces a diffeomorphism from $\mathcal X'_0\setminus \{|z|< \frac{3R_0}{4}\}$ onto its image in $\mathcal X'_\sigma$. Here we use the fact
that $|V|_{\omega_0}$ is uniformly bounded on $\mathcal X'\setminus \{|z|< \frac{R_0}{2}\}$, so the incompleteness of $\mathcal X'$ does not affect that $H_\sigma$ is a diffeomorphism  when $\sigma$ is sufficiently small and $|z|\geq\frac{3R_0}{4}$. It follows that on the strip $S :=\{ {R_0 \leq |z| \leq 2R_0}\}\cap \mathcal X_0$ we have two diffeomorphisms
$\Psi_\sigma^{-1} \circ \Phi_\sigma \circ \Psi_0$ and $H_\sigma$ onto some region of $\mathcal X_\sigma$. Use the map $\Psi_\sigma^{-1}\circ\Phi_\sigma\circ\Psi_0$ to
identify $S$ with its image inside $\mathcal X_\sigma$. Under this identification, $H_\sigma$ can be seen as a family of embeddings
\begin{align}
\tilde H_\sigma: S'\to S
\end{align}
from $S' := \{1.1R_0 \leq |z| \leq 1.9R_0\} \cap \mathcal{X}$ into $S$. Since 
$\tilde H_\sigma\to{\rm Id}_{S'}$
as $\sigma\to 0$,
$\tilde H_\sigma$ is smoothly isotopic to the identity map on $S'$ when $\sigma$ is sufficiently small. In particular, $\tilde H_\sigma$ is given by the flow of a time-dependent vector field $\tilde{V}_\sigma$ defined on
$\tilde{H}_\sigma(S')$. Let $\tau : S \to [0, 1]$ be a smooth cut-off function given by
$\tau(z) = \chi(|z|),$
where $\chi$ is a smooth increasing function equal to zero for $|z|\leq 1.3 R_0$ and equal to 1 for $|z| \geq 1.7R_0.$
Let $\tilde{I}_\sigma$ be the diffeomorphisms generated by the vector field $\tau \tilde{V}_\sigma$ for
$\sigma$ sufficiently small. By construction, $\tilde{I}_\sigma$ is equal to the identity for $|z| \leq 1.2R_0$ and equal to $ \tilde H_\sigma$ for $|z| \geq 1.8R_0.$
Thus, the diffeomorphism
\begin{equation}
G_\sigma:=\begin{cases}
\Psi_\sigma^{-1} \circ \Phi_\sigma \circ \Psi_0 \circ \tilde{I}_\sigma & \text{for}\;\, |z|\leq 1.8R_0,\\
H_\sigma & \text{for}\;\, |z| \geq 1.8R_0,\end{cases}
\end{equation}
satisfies the desired properties.
\end{proof}
        
\subsection{Singular Kähler-Einstein metrics with hyperbolic cusps}
         
Let $\mathcal X = \{\mathcal{X}_\sigma\}_{\sigma\in\Delta}$ be the family of sextic surfaces in $\mathbb{CP}^3$ discussed above. Recall that we have removed a family of hyperplane sections $\mathcal{D} = \{\mathcal{D}_\sigma\}_{\sigma\in\Delta}$ with $[2\mathcal{D}_\sigma] = K_{\mathcal X_\sigma}$ for all $\sigma\in\Delta$, thus defining a family of affine surfaces $\cY_\sigma=\mathcal X_\sigma\setminus \mathcal{D}_\sigma$, and that we have also fixed a family of holomorphic volume forms $\Omega_{\sigma}$ (unique up to scaling) on $\mathcal Y_\sigma$ that vanish to order $2$ along the infinity divisors $\mathcal{D}_\sigma$.

It is a well-known fact that the regular part of $\mathcal{X}_0$ admits a unique complete Kähler-Einstein metric $\omega_{KE,0}$ with Ricci curvature $-1$. This fact can nowadays be viewed as a very special case of a general theory of complete Kähler-Einstein metrics on log-canonical models; see \cite[Thm A]{BG} and \cite[Section 3]{Song}. However, in the $2$-dimensional case---particularly for surfaces of general type with elliptic singularities such as our $\mathcal{X}_0$---the required existence result actually goes back to work of R. Kobayashi \cite[Thm 1]{rkoba} and of Cheng-Tian-Yau \cite{CYcusp, TY0}. We now briefly review the construction of $\omega_{KE,0}$ in our example.

In our example, recall that $K_{\mathcal X_\sigma}\cong\mathcal O(2)|_{\mathcal X_\sigma}$. Fix the embedding $\mathbb {CP}^3\to\mathbb{CP}^9$ induced by $\mathcal O(2)$ as
\begin{align}
[Z_0,Z_1,Z_2,Z_3]\longrightarrow [Z_0^2,Z_0Z_1,Z_0Z_2,Z_0Z_3,Z_1^2,Z_1Z_2,Z_1Z_3,Z_2^2,Z_2Z_3,Z_3^2].
\end{align}
Restricting the Fubini-Study metric on $\mathbb{CP}^9$  to the image of $\mathbb{CP}^3$, we have
\begin{align}\label{eq:FS}
\begin{split}
\omega_{FS}&:=\ddbar\log(|Z_0^2|^2+|Z_0Z_1|^2+|Z_0Z_2|^2+|Z_0Z_3|^2+|Z_1^2|^2+|Z_1Z_2|^2+|Z_1Z_3|^2\\
&+|Z_2^2|^2+|Z_2Z_3|^2+|Z_3^2|^2).
\end{split}
\end{align}
On  the affine piece  $Z_0\neq 0$, we set 
\begin{equation}\label{FS}
\psi_{FS}:=\log(1+|z_1|^2+|z_2|^2+|z_3|^2+|z_1|^4+|z_1z_2|^2+|z_1z_3|^2+|z_2|^4+|z_2z_3|^2+|z_3|^4).
\end{equation}
For better emphasis, we also introduce the notation
\begin{align}\label{FS_sigma}
\omega_{FS,\sigma} := \omega_{FS}|_{\mathcal{X}_\sigma},\;\,\psi_{FS,\sigma} := \psi_{FS}|_{\mathcal{Y}_\sigma}\;\,\text{for all}\;\,\sigma \in \Delta.
\end{align}
Now let us fix a volume form $V$ on $\mathcal X_0$, which is induced by the Fubini-Study metric of the line bundle $\mathcal O(1)$ on $\mathbb{CP}^9$, such that
\begin{equation}\label{bbbbbb}
\omega_{FS,0} =\ddbar\log V.
\end{equation}
  Then the complex Monge-Amp\`ere equation of interest on $\mathcal X_0$ is given by
\begin{equation}\label{KEeqn}
	(\omega_{FS,0}+\ddbar\psi_0)^2 = e
	^{\psi_0} V.
 \end{equation}
We have the following existence result from \cite[Thm 1]{rkoba}, \cite[Thm C]{BG}, \cite[Lemma 3.6]{Song}, together with an asymptotic estimate of the solution $\psi_0$ from \cite[Prop D]{DiNeGueGue}, \cite[Prop 3.1]{SSW2}, \cite[Thm 1.1]{DFS}.
 
\begin{theorem}
\label{lem:exloglogsoln}
The equation \eqref{KEeqn} admits a solution $\psi_0 \in C^\infty(\mathcal{X}_0^{reg})$ satisfying
\begin{align}\label{eq:rough_est_ke_pot}-(3+\epsilon)\log (-{\log |\sigma_E|_{h_E}})-C_\epsilon\leq\pi^*(\Psi_0^{-1})^*\psi_0\leq C\end{align}
for every $\varepsilon>0$,  where $\pi$ is the resolution \eqref{resolution},  $\sigma_E$ is a defining section of $E$ inside the total space $\hat{L}$ and $h_E$ is any Hermitian metric on $L\to E$.
\end{theorem}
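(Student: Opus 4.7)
The plan is to combine a general existence result for complete negative Kähler-Einstein metrics on log-canonical pairs with a global maximum principle and a barrier argument on the resolution to obtain the two-sided asymptotic bounds.

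For existence, I would appeal to the fact that $\mathcal X_0$ is canonically polarized with a unique isolated elliptic (and therefore log-canonical) singularity at the origin, so \eqref{KEeqn} admits a unique smooth solution $\psi_0 \in C^\infty(\mathcal X_0^{reg})$ for which $\omega_{FS,0}+\ddbar\psi_0$ is complete Kähler-Einstein with Ricci curvature $-1$. R.~Kobayashi's original construction \cite[Thm 1]{rkoba} is tailored to the $2$-dimensional elliptic-singularity case via cusp barriers, while Berman-Guenancia \cite[Thm C]{BG} and Song \cite[Lemma 3.6]{Song} give the general log-canonical versions. Either proof proceeds by solving the equation on a sequence of compact subsets exhausting $\mathcal X_0^{reg}$ and passing to the limit using standard a priori estimates.

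The upper bound $\pi^*(\Psi_0^{-1})^*\psi_0 \leq C$ follows from a global maximum principle: near the smooth infinity divisor $\mathcal{D}_0$ the potential $\psi_0$ is uniformly bounded by the smoothness of $\mathcal X_0$ there, while near the origin $\psi_0 \to -\infty$ because $\omega_{FS,0}+\ddbar\psi_0$ is complete and has finite volume in any neighborhood of the singularity. Hence $\sup\psi_0$ is attained at some interior point, where $\ddbar\psi_0 \leq 0$, so $(\omega_{FS,0}+\ddbar\psi_0)^2 \leq \omega_{FS,0}^2$ and \eqref{KEeqn} gives $e^{\psi_0} \leq \omega_{FS,0}^2/V \leq C$. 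This is the input used in \cite[Prop D]{DiNeGueGue} and \cite[Prop 3.1]{SSW2}.

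The lower bound is the delicate point and the main obstacle. On the resolution $\hat L$, set $t := \log|\sigma_E|_{h_E}$. The model cusp Kähler-Einstein potential is $u_{cusp} := -3\log(-t)$, but it satisfies \eqref{KEeqn} only up to a multiplicative error of order $O(|t|^{-1})$, arising from the non-flatness of the Fubini-Study reference and from the fact that $h_E$ is not in general the Kähler-Einstein Hermitian metric on $L$. For any fixed $\epsilon > 0$, a direct Calabi-type computation shows that $u_- := -(3+\epsilon)\log(-t) - C_\epsilon$ is a sub-solution of \eqref{KEeqn} on a neighborhood $\{|\sigma_E|_{h_E} < \delta_\epsilon\}$ of $E$ for some $\delta_\epsilon > 0$; the extra $\epsilon$-slack absorbs the error of the model. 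Choosing $C_\epsilon$ large enough using the upper bound just established, one arranges $u_- \leq \pi^*(\Psi_0^{-1})^*\psi_0$ on $\{|\sigma_E|_{h_E} = \delta_\epsilon\}$, and the comparison principle then delivers this inequality throughout the neighborhood, which is precisely the stated bound. This is the content of \cite[Thm 1.1]{DFS}; removing the $\epsilon$ altogether requires the substantially sharper analysis of \cite{DFS, FHJ} and is not needed for the gluing that follows.
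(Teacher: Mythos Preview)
The paper does not supply its own proof of this theorem; it is quoted as a result from the literature, with existence attributed to \cite{rkoba, BG, Song} and the two-sided asymptotic estimate to \cite{DiNeGueGue, SSW2, DFS}. Your sketch accurately reflects the structure of those cited proofs: existence via Kobayashi-type cusp barriers or the general log-canonical theory, the upper bound from a global maximum principle, and the lower bound from a cusp sub-solution as in \cite{DFS}.

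One small remark on your upper-bound step: asserting that $\psi_0 \to -\infty$ near the origin ``because the metric is complete and has finite volume'' is mildly circular, since this asymptotic behavior is essentially what you are trying to establish, and in any case completeness plus finite volume do not by themselves force $\psi_0 \to -\infty$ pointwise (the volume form $e^{\psi_0}V$ being integrable says nothing about pointwise decay when $V$ itself has finite mass near the origin). The cleaner justifications used in the cited references are either (i) the Yau/Cheng-Yau generalized maximum principle on the complete Einstein manifold $(\mathcal X_0^{reg},\omega_{KE,0})$, which does not require the supremum to be attained, or (ii) the observation that in the Berman-Guenancia and Song approximation schemes $\psi_0$ arises as a decreasing limit of uniformly bounded potentials on a resolution, so the upper bound is built into the construction. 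Your lower-bound barrier argument, with the $\epsilon$-slack absorbing the error between the model cusp potential and the exact equation, is precisely the mechanism of \cite[Thm 1.1]{DFS}.
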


It will be convenient to rewrite the Monge-Amp\`ere equation \eqref{KEeqn} in the following manner.

\begin{lemma}\label{NormalizeKE}
Define $\psi_{KE,0}:=\psi_0+\psi_{FS,0}$, where $\psi_0$ is the solution provided by Theorem \ref{lem:exloglogsoln}. Then, after adding a constant to $\psi_{KE,0}$, the K\"ahler-Einstein metric  $\omega_{KE,0}:=\ddbar\psi_{KE,0}$ satisfies
\begin{equation}\label{eq:gfeorp}
	\log\frac{\omega_{KE,0}^
		2}{\Omega_0\wedge\overline{\Omega}_0}=\psi_{KE,0}.
\end{equation}
\end{lemma}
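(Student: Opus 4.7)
The plan is to rewrite the Monge-Amp\`ere equation \eqref{KEeqn} in the form $\omega_{KE,0}^2 = C \cdot e^{\psi_{KE,0}} \Omega_0 \wedge \overline{\Omega}_0$ for some positive constant $C$, after which \eqref{eq:gfeorp} follows by shifting $\psi_{KE,0}$ by $\log C$ (which leaves $\omega_{KE,0} = \ddbar \psi_{KE,0}$ unchanged). Since \eqref{KEeqn} reads $\omega_{KE,0}^2 = e^{\psi_0} V$ and $\psi_{KE,0} = \psi_0 + \psi_{FS,0}$ by construction, the problem reduces to establishing the pointwise identity
\begin{align}
V \;=\; C \cdot e^{\psi_{FS,0}}\, \Omega_0 \wedge \overline{\Omega}_0 \quad \text{on } \mathcal{Y}_0.
\end{align}

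To prove this identity I would argue directly from the definition of $V$. Under the adjunction isomorphism $K_{\mathcal{X}_0} \cong \mathcal{O}(2)|_{\mathcal{X}_0}$, the Fubini-Study metric on $\mathcal{O}(1)|_{\mathbb{CP}^9}$ pulled back along $\mathbb{CP}^3 \hookrightarrow \mathbb{CP}^9$ defines a Hermitian metric $h$ on $K_{\mathcal{X}_0}$, and the volume form $V$ agrees with the associated volume form $\Omega \wedge \overline{\Omega}/|\Omega|_h^2$ up to a multiplicative constant that is pinned down by the normalization $\omega_{FS,0} = \ddbar \log V$. To evaluate $|\Omega_0|_h^2$, I would compare $\Omega_0$ with the holomorphic section of $K_{\mathcal{X}_0}$ obtained by transferring $Z_0^2 \in H^0(\mathcal{X}_0, \mathcal{O}(2))$ through the adjunction isomorphism. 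Both are holomorphic sections of $K_{\mathcal{X}_0}$ with zero divisor exactly $2\mathcal{D}_0$ (for $\Omega_0$ this was recorded in the discussion above Remark \ref{nonVHVunique}), so by Remark \ref{nonVHVunique} they differ by a nonzero multiplicative constant. Hence $|\Omega_0|_h^2$ is a constant multiple of $|Z_0^2|_h^2$, and an explicit computation using \eqref{eq:FS} shows that in the affine chart $\{Z_0 \neq 0\}$ one has $|Z_0^2|_h^2 = e^{-\psi_{FS,0}}$, with $\psi_{FS,0}$ given by \eqref{FS}.

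Putting these facts together yields the claimed identity, and substituting into \eqref{KEeqn} followed by absorbing $\log C$ into $\psi_{KE,0}$ produces \eqref{eq:gfeorp}. I do not expect a genuine analytic obstacle; the only step requiring care is the appeal to Remark \ref{nonVHVunique}, which rests on $\Omega_0$ being a genuine holomorphic section of the dualizing line bundle $K_{\mathcal{X}_0}$ with the stated zero divisor $2\mathcal{D}_0$, and on the corresponding statement for the image of $Z_0^2$ under adjunction. All remaining computations are routine.
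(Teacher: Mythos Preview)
Your proposal is correct and follows essentially the same route as the paper's proof: both reduce \eqref{eq:gfeorp} to the identity $V = C\, e^{\psi_{FS,0}}\,\Omega_0 \wedge \overline{\Omega}_0$, established by identifying $V$ with the Fubini--Study Hermitian metric on $K_{\mathcal{X}_0}$ under adjunction and noting that $\Omega_0$ corresponds, up to a nonzero constant, to the section $Z_0^2$ cutting out $2\mathcal{D}_0$. Your explicit appeal to Remark~\ref{nonVHVunique} makes the constancy of the ratio transparent, whereas the paper simply asserts it; otherwise the arguments are the same.
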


\begin{proof}
By construction, \eqref{KEeqn}, we have that
\begin{equation}\label{eq:fgeorp}
    \log\frac{\omega_{KE,0}^2}{V} = \psi_{KE,0} - \psi_{FS,0}.
\end{equation}
Recall that by definition, $V$ is the unique real volume form on the regular part of $\mathcal{X}_0$ that, viewed as a Hermitian metric on $K_{\mathcal{X}_0}$, maps to the Fubini-Study metric $h_{FS}$ under the adjunction isomorphisms $K_{\mathcal{X}_0} \cong \mathcal{O}_{\mathbb{CP}^3}(2)|_{\mathcal{X}_0} \cong \mathcal{O}_{\mathbb{CP}^9}(1)|_{\mathcal{X}_0}$. On the other hand, also by construction, $\Omega_0$ vanishes to order $2$ along the hyperplane section $\mathcal{D}_0 = \mathcal{X}_0 \setminus \mathcal{Y}_0$ and vanishes nowhere else. Thus, under the above adjunctions, $\Omega_0$ maps to some nonzero multiple of the section $S$ of $\mathcal{O}_{\mathbb{CP}^9}(1)|_{\mathcal{X}_0}$ that cuts out the hyperplane $\mathcal{D}_0$. The square of the Hermitian norm of $\Omega_0$ with respect to $V$ is $(\Omega_0 \wedge \overline{\Omega}_0)/V$, and $|S|_{h_{FS}}^2 = -\psi_{FS,0}$. Thus, we get \eqref{eq:gfeorp} from \eqref{eq:fgeorp} by adding $\psi_{FS,0} + const$ to both sides.
\end{proof}

We now turn to a more precise asymptotic description of $\psi_{KE,0}$ near the singularity $p \in \mathcal{X}_0$. After introducing the relevant radial model potential $\psi_{cusp}$ and proving some technical lemmas, we will give the precise asymptotics of $\psi_{KE,0}$ in Proposition \ref{KE-CUSP}, which is the final result of this section.

Working on the line bundle $L$ whose total space $\hat{L}$ resolves the singularity, let $h$ denote the Hermitian metric on $L$ (unique up to scale) whose curvature form is minus the flat Kähler form $\omega_E$ representing $2\pi c_1(L') \in H^{1,1}(E,\mathbb{R})$. Here $L'$ denotes the dual of $L$. We also denote the positively curved Hermitian metric dual to $h$ by $h'$. Abusing notation, we view $h$ as a function on the total space of $L$ via
\begin{equation}
    h: \hat{L} \to \mathbb{R}, \;\, \xi \mapsto |\xi|_h^2.
\end{equation}
We consider radial Kähler metrics $\omega = i\partial \overline{\partial} \psi(t)$ on $\{0<h\leq \delta\}$,  where $\psi: (-\infty, \log \delta] \rightarrow \mathbb{R}$ and 
\begin{align}
 t := \log h.
\end{align}
Since the discussion of these model Kähler metrics (here as well as in all subsequent sections) works more or less the same way in all dimensions $n$ with $(L',h') \to (E,\omega_E)$ a polarized compact Calabi-Yau $(n-1)$-fold,  we prefer to \emph{not} specialize our computations of these metrics to the case $n = 2$. 

Continuing to work in the general $n$-dimensional setting, we fix a nowhere vanishing holomorphic $(n-1)$-form $\Omega_{E}$ on $E$ with
\begin{equation}
\omega_E^{n-1}=i^{(n-1)^2} \Omega_E\wedge\overline{\Omega}_E. 
\end{equation}
Note that $\Omega_E$ is unique up to multiplication by a unit complex number. Using this, we now construct a holomorphic volume form $\Omega_{\mathcal C}$ on the total space of the $\mathbb{C}^*$-bundle associated with $L$ or $L'$, i.e., on the complement of the zero section in the total space of either one of these two line bundles. This form $\Omega_{\mathcal{C}}$ will be unique up to sign, and will have first-order poles along both zero sections such that its residues at these poles are equal to $\pm \Omega_E$. Precisely, let $p$ denote the bundle projection from the total space of the $\mathbb{C}^*$-bundle onto $E$ and let $\pm Z$ denote the $(1,0)$-vector field on the total space that generates the natural $\C^*$-action coming from either $L$ or $L'$. Then $\Omega_{\mathcal C}$ is determined by the equation
 \begin{equation}
 \pm Z \lrcorner\ \Omega_{\mathcal C}=p^*\Omega_E.
 \end{equation}

A radial $(1,1)$-form $\omega = i\partial\overline\partial \psi(t)$ as above is positive definite if and only if 
\begin{align}\label{eq-psi-1}
\psi'(t), \psi''(t)>0.
\end{align}	
Then $\omega$ is a Kähler-Einstein metric if \eqref{eq-psi-1} holds and
\begin{align}\label{eq-psi-2}
(\psi')^{n-1} \psi'' =e^{\psi +a}
\end{align}
for some $a \in \mathbb{R}$. 
If $\psi$ satisfies 
\eqref{eq-psi-2}, then given any $k \in \mathbb{R}^+$, the function $\Psi(t) := \psi(kt)$ satisfies
\begin{align}
(\Psi')^{n-1} \Psi'' =e^{\Psi +a +(n+1)\log k},
\end{align}
which is the same as equation \eqref{eq-psi-2} except having a different $a$. The solutions related to the cuspidal model Kähler-Einstein metrics in our previous paper \cite{FHJ} are
\begin{align}\label{eq-psi-cusp}
\psi_{cusp}(t) = -(n+1) \log (-t),\;\,
e^a = (n+1)^n.
\end{align}
Given an arbitrary $a$, the model solution is defined as
\begin{align}\label{eq-psi-cusp-2}
\psi_{cusp}(t) := -(n+1) \log (-t )+n\log (n+1) -a, \;\, \omega_{cusp} := i\partial\overline{\partial}\psi_{cusp}.
\end{align}

\begin{lemma}\label{Normpsicusp}
Adding a constant to $\psi_{cusp}$, i.e., choosing the constant $a$ in \eqref{eq-psi-cusp-2} correctly, one has
\begin{equation}\label{normalizingpotential}
e^{-\psi_{cusp}}\omega_{cusp}^n=i^{n^2}\Omega_{\mathcal C}\wedge\overline{\Omega}_{\mathcal C}.
\end{equation}
\end{lemma}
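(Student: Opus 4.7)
The plan is to verify \eqref{normalizingpotential} by computing both sides in a local holomorphic trivialization of $L$ and then reading off the required value of $a$. Fix a trivializing section $s$ of $L$ over $U\subset E$ with fiber coordinate $\lambda$, so that $\xi=\lambda s$ and $h(\xi)=|\lambda|^2 h(s,s)$. Since the Chern curvature of $h$ equals $-\omega_E$, one has $\ddbar\log h(s,s)=\omega_E$ on the base, hence $\ddbar t=p^*\omega_E$ on the total space. Chain rule gives
\[
\omega_{cusp}=\psi'_{cusp}(t)\,p^*\omega_E+\psi''_{cusp}(t)\,i\partial t\wedge\overline{\partial} t,
\]
and since $\omega_E^n=0$ (as $E$ is $(n-1)$-dimensional) and $(i\partial t\wedge\overline{\partial} t)^2=0$, only the cross term in the binomial expansion survives:
\[
\omega_{cusp}^n=n(\psi'_{cusp})^{n-1}\psi''_{cusp}\,p^*\omega_E^{n-1}\wedge i\partial t\wedge\overline{\partial} t.
\]

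For the right-hand side of \eqref{normalizingpotential}, the generator of the fiberwise $\C^*$-action is $Z=\lambda\partial_\lambda$, so the defining equation of $\Omega_{\mathcal C}$ forces $\Omega_{\mathcal C}=(d\lambda/\lambda)\wedge p^*\Omega_E$ in this trivialization. Commuting $d\overline{\lambda}/\overline{\lambda}$ past the $(n-1,0)$-form $p^*\Omega_E$ produces a sign $(-1)^{n-1}$, and the elementary identity $i^{n^2}(-1)^{n-1}=i\cdot i^{(n-1)^2}$ combined with $i^{(n-1)^2}\Omega_E\wedge\overline{\Omega}_E=\omega_E^{n-1}$ yields
\[
i^{n^2}\Omega_{\mathcal C}\wedge\overline{\Omega}_{\mathcal C}=i\,\frac{d\lambda\wedge d\overline{\lambda}}{|\lambda|^2}\wedge p^*\omega_E^{n-1}.
\]
Next, in the same trivialization, $\partial t=d\lambda/\lambda+\partial\log h(s,s)$, and the second summand is pulled back from the base, hence is killed when wedged with the base top-form $p^*\omega_E^{n-1}$. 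Consequently only the vertical piece contributes and
\[
p^*\omega_E^{n-1}\wedge i\partial t\wedge\overline{\partial} t=i\,\frac{d\lambda\wedge d\overline{\lambda}}{|\lambda|^2}\wedge p^*\omega_E^{n-1}=i^{n^2}\Omega_{\mathcal C}\wedge\overline{\Omega}_{\mathcal C}.
\]

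Combining the two displays gives $\omega_{cusp}^n=n(\psi'_{cusp})^{n-1}\psi''_{cusp}\cdot i^{n^2}\Omega_{\mathcal C}\wedge\overline{\Omega}_{\mathcal C}$, and the Einstein ODE \eqref{eq-psi-2} makes the prefactor equal to $n e^{\psi_{cusp}+a}$. Choosing $a=-\log n$, equivalently adding $\log n$ to the particular potential in \eqref{eq-psi-cusp-2}, turns this prefactor into $e^{\psi_{cusp}}$ and proves \eqref{normalizingpotential}. The only nontrivial part of the argument is the bookkeeping of signs and powers of $i$ in the second paragraph; as a sanity check, the sign convention $\ddbar t=+p^*\omega_E$ is consistent with $\psi'_{cusp}(t)=-(n+1)/t>0$ for $t<0$, and hence with the positivity of $\omega_{cusp}$.
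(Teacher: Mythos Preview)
Your proof is correct. The paper's argument is quite different and more abstract: it observes that the ratio $g(t)=e^{-\psi_{cusp}}\omega_{cusp}^n/(i^{n^2}\Omega_{\mathcal C}\wedge\overline{\Omega}_{\mathcal C})$ has pluriharmonic logarithm by the K\"ahler--Einstein condition and depends only on $t$ by symmetry, and then uses that $\ddbar t=p^*\omega_E$ and $i\partial t\wedge\overline\partial t$ are linearly independent to conclude that any radial pluriharmonic function is constant. Your route is a direct computation in a local trivialization, which is longer but entirely elementary and has the concrete payoff of pinning down $a=-\log n$; the paper only recovers this value much later (for $n=2$, as $a=-\log 2$) when it is actually needed in the analysis of regions $\mathfrak{R}_6,\mathfrak{R}_7$. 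The paper's argument, on the other hand, is reusable: the same ``radial pluriharmonic implies constant'' trick is invoked again to normalize $\psi_T$ in \eqref{normalizingpotential2}.
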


\begin{proof}
It follows from the symmetry of $\omega_{cusp},\Omega_{\cC}$ and from the K\"ahler-Einstein condition that
\begin{align}g(t):=\frac{e^{-\psi_{cusp}}\omega_{cusp}^n}{i^{n^2}\Omega_{\cC}\wedge\overline{\Omega}_{\cC}}\end{align} is a pluriharmonic function depending only on $t$.  Now
\begin{align}\label{eq:radial_ph}
\ddbar g(t)=g'(t)\ddbar t+ig''(t)\partial t\wedge\overline{\partial} t=0.
\end{align}
So $g(t)$ must be a constant and \eqref{normalizingpotential} can be achieved by adding a constant to $\psi_{cusp}$.
\end{proof}

To compare $\psi_{KE,0}$ to $\psi_{cusp}$ in our main example, it is helpful to rescale the holomorphic volume forms $\Omega_\sigma$ by a constant so that $\Omega_0$ becomes asymptotic to $\Psi_0^*\Omega_{\mathcal{C}}$ at the singularity $p \in \mathcal{X}_0$.

\begin{lemma}\label{OmegaOC}
For fixed $\Omega_0$ and $\Omega_\cC$, there is a nonzero complex constant $C$ such that 
\begin{equation}\label{Omega0-OmegaC}
    (\Psi_0^{-1})^*(\Omega_0\wedge\overline{\Omega}_0)= |C|^2 e^{-v} {\Omega_{\mathcal C}}\wedge\overline{\Omega}_{\mathcal C},
\end{equation}
where $v$ is a pluriharmonic function satisfying for all $\epsilon>0$ and $j \geq 1$ and for $t \to -\infty$ that
\begin{equation}\label{ggg}|v|=O(e^{\frac{t}{2}}),\;\,|\nabla_{\omega_{cusp}}^j v|_{\omega_{cusp}}=O_{\varepsilon,j}(e^{\frac{(1-\epsilon)t}{2}}).\end{equation}
\end{lemma}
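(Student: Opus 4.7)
I would work on the resolution $\pi : \hat L \to TY_0$ of \eqref{resolution} and pull both volume forms back there. Because $TY_0$ is a simple elliptic singularity one has the standard relation $K_{\hat L} = \pi^* K_{TY_0} - E$, so any nowhere-vanishing holomorphic section of $\pi^* K_{TY_0}$ acquires exactly a simple pole along $E$ when viewed as a meromorphic section of $K_{\hat L}$. This applies to $\Omega_{\mathcal C}$ by its explicit construction, and to $\pi^*(\Psi_0^{-1})^*\Omega_0$ because $\Psi_0$ is a local analytic isomorphism at the singular point and $\Omega_0$ generates $\omega_{\mathcal Y_0}$ there. Hence the ratio
\begin{align}
F \;:=\; \frac{\pi^*(\Psi_0^{-1})^*\Omega_0}{\Omega_{\mathcal C}}
\end{align}
extends to a nowhere-vanishing holomorphic function on a neighborhood of $E$ in $\hat L$, and its restriction $F|_E$, being a holomorphic function on the compact elliptic curve $E$, is a nonzero constant $C \in \mathbb C^*$.

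Next I would write $F/C = 1 + G$ with $G$ holomorphic and $G|_E = 0$. In any finite atlas of local trivializations $(\zeta,w)$ of $\hat L$ near $E$ with $E = \{w = 0\}$ and $h = e^{-\varphi(\zeta)} |w|^2$, the function $G$ factors as $G = w\,H(\zeta,w)$ with $H$ holomorphic and bounded, giving $|G| \leq C|w| = C\,e^{(t+\varphi)/2} \leq C'\,e^{t/2}$ uniformly. For $t$ sufficiently negative $|G| < \tfrac{1}{2}$, so the principal branch $\log(1+G) = \sum_{k \geq 1} (-1)^{k+1} G^k/k$ is single-valued and holomorphic, and I would set
\begin{align}
v \;:=\; -2\,\operatorname{Re}\log(1+G).
\end{align}
Then $v$ is pluriharmonic, independent of the sign ambiguity in $\Omega_{\mathcal C}$ (changing the sign of $C$ only shifts $\log(F/C)$ by $i\pi$, which has zero real part), and $|F|^2 = |C|^2 e^{-v}$ holds by construction, proving \eqref{Omega0-OmegaC}. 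The $C^0$ bound $|v| \leq 2|\log(1+G)| \leq C e^{t/2}$ is immediate from the series.

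For the derivative estimates I would apply Cauchy inequalities to the holomorphic function $\log(1+G)$ on bidisks of size $\rho_\zeta \sim 1$ in the tangential direction and $\rho_w \sim |w|$ in the fiber direction (so as to stay away from $E$), obtaining $|\partial_\zeta^a \partial_w^b \log(1+G)| \leq C_{a,b}\,|w|^{1-b}$. The dominant inverse components of $\omega_{cusp}$ in these coordinates are $g^{\bar\zeta\zeta} \sim |t|$ tangentially and $g^{\bar w w} \sim |t|^2 |w|^2$ radially, and since $|w|^2 \sim e^{t}$ this makes $|\nabla^j_{\omega_{cusp}} v|_{\omega_{cusp}}$ at worst a polynomial in $|t|$ times $e^{t/2}$. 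The elementary fact that $|t|^N = o(e^{-\varepsilon t/2})$ as $t \to -\infty$ for any $\varepsilon, N > 0$ then absorbs the polynomial factors and yields the claimed bound $O_{\varepsilon,j}(e^{(1-\varepsilon)t/2})$.

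\textbf{Main obstacle.} The conceptual heart is the first step: one needs to match the pole orders of both forms along $E$ exactly, which rests on the discrepancy $-1$ of a simple elliptic singularity and on the fact that $\Omega_0$ is a local generator of the dualizing sheaf at the singular point. Once this is established, the construction of $v$ is routine, but the derivative estimates require some care because the cusp metric is strongly anisotropic: tangential and radial directions carry very different weights, and one must track these together with the $|w|$-dependence coming from Cauchy estimates in order to see that everything collapses into the sharp $e^{(1-\varepsilon)t/2}$ rate.
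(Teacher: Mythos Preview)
Your proof is correct but takes a somewhat different route from the paper in both halves. For the construction of $C$ and $v$, the paper stays on the singular space $TY_0$: the ratio $H := (\Psi_0^{-1})^*\Omega_0/\Omega_{\mathcal C}$ is a nowhere-vanishing holomorphic function on a punctured neighborhood of the singular point, and the Hartogs principle for normal isolated surface singularities (cited from \cite{Ruppenthal}) extends $H$ across $0$; applying the same principle to $1/H$ forces $C := H(0) \neq 0$, and one sets $v = \log|H(0)/H|^2$. You instead pass to the resolution $\hat L$, match pole orders along $E$ via the discrepancy formula, and use compactness of $E$ to see that $F|_E$ is constant. Your approach is more self-contained (no Hartogs for singular spaces needed) at the cost of invoking the discrepancy of a simple elliptic singularity and checking that $\Omega_0$ generates the dualizing sheaf at the origin; the paper's route is shorter but relies on an external reference. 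For the derivative estimates, the paper exploits that $v$ is harmonic with respect to the complex-hyperbolic metric $\omega_{cusp}$ and applies Schauder theory on universal covers of $\omega_{cusp}$-balls of fixed radius (in the quasi-coordinates of \cite[Lemma 3.4]{FHJ}), deducing the $C^j$ bounds directly from the $C^0$ bound; the $\varepsilon$-loss arises because $t$ varies by a factor $1+O(r_0)$ across such a ball. Your argument via Cauchy estimates on $\log(1+G)$ and explicit inverse metric components is more computational but equally valid, and makes the source of the polynomial factors more transparent; one small point you leave implicit is that the Christoffel symbols of $\omega_{cusp}$ in the $(\zeta,w)$ trivialization are also only polynomially large in $|t|$, which is needed for $j \geq 2$ but is routine.
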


\begin{proof}
The function $H:=[(\Psi_0^{-1})^*\Omega_0]/\Omega_\cC$ is a nowhere vanishing holomorphic function on $U \setminus \{0\}$ for some small open neighborhood $U$ of the singularity $0 \in TY_0$. Since this is a normal isolated surface singularity in $\mathbb{C}^3$, the Hartogs principle \cite[Thm 1.1]{Ruppenthal} says that $H$ extends to a holomorphic function on $U$. We must have that $H(0) \neq 0$ because otherwise $\lim_{z \to 0} 1/H(z) = \infty$, contradicting the Hartogs principle for $1/H$. Let $C := H(0)$. Then \eqref{Omega0-OmegaC} holds with $v(z) := \log |H(0)/H(z)|^2$, which is obviously pluriharmonic and obviously satisfies $|v(z)| = O(|z|) = O(e^{t/2})$ as $|z| \to 0$ resp. $t \to -\infty$.

Now we further estimate $|\nabla_{\omega_{cusp}}^j v|_{\omega_{cusp}}$ for all $j \geq 1$. Since $v$ is in particular harmonic with respect to $\omega_{cusp}$ and since $\omega_{cusp}$ is a hyperbolic metric for $n = 2$, these derivatives can be estimated using Schauder theory on the universal covers of $\omega_{cusp}$-geodesic balls of some fixed radius $r_0 \in (0,1]$. Coordinates on these local universal covers that make $\omega_{cusp}$ uniformly smoothly bounded can be found e.g. in \cite[proof of Lemma 3.5]{FHJ}. The $\varepsilon$-loss in \eqref{ggg} is due to the fact that the function $t$ (which coincides with $-1/x$ in the notation of \cite{FHJ}) varies by a factor of $1 + O(r_0)$ over any $\omega_{cusp}$-ball of radius $r_0 \ll 1$.
\end{proof}

By a common rescaling of the forms $\Omega_\sigma$, which was our only freedom in choosing these forms, we can now arrange that $C = 1$ in \eqref{Omega0-OmegaC}, i.e., that $\Omega_0$ is asymptotic to $\Psi_0^*\Omega_{\mathcal{C}}$ near the singularity $p \in \mathcal{X}_0$. According to Lemma \ref{NormalizeKE}, this leads to an additive normalization of $\psi_{KE,0}$ such that \eqref{eq:gfeorp} holds, and this equation then matches the equation \eqref{normalizingpotential} satisfied by $\psi_{cusp}$ to leading order at the singularity.

We require one additional technical lemma before stating our final result.

\begin{lemma}\label{PLHest}
Let $U$ be a neighborhood of the singularity $p=(0,0,0)$ in $TY_0$. Let $\varphi$ be a function on $U\setminus \{p\}$  satisfying $\ddbar\varphi=0$ and $|\varphi| = o(-{\log |z|}) = o(-t)$ as $|z|\to 0$ resp. $t \to -\infty$. Then $\varphi$ extends continuously as a PSH function on $U$. Moreover, $\varphi$ is the real part of a holomorphic function near the origin of $\mathbb C^3$, so, after subtracting a constant, $|\varphi| = O(|z|) = O(e^{t/2})$ as $|z|\to 0$ resp. $t \to -\infty$.
\end{lemma}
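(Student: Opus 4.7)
The plan is to work on the resolution $\pi:\hat{L}\to TY_0$ from \eqref{resolution} and reduce the assertion to a removable-singularity problem across the exceptional elliptic curve $E$. Set $\tilde\varphi:=\pi^*\varphi$, which is pluriharmonic on $\hat{L}\setminus E$. In any local coordinates $(z,w)$ on $\hat{L}$ with $E=\{w=0\}$ one has $h\asymp|w|^2$, so $-t\asymp-\log|w|^2$ and the hypothesis $|\varphi|=o(-t)$ becomes $|\tilde\varphi|=o(-\log|w|)$ near $E$.

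The first task is to extend $\tilde\varphi$ pluriharmonically across $E$. Slicewise, for each fixed $z\in E$ the function $w\mapsto\tilde\varphi(z,w)$ is harmonic on a punctured disk with $o(\log)$ growth, hence extends harmonically by the classical two-dimensional removable singularity theorem. To assemble these slicewise extensions into a globally continuous pluriharmonic function in a neighborhood of $E$, I would define the boundary values via the fiberwise Poisson integral $\tilde\varphi(z,0):=\frac{1}{2\pi}\int_{0}^{2\pi}\tilde\varphi(z,re^{i\theta})\,d\theta$ for a fixed small $r>0$; this is continuous in $z$, and the resulting extension is continuous on a neighborhood of $E$. A standard distributional argument (using that $\tilde\varphi$ is continuous and $\ddbar$-closed off $E$, and that $E$ has real codimension two) then shows $\ddbar\tilde\varphi=0$ across $E$ as a current, so the extension is pluriharmonic. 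Since $\tilde\varphi|_E$ is pluriharmonic on the compact elliptic curve $E$, it is constant, and because $\pi$ is a biholomorphism off $E$ and collapses $E$ to the cone point, $\tilde\varphi$ descends via $\pi$ to a continuous PSH extension of $\varphi$ on $TY_0$ near $0$.

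For the second half, I would run a period argument. The $(1,0)$-form $\partial\tilde\varphi$ is holomorphic on a neighborhood $U$ of $E$ in $\hat{L}$, and its pullback to $E$ equals $\partial(\tilde\varphi|_E)=0$. Because $U$ deformation retracts onto $E$ by fiberwise scaling in the line bundle $L$, every loop in $U$ is homotopic to a loop in $E$, so all periods of $\partial\tilde\varphi$ vanish. Hence $\partial\tilde\varphi=d\tilde F$ for a holomorphic function $\tilde F$ on $U$, and after adjusting the imaginary additive constant one has $\tilde\varphi=\tilde F+\overline{\tilde F}+\mathrm{const}$. Now $\tilde F|_E$ is holomorphic on the compact Riemann surface $E$, hence constant; so $\tilde F$ descends to a function $F$ on $TY_0$ near $0$ that is continuous there and holomorphic off $0$, and the Hartogs-type extension theorem for normal singularities (as cited in the proof of Lemma \ref{OmegaOC}) makes $F$ holomorphic at $0$. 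Every holomorphic germ on $TY_0$ at $0$ lifts to a holomorphic germ on $\mathbb{C}^3$ at $0$ by coherence of $\mathcal{O}_{TY_0}$, so there is a holomorphic $G$ near $0\in\mathbb{C}^3$ with $\varphi=2\,\mathrm{Re}\,G+\mathrm{const}$ on $TY_0$. Expanding $G(z)=G(0)+O(|z|)$ and subtracting the constant $2\,\mathrm{Re}\,G(0)+\mathrm{const}$ from $\varphi$ gives $|\varphi|=O(|z|)=O(e^{t/2})$.

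The hardest step is the removability across $E$: because the growth is only $o(\log)$ and not $O(1)$, the cleanest PSH extension theorems do not apply to $\tilde\varphi$ itself, and one has to combine the slicewise two-dimensional argument, the Poisson integral, and a distributional identification to establish genuine pluriharmonicity rather than mere slicewise harmonicity across $E$. Everything afterwards is a deformation-retraction period computation together with standard normal-singularity and coherence lifting steps.
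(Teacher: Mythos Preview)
Your argument is correct and follows essentially the same path as the paper: pass to the resolution $\hat L$, use the sub-logarithmic growth to remove the singularity slicewise in the fiber direction, define the value along $E$ by a circle average, observe the restriction to $E$ is constant, and then run a period argument on the closed $1$-form coming from pluriharmonicity to exhibit $\varphi$ as the real part of a holomorphic function, which finally extends across the normal singularity and lifts to $\mathbb{C}^3$. The only cosmetic differences are that the paper works with the real closed form $d^c\pi^*\varphi$ and the identification $H^1(\hat L,\mathbb{R})\cong H^1(E,\mathbb{R})$, whereas you work directly with the holomorphic form $\partial\tilde\varphi$ and the deformation retraction; and you are more explicit than the paper about why the continuous extension is actually pluriharmonic across $E$ (both $\tilde\varphi$ and $-\tilde\varphi$ extend PSH across the pluripolar set $E$ because they are locally bounded), a point the paper uses but does not spell out.
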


\begin{proof}
Consider the resolution of singularities $\pi:\hat L \to TY_0$ from \eqref{resolution}. The exceptional divisor is an elliptic curve $E$ and $\hat L$ is the total space of a negative holomorphic line bundle $L$ over $E$. For any point $q\in E$, there is a local holomorphic trivialization of the line bundle $\hat L|_V\cong V\times \mathbb C$ for some neighborhood $V \ni q$. Let $w_1$ be the coordinate of $V$ and $w_2$ be the coordinate of $\mathbb C$. Then by assumption, $\varphi$ restricted to any fiber of $L$ is a smooth harmonic function on the punctured disk $\Delta^*$ with a sub-logarithmic pole at $0$. Hence for each fixed $w_1$, $\varphi$ can be extended continuously across $0$. So for fixed $w_1$, $\varphi$ is the real part of a holomorphic function on $\Delta$. By the mean value theorem,
\begin{equation}
\varphi(w_1,0)=\dashint_{\gamma}\varphi(w_1,w_2)\,dw_2,   
\end{equation}
where $\gamma$ is a fixed circle centered at $0$. Taking $\ddbar$ with respect to the variable $w_1$, we have that $\varphi(w_1,0)$ is a harmonic function of $w_1$. Notice that $E$ is compact, so $\varphi(w_1,0)$ is a constant. Hence $\varphi$ can be extended continuously across $\mathcal{X}_0^{sing}$ as a PSH function on $\mathcal{X}_0$. 

Now we show that $\varphi$ is the real part of a holomorphic function. Note that because the line bundle $\hat L$ is homotopy equivalent to its zero section $E$, the de Rham cohomology $H^1(\hat L,\mathbb R)$ is naturally isomorphic to $H^1(E,\mathbb R)$. Since $\ddbar\pi^*\varphi=0$, $d^c\pi^*\varphi$ is a closed real $1$-form. So we have 
\begin{align}d^c\pi^*\varphi=dg+a\alpha+b\beta,\end{align} where $a,b$ are constants, $g$ is a function on an open neighborhood of the zero section of $\hat L$ and $\alpha,\beta$ are closed $1$-forms on $E$ generating $H^1(E,\mathbb{R})$. Let $\gamma_\alpha,\gamma_\beta$ be loops in $E$ whose homology classes are Poincar\'e dual to $\alpha,\beta$.  Note that $\varphi$ is constant on $E$ and $dg$ is exact, so the integrals of the $1$-form $d^c\pi^*\varphi-dg$ along the loops $\gamma_\alpha,\gamma_\beta$ are zero. So $a,b$ are zero. So $f:=\pi^*\varphi-ig$ is a holomorphic function with real part $\pi^*\varphi$, necessarily constant along the zero section $E$. After pushing down to the singularity, we can locally extend $f$ to a holomorphic function on an open set of $\mathbb C^3$. By subtracting a constant, we may assume that $f(p) = 0$. By Taylor expansion, we deduce that $f=O(|z|)$.
\end{proof}

We are now able to prove the following precise asymptotic comparison of $\psi_{KE,0}$ and $\psi_{cusp}$. Modulo the above lemmas this is the statement of \cite[Main Theorem]{FHJ}, whose proof relies on an earlier partial result from \cite[Thm 1.4]{DFS}. The results of \cite{DFS,FHJ} can be generalized to all dimensions $n \geq 2$ assuming that the polarized Calabi-Yau $(n-1)$-fold $(L',h')\to (E,\omega_E)$ in the above discussion of $\psi_{cusp}$ is flat.

\begin{proposition}\label{KE-CUSP}
Let $\Psi_\sigma$ be the local holomorphic isomorphism defined in equation \eqref{Psi}. Then
\begin{align}\label{eq:KE-CUSP}
(\Psi^{-1}_0)^*\psi_{KE,0}-\psi_{cusp}=u+v,
\end{align}
where for some constants $\mathfrak{s} \in \mathbb{R}$, $\delta_0 \in \mathbb{R}^+$, for all $j \geq 0$ and for $t \to -\infty$,
\begin{equation}\label{eq:KE-CUSP-2}
    \left|\nabla_{\omega_{cusp}}^j\left(u+3 \log \left(1-\frac{\mathfrak{s}}{t}\right)\right)\right|_{\omega_{cusp}}= O_j(e^{-\delta_0\sqrt {-t}}),
\end{equation}
and where $v$ is a pluriharmonic function satisfying for all $\epsilon>0$ and $j \geq 1$ and for $t \to -\infty$ that
\begin{equation}\label{bbbb}
|v|=O(e^{\frac{t}{2}}),\;\,|\nabla_{\omega_{cusp}}^j v|_{\omega_{cusp}}=O_{\varepsilon,j}(e^{\frac{(1-\epsilon)t}{2}}).
\end{equation}
\end{proposition}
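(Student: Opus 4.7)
The plan is to reduce the statement, via Lemma \ref{OmegaOC} and the two Monge--Amp\`ere equations \eqref{eq:gfeorp} and \eqref{normalizingpotential}, to a model perturbation Monge--Amp\`ere equation on the cubic cone, and then invoke \cite[Main Theorem]{FHJ} to extract the asymptotic expansion. First I would rescale $\Omega_\sigma$ once and for all so that the constant $C$ of Lemma \ref{OmegaOC} equals $1$, and then pull back equation \eqref{eq:gfeorp} via $\Psi_0^{-1}$ to obtain
\begin{align}
((\Psi_0^{-1})^*\omega_{KE,0})^2 = e^{(\Psi_0^{-1})^*\psi_{KE,0}-v}\, i^{n^2}\Omega_{\mathcal C}\wedge\overline{\Omega}_{\mathcal C}
\end{align}
on a punctured neighborhood of the apex of $TY_0$. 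Comparing with the cuspidal identity \eqref{normalizingpotential} and defining $u := (\Psi_0^{-1})^*\psi_{KE,0}-\psi_{cusp}-v$, the pluriharmonicity of $v$ forces $(\Psi_0^{-1})^*\omega_{KE,0} = \omega_{cusp} + \ddbar u$, and the equation collapses to the model perturbation equation
\begin{align}
(\omega_{cusp}+\ddbar u)^2 = e^{u}\,\omega_{cusp}^2.
\end{align}

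Second, I would convert the a priori bound of Theorem \ref{lem:exloglogsoln} into the hypothesis required by \cite[Main Theorem]{FHJ}. Using $\psi_{KE,0} = \psi_0 + \psi_{FS,0}$, the explicit form $\psi_{cusp} = -3\log(-t) + const$ from \eqref{eq-psi-cusp-2} specialized to $n=2$, the boundedness of $\psi_{FS,0}$ near the singularity, and the $O(e^{t/2})$ bound on $v$ from Lemma \ref{OmegaOC}, one reads off that $|u| \leq (3+\varepsilon)\log(-t) + C_\varepsilon$ for every $\varepsilon > 0$, provided one identifies $-\log|\sigma_E|_{h_E}^2$ with $-t$ up to a bounded error under $\pi\circ\Psi_0^{-1}$ (a brief bookkeeping exercise on the resolution). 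At this regularity threshold the FHJ main theorem applies and yields real constants $\mathfrak{s},\delta_0$ with
\begin{align}
\Bigl|\nabla^j_{\omega_{cusp}}\bigl(u + 3\log(1-\tfrac{\mathfrak{s}}{t})\bigr)\Bigr|_{\omega_{cusp}} = O_j(e^{-\delta_0\sqrt{-t}})
\end{align}
for all $j \geq 0$, which is exactly \eqref{eq:KE-CUSP-2}. The statement \eqref{bbbb} for $v$ is then simply inherited from Lemma \ref{OmegaOC}.

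The main obstacle is not the reduction itself but the question of \emph{uniqueness} of the decomposition $u + v$: a priori one could shift a pluriharmonic piece between $u$ and $v$ and potentially destroy the expansion form on the $u$-side. This is where Lemma \ref{PLHest} enters as the final ingredient. It guarantees that any pluriharmonic function on a neighborhood of the singular point that grows slower than $-t$ is automatically $O(e^{t/2})$ and hence absorbable into the $v$-type term; combined with the explicit growth rate $-3\log(-t)$ of the leading correction, this rigidifies the decomposition and shows that the $\mathfrak{s}$-term genuinely belongs to $u$ and cannot be traded against $v$. The slight $\varepsilon$-loss in the derivatives of $v$ is, as in the proof of Lemma \ref{OmegaOC}, an artifact of controlling the hyperbolic geometry of $\omega_{cusp}$ at scale $r_0$ via Schauder theory on local universal covers.
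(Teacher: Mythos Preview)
Your argument is correct and in fact arrives at the \emph{same} function $u$ as the paper: from \eqref{eq:gfeorp}, \eqref{Omega0-OmegaC} (with $C=1$) and \eqref{normalizingpotential} one checks directly that your $u := (\Psi_0^{-1})^*\psi_{KE,0} - \psi_{cusp} - v$ equals the paper's $u := \log\bigl(((\Psi_0^{-1})^*\omega_{KE,0}^2)/\omega_{cusp}^2\bigr)$. Both proofs then apply \cite[Main Theorem]{FHJ} to this $u$ after bounding it by $O(\log(-t))$ via Theorem \ref{lem:exloglogsoln}. The difference is purely organizational: the paper defines $u$ as the log volume ratio, obtains the expansion, and only then identifies the pluriharmonic remainder at the potential level as $v$ via Lemma \ref{PLHest} plus the normalization chain \eqref{normalizingpotential}--\eqref{Omega0-OmegaC}--\eqref{eq:gfeorp}; you instead build the decomposition into the definition of $u$ from the outset using the specific $v$ of Lemma \ref{OmegaOC}, which is a cleaner route.

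One small point: your final paragraph about Lemma \ref{PLHest} and ``uniqueness of the decomposition'' is not actually needed in your own argument. Since you \emph{define} $v$ to be the concrete function from Lemma \ref{OmegaOC} and $u$ as the difference, there is no ambiguity to resolve; the leading term $-3\log(1-\mathfrak{s}/t)$ is a radial function of $t$ and hence (by \eqref{eq:radial_ph}) not pluriharmonic unless $\mathfrak{s}=0$, so it cannot be traded against $v$ anyway. Lemma \ref{PLHest} is essential in the paper's organization precisely because there the pluriharmonic piece $\varphi$ arises abstractly and must be shown a posteriori to coincide with $v$; your route bypasses this step entirely.
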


\begin{remark}\label{scalelambda}
Composing both sides of \eqref{eq:KE-CUSP} with the automorphism $scale_{\lambda}(z) = \lambda z$ of $\mathbb{C}^3$ yields a new decomposition
$(\Psi_0^{-1} \circ scale_{\lambda})^*\psi_{KE,0} - \psi_{cusp} = u_\lambda + v_\lambda$,
where $u_\lambda, v_\lambda$ satisfy the same properties as before except that the constant $\mathfrak{s}$ in \eqref{eq:KE-CUSP-2} gets replaced by $\mathfrak{s}-\log\lambda $. Thus, by choosing $\lambda = e^{\mathfrak{s}}$ we obtain that $u_\lambda$ is purely exponentially decaying and contains no powers of $1/t$ in its expansion as $t \to -\infty$. This was already pointed out in the introduction to our previous paper \cite{FHJ}.

This suggests making the following modification to our setup: For $\lambda = e^{\mathfrak{s}}$, replace $\Psi_\sigma: \mathcal{Y}_\sigma \to TY_\sigma$ by ${scale}_{\lambda^{-1}} \circ \Psi_\sigma: \mathcal{Y}_\sigma \to TY_{\lambda^{-3}\sigma}$ for all $\sigma \in \Delta$. (The domains of these maps are actually just small open neighborhoods of the origin in $\mathbb{C}^3$ intersected with $\mathcal{Y}_\sigma$.) In this way, we can assume without loss of generality that $\mathfrak{s} = 0$ in \eqref{eq:KE-CUSP-2}, i.e., that $u$ is exponentially decaying. Note that, so far, we have never actually used the map $\Psi_\sigma$ with $\sigma \neq 0$ except in Lemma \ref{outdiff}. The statement and proof of that lemma remain unchanged if we also replace $\Phi_\sigma$ by $\Phi_{\lambda^{-3}\sigma}$, which again takes values in $TY_{\lambda^{-3}\sigma}$.

However, $\Psi_\sigma,\Phi_\sigma$ will also be used as gluing maps in Section \ref{sec:glued_approx_KE}. The replacements $\Psi_\sigma \leadsto {scale}_{\lambda^{-1}}\circ \Psi_\sigma$ and $\Phi_\sigma \leadsto \Phi_{\lambda^{-3}\sigma}$ do not worsen any of the estimates in Section \ref{sec:glued_approx_KE} (or later), but being able to assume that $\mathfrak{s} = 0$ in \eqref{eq:KE-CUSP-2} drastically reduces the gluing error. To exploit this improvement without having to introduce even more notation, we will from now on simply assume that $\mathfrak{s} = 0$ and $\lambda = 1$.
\end{remark}

\begin{proof}[Proof of Proposition \ref{KE-CUSP}]
First of all, if we define
\begin{align}
u:=\log\left(\frac{(\Psi_0^{-1})^*\omega^2_{KE,0}}{\omega^2_{cusp}}\right),
\end{align}
then by the Kähler-Einstein equation, we have that
\begin{align}
(\Psi_0^{-1})^*\omega_{KE,0}-\omega_{cusp}=\ddbar u.
\end{align}
By \cite[Main Theorem]{FHJ}, there exist $\mathfrak{s} \in \mathbb{R}$, $\delta_0 \in \mathbb{R}^+$ such that for all $j \geq 0$ and for $t \to -\infty$,
\begin{align}
\left|\nabla_{\omega_{cusp}}^j\left(u +3 \log \left(1-\frac{\mathfrak{s}}{t}\right)\right)\right|_{\omega_{cusp}} = O_j(e^{-\delta_0\sqrt {-t}}).
\end{align}
Here we remark again that the function $-1/t$ in this paper is the same as the function $x$ used in \cite{FHJ}.
So at the K\"ahler potential level, we have that
\begin{equation}
	(\Psi_0^{-1})^*\psi_{KE,0}-\psi_{cusp}=\varphi+u,
\end{equation}
where $\varphi$ is a pluriharmonic function. By the estimate \eqref{eq:rough_est_ke_pot} of $\psi_{KE,0}$ near the singularity and by the definition of $\psi_{cusp}$, we have $|\varphi|=O(\log(-t)) = o(-t)$. Thus, by Lemma \ref{PLHest}, $\varphi =\mathbf{c}+\mathbf{v},$ where $\mathbf{c}$ is a constant and $\mathbf{v}$ is a pluriharmonic function satisfying $|\mathbf{v}| = O (|z|) = O(e^{t/2})$.

It is now easy to see that the constant $\mathbf{c}$ is zero. Indeed,
\begin{equation}
e^{-\psi_{cusp}}\omega_{cusp}^2=\Omega_{\mathcal C}\wedge\overline{\Omega}_{\mathcal C}=e^{v}(\Psi_0^{-1})^*(\Omega_0\wedge\overline{\Omega}_0)=e^{v}(\Psi_0^{-1})^*(e^{-\psi_{KE,0}}\omega^2_{KE,0})
\end{equation}
by \eqref{normalizingpotential}, by \eqref{Omega0-OmegaC} with our chosen normalization $C = 1$, and by \eqref{eq:gfeorp}. Therefore,
\begin{equation}
{\mathbf{c}+u+\mathbf{v}}=(\Psi_0^{-1})^*\psi_{KE,0}-\psi_{cusp} = \log\left(\frac{(\Psi_0^{-1})^*\omega^2_{KE,0}}{\omega^2_{cusp}}\right) + v = u + O(e^{\frac{t}{2}}).
\end{equation}
Since $\mathbf{v}$ goes to zero at the singularity, we get $\mathbf{c}=0$. Moreover, $\mathbf{v} = v$, so \eqref{ggg} implies \eqref{bbbb}.
\end{proof}

\subsection{Tian-Yau metrics}\label{TYConstruction}
         
In this subsection we review the Tian-Yau construction \cite[Thm 4.2]{TY} of a complete Ricci-flat K\"ahler metric on the complement of a smooth anti-canonical divisor in a smooth Fano manifold. Expositions of this construction can be found in \cite[Section 3]{HSVZ} and \cite[Section 3]{HSVZ2}. We will borrow freely from these two references and show how to match their notation to ours.

As in our discussion of cusp metrics above, let $E$ be an $(n-1)$-dimensional compact K\"ahler manifold
with trivial canonical bundle $K_E$ and let $L' \to E$ be an ample line bundle, the dual of a negative line bundle $L \to E$. We fix a nowhere vanishing
holomorphic $(n-1)$-form $\Omega_{E}$ on $E$ with
\begin{equation}\label{CMA}
\int_{E} i^{(n-1)^2}\Omega_E\wedge\overline{\Omega}_E=(2\pi c_1(L'))^{n-1}.
\end{equation}
By Yau’s resolution of the Calabi conjecture \cite{Yau}, there exists a unique Ricci-flat K\"ahler metric $\omega_E$ on $E$ representing the Kähler class $2\pi c_1(L') \in H^{1,1}(E,\mathbb{R})$ and satisfying the equation
\begin{equation}
\omega_E^{n-1}=i^{(n-1)^2} \Omega_E\wedge\overline{\Omega}_E. 
\end{equation}
Up to scaling, there exists a unique Hermitian metric $h'$ on $L'$ whose curvature form is $\omega_E$, and $h'$ is the dual of a negatively curved Hermitian metric $h$ on $L$. We
now fix a choice of $h'$. Then the \emph{Calabi model space} is the subset $\mathcal C$ of the total space of $L'$ consisting
of all elements $\xi$ with $0< |\xi|_{h'}< 1$, endowed with a nowhere vanishing holomorphic volume form
$\Omega_{\mathcal C}$ and a Ricci-flat K\"ahler metric $\omega_{\mathcal C}$ which is incomplete as $|\xi|_{h'}\to  1$ and complete as $|\xi|_{h'}\to 0.$
Again as in our discussion of cusp metrics above, the holomorphic volume form $\Omega_{\mathcal C}$ is uniquely determined by the equation
 \begin{equation}\label{cone}
 Z \lrcorner\ \Omega_{\mathcal C}=p^*\Omega_E,
 \end{equation}
where $p : \mathcal C \to E$ is the bundle projection and $Z$ is the holomorphic vector field generating the natural $\mathbb C^*$-action on the fibers of $p$ (i.e., the one coming from the line bundle structure of $L'$). The metric $\omega_{\mathcal C}$ is given by the Calabi ansatz
\begin{equation}\label{calabiansatz}
\omega_{\mathcal C}:=\ddbar \psi_{\mathcal C}, \;\, \psi_{\mathcal{C}} := \frac{n}{n+1}(-{\log |\xi|_{h'}^2})^{\frac{n+1}{n}},
\end{equation}
and satisfies the Monge-Amp\`ere equation
 \begin{equation}\label{calabiyau}
 \omega_{\mathcal C}^n=i^{n^2} \Omega_{\mathcal C}\wedge\overline\Omega_{\mathcal C},
 \end{equation}
hence is Ricci-flat. Also define the momentum coordinate
\begin{equation}
    \mathbf{z} := (-{\log|\xi|^2_{h'}})^\frac{1}{n}.
\end{equation}
Then the $\omega_{\mathcal C}$-distance to a fixed point in $\mathcal{C}$ is uniformly comparable to $\mathbf{z}^{(n+1)/2}$ for $z \gg 1$.

We now explain the Tian-Yau construction \cite{TY} of complete Ricci-flat K\"ahler metrics asymptotic to a Calabi ansatz at infinity. Let $M$ be a smooth Fano manifold of complex dimension $n$, let $E\in |K_M^{-1}|$ be a smooth divisor, and let $L'$ denote the holomorphic normal bundle to $E$ in $M$. Then $E$ has a trivial canonical bundle and $L'$ is ample, so in particular we can choose a holomorphic volume form $\Omega_E$ on $E$ which satisfies \eqref{CMA}. We fix a defining section $S$ of $E$, so that $S^{-1}$ can be viewed as a holomorphic $n$-form $\Omega_X$ on $X=M\setminus E$ with a simple pole along $E$. After scaling $S$ by a nonzero complex constant, we may assume that $\Omega_E$ is the residue of $\Omega_X$ along $E$. (In practice this means that $\Omega_X$ is asymptotic to $\Omega_{\mathcal{C}}$ with respect to a suitable diffeomorphism $\Phi$ between tubular neighborhoods of $E$ in $M$ and of the zero section in $L'$.) Lastly, we fix a Hermitian metric $h_M$ on $K_M^{-1}$ whose curvature form is strictly positive on $M$ and restricts to the unique Ricci-flat K\"ahler form $\omega_E \in 2\pi c_1(L')$ on $E$. Then
 \begin{equation}\label{hM}
 \omega_X^\circ := \frac{n}{n+1}\ddbar(-{\log |S|_{h_M}^2})^{\frac{n+1}{n}}
 \end{equation}
defines a K\"ahler form on a neighborhood of infinity in $X$. This $\omega_X^\circ$ is then complete towards $E$ and is asymptotic to $\omega_{\mathcal{C}}$, where the Hermitian metric $h'$ used in \eqref{calabiansatz} is the restriction of $h_M$ to $K_M^{-1}|_E = L'$. In particular, $\omega_X^\circ$ is asymptotically Ricci-flat. Moreover, since $X$ is an affine variety, it is reasonably straightforward to extend $\omega_X^\circ$ as a globally defined $i\partial\overline\partial$-exact Kähler form on the whole manifold $X$. Then the following is proved in \cite{TY} by solving a Monge-Amp\`ere equation with reference metric $\omega_X^\circ$. The exponential decay statement follows from \cite[Prop 2.9]{Hein}.

\begin{theorem}\label{t:hein}
There is a complete Ricci-flat K\"ahler metric $\omega_{X}$ on $X$ solving the equation
 \begin{equation}
 \omega_{X}^n=i^{n^2}\Omega_{X}\wedge\overline\Omega_{X}.
 \end{equation}
Moreover, there is a unique choice of the scaling factor of $h'$ resp.~of $h_M$ such that $\omega_{X} = \omega_X^\circ + i\partial\overline\partial \phi$, where for some $\delta_0 >0$ and for all $k\geq 0$, as $z \to \infty$,
 \begin{equation}\label{lalilu}
 |\nabla_{\omega_{\mathcal{C}}}^k (\phi \circ \Phi)|_{\omega_{\mathcal{C}}} = O_k(e^{-\delta_0 \mathbf{z}^{n/2}}).
\end{equation}
Here we have implicitly fixed a smooth identification $\Phi$ of $M$ and of the total space of $L'$ near $E$.
\end{theorem}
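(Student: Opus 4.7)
The strategy is to solve the complex Monge--Amp\`ere equation
\begin{equation*}
(\omega_X^\circ + \ddbar\phi)^n = e^F (\omega_X^\circ)^n, \qquad F := \log\frac{i^{n^2}\Omega_X\wedge\overline{\Omega}_X}{(\omega_X^\circ)^n},
\end{equation*}
on the complete noncompact manifold $(X,\omega_X^\circ)$ and then bootstrap the decay of $\phi$. The first step is to check that $F$ decays exponentially in the momentum coordinate $\mathbf{z}$ at infinity. Identifying a neighborhood of $E$ in $X$ with a neighborhood of the zero section in $L'$ via $\Phi$, equation \eqref{calabiyau} gives $\omega_{\mathcal{C}}^n = i^{n^2}\Omega_{\mathcal{C}}\wedge\overline{\Omega}_{\mathcal{C}}$ exactly; the defect of $\omega_X^\circ$ from $\omega_{\mathcal{C}}$ then reduces to the difference between $-\log|S|^2_{h_M}$ and $-\log|\xi|^2_{h'}$ in fibre normal directions around $E$, which is $O(|\xi|^2_{h'}) = O(e^{-\mathbf{z}^n})$ by a Taylor expansion of $h_M$. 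A parallel expansion compares $\Omega_X$ to $\Omega_{\mathcal{C}}$. The scaling of $h'$ (equivalently of $h_M$, since $h' = h_M|_E$) normalizes the \emph{leading} terms in these expansions: there is exactly one scale for which $\Omega_X$ has residue $\Omega_E$ at $E$ under the normalization \eqref{CMA} that pins $\omega_E$ down, and thus only this scale forces $F \to 0$ rather than $F$ tending to a nonzero constant.

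Once $F$ is known to be exponentially small, existence of a smooth solution $\phi$ with $|\phi| \to 0$ follows from the original Tian--Yau argument \cite{TY}. One first extends $\omega_X^\circ$ globally on $X$ as a $\ddbar$-exact K\"ahler metric---possible because $X$ is affine---then verifies a weighted Sobolev inequality and polynomial volume growth on $(X,\omega_X^\circ)$ inherited from the Calabi end, and runs the standard continuity method $(\omega_X^\circ + \ddbar\phi_t)^n = e^{tF}(\omega_X^\circ)^n$ for $t \in [0,1]$. The $L^\infty$ estimate comes from Moser iteration against the weighted Sobolev inequality, the $C^2$ estimate from the usual Yau--Aubin computation exploiting the bounded geometry of $\omega_X^\circ$ at infinity, and higher regularity from Evans--Krylov plus Schauder. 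Uniqueness of the correct scaling of $h'$ ensures a posteriori that $\phi$ tends to zero rather than to a nonzero constant at infinity, and writing $\omega_X - \omega_X^\circ = \ddbar\phi$ then determines $\phi$ up to an additive constant.

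To upgrade $\phi \to 0$ to the exponential rate \eqref{lalilu}, rewrite the equation as $L_\phi \phi = F - Q(\ddbar\phi)$, where $L_\phi$ is a linear elliptic operator uniformly close to $\Delta_{\omega_X^\circ}$ once $\phi$ is small and $Q$ is quadratic in $\ddbar\phi$. On suitably exponentially weighted H\"older spaces on the Calabi end, $\Delta_{\omega_{\mathcal{C}}}$ is an isomorphism \cite[Prop 2.9]{Hein}, and a contraction-mapping argument converts the decay of $\phi$ coming from the existence step first into polynomial decay and then into exponential decay at the rate inherited from $F$; derivative bounds follow from local Schauder estimates in uniformly quasi-isometric coordinates on $\omega_{\mathcal{C}}$-balls of fixed radius. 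The principal obstacle is the noncompact $L^\infty$ estimate for the Monge--Amp\`ere equation on $(X,\omega_X^\circ)$; the Taylor expansions producing exponentially small $F$, the pinning down of the scale, and the weighted Schauder bootstrap are essentially routine once the weighted functional-analytic framework on the Calabi end is in place.
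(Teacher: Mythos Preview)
Your strategy matches the paper's: it does not prove this theorem but cites \cite{TY} for the existence via Monge--Amp\`ere on $(X,\omega_X^\circ)$ and \cite[Prop 2.9]{Hein} for the exponential decay of $\phi$, which is precisely your outline.

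One correction, however: your explanation of the uniqueness of the scaling of $h'$ is wrong. Rescaling $h' \to \lambda h'$ by a positive constant does not change its curvature $\omega_E$, does not change $\Omega_E$ (fixed by \eqref{CMA}), and is unrelated to whether $\Omega_X$ has residue $\Omega_E$---that is controlled by scaling the defining section $S$, a separate normalization already imposed before the theorem. In fact $\omega_{\mathcal{C}}^n$ is independent of the scale of $h'$ (the $\tau$-dependence in $(\psi_{\mathcal{C}}')^{n-1}\psi_{\mathcal{C}}''$ cancels), so \eqref{calabiyau} holds and $F \to 0$ for \emph{every} scaling. The actual mechanism is at the potential level: under $h' \to \lambda h'$ one computes $\psi_{\mathcal{C}} \mapsto \psi_{\mathcal{C}} - (\log\lambda)\,\mathbf{z} + O(\mathbf{z}^{1-n})$, so the reference metrics $\omega_X^\circ$ for different scalings differ by $i\partial\overline\partial$ of a term growing like $\mathbf{z}$. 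Since $\omega_X$ is fixed, the corresponding $\phi$'s differ by that growing term, and exactly one scale kills it so that $\phi$ itself decays as in \eqref{lalilu}. This is the content of the uniqueness clause, and also why the decay rate of $\phi$ is governed by the weighted mapping properties of $\Delta_{\omega_{\mathcal{C}}}$ rather than directly ``inherited from $F$.''
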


We will now apply this construction to our space $X := TY_1 = \{z_1^3+z_2^3+z_3^3=1\}$, a smooth cubic in $\mathbb{C}^3$. We can compactify $X$ to $M$, a smooth cubic in $\mathbb{CP}^3$, by adding an elliptic curve $E$ at infinity. Note that $M$ is an anticanonically embedded del Pezzo surface and $L'=-K_{M}^{-1}|_{E} = \mathcal O_{\mathbb{CP}^3}(1)|_{E}$. The total space of the dual bundle $L$ resolves the singularity of the cubic cone $TY_0$ at the origin, and $\mathcal{C}$ is identified with a neighborhood of infinity in $TY_0$. Then our diffeomorphism $\Phi = \Phi_1$ from \eqref{eq-Phi}, \eqref{eq-Phis} plays the role of the smooth identification $\Phi$ in Theorem \ref{t:hein}, and $\mathbf{z}^2 = t$ for $t > 0$.

The following lemma records some more detailed estimates from \cite[Prop 3.4]{HSVZ} in our setting.

\begin{lemma}\label{p:TY-asym}
For all $\epsilon > 0$ and $k \geq 0$ and for $t \to +\infty$,
\begin{equation}\label{eq:moron}
|\nabla_{g_{\mathcal C}}^k(\Phi_1^*J_{TY_1}-J_{\cC})|_{g_\mathcal C} + |\nabla_{g_{\mathcal{C}}}^k(\Phi_1^*\Omega_{TY_1}-\Omega_{\mathcal C})|_{g_{\mathcal C}} =O_{\varepsilon,k}(e^{-(\frac{1}{2}-\epsilon)t}).
\end{equation}
Moreover, there exists a global potential $\psi_{TY_1}$ of $\omega_{TY_1}$, i.e., $\omega_{TY_1} = i\partial\overline\partial\psi_{TY_1}$ globally on $TY_1$, such that there exists $\delta_0>0$ such that for all $k \geq 0$ and for $t \to +\infty$,
\begin{align}\label{halilu}
|\nabla_{g_\mathcal C}^k(\Phi_1^*\psi_{TY_1}-\psi_{\cC})|_{g_{\mathcal C}} + |\nabla_{g_\mathcal C}^k(\Phi_1^*\omega_{TY_1}-\omega_{\mathcal C})|_{g_{\mathcal C}} =O_k(e^{-\delta_0 \sqrt{t}}).
\end{align} 
\end{lemma}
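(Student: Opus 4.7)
My plan is to prove (1) and (2) in sequence, with (1) reducing to an explicit computation and (2) combining (1) with Theorem \ref{t:hein}.

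For (1), I will compute $\Phi_1^*J_{TY_1} - J_{\cC}$ and $\Phi_1^*\Omega_{TY_1} - \Omega_{\cC}$ directly from the explicit formula $\Phi_1(z) = z + \nu(z)\bar z^2$ of Lemma \ref{Phis} at $\sigma=1$. Setting $F_j(z) := z_j + \nu(z)\bar z_j^2$, a short calculation gives
\begin{align*}
dF_j \;=\; dz_j + \bar z_j^2\, \p\nu \;+\; \bigl(\bar z_j^2\, \bar{\p}\nu + 2\nu \bar z_j\, d\bar z_j\bigr),
\end{align*}
where the grouped term is the $(0,1)$-part of $dF_j$ with respect to $J_{\cC}$, and this is exactly what measures the non-holomorphicity of $\Phi_1$. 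By Lemma \ref{l:normproj}, these $(0,1)$-components have Euclidean size $O(|z|^{-3})$; since $|z| \asymp e^{t/2}$ on the cubic cone, this gives Euclidean decay $O(e^{-3t/2})$. Passing to $g_{\cC}$-norms costs at most polynomial factors in $t$ coming from the asymptotic form $\omega_{\cC} = t^{1/2}\pi^*\omega_E + \tfrac{1}{2}t^{-1/2}i\p t\wedge \bar{\p}t$, and these are absorbed by the loss $\epsilon$ in the stated exponent $\tfrac{1}{2}-\epsilon$. The bound for $\Phi_1^*\Omega_{TY_1} - \Omega_{\cC}$ proceeds analogously by writing out $\Omega_{TY_1} = c\cdot dz_1\wedge dz_2/z_3^2$ on $\{z_3 \neq 0\}$ (and symmetric formulas on the other two charts), expanding $\Phi_1^*\Omega_{TY_1}$, and comparing with the residue description $Z\lrcorner\, \Omega_\cC = p^*\Omega_E$. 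The higher-derivative bounds come from iterating with $\p^k \nu = O(|z|^{-4-k})$.

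For (2), since $TY_1$ is Stein, $\omega_{TY_1}$ is globally $\ddbar$-exact, so I may write $\omega_{TY_1} = \ddbar \psi_{TY_1}$ on all of $TY_1$, with the freedom of adding the real part of a holomorphic function to $\psi_{TY_1}$. By Theorem \ref{t:hein}, on a neighborhood of infinity we have $\omega_{TY_1} = \omega_X^\circ + \ddbar \phi$ with $|\nabla^k_{\omega_{\cC}}(\phi\circ \Phi_1)|_{g_\cC} = O_k(e^{-\delta_0\sqrt t})$, where $\omega_X^\circ = \ddbar\psi_X^\circ$ for $\psi_X^\circ := \tfrac{2}{3}(-\log|S|^2_{h_M})^{3/2}$. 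The difference $\psi_{TY_1} - \psi_X^\circ - \phi$ is then pluriharmonic near infinity, and I will show that it extends globally as the real part of a holomorphic function on $TY_1$, which can be absorbed into $\psi_{TY_1}$. It then remains to compare $\Phi_1^* \psi_X^\circ$ with $\psi_{\cC} = \tfrac{2}{3}(-\log|\xi|^2_{h'})^{3/2}$. Under the identification of a neighborhood of $E$ in $M$ with a neighborhood of the zero section of $L'$, the Hermitian metric $h_M$ on $K_M^{-1}$ agrees with $h'$ along $E$ by construction (both have curvature form $\omega_E$ on $E$), with normal Taylor corrections of size $O(|\xi|^2_{h'}) = O(e^{-t})$; together with the $e^{-(\tfrac{1}{2}-\epsilon)t}$ closeness from (1), this yields $-\log \Phi_1^*|S|^2_{h_M} = -\log|\xi|^2_{h'} + O(e^{-(\tfrac{1}{2}-\epsilon)t})$ in $g_\cC$-norm, hence $\Phi_1^*\psi_X^\circ - \psi_\cC = O(t^{1/2} e^{-(\tfrac{1}{2}-\epsilon)t})$. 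Combined with the $e^{-\delta_0\sqrt t}$ bound on $\phi$, this gives the stated $e^{-\delta_0'\sqrt t}$ decay for a possibly smaller $\delta_0'>0$; the bound on $\Phi_1^*\omega_{TY_1} - \omega_\cC$ follows by applying $\ddbar$.

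The main obstacle is the global patching step in (2): verifying that the pluriharmonic discrepancy between the global potential $\psi_{TY_1}$ and the locally defined $\psi_X^\circ + \phi$ can actually be removed by a holomorphic correction on the affine cubic $TY_1$. This requires a brief topological or cohomological argument, or alternatively one can bypass it by building $\psi_{TY_1}$ directly from a global extension of $\psi_X^\circ$ produced in the proof of Theorem \ref{t:hein} and then defining $\psi_{TY_1}$ as that extension plus the globally defined $\phi$. Beyond this, the rest is careful but routine bookkeeping of the various decay rates in Calabi norms; in particular, one must track how the bi-grading of $g_\cC$ into horizontal and radial pieces interacts with the chain rule under pullback by $\Phi_1$, to ensure that the polynomial factors in $t$ are always dominated by the exponential decays.
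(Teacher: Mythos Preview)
The paper does not actually give a proof of this lemma; it simply states that it ``records some more detailed estimates from \cite[Prop 3.4]{HSVZ} in our setting,'' and then moves on. So there is no paper proof to compare against in the strict sense, and your sketch is supplying what the paper outsources to that reference.

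That said, your approach is correct and is essentially the argument underlying \cite[Prop 3.4]{HSVZ}. A few remarks on the execution. For \eqref{eq:moron}, your claim that converting from Euclidean to $g_{\cC}$ norms costs only polynomial factors in $t$ is right, but it deserves a word of justification: although $g_{\cC}$ and $g_{flat}|_{TY_0}$ are far from uniformly equivalent (their eigenvalue ratio is $\sim t^{\pm 1/2}e^{-t}$), for an \emph{endomorphism} like $\Phi_1^*J_{TY_1}-J_\cC$ only the \emph{condition number} of $g_\cC$ relative to $g_{flat}$ enters, and this is $\sim t$. For the $2$-form $\Phi_1^*\Omega_{TY_1}-\Omega_\cC$ one should instead organize the computation so that the leading piece is a scalar multiple of $\Omega_\cC$ (whose $g_\cC$-norm is constant by Ricci-flatness) and the remaining mixed-type pieces are bounded using $|dz_i\wedge d\bar z_j/z_3^2|_{g_\cC}=O(t^{1/2})$; both routes give $O_{\varepsilon,k}(e^{-(3/2-\varepsilon)t})$, well inside the stated bound.

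For \eqref{halilu}, the obstacle you flag is real, and your second suggestion is the clean way around it: the Tian-Yau existence proof already produces $\omega_{TY_1}$ as $\omega_X^{glob}+i\partial\bar\partial\phi$ with $\omega_X^{glob}=i\partial\bar\partial\psi_X^{glob}$ a \emph{globally} $\partial\bar\partial$-exact reference form agreeing with $\omega_X^\circ$ near infinity and $\phi$ the \emph{global} Monge--Amp\`ere correction. Defining $\psi_{TY_1}:=\psi_X^{glob}+\phi$ then avoids the pluriharmonic extension issue entirely; the neighborhood of infinity in $TY_1$ is not simply connected (it retracts onto a nontrivial circle bundle over $E$), so your first route via a local holomorphic primitive would indeed need a separate cohomological argument. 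After that, the comparison $\Phi_1^*\psi_X^\circ-\psi_\cC=O(t^{1/2}e^{-(1/2-\varepsilon)t})$ that you outline is correct and is dominated by the $e^{-\delta_0\sqrt t}$ decay of $\phi$.
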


In our gluing construction, we will be working with a scaled copy of $\omega_{TY_1}$, pulled back from $TY_1$ to $TY_\sigma$ via the maps of Definition \ref{def:scaling_maps}. Let us first recall these maps for convenience:
\begin{equation}\label{eq:review_maps}
\begin{split}
\Phi: TY_0\setminus \overline{B}_R \to TY_1, \;\,\Phi(z) = z + O(|z|^{-2})\;\,\text{as}\;\;|z|\to\infty,\\
m_\sigma': TY_0 \to TY_0, \;\, m_\sigma'(z) = \sigma^{-1/3}z,\\
m_\sigma: TY_\sigma \to TY_1, \;\,m_\sigma(z) = \sigma^{-1/3}z,\\
\Phi_\sigma: TY_0 \setminus \overline{B}_{|\sigma|^{1/3}R}\to TY_\sigma, \;\, \Phi_\sigma = m^{-1}_\sigma \circ \Phi \circ m'_\sigma.
\end{split}
\end{equation}
Next, we introduce the following pullback objects:
\begin{equation}
\begin{split}
\Omega_{TY_\sigma} := m_\sigma^*\Omega_{TY_1}, \;\,\omega_{TY_\sigma} := m_\sigma^*\omega_{TY_1},\;\,\psi_{TY_\sigma} := m_\sigma^*\psi_{TY_1} 
\;\,\text{and}\;\,
g_{\cC,\sigma}:=(m'_{\sigma})^*g_{\cC}.
\end{split}
\end{equation}
Then Lemma \ref{p:TY-asym} implies the following estimates, including an additional rescaling. The strange form of the scaling factor,  $|b|^{1/2}$ with $b < 0$, is due to some conventions in Section \ref{ss:new_neck}.

\begin{lemma}\label{TYerror}
For all $b < 0$ and $\sigma \in \Delta^*$ the following estimates hold with all of the implied constants independent of $b$ and $\sigma$. First, for all $\varepsilon > 0$ and $k \geq 0$ and for $t - \frac{2}{3}\log|\sigma|\to+\infty$,  
\begin{equation}\label{e:n-form-asympt}
\begin{split}
|\nabla_{|b|^{\frac{1}{2}}g_{\mathcal C,\sigma}}^k(\Phi^*_\sigma J_{TY_\sigma}-J_{\cC})|_{|b|^{\frac{1}{2}}g_{\mathcal C,\sigma}}
&=O_{\varepsilon,k}(|b|^{-\frac{k}{4}}e^{-(\frac{1}{2}-\epsilon)(t-\frac{2}{3}\log|\sigma|)}),\\
|\nabla_{|b|^{\frac{1}{2}}g_{\mathcal{C},\sigma}}^k(\Phi_\sigma^*\Omega_{TY_\sigma}-\Omega_{\mathcal C})|_{|b|^{\frac{1}{2}}g_{\mathcal C, \sigma}}&=O_{\varepsilon,k}(|b|^{-\frac{k}{4}}e^{-(\frac{1}{2}-\epsilon)(t-\frac{2}{3}\log|\sigma|)}).
\end{split}
\end{equation}
Moreover, there exists $\delta_0 > 0$ such that for all $k \geq 0$ and for $t - \frac{2}{3}\log|\sigma|\to+\infty$,
\begin{equation}\label{TYError}
\begin{split}
|\nabla_{|b|^{\frac{1}{2}}g_{\mathcal C,\sigma}}^k(|b|^{\frac{1}{2}}\Phi_\sigma^*\psi_{TY_\sigma}-|b|^{\frac{1}{2}}\psi_{\mathcal C,\sigma})|_{{|b|^{\frac{1}{2}}g_{\mathcal C,\sigma}}}&=O_k(|b|^{\frac{2-k}{4}}e^{-{\delta}_0 (t-\frac{2}{3}\log|\sigma|)^{1/2}}),\\
|\nabla_{|b|^{\frac{1}{2}}g_{\mathcal C,\sigma}}^k(|b|^{\frac{1}{2}}\Phi_\sigma^*\omega_{TY_\sigma}-|b|^{\frac{1}{2}}\omega_{\mathcal C,\sigma})|_{{|b|^{\frac{1}{2}}g_{\mathcal C,\sigma}}}&=O_k(|b|^{-\frac{k}{4}}e^{-{\delta}_0 (t-\frac{2}{3}\log|\sigma|)^{1/2}}).
\end{split}
\end{equation} 
\end{lemma}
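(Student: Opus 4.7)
The plan is to derive Lemma \ref{TYerror} directly from Lemma \ref{p:TY-asym} by pullback under the scaling maps of Definition \ref{def:scaling_maps}, followed by the conformal rescaling from $g_{\mathcal{C},\sigma}$ to $|b|^{1/2}g_{\mathcal{C},\sigma}$. The argument should be entirely mechanical; essentially the only new input beyond Lemma \ref{p:TY-asym} is the scaling behavior of tensor norms under a constant conformal factor.

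First I would write $\Phi_\sigma = m_\sigma^{-1}\circ\Phi\circ m_\sigma'$ and exploit the fact that $m_\sigma$ and $m_\sigma'$ are biholomorphisms, together with the defining identity $\Omega_{TY_\sigma}:=m_\sigma^*\Omega_{TY_1}$, and the observation that $m_\sigma'$ acts by fiber scaling on $\hat L$, hence preserves the Euler vector field $Z$, the base projection $p$, and so the defining equation $Z\lrcorner\,\Omega_{\mathcal{C}}=p^*\Omega_E$. This yields $(m_\sigma')^*\Omega_{\mathcal{C}}=\Omega_{\mathcal{C}}$ and, trivially, $(m_\sigma')^*J_{\mathcal{C}}=J_{\mathcal{C}}$, whence
\begin{align*}
\Phi_\sigma^*J_{TY_\sigma} - J_{\mathcal{C}} &= (m_\sigma')^*(\Phi^*J_{TY_1} - J_{\mathcal{C}}),\\
\Phi_\sigma^*\Omega_{TY_\sigma} - \Omega_{\mathcal{C}} &= (m_\sigma')^*(\Phi^*\Omega_{TY_1} - \Omega_{\mathcal{C}}),
\end{align*}
and analogously $\Phi_\sigma^*\psi_{TY_\sigma} - \psi_{\mathcal{C},\sigma} = (m_\sigma')^*(\Phi^*\psi_{TY_1} - \psi_{\mathcal{C}})$ as well as $\Phi_\sigma^*\omega_{TY_\sigma} - \omega_{\mathcal{C},\sigma} = (m_\sigma')^*(\Phi^*\omega_{TY_1} - \omega_{\mathcal{C}})$, after setting $\psi_{\mathcal{C},\sigma}:=(m_\sigma')^*\psi_{\mathcal{C}}$ and $\omega_{\mathcal{C},\sigma}:=(m_\sigma')^*\omega_{\mathcal{C}}$.

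Next, since $g_{\mathcal{C},\sigma}=(m_\sigma')^*g_{\mathcal{C}}$, the Levi-Civita connection and pointwise norms intertwine with $(m_\sigma')^*$. Substituting into Lemma \ref{p:TY-asym} and using the identity $(m_\sigma')^*t = t - \tfrac{2}{3}\log|\sigma|$ (which holds because $m_\sigma'$ scales the fiber coordinate on $\hat L$ by $\sigma^{-1/3}$ and $t=\log|\cdot|_h^2$) should recover \eqref{e:n-form-asympt} and \eqref{TYError} with $g_{\mathcal{C},\sigma}$ replacing $|b|^{1/2}g_{\mathcal{C},\sigma}$, and without the $|b|^{1/2}$ prefactors in the $\psi$, $\omega$ statements. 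To close the argument I would then apply the standard scaling rule $|T|_{|b|^{1/2}g_{\mathcal{C},\sigma}}=|b|^{(p-q)/4}|T|_{g_{\mathcal{C},\sigma}}$ for a $(p,q)$-tensor $T$, using also $\nabla_{|b|^{1/2}g}=\nabla_g$; applied to each of the four difference tensors and combined with the $|b|^{1/2}$ prefactors in the $\psi$, $\omega$ cases, this produces the quoted powers $|b|^{-k/4}$ for the $J$, $\Omega$, $\omega$ estimates and $|b|^{(2-k)/4}$ for the $\psi$ estimate.

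I do not expect any real obstacle. The whole proof is purely formal, and the only subtle point that needs more than a line of checking is the invariance $(m_\sigma')^*\Omega_{\mathcal{C}}=\Omega_{\mathcal{C}}$, which is what lets the pullback-and-subtract step work cleanly; beyond that it is just bookkeeping of tensor types and powers of $|b|$.
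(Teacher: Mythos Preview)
Your proposal is correct and follows essentially the same approach as the paper's proof, which consists of exactly the two observations you single out: that $(m_\sigma')^*t = t - \tfrac{2}{3}\log|\sigma|$ and that $J_{\mathcal C}$, $\Omega_{\mathcal C}$ are $(m_\sigma')^*$-invariant, after which everything reduces to Lemma~\ref{p:TY-asym} plus the conformal rescaling by $|b|^{1/2}$. Your write-up simply spells out the bookkeeping that the paper compresses into one sentence.
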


\begin{proof}
This is trivial except for the following two observations. First, $t - \frac{2}{3}\log |\sigma| = (m_\sigma')^*t$. Second, $J_{\mathcal{C},\sigma} := (m_\sigma')^*J_{\mathcal{C}}$ and $\Omega_{\mathcal{C},\sigma} := (m_\sigma')^*\Omega_{\mathcal{C}}$ are actually equal to $J_\mathcal{C}$ resp. $\Omega_{\mathcal{C}}$. 
\end{proof}

Lastly, we need to compare the holomorphic volume form $\Omega_{TY_\sigma}$ on $TY_\sigma$ to the holomorphic volume form $\Omega_\sigma$ on $\mathcal{Y}_\sigma \subset \mathcal{X}_\sigma$ on a uniform neighborhood of the origin in $\mathbb{C}^3$.

\begin{lemma}\label{Hvolumeratio}
Recall the local biholomorphism $\Psi_\sigma: \mathcal{Y}_\sigma \to TY_\sigma$ from \eqref{Psi}. Then for all $\sigma\in\Delta^*$, 
\begin{equation}
(\Psi_\sigma^{-1})^*\Omega_\sigma = (1 + O(|z|))\Omega_{TY_\sigma}
\end{equation}
as $|z| \to 0$, and the implied constant is independent of $\sigma$.
\end{lemma}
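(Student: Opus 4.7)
The plan is to express both $\Omega_\sigma$ and $\Omega_{TY_\sigma}$ as Poincaré residues of ambient $3$-forms on $\mathbb{C}^3$, pull back by $\Psi_\sigma$ using the chain rule, and track the scaling constants. With $F(z) := \sum_{i=1}^3(z_i^6+z_i^3)$ and $G(w) := \sum_{i=1}^3 w_i^3$, define
\[
\Omega_\sigma^{\mathrm{res}} := \mathrm{Res}_{\mathcal{Y}_\sigma}\frac{dz_1\wedge dz_2\wedge dz_3}{F-\sigma}, \qquad \Omega_{TY_\sigma}^{\mathrm{res}} := \mathrm{Res}_{TY_\sigma}\frac{dw_1\wedge dw_2\wedge dw_3}{G-\sigma}.
\]
Writing $\Psi_\sigma(z) = (w_1,w_2,w_3)$ with $w_i = z_i(1+z_i^3)^{1/3}$, the identity $w_i^3 = z_i^3 + z_i^6$ yields $G\circ\Psi_\sigma = F$, while $dw_i = \frac{1+2z_i^3}{(1+z_i^3)^{2/3}}dz_i$, so the chain rule for the pullback of an ambient $3$-form gives
\[
\Psi_\sigma^*\Omega_{TY_\sigma}^{\mathrm{res}} = J(z)\,\Omega_\sigma^{\mathrm{res}}, \qquad J(z) := \prod_{i=1}^3 \frac{1+2z_i^3}{(1+z_i^3)^{2/3}} = 1 + O(|z|^3).
\]
Since $J$ does not depend on $\sigma$ and $\Psi_\sigma^{-1}(w) = w + O(|w|^4)$ uniformly in $\sigma$, this gives $(\Psi_\sigma^{-1})^*\Omega_\sigma^{\mathrm{res}} = (1+O(|z|^3))\,\Omega_{TY_\sigma}^{\mathrm{res}}$ with uniform implied constants.

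Next I identify $\Omega_\sigma$ and $\Omega_{TY_\sigma}$ with the Poincaré residues up to single overall constants. Remark \ref{nonVHVunique} forces $\Omega_\sigma = k_\sigma\,\Omega_\sigma^{\mathrm{res}}$ for constants $k_\sigma$; since both families $\{\Omega_\sigma\}$ and $\{\Omega_\sigma^{\mathrm{res}}\}$ arise from global sections of the relative canonical bundle $K_{\mathcal{X}/\Delta}$, which is rigid up to a single overall complex scaling pinned by the $C=1$ convention of Lemma \ref{OmegaOC}, the constant $k_\sigma$ is actually independent of $\sigma$. Similarly the change of variables $\zeta = \sigma^{-1/3}w$ gives $m_\sigma^*\Omega_{TY_1}^{\mathrm{res}} = \Omega_{TY_\sigma}^{\mathrm{res}}$, reducing the second comparison to $\Omega_{TY_1}$ versus $\Omega_{TY_1}^{\mathrm{res}}$; their ratio is a nowhere vanishing holomorphic function on the affine surface $TY_1$, bounded near the infinity divisor $E\subset M$ because both $\Phi_1^*\Omega_{TY_1}$ (by Lemma \ref{p:TY-asym}) and $\Phi_1^*\Omega_{TY_1}^{\mathrm{res}}$ (since $\Phi_1 \approx \mathrm{id}$ at infinity) are asymptotic to nonzero multiples of $\Omega_\mathcal{C}$, so by Riemann removable singularities the ratio extends to the compact surface $M$ and is a constant. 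Finally, plugging $\sigma=0$ into the chain-rule identity and matching with Lemma \ref{OmegaOC} (which pins $(\Psi_0^{-1})^*\Omega_0$ to $\Omega_\mathcal{C}$ at leading order) and with the asymptotic $\Phi_1^*\Omega_{TY_1}\to\Omega_\mathcal{C}$ forces the ratio of the two constants to be $1$. Combining these facts,
\[
(\Psi_\sigma^{-1})^*\Omega_\sigma = (1+O(|z|^3))\,\Omega_{TY_\sigma},
\]
which is stronger than what the lemma asserts and has an implied constant independent of $\sigma$.

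The main obstacle is not the chain-rule step itself, which is routine, but the normalization bookkeeping: one must argue carefully that the scaling constants relating $\Omega_\sigma$ and $\Omega_{TY_\sigma}$ to their Poincaré residues are both $\sigma$-independent and have ratio exactly $1$. The three key ingredients that make this work are the one-parameter rigidity of sections of $K_{\mathcal{X}/\Delta}$, the uniqueness statement of Remark \ref{nonVHVunique}, and Riemann extension across the smooth elliptic divisor $E\subset M$, together with the fact that both $\Omega_0$ and $\Omega_{TY_1}$ are asymptotic to the same $\Omega_\mathcal{C}$ at their respective cusps.
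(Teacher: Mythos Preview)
Your chain-rule computation is correct and in fact sharper than the paper's: the explicit Jacobian $J(z)=\prod_i\frac{1+2z_i^3}{(1+z_i^3)^{2/3}}=1+O(|z|^3)$ already gives the ratio of the two Poincar\'e residues with an $O(|z|^3)$ error, uniform in $\sigma$ because $J$ is a single function on a ball in $\mathbb{C}^3$. The paper instead defines $H(z):=[(\Psi_\sigma^{-1})^*\Omega_\sigma]/\Omega_{TY_\sigma}$ with $\sigma=z_1^3+z_2^3+z_3^3$, shows $H$ extends holomorphically across $TY_0$ and then across $0$ by Hartogs, and checks $H(0)=1$; this is less explicit but packages the $\sigma$-dependence automatically.

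There is, however, a genuine gap in your normalization step. You claim that global sections of $K_{\mathcal X/\Delta}$ vanishing to order $2$ along $\mathcal D$ are ``rigid up to a single overall complex scaling,'' and conclude that $k_\sigma$ is independent of $\sigma$. This is false: such sections form a free rank-one module over $\mathcal O(\Delta)$, so they are unique only up to multiplication by an arbitrary holomorphic function of $\sigma$. The $C=1$ convention of Lemma~\ref{OmegaOC} pins down $k_0$ but says nothing about $k_\sigma$ for $\sigma\neq 0$. Concretely, the paper never specifies $\Omega_\sigma$ beyond ``a holomorphic section of the relative canonical bundle, normalized at $\sigma=0$,'' so $k_\sigma=k(\sigma)$ could be any holomorphic function with the correct value at $0$.

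The fix is easy and fits your framework: since $k(\sigma)$ is holomorphic on $\Delta$ with $k(0)=\ell$, and since on $TY_\sigma$ near the origin one has $|\sigma|=|z_1^3+z_2^3+z_3^3|\leq C|z|^3$, the factor $k(\sigma)/\ell=1+O(\sigma)=1+O(|z|^3)$ is absorbed into your error term with a uniform constant. Equivalently, the ratio $(k(G(z))/\ell)\cdot J(\Psi^{-1}(z))^{-1}$ is a single holomorphic function of $z\in B_\varepsilon(0)$ with value $1$ at the origin---which is exactly the paper's $H(z)$, reached by a different route.
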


\begin{proof}
There exists an $\varepsilon>0$ such that for every fixed $\sigma \in \Delta^*$, the function $H_\sigma := [(\Psi_\sigma^{-1})^*\Omega_\sigma]/\Omega_{TY_\sigma}$ is holomorphic on $TY_\sigma \cap B_\varepsilon(0)$. Since all data depend holomorphically on $\sigma$, it follows that \begin{equation}
H: B_\varepsilon(0)\setminus TY_0 \to \C, \;\,z \mapsto H_{\sigma}(z)\;\,\text{with}\;\,\sigma = z_1^3 + z_2^3 + z_3^3 \neq 0,
\end{equation}
is holomorphic.

\begin{claim}\label{claim:hartogs}
$H$ extends holomorphically to $B_\varepsilon(0)$.
\end{claim}

\begin{proof}[Proof of Claim \ref{claim:hartogs}]
It is enough to show that $H$ extends holomorphically to $B_\varepsilon(0) \setminus \{0\}$ because then the claim follows from Hartogs' theorem. Obviously the numerator, $(\Psi_\sigma^{-1})^*\Omega_\sigma$, is holomorphic even at the points of $TY_0 \setminus \{0\}$. For the denominator, $\Omega_{TY_\sigma}$, cover $\mathbb{C}^3 \setminus \{0\}$ by the open sets $\{z_i \neq 0\}$ $(i = 1,2,3)$. By symmetry we only need to consider the case $i = 1$. By adjunction, there exists $\gamma \in \mathbb{C}^*$ such that $\Omega_{TY_1}$ is the restriction to the tangent bundle of $TY_1$ of the $2$-form $\gamma(dz_2 \wedge dz_3)/z_1^2$ on $\{z_1 \neq 0\}$. The latter is invariant under $m_\sigma$. Thus, $\Omega_{TY_\sigma}$ is given by the same formula, so it extends as a holomorphic and nowhere vanishing section of the relative canonical bundle across $TY_0 \setminus \{0\}$.
\end{proof}

It remains to show that $H(0) = 1$. To see that this is true, we approach the origin along a sequence of points in $TY_0\setminus\{0\}$. Along this sequence, $[(\Psi_0^{-1})^*\Omega_0]/\Omega_{\mathcal{C}} \to 1$ by the normalization chosen after the proof of Lemma \ref{OmegaOC}, so we need to show that $\Omega_{TY_0}/\Omega_{\mathcal{C}} \to 1$ as well. In fact, $\Omega_{TY_0} = \Omega_{\mathcal{C}}$ on $TY_0 \setminus \{0\}$. The reason is that these forms are scale-invariant on the cone $TY_0 \setminus \{0\}$ and have the same residue at the infinity divisor $E$; compare the general construction of $\Omega_X$ before Theorem \ref{t:hein}.
\end{proof}

\subsection{A new neck region between a Tian-Yau end and a hyperbolic cusp}\label{ss:new_neck}

We now return to the radial Kähler-Einstein equation \eqref{eq-psi-2} on a general $n$-dimensional cusp singularity, which already gave us the asymptotic model $\psi_{cusp}$ for the unique complete Kähler-Einstein potential on the regular part of our $2$-dimensional example $\mathcal{X}_0$.
In general, \eqref{eq-psi-2} is equivalent to
\begin{align}\label{eq-psi-3}
\frac{1}{n+1}(\psi')^{n+1} = e^{\psi+a}+b
\end{align}
for some constant $b$.
Then we have, for any $t<t_0,$
\begin{align}\label{eq-psi-4}
\int_{\psi(t)}^{\psi(t_0)} (e^{\xi+a}+b )^{-\frac{1}{n+1}}d\xi = (n+1)^\frac{1}{n+1}(t_0-t).
\end{align}	
The case $b = 0$ yields the cusp solution $\psi_{cusp}$. The case $b > 0$ yields solutions that are still defined on an entire negative half-line but are not metrically complete as $t \to -\infty$. In \cite[Sect 2]{BG} and \cite[Ex 2.7]{FHJ}, it was shown that the completions of these metrics may be viewed as Kähler-Einstein edge metrics on the total space of $L$, with conical singularities along the zero section that get pushed off to infinity as $b \to 0$ while the diameter of the zero section shrinks to zero. Since we are now considering smoothings rather than resolutions of singularities, it is natural to try to use the case $b < 0$.

We showed in \cite{FHJ} that when $b<0,$ there are solutions to \eqref{eq-psi-4} which correspond to incomplete Kähler-Einstein metrics on $\{\delta^-<h\leq \delta^+\}$, where $\delta^-, \delta^+>0$ are constants that depend on $a, b$. In fact,  \eqref{eq-psi-1} and \eqref{eq-psi-3} imply that $e^{\psi+a}+b>0$. When $b<0,$ the integral on the left-hand side of \eqref{eq-psi-4} is bounded and the bound is independent of $t, t_0$. The purpose of this section is to study these solutions in detail. They give rise to ``horn metrics'' $\omega_T$ approximating $\omega_{cusp}$ as in Figure \ref{fig:cusp_horn}.

\begin{convention}
    We will introduce geometric parameters $T < T + 2T_0 < 2\tau < 0$, and $b,T,T_0,\tau$ will eventually be made to depend on $\sigma$. The standing assumption is that as $\sigma \to 0$ we have that 
\begin{align}\label{eq-b-tau}
T \to -\infty, \;\, T_0 \to +\infty, \;\, T_0/T \to 0, \;\, \tau \to -\infty, \;\, \tau/T \to 0, \;\, b \to 0, \;\, b|\tau|^{n+1} \to 0.
\end{align}
We will be proving many $O(\ldots)$ type estimates for various different quantities, and the understanding will always be that the $O(\ldots)$ holds as $\sigma \to 0$, so that \eqref{eq-b-tau} is in force.
\end{convention}

We assume $\psi_T(t)$ is the solution of \eqref{eq-psi-2} such that
\begin{align}\label{eq-psi-10}
e^{\psi_T(T) +a} +b=0,\quad
\psi_T(\tau) = \psi_{cusp} (\tau).
\end{align}
Then
\begin{align}\label{eq-psi-5}
\psi_T(T) = \log |b|-a, \quad \psi_T(\tau) = -(n+1)\log( - \tau)+n\log (n+1) -a.
\end{align}
We note here that with $\omega_T := \ddbar \psi_T$ it holds that
\begin{equation}\label{normalizingpotential2}e^{-\psi_T}\omega_T^n=i^{n^2}\Omega_{\mathcal C}\wedge\overline{\Omega}_{\mathcal C}.\end{equation}
Indeed, radial pluriharmonic functions are constant (cf. \eqref{eq:radial_ph}), so there is a constant $C_T$ such that \begin{equation}e^{-\psi_T+C_T}\omega_T^n=i^{n^2}\Omega_{\cC}\wedge\overline{\Omega}_{\cC}.\end{equation}
Now by taking $t=\tau$ and noting that $\psi_T(\tau)=\psi_{cusp}(\tau)$ (cf. \eqref{eq-psi-10}), one has
\begin{equation}
i^{n^2}\Omega_{\cC}\wedge\overline{\Omega}_{\cC}=e^{-\psi_T(\tau)+C_T}\omega_T^n=e^{-\psi_{cusp}(\tau)+C_T}\omega_T^n=e^{C_T}i^{n^2}\Omega_{\cC}\wedge\overline{\Omega}_{\cC}
\end{equation}
as before. So indeed $C_T=0$. 

\begin{figure}[!ht]
\caption{Cusp metric $\omega_{cusp}$, horn metric $\omega_T$, gluing regions.}\label{fig:cusp_horn}
\vspace{-1mm}
\begin{center}
\includegraphics[width=160mm]{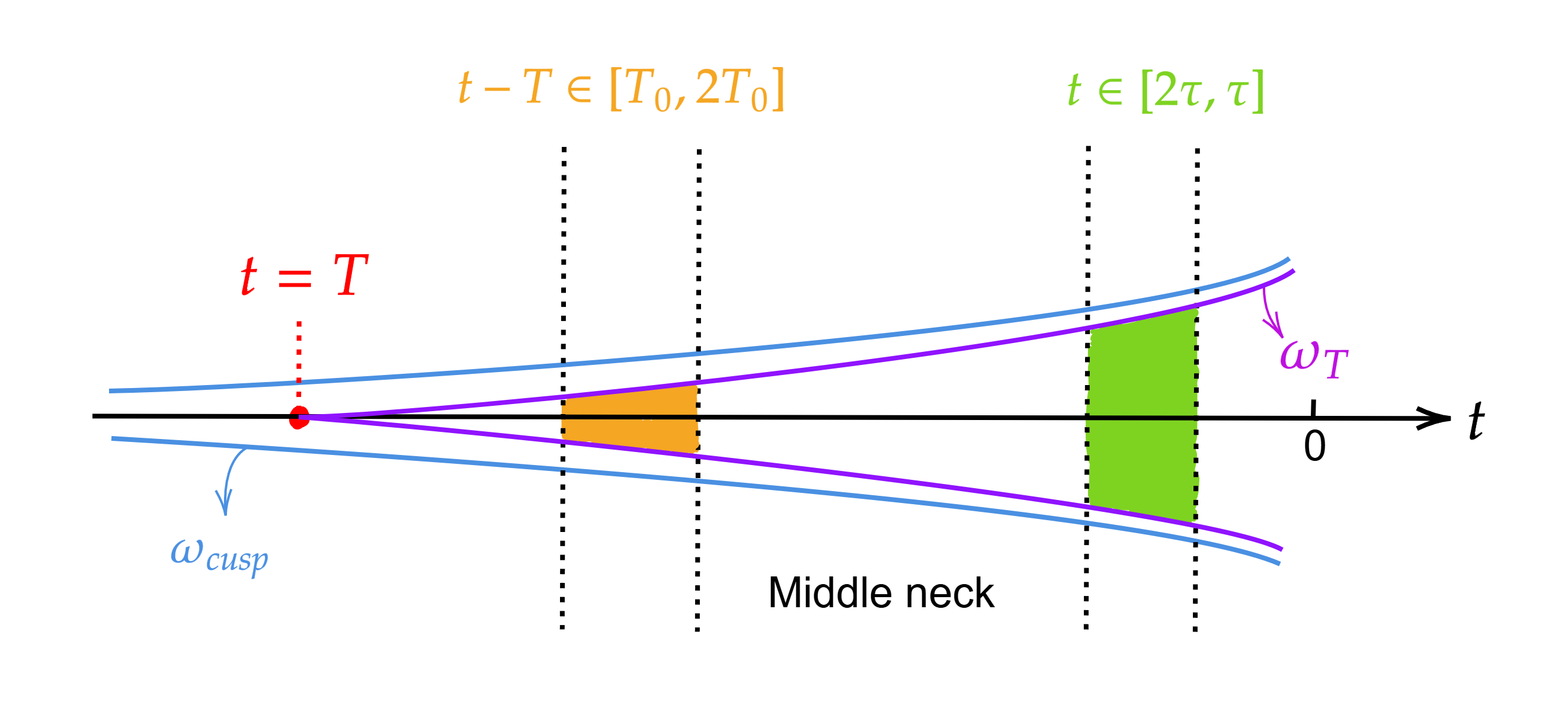}
\end{center}
\vspace{-7mm}
\end{figure}

\begin{remark}\label{rem:colored_regions}
We show the relationship between $\omega_T$ and $\omega_{cusp}$ in Figure \ref{fig:cusp_horn}. The metric $\omega_T$ has a horn singularity at $t = T$, which evolves into the cusp singularity as $T \to -\infty$. Later we will see that for a fixed $T$, the horn of $\omega_T$ is asymptotic to a scaled copy of the Calabi model space, which allows us to carry out a gluing construction in the region $t \in [T+T_0,T+2T_0]$ that we shaded orange in the figure ($T_0 \to +\infty$ but $T_0/T \to 0$). Similarly, in the green region, $t \in [2\tau,\tau]$ ($\tau\to-\infty$ but $\tau/T \to 0$), we can glue $\omega_T$ with $\omega_{cusp}$. We often refer to the region $T + 2T_0 < t < 2\tau$ as the ``new'' or ``middle'' neck.
\end{remark}

By \eqref{eq-psi-4},\begin{align}\label{eq-psi-6}
\int_{\psi_T(T)}^{\psi_T(\tau)} (e^{\xi+a}+b )^{-\frac{1}{n+1}}d\xi = (n+1)^\frac{1}{n+1}(\tau-T).
\end{align}	
\eqref{eq-psi-4} and \eqref{eq-psi-5} show that, by setting $\xi=s+\log |b|-a$,
\begin{align}\label{eq-bT}
\int_0^{-(n+1)\log |\tau| -\log|b|+n\log (n+1) }(e^s-1)^{-\frac{1}{n+1}}ds = (n+1)^\frac{1}{n+1}|b|^\frac{1}{n+1}(\tau -T).
\end{align}
Hence we obtain the crucial relation between $T$ and $b$:
\begin{align}\label{eq-T-b}
T =- (n+1)^{-\frac{1}{n+1}}  c(n) |b|^{-\frac{1}{n+1}} +O(\tau),
\end{align}
where
\begin{align}\label{eq:def_c(n)}
c(n) := \int_0^\infty (e^s-1)^{-\frac{1}{n+1}} ds.
\end{align}
We conclude that
\begin{align}
\psi_{cusp}(T) = \log |b| -(n+1)\log c(n)  +O(|b|^\frac{1}{n+1}\tau).
\end{align}
 Hence,
\begin{align}\label{eq-psi-T}
\psi_{cusp}(T) - \psi_T(T)=   -(n+1)\log c(n) +a +O(|b|^\frac{1}{n+1}\tau),
\end{align}
which tends to a constant as $\sigma\to 0$ by \eqref{eq-b-tau}.

\subsubsection{Estimates asymptotically close to the cusp \textup{(}green region\textup{)}}

Similarly to \eqref{eq-psi-4}, one has
\begin{align}
\int_{\psi_T(t)}^{\psi_T(\tau)} (e^{\xi+a}+b )^{-\frac{1}{n+1}}d\xi = (n+1)^\frac{1}{n+1}(\tau-t).
\end{align}
Substitute $\xi = s+\psi_T(\tau)$. 
By \eqref{eq-psi-5},
\begin{align}\label{eq-psi-7-c}
\int_{\psi_T(t)-\psi_T(\tau)}^{0} (e^{s}+(n+1)^{-n} b|\tau|^{n+1})^{-\frac{1}{n+1}}ds =- (n+1)(1-\tau^{-1}t).
\end{align}
Let $c_1>0$ be the unique constant such that
\begin{align}\label{eq-es-c}
\int_{-c_1}^0 (e^s)^{-\frac{1}{n+1}}ds = n+1.
\end{align}	
When $t = 2 \tau$, the right-hand side of \eqref{eq-psi-7-c} equals $n+1$. As $b<0$, we have
\begin{align}
(e^{s}+e^{-a} b|\tau|^{n+1})^{-\frac{1}{n+1}} > (e^s)^{-\frac{1}{n+1}}>0.
\end{align}
By comparing  \eqref{eq-psi-7-c} and \eqref{eq-es-c},
 we derive that $\psi_T(2\tau)-\psi_T(\tau)> - c_1$. By taking $b\rightarrow 0^-$ in \eqref{eq-psi-7-c}, we see that $\psi_T(t)-\psi_T(\tau)$ must decrease to a
constant. On the other hand,
\begin{align}
\int_{\psi_{cusp}(2\tau)-\psi_{cusp}(\tau)}^{0} (e^{s})^{-\frac{1}{n+1}}ds =n+1.
\end{align}
 By \eqref{eq-b-tau} and all the discussions above,
$\psi_T(2\tau)-\psi_T(\tau)$ converges to
$
\psi_{cusp}(2\tau)-\psi_{cusp}(\tau).
$
By \eqref{eq-psi-10}, $\psi_T(2\tau) \rightarrow \psi_{cusp}(2\tau)$.
Replacing $2\tau$ by any constant in $(2\tau, \tau)$, the same argument implies that,
\begin{align}
\psi_T(t) - \psi_{cusp}(t) =o(1)
\end{align}
as $b\rightarrow 0^-$ if we fix $\frac{t}{\tau}$ as a constant in $ [1, 2]$. 

When $t= c\tau$ for some constant $c\in [1, 2]$,
 $s\geq \psi_T(t)-\psi_T(\tau) \geq -c_1$ in \eqref{eq-psi-7-c}. Using that
\begin{align}
\begin{split}
\int_{\psi_T(t)-\psi_T(\tau)}^{0} (e^{s}+(n+1)^{-n} b|\tau|^{n+1})^{-\frac{1}{n+1}}ds &= (n+1)(c-1)\\
&= \int_{\psi_{cusp}(t)-\psi_{cusp}(\tau)}^{0} (e^{s})^{-\frac{1}{n+1}}ds,
\end{split}
\end{align}
we derive
\begin{align}
\int_{\psi_T(t)-\psi_T(\tau)}^{0}\left( (e^{s}+(n+1)^{-n} b|\tau|^{n+1})^{-\frac{1}{n+1}} - (e^{s})^{-\frac{1}{n+1}} \right) ds = \int_{\psi_{cusp}(t)-\psi_{T}(t)}^{0} (e^{s})^{-\frac{1}{n+1}}ds,
\end{align}
which shows that, for $t \in [2\tau, \tau]$,
\begin{align}
\label{eq-psi-8-c}
\psi_T(t) - \psi_{cusp}(t) =O(|b| |\tau|^{n+1}).
\end{align}

Let $\rho :=\sqrt{(n+1)/2} \log (-t)$, which is a Busemann function of $\omega_{cusp}$.
For $t \in [2\tau, \tau]$, $\rho$ varies by an additive constant independent of $b$. Set the normalized Busemann function \begin{align}
\tilde \rho(t) := \rho(t) - \rho(\tau) =\sqrt{\frac{n+1}{2}} \log \left( \frac{t}{\tau} \right),
\end{align}
whose range is
$I = 
[0, \sqrt{(n+1)/2}\log 2].
$ Regard $ t \psi_T',  t^2 \psi_T''$ as functions of $\tilde \rho$.
As $b \rightarrow 0^-,$
\begin{align}
\begin{split}
 t \psi_T' &=  t (n+1)^\frac{1}{n+1} (e^{\psi_T+a}+b)^\frac{1}{n+1}\\
&= - e^{\sqrt{\frac{2}{n+1}}\tilde \rho} (n+1)^\frac{1}{n+1} (e^{\psi_T-\psi_T(\tau)+a}+b |\tau|^{n+1} )^\frac{1}{n+1}\\
&= -(n+1)+O(|b| |\tau|^{n+1}),
\end{split}
\end{align}
uniformly for $\tilde \rho \in I$,
where we have applied the fact that
\begin{align}
    \psi_T-\psi_T(\tau)=\psi_{cusp}-\psi_{cusp}(\tau)+O(|b||\tau|^{n+1}) =-(n+1)\log |t/\tau| +O(|b||\tau|^{n+1}).
\end{align}
  A similar computation shows that
\begin{align}
t^2 \psi_T'' 
= t^2 e^{\psi_T +a}(\psi_T')^{1-n} = e^{\psi_T -\psi_{cusp} +n\log (n+1)}(-t\psi_T')^{1-n} = n+1+O(|b||\tau|^{n+1})
\end{align}
as $b\rightarrow 0^-$. By differentiating
\begin{align}\label{eq-diff-psi}
t^2 \psi_T'' 
= (n+1)^n e^{\psi_T -\psi_{cusp}}(-t\psi_T')^{1-n}
\end{align}
multiple times
and multiplying by $t$ each time, we derive the following lemma.

\begin{lemma}\label{lem-cusp}
For $k\geq 1,$
\begin{align}
t^k\psi_T^{(k)}=(-1)^k(n+1)(k-1)! + O_k(|b||\tau|^{n+1})
\end{align}
on $\{\tilde \rho \in I\}$ as $b\rightarrow 0^-$. Here $(-1)^k(n+1)(k-1)! = t^k\psi_{cusp}^{(k)}$.
\end{lemma}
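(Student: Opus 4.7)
I would proceed by induction on $k$, the base cases $k=1,2$ having already been established. Setting $\theta := \psi_T - \psi_{cusp}$, note that the inductive hypothesis at level $k$ is equivalent to $t^j\theta^{(j)} = O_j(|b||\tau|^{n+1})$ for $0 \leq j \leq k$: indeed, each $t^j D^j$ (with $D = d/dt$) is an integer polynomial in $tD$, namely the falling factorial $(tD)(tD-1)\cdots(tD-j+1)$, and the exact values $t^j\psi_{cusp}^{(j)} = (-1)^j(n+1)(j-1)!$ are provided by the lemma statement.

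For the inductive step, I would apply $tD$ a total of $k-1$ times to both sides of \eqref{eq-diff-psi}. The left-hand side becomes $t^{k+1}\psi_T^{(k+1)} + \sum_{j=2}^{k} a_j\, t^j\psi_T^{(j)}$ for certain integers $a_j$ depending only on $k$, because $tD$ sends $t^j\psi_T^{(j)}$ to $jt^j\psi_T^{(j)} + t^{j+1}\psi_T^{(j+1)}$. On the right, the Leibniz rule and Fa\`a di Bruno express $(tD)^{k-1}[(n+1)^n e^\theta(-t\psi_T')^{1-n}]$ as a polynomial in the variables $(tD)^i\theta$ for $1\leq i \leq k-1$ and $t^i\psi_T^{(i)}$ for $1 \leq i \leq k$, multiplied by the overall factor $(n+1)^n e^\theta(-t\psi_T')^{1-n} = t^2\psi_T'' = n+1 + O(|b||\tau|^{n+1})$. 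Division by $-t\psi_T' = (n+1) + O(|b||\tau|^{n+1})$ causes no difficulty because this quantity is bounded away from zero.

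By the inductive hypothesis, each $(tD)^i\theta$ factor is $O(|b||\tau|^{n+1})$ and each $t^i\psi_T^{(i)}$ equals $(-1)^i(n+1)(i-1)! + O(|b||\tau|^{n+1})$. Running the same computation for $\psi_{cusp}$ in place of $\psi_T$ yields the trivial identity $0=0$, because \eqref{eq-diff-psi} for $\psi_{cusp}$ degenerates into the constant equality $n+1 = n+1$ and $(tD)^{k-1}$ of a constant vanishes for $k \geq 2$. Subtracting the two computations, all leading contributions cancel and one is left with a sum of terms each of which contains at least one explicit $O(|b||\tau|^{n+1})$ factor (times the bounded factor $t^2\psi_T''$). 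This yields $t^{k+1}\psi_T^{(k+1)} - t^{k+1}\psi_{cusp}^{(k+1)} = O_k(|b||\tau|^{n+1})$, completing the induction. The main thing to guard against is inflation of the error by powers of $|\tau|$; this is automatic because $tD$ agrees up to a positive multiplicative constant with $d/d\tilde\rho$, and $\tilde\rho$ ranges over the fixed compact interval $I$.
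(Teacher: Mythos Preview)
Your proof is correct and follows essentially the same approach as the paper: apply $tD$ repeatedly to \eqref{eq-diff-psi} and proceed by induction, with the base cases $k=1,2$ supplied by the computations immediately preceding the lemma. The paper records this only as a one-line remark (``differentiating \eqref{eq-diff-psi} multiple times and multiplying by $t$ each time''); your subtraction of the parallel $\psi_{cusp}$ computation and the observation that $tD$ is a constant multiple of $d/d\tilde\rho$ are useful clarifications of why no extra powers of $|\tau|$ creep into the error.
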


Introduce a local bundle chart $(z_1,\ldots,z_n)$ on the total space of $L$ such that $z_n = 0$ cuts out the zero section. Write the Hermitian metric as $h = e^{-\varphi}|z_n|^2$, where $\varphi$ depends only on $z_1,\ldots,z_{n-1}$. Then for any radial potential $\psi = \psi(t)$, $t = \log h$, the associated $(1,1)$-form can be written as
\begin{align}
\omega = i\partial \overline{\partial} \psi= -\psi'  i\partial \overline{\partial} \varphi + \psi'' i(\partial \varphi - \partial \log z_n) \wedge (\overline{\partial} \varphi - \overline{\partial} \log \overline{z}_n).
\end{align}
We now assume that the Calabi-Yau $(E,\omega_E)$, $\omega_E = -\ddbar\varphi$, is flat and $z_1,\ldots,z_{n-1}$ are the standard linear coordinates on the universal cover $\mathbb{C}^{n-1} \to E$. We also write $z_\alpha = x_\alpha + iy_\alpha$ for $1 \leq \alpha \leq n-1$, $\theta = {\rm arg}\,z_n$ and $x = -1/t$. Then, on the universal cover of $\{2\tau\leq t\leq \frac{1}{2}\tau\}$, we define a new chart via
\begin{align}\label{eq-quasi}
(\check x_\alpha, \check y_\alpha, \check x, \check \theta) := (|\tau|^{-\frac{1}{2}} x_\alpha, |\tau|^{-\frac{1}{2}} y_\alpha, |\tau| x, |\tau|^{-1}\theta ) 
\end{align}
which takes the value $p = (0,\ldots,0,1,0)$ at a point $q$ such that $t(q) =\tau$. Under this chart,  the metric $\omega_{cusp}$ is equivalent to $\delta_{ij}$ and in addition, for all $k,\alpha$,
\begin{align}
\psi'_{cusp}  i\partial \overline{\partial} \varphi, \quad \psi''_{cusp}  i(\partial \varphi - \partial \log z_n ) \wedge (\overline{\partial} \varphi - \overline{\partial} \log \overline{z}_n)
\end{align}
have bounded $C^{k}$ norms on a fixed size ball centered at $p$, with bounds that are independent of $\tau$. Also, $\psi_T, \psi_{cusp}$ are independent of $x_\alpha, y_\alpha, \theta.$
Applying this chart, Lemma \ref{lem-cusp} implies that, for 
$t \in [2\tau, \tau],$ 
\begin{align}\label{eq-cusp-T-1}
    |\nabla_{\omega_{cusp}}^k (\psi_T -\psi_{cusp})|_{\omega_{cusp}}=O_k(|b||\tau|^{n+1}).
\end{align}
Meanwhile, to prove higher order derivative estimates of  $\omega_T$ under the coordinates \eqref{eq-quasi}, we only have to check that 
\begin{align}\label{eq-psi-prime-doubleprime}
\frac{\psi'_T}{\psi'_{cusp}}, \;\,\frac{\psi''_T}{\psi''_{cusp}}
\end{align}
have uniformly bounded $C^{k}$ norm with respect to \eqref{eq-quasi}. For example,
\begin{align}\label{eq-psi-11-c}
\frac{\partial}{\partial \check x} \frac{\psi''_T}{\psi''_{cusp}} = -|\tau|^{-1} \frac{\partial t}{\partial x}\cdot \frac{\psi'''_T \psi_{cusp}'' - \psi''_T \psi_{cusp}'''}{(\psi_{cusp}'')^2},
\end{align}
where 
\begin{align}
\begin{split}
t^5(\psi'''_T \psi_{cusp}'' - \psi''_T \psi_{cusp}''')=  (t^3\psi'''_T -t^3 \psi_{cusp}''')t^2\psi_{cusp}'' - (t^2\psi''_T -t^2 \psi_{cusp}'')t^3\psi_{cusp}''' 
,
\end{split}
\end{align}
which is $O(|b| |\tau|^{n+1})$ by Lemma \ref{lem-cusp}. This shows that
\begin{align}\label{eq-ddxhat-psu}
\frac{\partial}{\partial \check x} \frac{\psi''_T}{\psi''_{cusp}} = O(|b||\tau|^{n+1}).
\end{align}
We can estimate the $C^k$ norm of \eqref{eq-psi-prime-doubleprime} with $k\geq 1$ by induction, using the formula
\begin{align}\label{eq-high-quotient}
 \left(\frac{f}{g}\right)^{(k)} = \frac{1}{g}\left(f^{(k)} -k!  \sum_{j=1}^k \frac{g^{(k+1-j)}}{(k+1-j)!} \frac{\left(\frac{f}{g}\right)^{(j-1)}}{(j-1)!}\right)
\end{align}
with
\begin{align}
f=\psi'_T - \psi'_{cusp},\;\, g = \psi'_{cusp}
\end{align}
or with
\begin{align}\label{eq-fg}
f=\psi''_T - \psi''_{cusp},\;\, g = \psi''_{cusp}.
\end{align}
By Lemma \ref{lem-cusp} and \eqref{eq-psi-cusp}, we have for any $k\geq 1$ that $f^{(k)} = O(|b||\tau|^{n+1})$.

Thus, in conclusion, we have proved the following lemma.

\begin{lemma}\label{CusptoNeckerror}
Let $\psi_T$ be the solution of \eqref{eq-psi-2} satisfying \eqref{eq-bT} and assume that \eqref{eq-b-tau} is in force.
 Then for any $k\in \mathbb{N}$ and for all $t \in [2\tau, \tau]$,
\begin{align}\label{eq-Ck-T-c}
|\nabla_{\omega_{cusp}}^k (\omega_T - \omega_{cusp})|_{\omega_{cusp}}= O(|b||\tau|^{n+1}).
\end{align}
\end{lemma}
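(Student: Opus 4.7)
My plan is to assemble the stated estimate from the ingredients developed immediately above. Using the radial Calabi ansatz
\begin{equation*}
\omega = -\psi' \, i\partial \overline{\partial}\varphi + i\psi'' (\partial\varphi - \partial\log z_n)\wedge(\overline{\partial}\varphi - \overline{\partial}\log\overline{z}_n),
\end{equation*}
I would first write
\begin{equation*}
\omega_T - \omega_{cusp} = \Bigl(\tfrac{\psi'_T}{\psi'_{cusp}} - 1\Bigr)\bigl(-\psi'_{cusp}\, i\partial\overline{\partial}\varphi\bigr) + \Bigl(\tfrac{\psi''_T}{\psi''_{cusp}} - 1\Bigr)\bigl(i\psi''_{cusp} (\partial\varphi - \partial\log z_n)\wedge(\overline{\partial}\varphi - \overline{\partial}\log\overline{z}_n)\bigr).
\end{equation*}
After passing to the quasi-coordinates $(\check{x}_\alpha,\check{y}_\alpha,\check{x},\check{\theta})$ of \eqref{eq-quasi}, the two tensor factors on the right already have uniformly bounded $C^k$-norm (independent of $\tau, b$) and $\omega_{cusp}$ is uniformly $C^\infty$-bounded and uniformly elliptic. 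Hence $|\nabla_{\omega_{cusp}}^k(\omega_T - \omega_{cusp})|_{\omega_{cusp}}$ is bounded up to a constant by the Euclidean $C^k$-norm, in the quasi-coordinates, of the two scalar ratios $\psi'_T/\psi'_{cusp} - 1$ and $\psi''_T/\psi''_{cusp} - 1$, both of which are functions of the single variable $t$, equivalently of $\check{x} \in [1/2, 1]$.

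The remaining task is a $C^k$ estimate of these two ratios, which I would carry out by induction on $k$ via the quotient identity \eqref{eq-high-quotient} with $(f,g) = (\psi'_T - \psi'_{cusp},\, \psi'_{cusp})$ and with $(f,g) = (\psi''_T - \psi''_{cusp},\, \psi''_{cusp})$, exactly as foreshadowed in \eqref{eq-fg}. The sample computation leading to \eqref{eq-ddxhat-psu} illustrates the general mechanism: one $\check{x}$-derivative amounts to multiplying by $|\tau|^{-1}\,\partial t/\partial x \sim -|\tau|/\check{x}^2$ followed by a $t$-derivative, and the factor of $|\tau|$ so produced is precisely absorbed by a factor of $t^{-1}$ built into Lemma \ref{lem-cusp} through the uniform bound $\psi_T^{(k)} - \psi_{cusp}^{(k)} = O(|b||\tau|^{n+1})\,t^{-k}$. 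Feeding these bounds, together with the uniform lower bound $|\psi_{cusp}^{(k)}| \gtrsim t^{-k}$, into the right-hand side of \eqref{eq-high-quotient} shows by induction that the $C^k$-norm of each ratio stays $O(|b||\tau|^{n+1})$.

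The main obstacle is purely combinatorial bookkeeping: verifying that at each inductive step the product of the Lemma \ref{lem-cusp} bound on $f^{(k)}$, the previously controlled lower-order quotients, and the various chain-rule factors conspire to leave the $O(|b||\tau|^{n+1})$ estimate intact. No analytic input beyond Lemma \ref{lem-cusp} and the explicit form of the quasi-coordinates \eqref{eq-quasi} is required.
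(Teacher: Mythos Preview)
Your proposal is correct and follows essentially the same approach as the paper: the paper's proof is literally the discussion preceding the lemma, which decomposes $\omega_T-\omega_{cusp}$ via the radial ansatz, passes to the quasi-coordinates \eqref{eq-quasi}, and reduces to the $C^k$ control of the two ratios $\psi'_T/\psi'_{cusp}$ and $\psi''_T/\psi''_{cusp}$ using the quotient identity \eqref{eq-high-quotient} with $(f,g)$ as in \eqref{eq-fg} together with Lemma~\ref{lem-cusp}. Your write-up is in fact slightly cleaner in that you explicitly factor out the ratios minus $1$, making it transparent why the $O(|b||\tau|^{n+1})$ bound (rather than mere boundedness) propagates.
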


\subsubsection{Estimates on the middle neck}

\begin{proposition}\label{prop-psi-diff}
Assume that $b\in (-\frac{1}{2}, 0)$. Let $\psi_T$ be the solution of \eqref{eq-psi-2} satisfying \eqref{eq-bT}. 
Then:
\begin{enumerate}
\item[$(1)$] For $ t\in [T, \tau],$
\begin{align}\label{eq-psi-T-max}
0\leq \psi_T(t) - \psi_{cusp}(t) < C,
\end{align}
where $C$ does not depend on $b$. 

\item[$(2)$] $\psi_T(t) - \psi_{cusp}(t)$ is a  decreasing function on $[T, \tau]$.

\item[$(3)$] $\psi_T'(t) < \psi_{cusp}'(t)$ for $t \in [T, \tau]$.
\end{enumerate}
\end{proposition}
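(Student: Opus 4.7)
The three statements are logically coupled, and I would prove them in the reverse order (3), (2), (1). The heart of the matter is (3), after which (2) is immediate from integration and (1) follows by combining $\psi_T(\tau)=\psi_{cusp}(\tau)$ with the endpoint estimate \eqref{eq-psi-T}. The guiding observation is the following algebraic rewriting: subtracting the two instances of \eqref{eq-psi-3} (one for $\psi_T$ with $b<0$, one for $\psi_{cusp}$ with $b=0$) gives
\begin{equation}
\tfrac{1}{n+1}\bigl((\psi_T')^{n+1}-(\psi_{cusp}')^{n+1}\bigr) = g(t), \quad g(t) := e^{\psi_T(t)+a} - e^{\psi_{cusp}(t)+a} + b.
\end{equation}
Since $\psi_T',\psi_{cusp}'>0$, statement (3) is equivalent to the pointwise assertion $g(t)<0$ on $[T,\tau]$.

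\textbf{Proving $g<0$ via a maximum principle argument.} First I check the two boundary values. At $t=\tau$, $\psi_T(\tau)=\psi_{cusp}(\tau)$ by \eqref{eq-psi-10}, so $g(\tau)=b<0$. At $t=T$, the boundary condition $e^{\psi_T(T)+a}+b=0$ from \eqref{eq-psi-10} gives $g(T)=-e^{\psi_{cusp}(T)+a}<0$. Now suppose for contradiction that $\max_{[T,\tau]}g\geq 0$. The maximum must be attained at an interior point $t^{*}$ with $g(t^{*})\geq 0$ and $g'(t^{*})=0$. From $g(t^{*})\geq 0$ I read off $e^{\psi_T(t^{*})+a}+b\geq e^{\psi_{cusp}(t^{*})+a}$, so $(\psi_T')^{n+1}\geq (\psi_{cusp}')^{n+1}$ at $t^{*}$, hence $\psi_T'(t^{*})\geq \psi_{cusp}'(t^{*})>0$; and also $e^{\psi_T(t^{*})+a}>e^{\psi_{cusp}(t^{*})+a}$ strictly because $b<0$. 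But then
\begin{equation}
g'(t^{*}) = \psi_T'(t^{*})\,e^{\psi_T(t^{*})+a} - \psi_{cusp}'(t^{*})\,e^{\psi_{cusp}(t^{*})+a} > 0,
\end{equation}
contradicting $g'(t^{*})=0$. This is the main obstacle of the proof; the only delicate point is ruling out the degenerate case $g(t^{*})=0$, which is handled identically once one notices that then $\psi_T'(t^{*})=\psi_{cusp}'(t^{*})$ strictly, while $e^{\psi_T+a}>e^{\psi_{cusp}+a}$, still forcing $g'(t^{*})>0$. Thus $g<0$ on $[T,\tau]$ and (3) follows.

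\textbf{Deducing (2) and (1).} Let $\delta(t):=\psi_T(t)-\psi_{cusp}(t)$. By (3), $\delta'(t)=\psi_T'(t)-\psi_{cusp}'(t)<0$ on $[T,\tau]$, which is exactly (2). For (1), since $\delta(\tau)=0$ and $\delta$ is strictly decreasing, we get $\delta(t)\geq 0$ on $[T,\tau]$, giving the lower bound. For the upper bound, the monotonicity gives $\delta(t)\leq \delta(T)=\psi_T(T)-\psi_{cusp}(T)$. By the identity \eqref{eq-psi-T}, this difference equals $(n+1)\log c(n)-a + O(|b|^{1/(n+1)}|\tau|)$. Under the standing assumption \eqref{eq-b-tau}, $|b|^{1/(n+1)}|\tau|=(|b||\tau|^{n+1})^{1/(n+1)}\to 0$, so $\delta(T)$ is bounded by a constant depending only on $n$ and $a$, independent of $b$. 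This completes the proof of (1) with the required $b$-independence of $C$.
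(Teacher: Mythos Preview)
Your proof is correct and is a genuinely different route from the paper's. The paper proves the three statements in the order (1), (2), (3): it first applies a maximum principle directly to $\psi_T-\psi_{cusp}$, using the second-order equation \eqref{eq-psi-2} at a hypothetical interior extremum (if $\psi_T-\psi_{cusp}$ has a positive local max at $p$, then $\psi_T(p)>\psi_{cusp}(p)$, $\psi_T'(p)=\psi_{cusp}'(p)$, $\psi_T''(p)\leq\psi_{cusp}''(p)$, contradicting $(\psi')^{n-1}\psi''=e^{\psi+a}$). Having established (1) and (2), the paper then proves (3) by a secondary contradiction argument that relies on (2).

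You instead introduce the auxiliary function $g=e^{\psi_T+a}-e^{\psi_{cusp}+a}+b$, observe that (3) is equivalent to $g<0$, and run the maximum principle on $g$ using only the first-order identity \eqref{eq-psi-3}; parts (2) and (1) then fall out by integration. Your approach is arguably more streamlined: it needs only the first integral \eqref{eq-psi-3} and never touches second derivatives, and the logical dependencies run in one direction only. The paper's approach has the minor advantage that the maximum principle is applied to the more natural quantity $\psi_T-\psi_{cusp}$ itself, which may be conceptually clearer to a reader thinking in terms of comparing the two potentials directly.
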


\begin{proof}
By \eqref{eq-psi-10}, \eqref{eq-psi-T},
when $t =\tau$ or $T$, $0\leq \psi_T- \psi_{cusp}<C$, where $C$ is independent of $b$.  Now assume that $\psi_T - \psi_{cusp}$ has a positive local maximum at $p \in (T, \tau)$. Then
\begin{align}
&\psi_T(p) > \psi_{cusp}(p),\\
&0<\psi_T'(p) = \psi_{cusp}'(p),\\
&0<\psi_T''(p) \leq \psi_{cusp}''(p),
\end{align}
which contradicts the fact that  $\psi_T$ and $\psi_{cusp}$ both satisfy equation \eqref{eq-psi-2}.
Similarly, $\psi_T - \psi_{cusp}$ does not have a negative local minimum in $(T, \log |b|)$. Part (1) of the lemma is proved.

Notice that
$\psi_T - \psi_{cusp} = 0$ at $\tau$. If $\psi_T - \psi_{cusp}$ is not monotone, then it has either a positive local maximum or a negative local minimum in $(T, \tau)$, which contradicts the above discussion. At $\tau$, $\psi_T' < \psi_{cusp}'$ by \eqref{eq-psi-3}. So $\psi_T - \psi_{cusp}$ is a decreasing function.

For part (3), we assume that it is not true and set
\begin{align}
t_0 := \sup \{ t \in [T, \tau]: \psi_T'(t) \geq \psi'_{cusp}(t) \}.
\end{align}
Then $t_0 < \tau$. At $t_0$, we have
\begin{align}
&0<\psi_T'(t_0) = \psi_{cusp}'(t_0),\\
&0<\psi_T''(t_0) \leq  \psi_{cusp}''(t_0),
\end{align}
By part (2), $\psi_T(t_0) > \psi_{cusp}(t_0)$. This contradicts equation \eqref{eq-psi-2}.
\end{proof}

\begin{proposition}\label{prop-T-c}
Fix $\delta \in (0,1)$. Then for all $\eta \in (0,\delta]$ we have that
\begin{align}\label{eq-psi-T-eta-0}
\psi_T (\eta T) = \psi_{cusp} (\eta T) + O(|b|^\frac{1}{n+1}\tau) +O_\delta(\eta^{n+1}).
\end{align}
 In addition,
\begin{align}
\psi'_T(\eta T) &= \psi_{cusp}' (\eta T)e^{O_\delta(\eta^{n+1}) } (1+O(|b|^\frac{1}{n+1}\tau)),\label{eq-psi-T-eta-1}\\
\psi''_T(\eta T) &=\psi_{cusp}''(\eta T)e^{O_\delta(\eta^{n+1}) } (1+O(|b|^\frac{1}{n+1}\tau) ).\label{eq-psi-T-eta-2}
\end{align}
\end{proposition}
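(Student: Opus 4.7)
The plan is to establish part (1) via the integral form \eqref{eq-psi-4} applied on $[T,\eta T]$, and then to deduce parts (2) and (3) by dividing the defining ODE \eqref{eq-psi-3} for $\psi_T$ by that for $\psi_{cusp}$. For part (1), using $\psi_T(T)=\log|b|-a$ from \eqref{eq-psi-5}, the substitution $s=\xi-\psi_T(T)$ turns $(e^{\xi+a}+b)^{-1/(n+1)}$ into $|b|^{-1/(n+1)}(e^s-1)^{-1/(n+1)}$ and yields
\begin{align*}
\int_0^{w}(e^s-1)^{-1/(n+1)}\,ds=(n+1)^{1/(n+1)}|b|^{1/(n+1)}(\eta T-T),\quad w:=\psi_T(\eta T)-\psi_T(T).
\end{align*}
A sharpened version of \eqref{eq-T-b}, obtained by pushing the expansion of the tail $F(-\log\beta):=\int_{-\log\beta}^{\infty}(e^s-1)^{-1/(n+1)}\,ds$ with $\beta=|b||\tau|^{n+1}/(n+1)^n$ one order further (via $u=e^{-s}$ and binomially expanding $(1-u)^{-1/(n+1)}$), gives $|b|^{1/(n+1)}T=-(n+1)^{-1/(n+1)}c(n)+O((|b|^{1/(n+1)}\tau)^{n+2})$. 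Combined with \eqref{eq:def_c(n)} this reduces the identity to $F(w)=\eta c(n)+O((|b|^{1/(n+1)}\tau)^{n+2})$, where $F(w):=\int_w^{\infty}(e^s-1)^{-1/(n+1)}\,ds$.

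To invert $F$, apply the substitution $u=e^{-s}$ again to get $F(w)=\int_0^{e^{-w}}u^{-n/(n+1)}(1-u)^{-1/(n+1)}\,du$. For $\delta$ small enough (assumed WLOG, since larger $\delta$ can be handled by subdividing $(0,\delta]$), the binomial expansion integrated term by term produces $F(w)=(n+1)e^{-w/(n+1)}+O_\delta(e^{-(n+2)w/(n+1)})$. Solving $F(w)=\eta c(n)+O((|b|^{1/(n+1)}\tau)^{n+2})$ perturbatively and taking logarithms yields
\begin{align*}
w=-(n+1)\log(\eta c(n)/(n+1))+O_\delta(\eta^{n+1})+O(|b|^{1/(n+1)}\tau),
\end{align*}
where the log-inversion factor $|F'(w)|^{-1}\sim(n+1)/(\eta c(n))$ is safely absorbed because the $n+1$ spare powers in the sharpened error $(|b|^{1/(n+1)}\tau)^{n+2}$ dominate the $1/\eta$ factor for any fixed $\eta>0$ and $\sigma$ small. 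Combining with the exact identity $\psi_{cusp}(\eta T)-\psi_{cusp}(T)=-(n+1)\log\eta$ (from \eqref{eq-psi-cusp-2}) and with $\psi_T(T)-\psi_{cusp}(T)=(n+1)\log(c(n)/(n+1))+O(|b|^{1/(n+1)}\tau)$ (from the sharpened \eqref{eq-T-b} and \eqref{eq-psi-5}), the two occurrences of $(n+1)\log(c(n)/(n+1))$ cancel and produce \eqref{eq-psi-T-eta-0}.

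For parts (2) and (3), dividing $(\psi_T')^{n+1}=(n+1)(e^{\psi_T+a}+b)$ by its $\psi_{cusp}$ analogue gives $(\psi_T'/\psi_{cusp}')^{n+1}(\eta T)=e^{\phi(\eta T)}+b(-\eta T)^{n+1}/(n+1)^n$ with $\phi:=\psi_T-\psi_{cusp}$. By the sharpened \eqref{eq-T-b} the second summand equals $-\eta^{n+1}c(n)^{n+1}/(n+1)^{n+1}+O(\eta^{n+1}|b|^{1/(n+1)}\tau)$, while part (1) gives $e^{\phi(\eta T)}=1+O_\delta(\eta^{n+1})+O(|b|^{1/(n+1)}\tau)$. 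Bundling these as multiplicative corrections and extracting the $(n+1)$-th root proves \eqref{eq-psi-T-eta-1}; combining with $\psi_T''=e^{\psi_T+a}/(\psi_T')^{n-1}$ then proves \eqref{eq-psi-T-eta-2}.

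I expect the main obstacle to be establishing and exploiting the sharpened form of \eqref{eq-T-b}: the $O(\tau)$ quoted there is far from optimal, and the gain from $O(\tau)$ to $O(|b||\tau|^{n+2})$ (equivalently $O((|b|^{1/(n+1)}\tau)^{n+2})$ for the normalized quantity $|b|^{1/(n+1)}T$) is precisely what prevents the log-inversion of $F$ from blowing up the error as $\eta\to 0$.
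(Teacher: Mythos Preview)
Your approach is essentially the same as the paper's: rewrite \eqref{eq-psi-4} on $[T,\eta T]$ as an identity for $F(w)=\int_w^\infty(e^s-1)^{-1/(n+1)}\,ds$ with $w=\psi_T(\eta T)-\psi_T(T)$, invert via the large-$w$ expansion $F(w)=(n+1)e^{-w/(n+1)}+O_\delta(e^{-(n+2)w/(n+1)})$, and then read off \eqref{eq-psi-T-eta-1}--\eqref{eq-psi-T-eta-2} from \eqref{eq-psi-3} and \eqref{eq-psi-2}. The paper arrives at $F(w)=\eta\bigl(c(n)+O(|b|^{1/(n+1)}\tau)\bigr)$ by informally ``taking $\eta\to0^+$'' in \eqref{eq-etaT-T} to get the intermediate exact-looking identity $c(n)=(n+1)^{1/(n+1)}|b|^{1/(n+1)}|T|$ and then invoking \eqref{eq-T-b}; you instead sharpen \eqref{eq-T-b} directly to $|b|^{1/(n+1)}T=-(n+1)^{-1/(n+1)}c(n)+O\bigl((|b|^{1/(n+1)}\tau)^{n+2}\bigr)$. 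Your sharpening is exactly what makes the paper's shortcut rigorous: the residual $c(n)-(n+1)^{1/(n+1)}|b|^{1/(n+1)}|T|$ that the paper's limit argument tacitly drops is of this size, and without the extra $n+1$ powers the $1/\eta$ from inverting $F$ would indeed spoil the uniformity.

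One small point of phrasing: ``for any fixed $\eta>0$ and $\sigma$ small'' is weaker than the uniform-in-$\eta$ bound the proposition asserts. The fix is immediate: on the natural domain $\eta T\in[T,\tau]$ one has $\eta\ge\tau/T\sim C^{-1}|b|^{1/(n+1)}|\tau|$, whence $(|b|^{1/(n+1)}|\tau|)^{n+2}/\eta\le C(|b|^{1/(n+1)}|\tau|)^{n+1}\le C|b|^{1/(n+1)}|\tau|$ uniformly. Your reduction to small $\delta$ is also fine once phrased as splitting $(0,\delta]=(0,\delta']\cup[\delta',\delta]$: on the second piece $\eta^{n+1}$ is bounded below and Proposition~\ref{prop-psi-diff}(1) already gives the $O_\delta(\eta^{n+1})$ bound trivially.
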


\begin{proof}
Taking $t_0 =\eta T$ and $t=T$ and substituting $\xi = s+ \psi_T(T)$ in \eqref{eq-psi-4}, we get
\begin{align}\label{eq-etaT-T}
\int_{0}^{\psi_T(\eta T)-\psi_T(T)} (e^{s}-1 )^{-\frac{1}{n+1}}ds = (n+1)^\frac{1}{n+1}|b|^{\frac{1}{n+1}}(\eta T-T).
\end{align}
By part (2) of Proposition \ref{prop-psi-diff}, as $\eta \to 0^+$, 
\begin{align}
    \psi_T (\eta T) - \psi_T (T) > \psi_{cusp} (\eta T)   -\psi_{cusp} (T) -C \rightarrow \infty.
\end{align}
Taking  $\eta \rightarrow 0^+$ in \eqref{eq-etaT-T},
\begin{align}\label{bandT}
\int_{0}^{\infty} (e^{s}-1 )^{-\frac{1}{n+1}}ds = (n+1)^\frac{1}{n+1}|b|^{\frac{1}{n+1}}|T|.
\end{align}
Subtracting it from \eqref{eq-etaT-T},
\begin{align}
\begin{split}
\int_{\psi_T (\eta T) - \psi_T (T)}^{\infty} (e^{s}-1 )^{-\frac{1}{n+1}}ds &= (n+1)^\frac{1}{n+1}\eta|b|^{\frac{1}{n+1}}|T|\\
&=\eta\left(\int_0^\infty (e^{s}-1 )^{-\frac{1}{n+1}}ds  +O( |b|^\frac{1}{n+1}\tau)\right),
\end{split}
\end{align}
where we applied \eqref{eq-T-b} for the last step.
When  $s>0$ is bounded away from 0,
\begin{align}
(e^{s}-1 )^{-\frac{1}{n+1}} = e^{-\frac{s}{n+1}} (1 - e^{-s})^{-\frac{1}{n+1}} = e^{-\frac{s}{n+1}} + O(e^{-\frac{(n+2)s}{n+1}}).
\end{align}
Thus,
\begin{align}
\begin{split}
\psi_T (\eta T) - \psi_T (T) &=(n+1)\log (n+1) -(n+1)\log \eta\\
&-(n+1)\log \left(\int_0^\infty (e^{s}-1 )^{-\frac{1}{n+1}}ds \right)+O(|b|^\frac{1}{n+1}\tau) +O(\eta^{n+1}).
\end{split}
\end{align}
Comparing this to
\begin{align}
\begin{split}
\psi_{cusp} (\eta T) - \psi_T (T) &= -(n+1)\log (-\eta T) - \log |b| +n\log(n+1) \\
&=-(n+1) \log\eta +(n+1)\log (n+1) \\
&-(n+1)\log \left(\int_0^\infty (e^{s}-1 )^{-\frac{1}{n+1}}ds \right)+O(|b|^\frac{1}{n+1}\tau),
\end{split}
\end{align}
we obtain that
\begin{align}
\psi_T (\eta T) - \psi_{cusp} (\eta T) = O(|b|^\frac{1}{n+1}\tau) +O(\eta^{n+1}).
\end{align}
The rest of the proposition follows by analyzing \eqref{eq-psi-3} and \eqref{eq-psi-2}.
\end{proof}

\begin{definition}[Perturbation of differential operators]\label{def-pert}
Given a function $\epsilon(\mathbf{x})>0$ on a local chart $\mathbf{x}$, we say that $f(\mathbf{x}) = O(\epsilon(\mathbf{x}))$ if $|f(\mathbf{x})| \leq C\epsilon(\mathbf{x})$ for some $C>0$ independent of $\mathbf{x}$ and of $b, T$.
Let
\begin{align}
    L &= \sum_{i,j=1}^m a_{ij}(\mathbf{x}) \partial^2_{ij} + \sum_{i=1}^m b_i(\mathbf{x}) \partial_i,\\
    \tilde L &= \sum_{i,j=1}^m \tilde a_{ij}(\mathbf{x}) \partial^2_{ij} + \sum_{i=1}^m\tilde b_i(\mathbf{x}) \partial_i,
\end{align}
be two differential operators. We say that $\tilde L = L + O(\epsilon(\mathbf{x}))$ if 
\begin{align}\label{eq:O(eps)terms}
\tilde a_{ij} = (1+O(\epsilon(\mathbf{x}))) a_{ij}, \quad   \tilde b_i =(1+O(\epsilon(\mathbf{x}))) b_{i}, 
\end{align}
for all $1\leq i, j\leq m.$ Similarly, we say that $\tilde L = L \cdot e^{O(\epsilon(\mathbf{x}))}$ if
\begin{align}
\tilde a_{ij} = e^{O(\epsilon(\mathbf{x}))} a_{ij}, \quad   \tilde b_i =e^{O(\epsilon(\mathbf{x}))} b_{i}.
\end{align}
\end{definition}

\begin{lemma}\label{lem-g-inv}
Recall the local holomorphic chart $(z_1,\ldots,z_n)$ introduced after Lemma \ref{lem-cusp}. Assume that in this chart a Kähler metric $g$ is given by
\begin{align}
g_{i \bar j} = -F\varphi_{i \bar j} + G\left(\varphi_i -\frac{\delta_{i n }}{z_n}\right)\left(\varphi_{\bar j} - \frac{\delta_{j n }}{\overline{z}_n}\right),
\end{align}
where $F, G$ are positive functions and $\varphi$ is a function of $z_1, \ldots, z_{n-1}$. Then its inverse $g^{\bar j k}$ is given by
\begin{align}
g^{\bar \beta \alpha} = -\frac{\varphi^{\bar \beta \alpha}}{F}\;\,(1 \leq \alpha,\beta \leq n-1), \;\, g^{\bar \alpha n} = -\frac{\varphi^{\bar \alpha \gamma}\varphi_{ \gamma}z_n}{F}, \;\, g^{\bar n n} = \frac{|z_n|^2 (F- G \varphi^{\bar \beta \alpha} \varphi_\alpha \varphi_{\bar \beta})}{F G}.
\end{align}
 \end{lemma}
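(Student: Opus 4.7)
The plan is a direct linear-algebra verification that exploits the crucial hypothesis that $\varphi$ depends only on $z_1,\ldots,z_{n-1}$. This immediately forces $\varphi_n = \varphi_{\bar n} = 0$ and $\varphi_{i\bar n} = \varphi_{n\bar j} = 0$, so the components of the vector $w_i := \varphi_i - \delta_{in}/z_n$ are simply $w_\alpha = \varphi_\alpha$ for $1 \leq \alpha \leq n-1$ and $w_n = -1/z_n$. Consequently the metric decomposes into the block form $g_{\alpha\bar\beta} = -F\varphi_{\alpha\bar\beta} + G\varphi_\alpha\varphi_{\bar\beta}$, $g_{\alpha\bar n} = -G\varphi_\alpha/\bar z_n$, $g_{n\bar\beta} = -G\varphi_{\bar\beta}/z_n$, $g_{n\bar n} = G/|z_n|^2$. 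The $(n-1)\times(n-1)$ tangential block is a rank-one perturbation of $-F\varphi_{\alpha\bar\beta}$, and the mixed and radial entries are scalar multiples of the tangential data, so the inverse can in principle be obtained by a Sherman--Morrison / Schur-complement calculation; but since the answer is already written down in the statement, it is cleaner to simply check it.

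Concretely, I would verify $g_{i\bar j}g^{\bar j k} = \delta_i^k$ in the four cases $(i,k) \in \{(\alpha,\gamma),(\alpha,n),(n,\gamma),(n,n)\}$. Each computation uses only the tangential inversion identity $\varphi_{\alpha\bar\beta}\varphi^{\bar\beta\gamma} = \delta_\alpha^\gamma$ together with Hermiticity of $g^{\bar j k}$ (which gives $g^{\bar n\gamma} = \overline{g^{\bar\gamma n}} = -\varphi^{\bar\alpha\gamma}\varphi_{\bar\alpha}\bar z_n/F$, obtained by conjugating the stated formula for $g^{\bar\alpha n}$ and using that $\varphi$ is real). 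In the Greek-Greek case, multiplying $-F\varphi_{\alpha\bar\beta} + G\varphi_\alpha\varphi_{\bar\beta}$ by $-\varphi^{\bar\beta\gamma}/F$ produces $\delta_\alpha^\gamma$ plus a spurious term proportional to $\varphi_\alpha\varphi_{\bar\beta}\varphi^{\bar\beta\gamma}$; this is exactly cancelled by the off-block contribution $g_{\alpha\bar n}g^{\bar n\gamma}$, where the factors of $\bar z_n$ collapse. The two mixed cases $(\alpha,n)$ and $(n,\gamma)$ follow by an entirely analogous cross-cancellation once one tracks the $z_n, \bar z_n$ factors correctly.

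The only case with genuine content is $(n,n)$: here $g_{n\bar\beta}g^{\bar\beta n}$ contributes $(G/F)\,\varphi^{\bar\beta\gamma}\varphi_\gamma\varphi_{\bar\beta}$, while $g_{n\bar n}g^{\bar n n}$ contributes $(F - G\varphi^{\bar\beta\gamma}\varphi_\gamma\varphi_{\bar\beta})/F$, and these sum to $1$. This is the Schur-complement identity for the block matrix, and it is the single place where the precise coefficient $F - G\varphi^{\bar\beta\gamma}\varphi_\gamma\varphi_{\bar\beta}$ appearing in the formula for $g^{\bar n n}$ is forced. Beyond this, the lemma is pure bookkeeping: there is no analytic obstacle, and the only care needed lies in the consistent placement of the $z_n, \bar z_n$ factors and of the index conventions $g^{\bar j k}$ versus $g^{k\bar j}$.
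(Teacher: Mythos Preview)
Your verification is correct. The paper actually states this lemma without proof, treating it as an elementary linear-algebra computation; your direct check of $g_{i\bar j}g^{\bar j k}=\delta_i^{\,k}$ in the four block cases is precisely the routine verification the reader is implicitly expected to supply.
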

 
The next corollary follows from Proposition \ref{prop-T-c}.

\begin{corollary}\label{cor-control-on-neck}
Fix $\delta \in (0,1)$ and let $\eta \in (0, \delta]$.  
Then at $t= \eta T$,
    \begin{align}\label{eq-g-T-eta}
        \Delta_{\omega_T} = (L_h +O(|b|^\frac{1}{n+1}\tau)) \cdot e^{O_\delta(\eta^{n+1})},
    \end{align}
where the cuspidal Laplacian $L_h$ was introduced in \cite[Lemma 2.5]{FHJ}. Using the notation $x = -1/t$ and $\theta = {\rm arg}\,z_n$ from \cite{FHJ}, an explicit formula for $L_h$ is
\begin{align}
\begin{split}
L_hu =  \frac{1}{n+1}(x^2 u_{xx}+(n+1) xu_x - (n+1)u -x^{-1}\varphi^{\bar \beta \alpha}u_{\alpha \bar \beta} +(2x)^{-1}u_{\theta\theta})+F,\\
F = F(x_\alpha,y_\alpha, x^{-1}u_{\theta\alpha}, x^{-1}u_{\theta\bar\alpha}, x^{-1}u_{\theta\theta}),
\end{split}
\end{align}
with $F$ smooth in $x_\alpha,y_\alpha$ and linear homogeneous in the other arguments. In addition, for all $k \geq 0$,
\begin{align}\label{eq-T-c-Dk-2}
|\nabla_{\omega_{cusp}}^k (\omega_T - \omega_{cusp})|_{\omega_{cusp}} = O_{\delta,k}(|b|^\frac{1}{n+1}|\tau| + \eta^{n+1}).
\end{align}
\end{corollary}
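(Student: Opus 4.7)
The plan is to convert the scalar ODE comparison estimates of Proposition \ref{prop-T-c} into tensor estimates for the Kähler form and its Laplacian, exploiting that $\omega_T$ and $\omega_{cusp}$ are both built from the same bundle-geometric data $(\varphi, z_n)$ via the Calabi ansatz and differ only through the radial functions $\psi_T(t), \psi_{cusp}(t)$ and their first two $t$-derivatives.

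For the Laplacian identity \eqref{eq-g-T-eta}, I would apply Lemma \ref{lem-g-inv} twice: once with $(F, G) = (\psi_T'(t), \psi_T''(t))$ to obtain $g_T^{-1}$, and once with $(F, G) = (\psi_{cusp}'(t), \psi_{cusp}''(t))$ to obtain $g_{cusp}^{-1}$. Each inverse metric coefficient is a rational expression in $F, G$ whose remaining inputs are the $\psi$-independent bundle data $\varphi^{\bar\beta\alpha}, \varphi_\alpha, z_n$. By \eqref{eq-psi-T-eta-1}--\eqref{eq-psi-T-eta-2}, at $t = \eta T$ both ratios $\psi_T'/\psi_{cusp}'$ and $\psi_T''/\psi_{cusp}''$ equal $e^{O_\delta(\eta^{n+1})}(1 + O(|b|^{1/(n+1)}\tau))$, so the same factor relates $g_T^{-1}$ to $g_{cusp}^{-1}$ coefficient by coefficient. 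Recalling from \cite[Lemma 2.5]{FHJ} that $\Delta_{\omega_{cusp}}$ acting on functions of $(x, x_\alpha, y_\alpha, \theta)$ coincides with $L_h$, the identity \eqref{eq-g-T-eta} then follows after absorbing the $(1 + O(|b|^{1/(n+1)}\tau))$ factor into the additive correction permitted by Definition \ref{def-pert}.

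For the higher-order estimate \eqref{eq-T-c-Dk-2}, I would follow the strategy of Lemma \ref{CusptoNeckerror} transplanted to the middle neck. The tensor $\omega_T - \omega_{cusp}$ decomposes pointwise as
\begin{equation}
-(\psi_T' - \psi_{cusp}')\, i\partial\overline{\partial}\varphi + i(\psi_T'' - \psi_{cusp}'')\, (\partial\varphi - \partial\log z_n)\wedge (\overline{\partial}\varphi - \overline{\partial}\log \overline{z}_n).
\end{equation}
In an $|\eta T|$-scaled analogue of the quasi-coordinate chart \eqref{eq-quasi} centered at a point where $t = \eta T$, the background $\omega_{cusp}$ and the two tensorial factors above are uniformly smoothly bounded, so it is enough to prove $C^k$ bounds of order $O_{\delta, k}(\eta^{n+1} + |b|^{1/(n+1)}|\tau|)$ on the scalar ratios $\psi_T'/\psi_{cusp}' - 1$ and $\psi_T''/\psi_{cusp}'' - 1$ in terms of the rescaled radial coordinate. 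The zeroth-order bound is Proposition \ref{prop-T-c} itself. Higher $t$-derivatives of $\psi_T - \psi_{cusp}$ are extracted by differentiating the ODE \eqref{eq-psi-2}, and the induction identity \eqref{eq-high-quotient}, applied with $(f, g) = (\psi_T^{(j)} - \psi_{cusp}^{(j)}, \psi_{cusp}^{(j)})$ for $j = 1, 2$ exactly as in \eqref{eq-fg}, then propagates the bound to all orders.

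The main obstacle is the simultaneous bookkeeping of two independent small parameters, $\eta^{n+1}$ and $|b|^{1/(n+1)}|\tau|$, which enter through different mechanisms: the first measures how deep into the middle neck one is (with the horn singularity at $\eta = 0$), while the second measures how close the horn ODE \eqref{eq-psi-3} is to the cusp equation as $b \to 0^-$. The two interact multiplicatively in the inverse metric expansion, but because the bundle-geometric data are $\sigma$-independent and the ODE derivatives can be controlled in isolation, the argument parallels Lemma \ref{CusptoNeckerror} term by term and is essentially routine.
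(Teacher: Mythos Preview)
Your proposal is correct and follows essentially the same route as the paper: both parts rely on Lemma \ref{lem-g-inv} together with the ratio estimates \eqref{eq-psi-T-eta-1}--\eqref{eq-psi-T-eta-2} for \eqref{eq-g-T-eta}, and on quasi-coordinates centered at $t_* = \eta T$ combined with the quotient-derivative induction \eqref{eq-high-quotient}--\eqref{eq-fg} for \eqref{eq-T-c-Dk-2}. The paper makes the higher-derivative step slightly more explicit by differentiating the rewritten ODE \eqref{eq-diff-psi-2} to obtain $\psi_T^{(k)}(\eta T) = \psi_{cusp}^{(k)}(\eta T)(1 + O(|b|^{1/(n+1)}\tau) + O(\eta^{n+1}))$ before feeding into the induction, but this is exactly what your phrase ``differentiating the ODE \eqref{eq-psi-2}'' amounts to.
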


\begin{proof}
    \eqref{eq-g-T-eta} follows from Proposition \ref{prop-T-c}, Lemma \ref{lem-g-inv}, and the proof of Lemma 2.5 in \cite{FHJ}.

    To prove \eqref{eq-T-c-Dk-2}, we first rewrite \eqref{eq-diff-psi} as
\begin{align}\label{eq-diff-psi-2}
 \psi_T'' 
= (n+1)^n e^{\psi_T -\psi_{cusp}} (-t)^{-n-1}(\psi_T')^{1-n}
\end{align}
and differentiate it
    multiple times to inductively show that
    \begin{align}\label{eq-Psi-T-k-eta}
    \psi^{(k)}_T(\eta T) &=\psi_{cusp}^{(k)}(\eta T) (1+O(|b|^\frac{1}{n+1}\tau)+O(\eta^{n+1})).
    \end{align}
In fact, cases $k=1,2$ follow directly from Proposition \ref{prop-T-c}. 
For $k\geq 3$, when we differentiate \eqref{eq-diff-psi-2} $k-2$ times using the product rule,  every term on the right-hand side is of the form
\begin{align}
    (-t)^{-k} (C+O(|b|^\frac{1}{n+1}\tau)+O(\eta^{n+1})).
\end{align}
By formally writing $1 = e^{\psi_{cusp} - \psi_{cusp}}$ and differentiating
\begin{align}\label{eq-Psi-cusp-eta}
\psi_{cusp}'' 
= (n+1)^n e^{\psi_{cusp} -\psi_{cusp}} (-t)^{-n-1}(\psi_{cusp}')^{1-n}
\end{align}
$k-2$ times, we know that the constants $C$ add up to $(n+1)(k-1)!$ as
\begin{align}
 \psi_{cusp}^{(k)}(t) =(n+1)(k-1)!(-t)^{-k}.  
\end{align}
Here we use the fact that the $(k-2)$-th derivatives of \eqref{eq-diff-psi-2} and $(k-2)$-th derivatives of \eqref{eq-Psi-cusp-eta} are in similar forms.
With this, we derive \eqref{eq-Psi-T-k-eta}. Near a point $q$ with $t_* := t(q) \in [\delta T, \tau]$, we then consider quasi-coordinates for $\omega_{cusp}$ as in \eqref{eq-quasi}:
\begin{align}\label{eq-quasi-1}
(\check x_\alpha, \check y_\alpha, \check x, \check \theta) := ((-t_*)^{-\frac{1}{2}} x_\alpha, (-t_*)^{-\frac{1}{2}} y_\alpha, -t_* x, (-t_*)^{-1}\theta ).
\end{align}
Similarly as in the proof of Lemma \ref{lem-cusp}, we need to show that
the $C^k$ norm of 
\begin{align}
\frac{\psi'_T - \psi'_{cusp}}{\psi'_{cusp}}, \;\,\frac{\psi''_T - \psi''_{cusp}}{\psi''_{cusp}}
\end{align}
with respect to \eqref{eq-quasi-1} on $\{2t_*<t<\frac{1}{2}t_*\}$ is bounded by $C_k (|b|^\frac{1}{n+1}|\tau|+\eta^{n+1})$. 
For $k=0$, this follows directly from \eqref{eq-Psi-T-k-eta}. For $k\geq 1$, we prove it by induction by applying \eqref{eq-high-quotient}--\eqref{eq-fg}.
\end{proof}

\begin{proposition} \label{prop-L-s}
Let $\delta \in (0,1)$ be a small dimensional constant. Define the coordinate $s:= 1-t/T$ and restrict it to $s\in [2|T|^{\alpha-1},\delta]$ for some $\alpha \in (0,1)$. Then $\Delta_{\omega_T}$  is of the form
\begin{align}\label{eq:666}
\Delta_{\omega_T}  = L +O(|b|^{-\frac{1}{n+1}} s^{-\frac{1}{n}})\cdot \partial^2_{\theta \theta}+O(|b|^\frac{1}{n+1}\tau)+O(s^\frac{n+1}{n}).
\end{align}
Here the $O$ notation is understood in the sense of Definition \ref{def-pert} with respect to the local chart $s, \theta, z_\alpha$ and the model operator $L$ takes the following form:
\begin{align}
\begin{split}
    L u &= \left(\frac{n+1}{n}\right)^\frac{1}{n} c(n)^{-\frac{n+1}{n}}(n s^\frac{n-1}{n}u_{ss} + (n-1)s^{-\frac{1}{n}}u_s) + |b|^{-\frac{1}{n+1}}s^{-\frac{1}{n}} \mathcal{T} u\\
    &+ C|b|^{-\frac{2}{n+1}} s^\frac{n-1}{n} u_{\theta\theta}+s^{-\frac{1}{n}}\mathcal{H}u,
    \end{split}
\end{align}
where  $C>0$ is a constant, $\mathcal{T}$ is an elliptic operator on $E$ and $\mathcal{H}$ is linear homogeneous in 
\begin{align}\label{eq:form_of_H}
    u_{s \alpha}, u_{s \bar \beta},  u_{s\theta},  |b|^{-\frac{1}{n+1}} u_{\alpha \theta},  |b|^{-\frac{1}{n+1}} u_{\bar \beta \theta}
\end{align}
with coefficients that are smooth in $x_\alpha, y_\alpha$ and independent of $b,T$.
\end{proposition}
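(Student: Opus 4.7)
The proof has two main ingredients: (a) sharp expansions of $\psi_T'$ and $\psi_T''$ near the horn $t=T$ (equivalently $s\to 0^+$), and (b) the inverse-metric formulas of Lemma \ref{lem-g-inv} applied in a mixed chart $(z_1,\ldots,z_{n-1},s,\theta)$ on the universal cover.

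For (a), set $F(s):=\psi_T(T(1-s))-\psi_T(T)$. Rewriting $(\psi_T')^{n+1}=(n+1)|b|(e^F-1)$ in the $s$-variable and using \eqref{eq-T-b} gives
\begin{align*}
(\partial_s F)^{n+1}=c(n)^{n+1}(e^F-1)\bigl(1+O(|b|^{\frac{1}{n+1}}\tau)\bigr).
\end{align*}
Iterating with $e^F-1=F+O(F^2)$ and matching leading terms yields $F(s)=D_0 s^{(n+1)/n}\bigl(1+O(s^{(n+1)/n})+O(|b|^{\frac{1}{n+1}}\tau)\bigr)$, where $D_0=c(n)^{(n+1)/n}(n/(n+1))^{(n+1)/n}$ is fixed by $D_0^n((n+1)/n)^{n+1}=c(n)^{n+1}$. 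Converting the $F$-estimates back to $\psi_T',\psi_T''$ via $\partial_s=|T|\partial_t$ (for $T<0$) and $|T|^{-1}\sim (n+1)^{1/(n+1)}c(n)^{-1}|b|^{1/(n+1)}$ produces
\begin{align*}
\psi_T'(t)=P_1|b|^{\frac{1}{n+1}}s^{\frac{1}{n}}(1+E),\quad \psi_T''(t)=P_2|b|^{\frac{2}{n+1}}s^{-\frac{n-1}{n}}(1+E),
\end{align*}
with $E=O(s^{(n+1)/n})+O(|b|^{\frac{1}{n+1}}\tau)$ and explicit constants $P_1,P_2$ that one verifies satisfy $1/(|T|\psi_T')\sim As^{-1/n}$ and $|T|^{-2}/\psi_T''\sim nAs^{(n-1)/n}$, where $A:=((n+1)/n)^{1/n}c(n)^{-(n+1)/n}$ is the prefactor appearing in $L$.

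For (b), let $\hat\partial_\alpha:=\partial/\partial z_\alpha|_{z_n}$ denote the holomorphic derivative used in Lemma \ref{lem-g-inv} and $\partial_\alpha,\partial_s,\partial_\theta$ the derivatives in the new chart. The chain rule gives $\hat\partial_\alpha=\partial_\alpha-(\varphi_\alpha/|T|)\partial_s$ and $\partial/\partial z_n|_{z_\alpha}=(|T|z_n)^{-1}\partial_s-i(2z_n)^{-1}\partial_\theta$. Plugging these into $\Delta_{\omega_T}u=g^{\bar j i}u_{i\bar j}$ and applying Lemma \ref{lem-g-inv} with $F=\psi_T'$, $G=\psi_T''$, $\varphi=-|z|^2$, and expanding the four blocks yields: (i) the tangential block $g^{\bar\beta\alpha}u_{\alpha\bar\beta}$ gives $\Delta_E u/\psi_T'$, hence the $|b|^{-1/(n+1)}s^{-1/n}\mathcal{T}u$ term, plus a $u_s$ coefficient equal to $(n-1)As^{-1/n}$ via the identity $\varphi^{\bar\beta\alpha}\partial_\alpha\varphi_{\bar\beta}=n-1$; (ii) the fiber block $g^{\bar n n}u_{n\bar n}$, with $g^{\bar n n}=|z_n|^2(1/\psi_T''+|z|^2/\psi_T')$, contributes the $u_{ss}$ coefficient $|T|^{-2}/\psi_T''=nAs^{(n-1)/n}$ and the $u_{\theta\theta}$ coefficient $1/(4\psi_T'')+|z|^2/(4\psi_T')$, whose leading part yields the $C|b|^{-2/(n+1)}s^{(n-1)/n}u_{\theta\theta}$ term and whose $|z|^2/(4\psi_T')$ correction is precisely the $O(|b|^{-1/(n+1)}s^{-1/n})\partial^2_{\theta\theta}$ error in the statement; (iii) the remaining cross-derivative terms $u_{s\alpha},u_{s\bar\beta},u_{s\theta},u_{\alpha\theta},u_{\bar\beta\theta}$ assemble into $s^{-1/n}\mathcal{H}u$, with the $|b|^{-1/(n+1)}$ factors on $u_{\alpha\theta},u_{\bar\beta\theta}$ coming from $g^{\bar n\beta}\propto z^\beta\bar z_n$ contracting with $(2z_n)^{-1}\partial_\theta$.

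The main calculational subtlety is to verify the cancellation of $|z|^2$-dependent $u_{ss}$ contributions across the four blocks. The tangential block gives $+|z|^2/(\psi_T'|T|^2)u_{ss}$ from the $(\varphi_\alpha\varphi_{\bar\beta}/|T|^2)$ piece of $u_{\alpha\bar\beta}|_{z_n}$; the two mixed blocks $g^{\bar\alpha n}u_{n\bar\alpha}+g^{\bar n\beta}u_{\beta\bar n}$ together contribute $-2|z|^2/(\psi_T'|T|^2)u_{ss}$; and the $|z|^2/\psi_T'$ correction hidden in $g^{\bar n n}$ contributes another $+|z|^2/(\psi_T'|T|^2)u_{ss}$. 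These three sum to zero, so only $|T|^{-2}/\psi_T''\cdot u_{ss}$ survives. A parallel Hermitian-conjugate cancellation removes the imaginary $u_{s\theta}$ pieces between the mixed blocks. Once these cancellations are in hand, the multiplicative errors $E$ from step (a) propagate directly to all coefficients of $L$, producing exactly the $O(|b|^{1/(n+1)}\tau)+O(s^{(n+1)/n})$ perturbation of the differential operator in the sense of Definition \ref{def-pert}.
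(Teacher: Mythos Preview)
Your proof is correct and follows essentially the same strategy as the paper: derive sharp expansions of $\psi_T',\psi_T''$ near the horn from the integral identity \eqref{eq-psi-4} combined with \eqref{eq-T-b}, then compute $\Delta_{\omega_T}$ block by block via Lemma \ref{lem-g-inv} and identify the cancellation of the $Q/F$-type contributions to $u_{ss}$ across the tangential, mixed, and fiber blocks. The only cosmetic difference is that you work directly in the $s$-variable via $\partial_s=|T|\partial_t$, whereas the paper routes through the intermediate variable $x=-1/t$ and the conversion $\partial_x=|T|(1-s)^2\partial_s$; your shortcut is slightly cleaner but the content is identical.
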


The term $O(s^\frac{n+1}{n})$ in \eqref{eq:666} stands for a comparison of differential operators as in \eqref{eq:O(eps)terms}, so for applications it is important to know that this term is $> -1$. This is ensured by our assumption $s \leq \delta$. Similarly, given the condition $s \geq 2|T|^{\alpha-1}$, we can observe that the term with $\partial^2_{\theta \theta}$ in  \eqref{eq:666} is
\begin{align}
O(|b|^{-\frac{1}{n+1}} s^{-\frac{1}{n}})\cdot \partial^2_{\theta \theta} = O(|b|^\frac{\alpha}{n+1}) \cdot |b|^{-\frac{2}{n+1}} s^\frac{n-1}{n} u_{\theta\theta}.
\end{align}
Hence,  this term represents a higher-order contribution compared to the operator  $L.$

\begin{proof}[Proof of Proposition \ref{prop-L-s}]
By \eqref{eq-etaT-T} and \eqref{eq-T-b},
\begin{align}\begin{split}\label{eq-psi-T-s}
\int_{0}^{\psi_T(\eta T)-\psi_T(T)} (e^{\xi}-1 )^{-\frac{1}{n+1}} d\xi &= (n+1)^\frac{1}{n+1}|b|^{\frac{1}{n+1}}(1-\eta)|T|\\
&=(c(n) +O(|b|^\frac{1}{n+1}\tau))(1-\eta),
\end{split}
\end{align}
where $1-\eta =s$ and the $O(|b|^\frac{1}{n+1}\tau)$ term is independent of $s.$
When $\xi>0$ is bounded,
\begin{align}\label{eq-xi-exp}
    (e^\xi-1)^{-\frac{1}{n+1}} = \xi^{-\frac{1}{n+1}}(1+O(\xi)) = \xi^{-\frac{1}{n+1}} + O(\xi^\frac{n}{n+1}).
\end{align}
Thus, we have that
\begin{align}
    \int_{0}^{\psi_T(\eta T)-\psi_T(T)} (e^{\xi}-1 )^{-\frac{1}{n+1}} d\xi = \frac{n+1}{n}({\psi_T(\eta T)-\psi_T(T)})^\frac{n}{n+1}(1+O({\psi_T(\eta T)-\psi_T(T)})).
\end{align}
By \eqref{eq-psi-T-s},
\begin{align}
    \psi_T(\eta T)-\psi_T(T) =\left(\frac{n}{n+1}\right)^\frac{n+1}{n} \left(c(n) +O(|b|^\frac{1}{n+1}\tau)\right)^\frac{n+1}{n}s^\frac{n+1}{n} (1+ O(\psi_T(\eta T)-\psi_T(T))),
\end{align}
which further implies that
\begin{align}\label{eq-psi-T-s-2}
    \psi_T(\eta T)-\psi_T(T) = \left(\frac{n}{n+1}\right)^\frac{n+1}{n} \left(c(n)+O(|b|^\frac{1}{n+1}\tau) \right)^\frac{n+1}{n} s^\frac{n+1}{n}(1+O(s^\frac{n+1}{n})).
\end{align}
The metric $\omega_T$ is given by
\begin{align}
    \omega_T = -\psi_T'i\partial \overline{\partial} \varphi +\psi_T''i(-\partial \varphi +\partial \log z_n) \wedge (-\overline{\partial} \varphi +\overline{\partial} \log z_n).
\end{align}
Thanks to \eqref{eq-psi-T-s-2}, we can evaluate the coefficients as follows:
\begin{align}
\begin{split}\label{eq-Psi-T-11}
    \psi_T'(\eta T)& = (n+1)^\frac{1}{n+1}(e^{\psi_T(\eta T) +a}+b)^\frac{1}{n+1}\\
    &= (n+1)^\frac{1}{n+1}|b|^\frac{1}{n+1}(e^{\psi_T(\eta T) -\psi_T(T)}-1)^\frac{1}{n+1}\\
    &= (n+1)^\frac{1}{n+1}\left(\frac{n}{n+1}\right)^\frac{1}{n}|b|^\frac{1}{n+1}s^\frac{1}{n}\left(c(n)^\frac{1}{n} +O(|b|^\frac{1}{n+1}\tau) +O(s^\frac{n+1}{n})\right),
    \end{split}\\
\begin{split}\label{eq-Psi-T-12}
    \psi_T''(\eta T) & = \frac{e^{\psi_T(\eta T)+a}}{(\psi_T'(\eta T))^{n-1}} =  \frac{|b|e^{\psi_T(\eta T)- \psi_T(T)}}{(\psi_T'(\eta T))^{n-1}}\\
    &= (n+1)^{\frac{1-n}{n+1}} \left(\frac{n}{n+1}\right)^{\frac{1-n}{n}}|b|^\frac{2}{n+1}s^{\frac{1-n}{n}}\left(c(n)^{\frac{1-n}{n}} +O(|b|^\frac{1}{n+1}\tau) +O(s^\frac{n+1}{n})\right).
    \end{split}
\end{align}

Setting $x= -1/t$, we have $x= (1-s)^{-1}|T|^{-1}$ and
\begin{align}
    \partial_x &= |T|(1-s)^2 \partial_s, \quad
    \partial_{xx}^2 = |T|^2(1-s)^4\partial_s^2 - 2|T|^2(1-s)^3 \partial_s.
\end{align}
Given this conversion formula, it will be enough to express $\Delta_{\omega_T}$ in terms of $\partial_x, \partial^2_{xx}$. This can be done by a computation similar to the proof of \cite[Lemma 2.5]{FHJ}. Set 
\begin{align}
Q := \varphi^{\bar \beta \alpha}\varphi_\alpha \varphi_{\bar \beta},    
\end{align}
which is uniformly bounded. Applying Lemma \ref{lem-g-inv} with $F= \psi'(\eta T)$, $G = \psi''(\eta T)$, we have that
\begin{align}
\begin{split}\label{eq-Del1}
    g^{\bar n n} \partial_{n} \partial_{\bar n} &= \frac{r^2(F - GQ)}{FG} \left(\frac{x^2}{z_n}\partial_x - \frac{i}{2z_n}\partial_\theta\right)
    \left(\frac{x^2}{\overline{z}_n}\partial_x + \frac{i}{2\overline{z}_n}\partial_\theta\right)\\
    &= \left(\frac{1}{G} -\frac{Q}{F}\right)
    \left(x^4 \partial^2_{xx} + 2 x^3 \partial_x + \frac{1}{4} \partial^2_{\theta\theta} \right)\\
    &= (n+1)^{\frac{n-1}{n+1}} \left(\frac{n}{n+1}\right)^{\frac{n-1}{n}}c(n)^{\frac{n-1}{n}}|b|^{-\frac{2}{n+1}} s^{\frac{n-1}{n}} \left(x^4 \partial^2_{xx} + 2 x^3 \partial_x + \frac{1}{4} \partial^2_{\theta\theta} \right) \\
    & -\frac{Q}{F}(x^4 \partial^2_{xx} + 2 x^3 \partial_x)+O(|b|^{-\frac{1}{n+1}} s^{-\frac{1}{n}}) \partial^2_{\theta \theta} +O(|b|^\frac{1}{n+1}\tau) +O(s^\frac{n+1}{n})\\
     &= (n+1) \left(\frac{n}{n+1}\right)^{\frac{n-1}{n}} c(n)^{-\frac{n+1}{n}} s^{\frac{n-1}{n}} \partial^2_{ss} + C|b|^{-\frac{2}{n+1}} s^\frac{n-1}{n} \partial^2_{\theta\theta}  \\
    &  -\frac{Q}{F}(x^4 \partial^2_{xx} + 2 x^3 \partial_x)+O(|b|^{-\frac{1}{n+1}} s^{-\frac{1}{n}}) \partial^2_{\theta \theta} +O(|b|^\frac{1}{n+1}\tau) +O(s^\frac{n+1}{n}),
        \end{split}\\
\begin{split}\label{eq-Del2}
    g^{\bar \alpha n}\partial_n\partial_{\bar \alpha} &=-\frac{\varphi^{\bar \alpha \gamma}\varphi_{ \gamma}z_n}{F} \left(\frac{x^2}{z_n}\partial_x -\frac{i}{2z_n}\partial_\theta\right)(\partial_{\bar \alpha} - x^2\varphi_{\bar \alpha} \partial_x)\\
    &= -\frac{x^2}{F}\varphi^{\bar \alpha \gamma}\varphi_{ \gamma}\partial^2_{x\bar \alpha}+\frac{Q}{F}(x^4\partial^2_{xx} +2x^3 \partial_x)-\frac{ix^2Q\partial^2_{x\theta} }{2F}+\frac{i\varphi^{\bar \alpha \gamma}
    \varphi_\gamma \partial^2_{
    \bar \alpha \theta}}{F}\\
    &= s^{-\frac{1}{n}} H_1(x_\alpha, y_\alpha,   \partial^2_{s\bar \alpha}, |b|^\frac{1}{n+1} \partial^2_{ss},|b|^\frac{1}{n+1} \partial_{s}, \partial^2_{s\theta},  |b|^{-\frac{1}{n+1}} \partial^2_{\bar \alpha \theta})+O(b^{n+1}\tau)+O(s^\frac{n+1}{n}),
        \end{split}\\
\begin{split}\label{eq-Del3}
    g^{\bar \beta \alpha}\partial_\alpha \partial_{\bar \beta} &= -\frac{\varphi^{\bar \beta \alpha}}{F}(\partial_{z_\alpha} - x^2 \varphi_\alpha \partial_x)(\partial^2_{\bar \beta} - x^2 \varphi_{\bar \beta} \partial_x)\\
    &=-\frac{\varphi^{\bar \beta \alpha}}{F}\partial^2_{\alpha\bar \beta} +(n-1)\frac{x^2\partial_x}{F} +\frac{x^2\varphi^{\bar \beta \alpha}(\varphi_\alpha \partial^2_{x\bar \beta} + \varphi_{\bar \beta} \partial^2_{x\alpha})}{F} -\frac{Q (x^4\partial^2_{xx} +2x^3 \partial_x)}{F}\\
    &= |b|^{-\frac{1}{n+1}}s^{-\frac{1}{n}}T + (n-1)\left(\frac{n}{n+1}\right)^{-\frac{1}{n}} c(n)^{-\frac{n+1}{n}} s^{-\frac{1}{n}}\partial_s  \\
    &+ s^{-\frac{1}{n}} H_2(x_\alpha, y_\alpha,  \partial^2_{s \alpha}, \partial^2_{s \bar \beta},  |b|^{\frac{1}{n+1}} \partial^2_{ss}, |b|^{\frac{1}{n+1}} \partial_{s}) +O(b^{n+1}\tau)+O(s^\frac{n+1}{n}),
    \end{split}
\end{align}
where $H_1, H_2$ are smooth in $x_\alpha,y_\alpha$ and linear homogeneous with respect to their differential operator arguments.
As $\Delta_{\omega_T}$ is a real operator, all terms with an $i$ must cancel out. In addition, the terms
\begin{align}
\frac{Q}{F}(x^4 \partial^2_{xx} + 2 x^3 \partial_x)    
\end{align}
in \eqref{eq-Del1}--\eqref{eq-Del3} cancel out. So the terms $|b|^{\frac{1}{n+1}} \partial^2_{ss}, |b|^{\frac{1}{n+1}} \partial_{s}$ in $H_1, H_2$ cancel out as well.
\end{proof}

Let us also note the following formulas for later use. By repeatedly differentiating \eqref{eq-Psi-T-11}--\eqref{eq-Psi-T-12}, we may prove inductively that, when $k>2$, there are some constants $C_{j,l}$ such that
\begin{align}\label{horderpsiT}
\begin{split}\psi^{(k)}_{T}(\eta T)=&\sum_{j=-(k-2)}^{n(k-2)}\sum_{l=1}^{k-1}\frac{C_{j,l}e^{l(\psi_T(\eta T)+a)}}{(\psi'_T(\eta T))^{n-1+j}}\\
=&\sum_{j=-(k-2)}^{n(k-2)}\sum_{l=1}^{k-1}\frac{C_{j,l}|b|^le^{l(\psi_T(\eta T)-\psi_T(T))}}{(\psi'_T(\eta T))^{n-1+j}}\\
=&\sum_{j=-(k-2)}^{n(k-2)}\sum_{l=1}^{k-1}C_{n,j,l}|b|^{l-\frac{n-1+j}{n+1}}s^{\frac{1-n-j}{n}}\left(c(n)^{\frac{1-n}{n}} +O(|b|^\frac{1}{n+1}\tau) +O(s^\frac{n+1}{n})\right).
\end{split}
\end{align}

The following lemma about the shape of the volume form of $\omega_T$ is also an easy consequence of the computations in this section.

\begin{lemma}\label{l:volumeform}
Write $s = 1- \frac{t}{T}$ as before. Then for every $\eta \in (0,1)$ we have that
\begin{align}\label{eq:volumeform_T}
\omega_T^n|_{t=\eta T} = |T|^{-n} \mu_T(1-\eta)\, ds \wedge d\theta \wedge d{\rm Vol}_E,
\end{align}
where the radial volume density $\mu_T: (0,1) \to \mathbb{R}^+$ satisfies the following properties:
\begin{itemize}
    \item[$(1)$] There are constants $e(n), e'(n)>0$ with $e(n) \leq \mu_T(1-\eta) \leq e'(n)\eta^{-(n+1)}$ for all $\eta,T$.
    \item[$(2)$] $\mu_\infty := \lim_{T \to -\infty} \mu_T$ exists pointwise and is smooth.
    \item[$(3)$] $\mu_\infty(s) \to const>0$ as $s \to 0^+$ and $\eta^{n+1}\mu_\infty(1-\eta) \to const>0$ as $\eta \to 0^+$.
\end{itemize}
\end{lemma}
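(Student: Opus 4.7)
The plan is to reduce everything to a single explicit formula for the volume density. Writing the Calabi ansatz as $\omega_T = \psi_T'\omega_E + \psi_T''\, i\partial t\wedge\bar\partial t$ and using $\alpha^2=0$ for the rank-one $(1,1)$-form $\alpha := i\partial t\wedge\bar\partial t$, together with $\omega_E^n = 0$ and the identity $\omega_E^{n-1}\wedge i\partial t \wedge \bar\partial t = (n-1)!\, dt\wedge d\theta\wedge d\mathrm{Vol}_E$ (which follows because $d\varphi\wedge\omega_E^{n-1}=0$ on the base), I obtain
\[
\omega_T^n = n!\,(\psi_T')^{n-1}\psi_T''\, dt\wedge d\theta\wedge d\mathrm{Vol}_E.
\]
Invoking the radial Kähler-Einstein ODE \eqref{eq-psi-2} to replace $(\psi_T')^{n-1}\psi_T''$ by $e^{\psi_T+a}$, and using $dt = |T|\, ds$ at $t=\eta T$, gives the master formula
\[
\mu_T(1-\eta) \;=\; n!\,|T|^{n+1}\, e^{\psi_T(\eta T)+a}.
\]

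For part (1) I invoke Proposition \ref{prop-psi-diff}(1), which sandwiches $0\leq \psi_T-\psi_{cusp}\leq C$ on $[T,\tau]$. Since $e^{\psi_{cusp}+a} = (n+1)^n(-t)^{-(n+1)}$, at $t = \eta T$ this gives
\[
n!(n+1)^n \;\leq\; n!(n+1)^n\eta^{-(n+1)} \;\leq\; \mu_T(1-\eta) \;\leq\; e^C n!(n+1)^n\eta^{-(n+1)},
\]
which yields both bounds. For fixed $\eta\in(0,1)$ and $T$ sufficiently negative, $\eta T\in[T,\tau]$ because $\tau/T\to 0$ by \eqref{eq-b-tau}; the tail region $t\in[\tau,\log\delta^+]$ requires only the observation that $\omega_T\approx\omega_{cusp}$ there, which is already contained in Lemma \ref{CusptoNeckerror} and extends past $\tau$ by ODE continuation.

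For parts (2) and (3) I exploit the implicit representation: setting $u := \psi_T(\eta T) - \psi_T(T)\geq 0$ and using $e^{\psi_T(T)+a}=|b|$, the master formula becomes $\mu_T(1-\eta) = n!\,|T|^{n+1}|b|\, e^u$, while \eqref{eq-etaT-T} reads
\[
\int_0^{u}(e^\xi-1)^{-\frac{1}{n+1}}\,d\xi \;=\; (n+1)^{\frac{1}{n+1}}|b|^{\frac{1}{n+1}}|T|\,s, \qquad s=1-\eta.
\]
By \eqref{eq-T-b} and \eqref{eq-b-tau}, $(n+1)^{1/(n+1)}|b|^{1/(n+1)}|T|\to c(n)$ and $|T|^{n+1}|b|\to c(n)^{n+1}/(n+1)$. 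Hence $u\to u_\infty(s)$ smoothly in $s$, where $u_\infty(s)$ is defined by $\int_0^{u_\infty(s)}(e^\xi-1)^{-1/(n+1)}\,d\xi = c(n)s$ (smoothness follows from the implicit function theorem, the integrand being smooth and strictly positive for $\xi>0$), giving a pointwise smooth limit $\mu_\infty(s) = n!\,c(n)^{n+1}/(n+1)\cdot e^{u_\infty(s)}$ on $(0,1)$.

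Finally, for (3) I expand the integrand at its two singularities. As $s\to 0^+$, $(e^\xi-1)^{-1/(n+1)}\sim \xi^{-1/(n+1)}$ forces $u_\infty(s) \sim \mathrm{const}\cdot s^{(n+1)/n}\to 0$, so $\mu_\infty(s)\to n!\,c(n)^{n+1}/(n+1)>0$. For $\eta\to 0^+$ (equivalently $s\to 1^-$), writing $c(n)s = c(n)-c(n)\eta$ and using $(e^\xi-1)^{-1/(n+1)}\sim e^{-\xi/(n+1)}$ for large $\xi$, the tail integral satisfies $\int_{u_\infty}^\infty \sim (n+1)e^{-u_\infty/(n+1)} = c(n)\eta$, so $e^{u_\infty}\sim ((n+1)/(c(n)\eta))^{n+1}$, which gives $\eta^{n+1}\mu_\infty(1-\eta)\to n!(n+1)^n>0$. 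The main (mild) obstacle is making all bounds genuinely uniform in $\eta$ near the endpoints while $T$ varies; this is handled by the sandwich of part (1), which bounds $\mu_T$ even when the explicit asymptotic expansions in the middle neck break down.
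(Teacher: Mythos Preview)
Your proof is correct and follows essentially the same route as the paper: both express $\mu_T$ via the K\"ahler-Einstein equation as a constant times $e^{\psi_T(\eta T)-\psi_T(T)}$, invoke Proposition~\ref{prop-psi-diff} for the two-sided bound in (1), and read off (2)--(3) from the implicit integral relation \eqref{eq-etaT-T} (the paper cites the derived formulas \eqref{eq-psi-T-eta-0} and \eqref{eq-psi-T-s-2} instead, but these are the same computation). Your version is in fact slightly more precise: you correctly carry the $T$-dependent prefactor $n!\,|T|^{n+1}|b|$, which the paper absorbs into a dimensional constant $e''(n)$---a harmless imprecision since $|T|^{n+1}|b|\to c(n)^{n+1}/(n+1)$ by \eqref{eq-T-b}, but your formulation makes the limit $\mu_\infty$ explicit.
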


\begin{proof}
From the Kähler-Einstein equation, it is straightforward to see that
\begin{align}
\omega_T^n|_{t=\eta T} = e''(n) |T|^{-n} e^{\psi_T(\eta T) - \psi_T(T)} ds \wedge d\theta \wedge d{\rm Vol}_E
\end{align}
for some constant $e''(n)>0$. For item (1), we use \eqref{eq-psi-T-max} to estimate
\begin{align}\begin{split}
0 \leq \psi_T(\eta T) - \psi_T(T) &\leq \psi_{cusp}(\eta T) + C - \psi_T(T) \\
&\leq -(n+1)\log\eta - (n+1)\log(-T) + C - \psi_T(T)\\
&\leq -(n+1)\log \eta + C.
\end{split}\end{align}
For items (2) and (3), we use the more precise formula \eqref{eq-psi-T-eta-0}, which is valid for all $\eta \in (0,1)$. This tells us that $\psi_T(\eta T) - \psi_T(T)$ has a pointwise limit as $T \to -\infty$, which is a smooth function of $\eta$ and which moreover differs from $-(n+1)\log \eta + C$ by $O(\eta^{n+1})$ as $\eta \to 0^+$. This also establishes the limit as $\eta \to 0^+$ in item (3). For the limit as $s \to 0^+$ in item (3), we can instead look at \eqref{eq-psi-T-s-2}.
\end{proof}

\subsubsection{Estimates asymptotically close to the Tian-Yau end \textup{(}orange region\textup{)}}

Recall from Remark \ref{rem:colored_regions} that the orange gluing region takes the form $t \in [T+T_0,T+2T_0]$ for some positive $T_0$ depending on $\sigma$ such that $T_0 \to +\infty$ but $T_0/T \rightarrow 0^-$ as $b \rightarrow 0^-$. From now on we fix an $\alpha \in (0,1)$ and set
\begin{align}\label{eq:def_T0}
T_0 := |T|^\alpha.
\end{align}

In this section we will prove decay estimates for the difference of the Tian-Yau metric and the neck metric $\omega_T$ in the orange region. We slightly abuse notation by writing
\begin{align}\label{eq-psi-C}
\psi_{\mathcal{C}}(t) = \frac{n}{n+1}(t-T)^{\frac{n+1}{n}}
\end{align}
for the Calabi model potential shifted by $T$. We also set
\begin{align}\label{cccccc}
\mathfrak{c} := n^{\frac{1}{n}}.
\end{align}
We wish to glue $\omega_T$ with $\mathfrak{c} |b|^\frac{1}{n} \omega_{\mathcal{C}}$. To this end, we introduce the error term
\begin{align}\label{ErrorE}
E(t):= \psi_T(t) - \psi_T(T) - \mathfrak{c} |b|^\frac{1}{n} \psi_{\mathcal{C}}.
\end{align}
Notice that
\begin{align}
\int_{\psi_T(T)}^{\psi_T(t)} (e^{\xi+a}+b )^{-\frac{1}{n+1}}d\xi = (n+1)^\frac{1}{n+1}(t-T).
\end{align}
Taking $s = \xi -\psi_T(T)$, we have that for $t-T \in [T_0, 2T_0]$, 
\begin{align}\label{eq-psi-t-T}
\int_{0}^{\psi_T(t)-\psi_T(T)} (e^{s}-1 )^{-\frac{1}{n+1}}ds = (n+1)^\frac{1}{n+1}|b|^{\frac{1}{n+1}}(t-T) = O(T_0|T|^{-1}).
\end{align}
Then $\psi_T(t)-\psi_T(T) =o(1)$ as $T\rightarrow -\infty.$ When $s$ is small, 
\begin{align}
(e^s -1)^{-\frac{1}{n+1}} = s^{-\frac{1}{n+1}}  +O(s^{\frac{n}{n+1}}).
\end{align}
We have that
\begin{equation}\label{psiDiff}
\frac{n+1}{n}(\psi_T(t)-\psi_T(T))^\frac{n}{n+1}(1 +O(\psi_T(t)-\psi_T(T))) = (n+1)^\frac{1}{n+1}|b|^{\frac{1}{n+1}}(t-T).
\end{equation}
In sum, for $t-T \in [T_0, 2T_0]$, 
\begin{align}\label{eq-Psi-t-T-T0}
\psi_T(t)-\psi_T(T) = \mathfrak{c}|b|^{\frac{1}{n}}\psi_\mathcal{C} + O(|b|^\frac{2}{n}T_0^\frac{2(n+1)}{n}),
\end{align}
which shows that
\begin{align}
    E(t) = O(|b|^\frac{2}{n}T_0^\frac{2(n+1)}{n}).
\end{align}
We can actually expand $E(t)$ in terms of powers of $|b|^\frac{1}{n}\psi_{\mathcal{C}}$ up to any finite order.
For example,
\begin{align}\label{eq:i_need_this_too}
E(t) = \frac{\mathfrak{c}^2}{4n+2} |b|^\frac{2}{n} \psi_{\mathcal{C}}^2 + O(|b|^\frac{3}{n}\psi_{\mathcal{C}}^3)\;\,\text{as}\;\,|b|^{\frac{1}{n}}\psi_{\mathcal{C}} \to 0.
\end{align}
More precisely, we have the following statement.

\begin{lemma}\label{ana}
 $E(t)$ is an analytic function of $|b|^\frac{1}{n} (t-T)^\frac{n+1}{n}$ at $|b|^\frac{1}{n} (t-T)^\frac{n+1}{n}=0.$   
\end{lemma}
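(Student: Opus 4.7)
The plan is to invert the defining integral equation for $\psi_T(t) - \psi_T(T)$ using the analytic inverse function theorem, after clearing out a single explicit branch point. Let me write $w := \psi_T(t) - \psi_T(T)$ and $u := |b|^{\frac{1}{n}}(t-T)^{\frac{n+1}{n}}$. From \eqref{eq-psi-t-T},
\begin{align}
F(w) := \int_0^w (e^s-1)^{-\frac{1}{n+1}}\,ds = (n+1)^{\frac{1}{n+1}}|b|^{\frac{1}{n+1}}(t-T),
\end{align}
and raising this identity to the $(n+1)$-st power shows that $F(w)^{n+1} = (n+1)|b|(t-T)^{n+1}$. Our goal is to solve this equation for $w$ as an analytic function of $u$ near $u = 0$, and then use the definition $E(t) = w - \mathfrak{c}|b|^{\frac{1}{n}}\psi_\mathcal{C} = w - \frac{n^{(n+1)/n}}{n+1}u$ to conclude.

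First I would write $e^s - 1 = s\,\varphi(s)$, where $\varphi(s) = \sum_{k \geq 0} s^k/(k+1)!$ is entire with $\varphi(0) = 1$. Then $(e^s-1)^{-1/(n+1)} = s^{-\frac{1}{n+1}}\varphi(s)^{-\frac{1}{n+1}}$, and $\tilde\varphi(s) := \varphi(s)^{-1/(n+1)}$ is analytic in a neighborhood of $s = 0$ with $\tilde\varphi(0) = 1$. Writing $\tilde\varphi(s) = \sum_{k \geq 0} a_k s^k$ and integrating termwise,
\begin{align}
F(w) = \sum_{k \geq 0} \frac{a_k}{k + \frac{n}{n+1}}\,w^{k + \frac{n}{n+1}} = w^{\frac{n}{n+1}}\,\tilde F(w),
\end{align}
where $\tilde F(w) := \sum_{k \geq 0} \frac{a_k}{k + n/(n+1)}w^k$ is analytic near $w = 0$ with $\tilde F(0) = \frac{n+1}{n} \neq 0$. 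Raising the equation $F(w) = (n+1)^{\frac{1}{n+1}}|b|^{\frac{1}{n+1}}(t-T)$ to the power $(n+1)/n$ then yields
\begin{align}
w\,\tilde F(w)^{\frac{n+1}{n}} = (n+1)^{\frac{1}{n}}\,u.
\end{align}

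The left-hand side is an analytic function of $w$ near $w = 0$, with value $0$ and derivative $\tilde F(0)^{(n+1)/n} = \bigl(\frac{n+1}{n}\bigr)^{(n+1)/n} \neq 0$ there. By the analytic inverse function theorem, there exists an analytic function $w = w(u)$ defined in a neighborhood of $u = 0$ with $w(0) = 0$ solving this equation. Matching leading coefficients gives $w(u) = \frac{n^{(n+1)/n}}{n+1}u + O(u^2)$, which matches the definition of $\mathfrak{c}|b|^{\frac{1}{n}}\psi_\mathcal{C}$; hence $E(t) = w(u) - \frac{n^{(n+1)/n}}{n+1}u$ is an analytic function of $u$ near $u = 0$, vanishing to order at least $2$. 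This is precisely the claim, and as a byproduct it gives the quadratic leading term recorded in \eqref{eq:i_need_this_too}.

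I do not anticipate any serious obstacle; the entire argument is essentially the observation that the branch point of $F$ at $w=0$ has the exponent $n/(n+1)$ which is exactly compensated when we pass to the variable $u = |b|^{1/n}(t-T)^{(n+1)/n}$. The only point requiring a bit of care is the termwise integration in the definition of $\tilde F$, which is justified because the series for $\tilde \varphi$ converges uniformly on compacta near $s=0$ and $s^{k - 1/(n+1)}$ is integrable for every $k \geq 0$.
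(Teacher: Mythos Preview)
Your proof is correct and follows essentially the same approach as the paper: factor out the branch $s^{-1/(n+1)}$ from $(e^s-1)^{-1/(n+1)}$, integrate termwise to obtain $F(w) = w^{n/(n+1)}\tilde F(w)$ with $\tilde F$ analytic and nonvanishing at $0$, raise to the power $(n+1)/n$, and invert. The paper invokes the Lagrange inversion theorem at the final step where you invoke the analytic inverse function theorem, but these are interchangeable here.
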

\begin{proof}
Applying the Maclaurin series of $e^s$,   \begin{align}
    (e^s -1)^{-\frac{1}{n+1}}= s^{-\frac{1}{n+1}} \left(\sum_{i=0}^\infty \frac{s^i}{(i+1)!}\right)^{-\frac{1}{n+1}}=  s^{-\frac{1}{n+1}} \sum_{i=0}^\infty a_i s^i,
\end{align}
where $a_0=1$ and $C^i a_i \rightarrow 0$ for some $C>0$. Indeed, $\left(\frac{e^s-1}{s}\right)^{-\frac{1}{n+1}}$ is analytic at $s=0.$
By \eqref{eq-psi-t-T},
\begin{align}
\sum_{i=0}^\infty \tilde{a}_i (\psi_T(t)-\psi_T(T))^{i+\frac{n}{n+1}} = (n+1)^\frac{1}{n+1}|b|^\frac{1}{n+1}(t-T),
\end{align}
where again $\tilde{a}_0>0$ and $C^i \tilde{a}_i \rightarrow 0$ for some $C>0$. Taking the $\frac{n+1}{n}$-th power of both sides, we get
\begin{align}
(\psi_T(t) - \psi_T(T))\left(\sum_{i=0}^\infty \tilde{a}_i (\psi_T(t)-\psi_T(T))^{i}\right)^\frac{n+1}{n} = (n+1)^\frac{1}{n}|b|^\frac{1}{n}(t-T)^\frac{n+1}{n}.
\end{align}
The left-hand side is obviously analytic in $\psi_T(t) - \psi_T(T)$ around $\psi_T(t) - \psi_T(T) =0$. 
Then the lemma follows from the Lagrange inversion theorem.
\end{proof}

Lastly, we estimate the derivatives of $\ddbar E$ under the scaled Calabi metric $|b|^{\frac{1}{n}}\omega_{\mathcal{C}}$. For this we need one more lemma establishing quasi-coordinates for this metric. Following our work after Lemma \ref{lem-cusp}, we define real coordinates on the universal cover of the annulus $\{y := t-T \in [T_0,2T_0]\}$ via
\begin{align}\label{eq-TY-coor}
 (\check x_\alpha, \check y_\alpha, \check y, \check \theta) := (T_0^{\frac{1}{2n}}x_\alpha, T_0^{\frac{1}{2n}} y_\alpha, T_0^{\frac{1-n}{2n}}    y, T_0^{\frac{1-n}{2n}}    \theta ).
 \end{align}
We also introduce the corresponding holomorphic coordinates, with $w = \log z_n$:
\begin{align}\label{eq-TY-coor-2}
(\check z_\alpha, \check w) :=  (T_0^{\frac{1}{2n}}  z_\alpha, T_0^{\frac{1-n}{2n}}    w).
\end{align}

\begin{lemma}\label{lem-Ca-coor}
The un-scaled Calabi metric $\omega_{\mathcal{C}}$ is uniformly equivalent to $(\delta_{jk})$ under \eqref{eq-TY-coor}--\eqref{eq-TY-coor-2}. For all $k \geq 1$ its entries under \eqref{eq-TY-coor}--\eqref{eq-TY-coor-2} satisfy a uniform $C^k$ bound independent of $T,T_0$. 
\end{lemma}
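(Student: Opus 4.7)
My plan is to compute $\omega_{\mathcal{C}}$ explicitly in the local holomorphic chart $(z_1, \ldots, z_{n-1}, w = \log z_n)$ and then verify the bounds by a direct power count. With $t = -\varphi + w + \bar w$ and $\psi_{\mathcal{C}}(t) = \frac{n}{n+1}(t-T)^{(n+1)/n}$, the Calabi form takes the standard shape established earlier in the subsection, giving entries $g_{\alpha\bar\beta} = -\psi_{\mathcal{C}}'\varphi_{\alpha\bar\beta} + \psi_{\mathcal{C}}''\varphi_\alpha\varphi_{\bar\beta}$, $g_{\alpha\bar w} = -\psi_{\mathcal{C}}''\varphi_\alpha$, and $g_{w\bar w} = \psi_{\mathcal{C}}''$. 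In the flat case on the universal cover of $E$, both $\varphi_\alpha$ (linear in $\bar z$) and $\varphi_{\alpha\bar\beta}$ (constant) are explicit. Writing $y := t - T \in [T_0, 2T_0]$, each derivative $\psi_{\mathcal{C}}^{(j)}(t) = c_j y^{(n+1)/n - j}$ has order $T_0^{(n+1)/n - j}$.

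The key observation is that \eqref{eq-TY-coor-2} is precisely the anisotropic Euclidean dilation matching $\omega_{\mathcal{C}}$: tangential distances scale as $(\psi_{\mathcal{C}}')^{1/2} \sim T_0^{1/(2n)}$ while radial distances scale as $(\psi_{\mathcal{C}}'')^{1/2} \sim T_0^{(1-n)/(2n)}$. On a ball of fixed radius in the rescaled coordinates around a reference point with $y_0 \asymp T_0$, one has $|z_\alpha| = O(T_0^{-1/(2n)})$ and $|y - y_0| = O(T_0^{(n-1)/(2n)}) = o(T_0)$ for $n \geq 2$, so $y/T_0$ stays in a compact interval bounded away from zero. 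Substituting, the rescaled entries become
\[
\check g_{\alpha\bar\beta} = T_0^{-1/n}\psi_{\mathcal{C}}'(-\varphi_{\alpha\bar\beta}) + O(T_0^{-1}), \quad \check g_{w\bar w} = T_0^{(n-1)/n}\psi_{\mathcal{C}}'', \quad \check g_{\alpha\bar w} = O(T_0^{-(n+1)/(2n)}),
\]
where $T_0^{-1/n}\psi_{\mathcal{C}}' = (y/T_0)^{1/n}$ and $T_0^{(n-1)/n}\psi_{\mathcal{C}}'' = n^{-1}(y/T_0)^{(1-n)/n}$ are bounded above and below by universal positive constants. This yields uniform equivalence to the Euclidean metric.

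For the $C^k$ bounds, I would do an inductive power count. Each $\check\partial_w$ or $\check\partial_{\bar w}$ picks up $T_0^{(n-1)/(2n)}$ from the chain rule and, via $\partial_w\psi^{(j)}(t) = \psi^{(j+1)}(t)$, replaces $\psi^{(j)}$ by $\psi^{(j+1)}$ (losing $T_0^{-1}$), for a net factor of $T_0^{-(n+1)/(2n)} \leq 1$; each $\check\partial_{z_\alpha}$ or $\check\partial_{\bar z_\beta}$ picks up $T_0^{-1/(2n)}$ and, via $\partial_{z_\alpha}\psi^{(j)}(t) = -\varphi_\alpha\psi^{(j+1)}(t)$ with $|\varphi_\alpha| = O(T_0^{-1/(2n)})$ on the fixed $\check z$-ball, brings an additional $O(T_0^{-1/(2n)})$ factor. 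In every case, the $k$-th derivative of every entry remains bounded by a constant depending only on $k$, uniformly in $T, T_0$. The only mild subtlety is tracking the $z$-derivative chain rule for the cross term $g_{\alpha\bar w}$, but since $\varphi$ is quadratic and $\psi_{\mathcal{C}}$ depends only on $t$, the bookkeeping is a routine induction.
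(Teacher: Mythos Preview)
Your proof is correct and follows essentially the same approach as the paper's: both compute $\omega_{\mathcal{C}}$ in the holomorphic chart $(z_\alpha,w=\log z_n)$ with $t=-\varphi+w+\bar w$, then verify by direct power counting that the anisotropic rescaling \eqref{eq-TY-coor-2} renders the entries uniformly comparable to $\delta_{jk}$ with uniformly decaying coordinate derivatives. The paper packages the computation as the $2$-form identity \eqref{eq:formula666} rather than listing the matrix entries $g_{\alpha\bar\beta},g_{\alpha\bar w},g_{w\bar w}$ as you do, but the substance is identical (one small slip: your secondary term in $\check g_{\alpha\bar\beta}$ is actually $O(T_0^{-(n+1)/n})$, not $O(T_0^{-1})$, though this is immaterial since both are $o(1)$).
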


\begin{proof}
Because $w =\log z_n$, we locally have that
\begin{align}\label{eq-t-w}
t =\log h = -\varphi + w + \overline{w}.    
\end{align}
This implies that 
\begin{align}
\begin{split}
 \omega_{\mathcal{C}} = \frac{n}{n+1} i\partial\overline{\partial} (t- T)^\frac{n+1}{n} &= \frac{1}{n} (t-T)^{\frac{1-n}{n}} i\partial t \wedge \overline{\partial} t +(t-T)^\frac{1}{n}i\partial\overline{\partial} t\\
 &=  \frac{1}{n} (t-T)^{\frac{1-n}{n}} i\partial (-\varphi +w) \wedge \overline{\partial} (-\varphi +\overline{w}) -(t-T)^\frac{1}{n}i\partial\overline{\partial} \varphi,
 \end{split}
\end{align}
where $\varphi$ is a quadratic polynomial in $z_\alpha, \overline{z}_\alpha$ for $\alpha=1,\ldots, n-1$. 
Under the coordinates
$(\check z_\alpha, \check w)$ given in \eqref{eq-TY-coor-2}, it follows that
\begin{align}\begin{split}\label{eq:formula666}
 \omega_{\mathcal{C}}
 &=  \frac{1}{n} ((t-T)T_0^{-1})^{\frac{1-n}{n}} i(-T_0^{-\frac{1}{2}}\varphi_{z_\alpha} d\check z_\alpha +d \check w) \wedge
(-T_0^{-\frac{1}{2}}\varphi_{\overline{z}_\alpha}d \overline{{\check z}}_{\alpha}+d \overline {\check w})\\&  
-((t-T)T_0^{-1})^\frac{1}{n}\varphi_{z_\alpha\overline{z}_\beta}id\check z_\alpha \wedge d\overline{\check z}_{\beta}.
\end{split}
\end{align}
This is uniformly equivalent to the Euclidean metric in the chart \eqref{eq-TY-coor-2} for $y = t-T \in [T_0,2T_0]$. To check the desired $C^k$ bound for $k = 1$, note that $\varphi_{z_\alpha\overline{z}_\beta}$ is a constant, that $\varphi_{\overline{z}_\alpha}$ satisfies
\begin{align}
\frac{\partial\varphi_{\overline{z}_\alpha}}{\partial\check z_\beta} = T_0^{-\frac{1}{2n}}\frac{\partial\varphi_{\overline{z}_\alpha}}{\partial z_\beta} =T_0^{-\frac{1}{2n}}\varphi_{z_\beta\overline{z}_\alpha},
\end{align}
and that, by \eqref{eq-t-w},
\begin{align}
\frac{\partial ((t-T)T_0^{-1})}{\partial \check w} = T_0^\frac{n-1}{2n} \frac{\partial ( (t-T)T_0^{-1})}{\partial w} =T_0 ^{-\frac{n+1}{2n}}.
\end{align}
Thus, the first coordinate derivatives of $\omega_{\mathcal{C}}$ are uniformly decaying. This pattern persists for all $k$.
\end{proof}

\begin{proposition}\label{prop:estimate_of_E}
     For $t\in [T+T_0, T+ 2T_0]$ and $j=1,2,$
\begin{equation}\label{TY-neckError}
\left| \nabla^j_{|b|^{\frac{1}{n}}\omega_{\cC}} (i\partial   \overline{\partial}
 E)\right|^2_{|b|^{\frac{1}{n}}\omega_{\cC}} \leq  C|b|^{\frac{2-j}{n}}T_0^{\frac{(2-j)(n+1)}{n}}.
\end{equation} 
\end{proposition}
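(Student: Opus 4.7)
The plan is to combine the analyticity of $E$ in the variable $u := |b|^{1/n}(t-T)^{(n+1)/n}$ established in Lemma \ref{ana} with suitably rescaled Calabi quasi-coordinates adapted to the scaled metric $|b|^{1/n}\omega_{\cC}$. By Lemma \ref{ana}, $E(t) = \mathcal{A}(u)$ with $\mathcal{A}(u) = \sum_{k \geq 2} c_k u^k$ analytic on a fixed disk around $u = 0$. On the orange region $y := t - T \in [T_0, 2T_0]$ one has $u = O(|b|^{1/n}T_0^{(n+1)/n}) = O(|T|^{(\alpha-1)(n+1)/n})$, which tends to $0$ as $\sigma \to 0$ by \eqref{eq-T-b} and our assumption $\alpha < 1$. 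Hence $\mathcal{A}^{(k)}(u)$ is uniformly bounded for every $k$, and term-by-term differentiation yields
\begin{equation}\label{eq:my-Ek}
E^{(k)}(t) = O(|b|^{2/n}T_0^{(2n+2-nk)/n}), \qquad k \geq 0,
\end{equation}
uniformly for $y \in [T_0, 2T_0]$.

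Next, I combine the Calabi quasi-coordinates \eqref{eq-TY-coor-2} with an additional uniform rescaling by $|b|^{1/(2n)}$: set $\tilde z_\alpha := |b|^{1/(2n)}T_0^{1/(2n)}z_\alpha$ and $\tilde w := |b|^{1/(2n)}T_0^{(1-n)/(2n)}\log z_n$. By Lemma \ref{lem-Ca-coor} applied with this extra scaling, $|b|^{1/n}\omega_{\cC}$ is uniformly equivalent to the Euclidean metric in the $(\tilde z, \tilde w)$ coordinates with uniformly bounded $C^k$ components for every $k$. Consequently, for any $(1,1)$-form $\beta$ and every $j \geq 0$, the squared norm $|\nabla_{|b|^{1/n}\omega_{\cC}}^j \beta|^2_{|b|^{1/n}\omega_{\cC}}$ is pointwise uniformly equivalent to the sum of squares of all order-$j$ coordinate partial derivatives of the entries of $\beta$ in these quasi-coordinates. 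Writing $t = -\varphi(z,\bar z) + w + \bar w$ with $\varphi$ quadratic in $z_\alpha,\bar z_\alpha$, one sees that $\partial t/\partial\tilde w = |b|^{-1/(2n)}T_0^{(n-1)/(2n)}$ is constant, whereas $|\tilde z_\alpha| = O(|b|^{1/(2n)}T_0^{1/(2n)})$ is small within the local bundle chart, so $\partial t/\partial\tilde z_\gamma = O(|b|^{-1/(2n)}T_0^{-1/(2n)})$ and $\partial^2 t/\partial\tilde z_\gamma\partial\overline{\tilde z}_\delta = O(|b|^{-1/n}T_0^{-1/n})$.

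Finally, the dominant entry of $\nabla^j(i\partial\overline{\partial} E)$ in these coordinates is the $\tilde w$-iterated partial of the $(\tilde w,\overline{\tilde w})$ component,
\begin{equation}
\frac{\partial^{j+2}E}{\partial\tilde w^{j+1}\partial\overline{\tilde w}} = E^{(j+2)}(t)\left(\frac{\partial t}{\partial\tilde w}\right)^{j+1}\frac{\partial t}{\partial\overline{\tilde w}}.
\end{equation}
Plugging in \eqref{eq:my-Ek} with $k = j+2$ and squaring produces a bound of order $|b|^{(2-j)/n}T_0^{[(2n+2)-j(n+1)]/n}$, and the identity $(2n+2)-j(n+1) = (2-j)(n+1)$ gives precisely the claimed estimate. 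The main obstacle is purely bookkeeping: one must carefully track the many powers of $|b|$ and $T_0$ that appear and verify that the cross entries and the $\tilde z_\gamma$-partial derivatives, as well as the contributions from the Christoffel symbols of $|b|^{1/n}\omega_{\cC}$ in the quasi-coordinates, all contribute at most the same order of magnitude as this dominant term. No new analytic input beyond Lemma \ref{ana} and Lemma \ref{lem-Ca-coor} is required.
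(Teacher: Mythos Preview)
Your proposal is correct and follows essentially the same route as the paper: derive derivative bounds on $E$ from the analyticity in Lemma~\ref{ana}, pass to Calabi quasi-coordinates from Lemma~\ref{lem-Ca-coor}, and read off the claimed powers of $|b|$ and $T_0$. The only organizational difference is that you scale the quasi-coordinates by $|b|^{1/(2n)}$ up front to adapt them to $|b|^{1/n}\omega_{\cC}$, whereas the paper works with the unscaled $\omega_{\cC}$ and applies the scaling law $|\nabla^j_{\tilde\omega}\mathbf{T}|^2_{\tilde\omega}=A^{-2-j}|\nabla^j_\omega\mathbf{T}|^2_\omega$ at the very end; the two are equivalent.

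One small remark: your designation of the $\partial_{\tilde w}^{j+1}\partial_{\overline{\tilde w}}$ entry as ``dominant'' is slightly misleading, since the $E'\cdot\partial^2 t/\partial\tilde z_\alpha\partial\overline{\tilde z}_\beta$ contribution is of the \emph{same} order $|b|^{1/n}T_0^{(n+1)/n}$ (for $j=0$), not strictly smaller. The paper sidesteps this by writing $i\partial\overline{\partial}E=-E'\,i\partial\overline{\partial}\varphi+E''\,i(\partial\varphi-\partial w)\wedge(\overline{\partial}\varphi-\overline{\partial}\overline{w})$ and noting that $T_0^{1/n}i\partial\overline{\partial}\varphi$ and $T_0^{(1-n)/n}i(\partial\varphi-\partial w)\wedge(\overline{\partial}\varphi-\overline{\partial}\overline{w})$ are uniformly smoothly bounded in the quasi-coordinates; this reduces the ``bookkeeping'' you mention to checking derivatives of the two scalar functions $E'/T_0^{1/n}$ and $E''/T_0^{(1-n)/n}$, which is cleaner than tracking all mixed coordinate entries.
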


\begin{proof}
Define $y := t-T$. By deriving the Taylor expansion of $\psi_T(t) - \psi_T(T)$ with respect to $|b|^\frac{1}{n}y^\frac{n+1}{n}$ using \eqref{psiDiff},
we obtain that for $y \in [T_0, 2T_0]$ and $j\geq 0$,
\begin{align}\label{eq-E-j}
|(y\partial_t)^j E(t)|\leq C_j |b|^\frac{2}{n}T_0^\frac{2(n+1)}{n},
\end{align}
where the constants $C_j$ are independent of $b$.

Recall the real chart $(\check x_\alpha, \check y_\alpha, \check y, \check \theta)$ and the holomorphic chart $(\check z_\alpha, \check w)$ on the universal cover of the annulus $\{y \in [T_0, 2T_0]\}$ defined in \eqref{eq-TY-coor}--\eqref{eq-TY-coor-2} above. By Lemma \ref{lem-Ca-coor}, these are quasi-coordinates for $\omega_{\mathcal{C}}$, i.e., the pullback of $\omega_{\mathcal{C}}$ to the universal cover is uniformly smoothly comparable to the Euclidean metric in these coordinates and its $C^{k}$ norms are bounded independently of $T_0$.

Now we are ready to estimate the derivatives of $i\partial   \overline{\partial} E$. In fact, 
 \begin{align}
  \begin{split}
 i\partial   \overline{\partial} E = E' i\partial \overline{\partial} t +E'' i\partial t \wedge \overline{\partial} t =-E' i\partial \overline{\partial} \varphi + E''i (\partial\varphi - \partial w) \wedge (\overline{\partial}\varphi - \overline{\partial} \overline{w}),
  \end{split}
 \end{align}
 where 
 \begin{align}
 T_0^\frac{1}{n}i\partial \overline{\partial} \varphi, \quad T_0^\frac{1-n}{n}i(\partial\varphi - \partial w)\wedge (\overline{\partial}\varphi - \overline{\partial} \overline{w})
 \end{align}
are uniformly smoothly bounded under \eqref{eq-TY-coor-2}. So we need to check the regularity of 
\begin{align}\label{eq-f''-f'}
E'/T_0^\frac{1}{n}, \quad  E''/T_0^\frac{1-n}{n}.
\end{align}
Because these are radial functions, we just need to check the derivatives of \eqref{eq-f''-f'} with respect to $\check y$. Applying \eqref{eq-E-j}, we obtain that
\begin{align}
\left|\frac{\partial (E'/T_0^\frac{1}{n})}{\partial \check y}\right|+\left|
\frac{\partial (E''/T_0^\frac{1-n}{n})}{\partial \check y}\right|\leq C|b|^\frac{2}{n}T_0^\frac{n+1}{2n},\\
\left|\frac{\partial^2 (E'/T_0^\frac{1}{n})}{\partial \check y^2}\right| +
\left| \frac{\partial^2 (E''/T_0^\frac{1-n}{n})}{\partial \check y^2}\right| \leq C|b|^\frac{2}{n}.
\end{align}
We conclude that the 2-form $i\partial   \overline{\partial}
 E$ satisfies  for $j=1,2$ that
\begin{align}\label{eq-E-TY}
\left| \nabla^j_{\omega_{\cC}} (i\partial   \overline{\partial}
 E)\right|_{\omega_{\cC}} \leq  C|b|^\frac{2}{n}T_0^{\frac{n+1}{2n}(2-j)}.
\end{align} 
 Now the metric $|b|^\frac{1}{n}\omega_{\cC}$ in the statement of the proposition is simply a rescaling of $\omega_{\cC}$.  If $\tilde \omega = A \omega$ for some constant $A > 0$, then for any 2-form $\mathbf{T}$,
 \begin{align}\label{eq-T-scale}
 |\nabla^j_{\tilde \omega} \mathbf{T} |^2_{\tilde \omega} = A^{-2-j} |\nabla^j_{ \omega} \mathbf{T} |^2_\omega.
 \end{align}
The proposition follows from \eqref{eq-E-TY}--\eqref{eq-T-scale}.
\end{proof}

\section{The glued approximate Kähler-Einstein metric }\label{sec:glued_approx_KE}
         
\subsection{Setting up the glued metric and the Monge-Amp\`ere equation}

 Before defining the glued metric, we briefly review some material from previous sections and fix some parameters.
 
 First, we have a family of Tian-Yau spaces $TY_\sigma$ as our singularity models, see \eqref{TYspace}. We have smooth embeddings $\Phi_\sigma: TY_0\setminus\overline{B}_{|\sigma|^{1/3}R} \to TY_\sigma$ onto a neighborhood of infinity in $TY_\sigma$ for $\sigma \neq 0$, see \eqref{eq:review_maps}. In Section \ref{TYConstruction} we review the Tian-Yau construction $(\psi_{TY_1},\omega_{TY_1},\Omega_{TY_1})$ on $TY_1$ and its decay towards the Calabi model data $(\psi_{\mathcal{C}},\omega_\cC,\Omega_\cC)$ on $TY_0$ via $\Phi_1$. This gives rise to a family $(\psi_{TY_\sigma},\omega_{TY_\sigma},\Omega_{TY_\sigma})$ via the biholomorphism $m_\sigma: TY_\sigma \to TY_1$ of \eqref{eq:review_maps}. A tricky point here is that, in the Tian-Yau construction, there is a unique choice of a Hermitian metric $h$ on the line bundle such that the Calabi model potential $\psi_{\mathcal{C}}$ defined using $h$ (see \eqref{calabiansatz}) differs from $\psi_{TY_1}$ by an exponentially decaying term. We shall fix this choice of $h$, i.e., no rescalings of $h$ are allowed. Another point that we hope will aid clarity is that we are not allowing any rescalings of $\Omega_{\mathcal{C}}$. This also fixes the scale of $\omega_{\mathcal{C}}$ because $\omega^2_\cC=\Omega_\cC\wedge\overline{\Omega}_\cC$.
 
 Secondly,  we have a family of canonically polarized surfaces $\mathcal  X_\sigma \subset \mathbb{CP}^3$ and hyperplane sections $\mathcal{D}_\sigma$ with $[2\mathcal{D}_\sigma] = K_{\mathcal  X_\sigma}$, see \eqref{eq:sextics} and \eqref{eq:sextics_affine}. We have a family of algebraic holomorphic volume forms $\Omega_{\sigma}$ on the affine surfaces $\cY_\sigma=\mathcal X_\sigma\setminus \mathcal{D}_\sigma$, which are unique up to scaling (Remark \ref{nonVHVunique}). On the regular part of $ \cY_0$, i.e., on the complement of the origin, we have a K\"ahler-Einstein metric $\omega_{KE,0}=\ddbar\psi_{KE,0}$ with $e^{-\psi_{KE,0}}\omega^2_{KE,0}=\Omega_0\wedge\overline{\Omega}_0$ (Lemma \ref{NormalizeKE}). We remark that the additive normalization of $\psi_{KE,0}$ depends on the choice of a scale of $\Omega_0$, which we will fix in a moment. 
 
 Third, we fix a local holomorphic identification $\Psi_\sigma$ of $TY_\sigma$ and $\mathcal X_\sigma$, see \eqref{Psi}. By Lemma \ref{OmegaOC},  by a suitable (unique) rescaling of $\Omega_0$ we can achieve that $ (\Psi_0^{-1})^*(\Omega_0\wedge\overline{\Omega}_0)=(1+O(e^{\frac{t}{2}}))({\Omega_{\mathcal C}}\wedge\overline{\Omega}_{\mathcal C})$ as $t = \log h \to -\infty$. We remark that there is no scaling ambiguity for $\Omega_\sigma$ any more since $\Omega_0$ is fixed. To get rid of the term $-3 \log (1-\mathfrak{s}/t)$ in
 the expansion of $(\Psi_0^{-1})^*\psi_{KE,0}-\psi_{cusp}$, we have to compose $\Psi_\sigma$ with $scale_{e^{-{\mathfrak{s}}}}$, but it is harmless for our purposes to assume directly that $\mathfrak{s} = 0$ (see Remark \ref{scalelambda}). Thus we achieve that $(\Psi_0^{-1})^*\psi_{KE,0} - \psi_{cusp} = O(e^{-\delta_0\sqrt{-t}})$ as $t \to -\infty$ for some $\delta_0>0$. Here, $\psi_{cusp}$ has already been normalized by adding a constant such that
    $e^{-\psi_{cusp}}\omega_{cusp}^2=\Omega_{\mathcal C}\wedge\overline{\Omega}_{\mathcal C}$.
    The same relation holds for the horn metrics $(\psi_T,\omega_T)$ of Section \ref{ss:new_neck} instead of $(\psi_{cusp},\omega_{cusp})$.

We review the relations between different parameters. The position of the horn is fixed via
\begin{align}
    T := \frac{2}{3}\log|\sigma|.
\end{align}
This is motivated by the discussion in Section \ref{sec:outline}. By \eqref{bandT}, the parameter $b<0$ satisfies that $|b|$ is uniformly comparable to $|T|^{-3}$. Our preferred radius coordinate $t = \log h$ differs from $\log |z|^2$ by a uniformly bounded function, see again the discussion before \eqref{eq:naive_gluing}. Here, $|z|$ is the standard radius in $\mathbb{C}^3$, where $TY_\sigma$ and $\mathcal{Y}_\sigma$ are embedded by definition. Thus, $|\sigma| |z|^{-3}$ is comparable to $e^{-(3/2)(t-T)}$.

The orange gluing region, where the end of the Tian-Yau space is attached to the left end of the new neck, is parametrized by $t \in [T+T_0,T+2T_0]$ (see Figure \ref{fig:cusp_horn} and Remark \ref{rem:colored_regions}). Here, as in \eqref{eq:def_T0},
\begin{align}
    T_0 := |T|^\alpha\;\,\text{for some fixed}\;\,\alpha \in (0,1).
\end{align}
The limit $\alpha \to 1$ corresponds to the naive gluing of the cusp metric and the Tian-Yau metric described after \eqref{eq:naive_gluing}, whereas in this paper we will always fix $\alpha$ to be arbitrarily close to zero. It is also worth noting that we do not glue with the Tian-Yau metric normalized exactly as above but, rather, with a scaled copy of it, where the scaling factor $\mathfrak{c}$ equals $\sqrt{2}$ for $n = 2$; see \eqref{cccccc}. We could have hidden this factor in our normalization of $\Omega_{\mathcal{C}}$, but we chose not to do so because $\mathfrak{c}$ appears much later in the paper than $\Omega_{\mathcal{C}}$, and a lot of other choices depend on the initial choice of $\Omega_{\mathcal{C}}$.

Lastly, the green gluing region between the neck and the cusp is parametrized by $t \in [2\tau,\tau]$ (see Remark \ref{rem:colored_regions}). The only requirement so far was that $\tau/T \to 0$ as $\sigma \to 0$. We now fix $\tau$ such that
\begin{align}\label{eq-fix-tau}
    |b||\tau|^3 = e^{-\delta_0\sqrt{-\tau}},
\end{align}
hence in particular $\tau \sim -((3/\delta_0)\log |T|)^2$. This is the classical choice of making the two gluing errors on the left side and on the right side of the gluing region comparable to each other (\eqref{eq:f_sigma_estimate}, line 3).  

\begin{table}[!ht]
\caption{Properties of $\omega_{glue,\sigma}$. The estimates are sharp up to constant factors.}
\label{tab:metric}\begin{tabular}{|l|l|l|l|l|}
\hline
\diagbox{Property}{Region}
& \parbox{32mm}{$\mathfrak{R}_1$\\compact part of $\mathcal{X}_\sigma$} & \parbox{30mm}{$\mathfrak{R}_2$\\hyperbolic cusp} & \parbox{23mm}{$\mathfrak{R}_3$\\green gluing}
& \parbox{31mm}{$\mathfrak{R}_4$\\middle neck}\\ \cline{1-1} \cline{3-5}
Range of $t = \log h$ & & $\tau<t<-N$&$2\tau<t<\tau$&$T+2T_0<t<2\tau$\\\hline
Diameter &$ 1 $&$ \log\log|T|$&{$1$}&$\log|T|$\\\hline
Curvature &$ 1 $&$ 1$&$1$&$|b|^{-\frac{1}{2}}(t-T)^{-\frac{3}{2}}$\\\hline\hline
\diagbox{Property}{Region}
&\parbox{32mm}{$\mathfrak{R}_5$\\orange gluing}&\parbox{30mm}{$\mathfrak{R}_6$\\Tian-Yau end}&\multicolumn{1}{l||}{\parbox{23mm}{$\mathfrak{R}_7$\\Tian-Yau cap}}& \multirow{4}{*}{\parbox{31mm}{ $|T|\sim |{\log |\sigma|}|$ \\  $T_0 = |T|^\alpha$ ($\alpha \ll 1$) \\ $|b| \sim |T|^{-3}$ \\ $\tau \sim (\log |T|)^2$  \\ $N,R$ $const$ $\gg 1$}} \\\cline{1-3}
Range of $t = \log h$ &$ T_0 <t-T<2T_0$&$ \log R < t-T<T_0$& \multicolumn{1}{l||}{} & \\\cline{1-4}
Diameter &$|T|^{-\frac{3}{4}(1-\alpha)}$&$|T|^{-\frac{3}{4}(1-\alpha)}$& \multicolumn{1}{l||}{$|T|^{-\frac{3}{4}}$} & \\\cline{1-4}
Curvature &$|b|^{-\frac{1}{2}}(t-T)^{-\frac{3}{2}}$&$|b|^{-\frac{1}{2}}(t-T)^{-\frac{3}{2}}$& \multicolumn{1}{l||}{$|b|^{-\frac{1}{2}}$} & \\\hline
\end{tabular}
\end{table}

\begin{figure}[!ht]
\caption{The seven regions of $\mathcal{X}_\sigma$. For the middle neck $\mathfrak{R}_4$ see also Figure \ref{fig:cusp_horn}.}\label{fig:gluing_regions}
\begin{center}
\includegraphics[width=150mm]{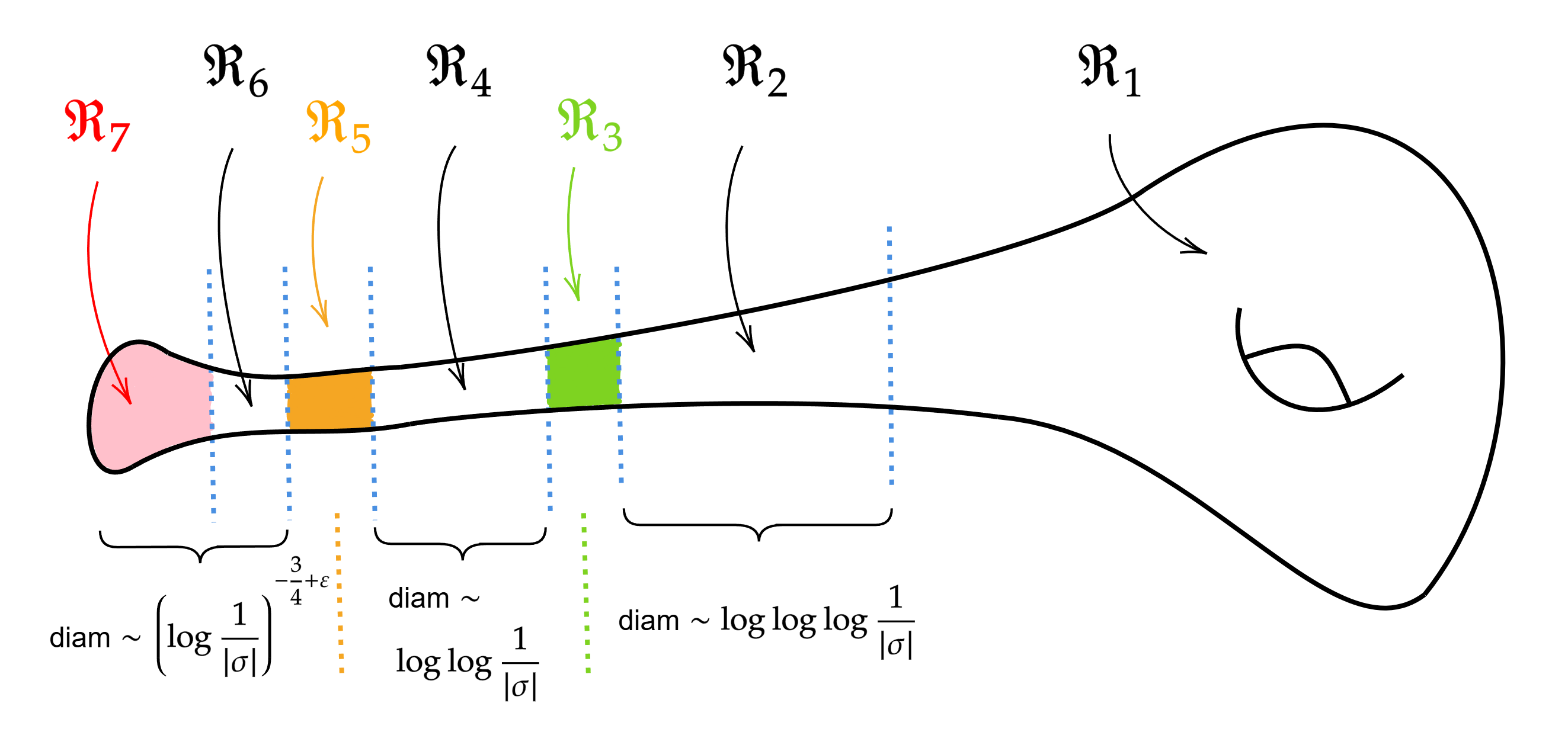}
\end{center}
\vspace{-2mm}
\end{figure}

\begin{definition}
We now define our glued approximate K\"ahler-Einstein metric $\omega_{glue,\sigma}$ on $\mathcal{X}_\sigma$. On the affine surface $\mathcal{Y}_\sigma = \mathcal{X}_\sigma \setminus \mathcal{D}_\sigma$ we set $\omega_{glue,\sigma} := \ddbars \psi_{glue,\sigma}$, where $\psi_{glue,\sigma}$ is defined as follows. We need to distinguish $7$ different regions $\mathfrak{R}_1, \ldots, \mathfrak{R}_7$; see Table \ref{tab:metric} and Figure \ref{fig:gluing_regions}. We begin by setting
\begin{equation}\label{def:metric}
\psi_{glue,\sigma}:=\psi_{FS,\sigma}+(G_\sigma^{-1})^*\psi_0\;\,\text{on}\;\,\mathfrak{R}_1 := \Psi_\sigma^{-1}(\Phi_\sigma(\{t>-N\})) \cup (\mathcal{Y}_\sigma \setminus {\rm dom}\,\Psi_\sigma).
\end{equation}
Here $\psi_{FS,\sigma}$ was defined in \eqref{FS_sigma}, $\psi_0 = \psi_{KE,0} - \psi_{FS,0}$ (Lemma \ref{lem:exloglogsoln}), and $G_\sigma$ is the $C^5$ diffeomorphism from Lemma \ref{outdiff}, while $N$ is an arbitrary but fixed large positive constant such that $\Phi_\sigma(\{t < -N/2\})$ is contained in ${\rm dom}\,\Psi_\sigma^{-1}$. Clearly $\omega_{glue,\sigma}$ is then at least $C^3$ at the divisor $\mathcal{D}_\sigma$.

For the remaining $6$ regions we prefer to write down formulas for $(\Psi^{-1}_\sigma)^*\psi_{glue,\sigma}$ on $TY_\sigma$ rather than for $\psi_{glue,\sigma}$ on $\mathcal{X}_\sigma$. To this end, let $\chi_1(t)$ be smooth and increasing with $\chi_1(t)=0$ for $t\leq T+T_0$ and $\chi_1(t)=1$ for $t\geq T+2T_0$, and with $|\partial_t^j\chi_1| = O_j(T_0^{-j})$ for all $j \geq 0$.  Similarly,
let $\chi_2(t)$ be smooth and increasing with $\chi_2(t)=0$ for $t\leq 2\tau$ and $\chi_2(t)=1$ for $t\geq \tau$, and with $|\partial_t^j\chi_2| = O_j(|\tau|^{-j})$ for all $j \geq 0$. Then the desired formulas for $(\Psi^{-1}_\sigma)^*\psi_{glue,\sigma}$ are as follows:
\begin{align}
\begin{split}
    (\Psi^{-1}_\sigma)^*\psi_{FS,\sigma}+((\Psi_\sigma\circ G_\sigma)^{-1})^*\psi_0 &\;\,\text{on}\;\,\Psi_\sigma(\mathfrak{R}_2) := \Phi_\sigma(\{\tau<t<-N\}), \\
   (\Phi_\sigma^{-1})^*\Big(\chi_2(\Psi^{-1}_0)^*(G_\sigma^*\psi_{FS,\sigma}+\psi_0)+(1-\chi_2)\psi_T\Big)&\;\,\text{on}\;\,\Psi_\sigma(\mathfrak{R}_3) := \Phi_\sigma(\{2\tau<t<\tau\}),\\
    (\Phi^{-1}_\sigma)^*\psi_T  &\;\,\text{on}\;\,\Psi_\sigma(\mathfrak{R}_4) := \Phi_\sigma(\{T+2T_0<t<2\tau\}),\\
    (\Phi_\sigma^{-1})^*\Big(\chi_1\psi_T+(1-\chi_1)\Phi_\sigma^*(\mathfrak{c}|b|^{\frac{1}{2}}\psi_{TY_\sigma}+\psi_T(T))\Big)&\;\,\text{on}\;\,\Psi_\sigma(\mathfrak{R}_5) := \Phi_\sigma(\{T_0<t-T<2T_0\}),\\
    \mathfrak{c}|b|^{\frac{1}{2}}\psi_{TY_\sigma}+\psi_T(T)&\;\,\text{on}\;\,\Psi_\sigma(\mathfrak{R}_6) := \Phi_\sigma(\{t < T+ T_0\}),\\
    \mathfrak{c}|b|^{\frac{1}{2}}\psi_{TY_\sigma}+\psi_T(T)&\;\,\text{on}\;\,\Psi_\sigma(\mathfrak{R}_7) := ({\rm dom}\,\Psi_\sigma^{-1})\setminus ({\rm im}\,\Phi_\sigma).
\end{split}
\end{align}
This concludes our definition of the glued approximate Kähler-Einstein manifold $(\mathcal{X}_\sigma,\omega_{glue,\sigma})$.
\end{definition}

The Monge-Ampère equation that we want to solve is 
\begin{equation}\label{mainMA}
(\omega_{glue,\sigma}+\ddbar u_\sigma)^2=e^{u_\sigma-f_\sigma}\omega_{glue,\sigma}^2,
\end{equation}
where $f_\sigma$ is the Ricci potential, defined up to a constant by the condition that
\begin{align}\label{eq:def_ric_pot}
{\rm Ric}(\omega_{glue,
\sigma})+\omega_{glue,\sigma}+\ddbar f_\sigma=0.
\end{align}

\begin{lemma}
Up to an arbitrary constant,
\begin{equation}\label{riccipotential}
    f_\sigma|_{\mathcal{Y}_\sigma} = \log\left(\frac{\omega_{glue,\sigma}^2}{\Omega_\sigma\wedge\overline{\Omega}_\sigma}\right)-\psi_{glue,\sigma}.
\end{equation} 
\end{lemma}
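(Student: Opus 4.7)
The plan is a direct $\ddbar$-computation on $\mathcal{Y}_\sigma$, followed by an argument that the resulting pluriharmonic ambiguity collapses to a single additive constant once both sides are extended across the divisor $\mathcal{D}_\sigma$. On $\mathcal{Y}_\sigma$ the form $\Omega_\sigma$ is a nowhere vanishing holomorphic section of $K_{\mathcal{X}_\sigma}$, so the universal identity
\begin{equation*}
{\rm Ric}(\omega_{glue,\sigma})=-\ddbar\log\bigl(\omega_{glue,\sigma}^2/(\Omega_\sigma\wedge\overline{\Omega}_\sigma)\bigr)
\end{equation*}
holds pointwise there. Combining this with $\omega_{glue,\sigma}=\ddbar\psi_{glue,\sigma}$ on $\mathcal{Y}_\sigma$ (true by construction) and with the defining equation for $f_\sigma$, I obtain
\begin{equation*}
\ddbar\Bigl[f_\sigma-\bigl(\log(\omega_{glue,\sigma}^2/(\Omega_\sigma\wedge\overline{\Omega}_\sigma))-\psi_{glue,\sigma}\bigr)\Bigr]=0
\end{equation*}
on $\mathcal{Y}_\sigma$. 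Hence the two sides of the claimed identity differ by a pluriharmonic function $H$ on $\mathcal{Y}_\sigma$.

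To upgrade this to equality up to a constant, it suffices to show that $H$ extends continuously across $\mathcal{D}_\sigma$: then the maximum principle on the compact $\mathcal{X}_\sigma$ forces $H$ to be constant. The summand $f_\sigma$ is already smooth on all of $\mathcal{X}_\sigma$ because $\omega_{glue,\sigma}$ is a $C^3$ Kähler metric there, so the real content is that $\log(\omega_{glue,\sigma}^2/(\Omega_\sigma\wedge\overline{\Omega}_\sigma))-\psi_{glue,\sigma}$ extends continuously across $\mathcal{D}_\sigma$. This cancellation of logarithmic singularities is the one nontrivial ingredient, and the main (minor) obstacle of the proof.

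The extension proceeds by the same bookkeeping as in the proof of Lemma~\ref{NormalizeKE}. Near $\mathcal{D}_\sigma$, $\psi_{glue,\sigma}=\psi_{FS,\sigma}+(G_\sigma^{-1})^*\psi_0$ by the definition on region $\mathfrak{R}_1$, where the second summand is smooth across $\mathcal{D}_\sigma$ because $\psi_0$ is smooth on $\mathcal{X}_0^{reg}$ in a neighborhood of $\mathcal{D}_0$ and $G_\sigma$ is $C^5$ by Lemma~\ref{outdiff}. For the first summand, under the adjunction $K_{\mathcal{X}_\sigma}\cong\mathcal{O}_{\mathbb{CP}^3}(2)|_{\mathcal{X}_\sigma}$ both $\Omega_\sigma$ and the canonical section $Z_0^2$ vanish to order exactly $2$ on $\mathcal{D}_\sigma=\{Z_0=0\}$, so $\Omega_\sigma/Z_0^2$ is a holomorphic nonvanishing function in a neighborhood of $\mathcal{D}_\sigma$. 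Using $e^{-\psi_{FS,\sigma}}=|Z_0^2|^2_{h_{FS}}$ globally and $(\Omega_\sigma\wedge\overline{\Omega}_\sigma)/V_\sigma=|\Omega_\sigma|^2_{h_{FS}}$ for the Fubini-Study volume form $V_\sigma$, I can rewrite the first summand as
\begin{equation*}
\log\bigl(\omega_{glue,\sigma}^2/(\Omega_\sigma\wedge\overline{\Omega}_\sigma)\bigr)-\psi_{FS,\sigma}=\log\bigl(\omega_{glue,\sigma}^2/V_\sigma\bigr)+\log\bigl|Z_0^2/\Omega_\sigma\bigr|^2,
\end{equation*}
and both summands on the right extend smoothly across $\mathcal{D}_\sigma$. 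This yields the continuous extension of $H$ and completes the proof.
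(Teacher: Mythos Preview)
Your proof is correct and follows essentially the same approach as the paper's: verify that the difference $H$ is pluriharmonic on $\mathcal{Y}_\sigma$ via the Ricci curvature formula, then show that the right-hand side extends across $\mathcal{D}_\sigma$ by exhibiting the cancellation between the order-$2$ zero of $\Omega_\sigma$ and the logarithmic pole of $\psi_{FS,\sigma}$. The paper phrases this cancellation abstractly via a defining section $H_{\mathcal{D}_\sigma}$ of $[\mathcal{D}_\sigma]$, whereas you write it out concretely using $Z_0^2$ and the Fubini--Study volume form $V_\sigma$; the content is the same.
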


\begin{proof}
	Denote the right-hand side of \eqref{riccipotential} by $g_\sigma$.
    
    \begin{claim}\label{claim:yetanotherone}
        $g_\sigma$ extends at least $C^3$ to $\mathcal{X}_\sigma$.
    \end{claim} 

    \begin{proof}[Proof of Claim \ref{claim:yetanotherone}]
    We know that $\omega_{glue,\sigma}^2$ is at least $C^3$ on $\mathcal{X}_\sigma$ and $\Omega_\sigma\wedge\overline{\Omega}_\sigma = |H_{\mathcal{D}_\sigma}|^{4} \cdot (\textit{smooth on $\mathcal{X}_\sigma$})$, where $H_{\mathcal{D}_\sigma}$ is a defining section of the line bundle $[\mathcal{D}_\sigma]$ and $|\cdot|$ is any smooth Hermitian metric on this line bundle. Thus, the log volume ratio in \eqref{riccipotential} is of the form $\log |H_{\mathcal{D}_\sigma}|^{-4} + (\textit{at least $C^3$ on $\mathcal{X}_\sigma$})$. Finally, since the reference K\"ahler form $\omega_{FS,\sigma}$ represents the Poincaré dual of the divisor class $[2\mathcal{D}_\sigma]$, we have that  $\psi_{glue,\sigma}= \psi_{FS,\sigma} + (\textit{at least $C^5$ on $\mathcal{X}_\sigma$}) = \log|H_{\mathcal{D}_\sigma}|^{-4} + (\textit{at least $C^5$ on $\mathcal{X}_\sigma$})$. Thus, the two log terms in the definition of $g_\sigma$ cancel each other out and the remainder is at least $C^3$.
    \end{proof}
    
    From its definition, \eqref{eq:def_ric_pot}, and by elliptic regularity, $f_\sigma$ is at least $C^{2,\alpha}$ on $\mathcal{X}_\sigma$. Hence, by Claim \ref{claim:yetanotherone}, so is $f_\sigma - g_\sigma$. From the definition of $g_\sigma$ and from the standard formula for the Ricci curvature of a Kähler metric, $g_\sigma$ satisfies ${\rm Ric}(\omega_{glue,\sigma})+\omega_{glue,\sigma}+\ddbar g_\sigma=0$ (on $\mathcal{Y}_\sigma$ and thus, by Claim \ref{claim:yetanotherone}, on $\mathcal{X}_\sigma$). Thus, $f_\sigma - g_\sigma$ is pluriharmonic on $\mathcal{X}_\sigma$ and hence constant.
	\end{proof}

 Of course, we already know that, given $f_\sigma$, the Monge-Amp\`ere equation \eqref{mainMA} has a unique solution $u_\sigma$ by the Aubin-Yau theorem \cite{Aubin,Yau}, and if we change $f_\sigma$ by a constant then $u_\sigma$ changes by the same constant. Our main goal in this paper is to prove that $u_\sigma$ is actually small modulo constants, meaning at the very least that $\sup_{\mathcal{X}_\sigma} |\ddbar u_\sigma|_{\omega_{glue,\sigma}} \to 0$ as $\sigma \to 0$. Clearly, the first step here is to prove that $f_\sigma$ is sufficiently small modulo constants, using the expression in \eqref{riccipotential}. This estimate is the main result of Section \ref{sec:glued_approx_KE} and we record it in Theorem \ref{f:Riccipotential}. To state the theorem, we need one other definition.

 \begin{definition}\label{def:reg_scale}
The regularity scale function $\mathbf{r}_\sigma: \mathcal{X}_\sigma \to \mathbb{R}^+$ of $\omega_{glue,\sigma}$ is defined as follows:
\begin{align}\label{eq:reg_scale}
\mathbf{r}_\sigma := 
\begin{cases}
1 &\text{on}\;\,\Psi_\sigma^{-1}(\Phi_\sigma(\{t > T/2\})) \cup (\mathcal{X}_\sigma \setminus {\rm dom}\,\Psi_\sigma),\\
\Psi_\sigma^*(\Phi_\sigma^{-1})^*\left(|b|^{\frac{1}{4}}(t-T)^{\frac{3}{4}}\right)&\text{on}\;\,\Psi_\sigma^{-1}(\Phi_\sigma(\{t < T/2\})),\\
|b|^{\frac{1}{4}} &\text{on}\;\,\mathfrak{R}_7.
\end{cases}
\end{align}
Up to bounded factors that we suppress, $\mathbf{r}_\sigma(p)$ is the maximal radius of an $\omega_{glue,\sigma}$-ball $B_r(p)$ such that $r^{-2} \omega_{glue,\sigma}$ is $C^\infty$ bounded in coordinates on the universal cover of $B_r(p)$. In particular, the curvature of $\omega_{glue, \sigma}$ is uniformly $O(\mathbf{r}_\sigma^{-2})$. This is the curvature estimate recorded in Table \ref{tab:metric} and it is sharp.
\end{definition}

 We are now able to state our main result in this section. The proof is deferred to Section \ref{sec:proof_ric_pot}.
 
 \begin{theorem}\label{f:Riccipotential}
Let $f_\sigma$ be the Ricci potential defined in \eqref{riccipotential}. Then for all $\varepsilon>0$ the function
\begin{equation}
    |f_\sigma|+\mathbf{r}_\sigma |\nabla_{\omega_{glue,\sigma}} f_\sigma|_{\omega_{glue,\sigma}}
\end{equation}
satisfies the following pointwise estimates as $\sigma \to 0$:
     \begin{align}\label{eq:f_sigma_estimate}
\begin{cases}
   O(|\sigma|) &\textnormal{on $\mathfrak{R}_1$},\\
O(e^{-(\frac{1}{2}-\epsilon)(t-T)}) &\textnormal{on $\mathfrak{R}_2$},\\
    O(|b||\tau|^3) + O(e^{-\delta_0\sqrt{-t}})&\textnormal{on $\mathfrak{R}_3$},\\
    O(e^{-(\frac{1}{2}-\epsilon)(t-T)})+O(e^{(\frac{1}{2}-\epsilon)t})  &\textnormal{on $\mathfrak{R}_4$},\\
   O((T_0/|T|)^{\frac{3}{2}})&\textnormal{on $\mathfrak{R}_5 \cup \mathfrak{R}_6 \cup \mathfrak{R}_7$}.
    \end{cases}
    \end{align}
Here we abuse notation by writing $t$ instead of the correct $\Psi_\sigma^*(\Phi_\sigma^{-1})^*t$.
\end{theorem}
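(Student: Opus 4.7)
The plan is to compute $f_\sigma$ region by region via \eqref{riccipotential}, fix one global additive constant, and verify all seven pointwise bounds. I classify the seven regions as follows: on the five ``bulk'' regions $\mathfrak{R}_1, \mathfrak{R}_2, \mathfrak{R}_4, \mathfrak{R}_6, \mathfrak{R}_7$, the potential $\psi_{glue,\sigma}$ is built from a single model potential, so \eqref{riccipotential} reduces modulo constants to the discrepancy between that model's intrinsic Monge-Amp\`ere equation and the ambient data $(\Omega_\sigma, J_\sigma)$, driven by approximate holomorphicity of the relevant gluing diffeomorphism and by comparisons of holomorphic volume forms. On the two transition regions $\mathfrak{R}_3$ and $\mathfrak{R}_5$, there is an additional cutoff contribution $\ddbars(\chi(\psi_1-\psi_2))$ controlled by the $C^2$-distance between the two model potentials being interpolated.

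For the bulk regions I would proceed from right to left in Figure~\ref{fig:gluing_regions}. On $\mathfrak{R}_6\cup\mathfrak{R}_7$, $\omega_{glue,\sigma}=\mathfrak{c}|b|^{1/2}\omega_{TY_\sigma}$ with $\omega_{TY_\sigma}^n=i^{n^2}\Omega_{TY_\sigma}\wedge\overline{\Omega}_{TY_\sigma}$, and Lemma~\ref{Hvolumeratio} gives $\log((\Omega_{TY_\sigma}\wedge\overline{\Omega}_{TY_\sigma})/(\Omega_\sigma\wedge\overline{\Omega}_\sigma))=O(|z|)$, so modulo constants $f_\sigma=-\mathfrak{c}|b|^{1/2}\psi_{TY_\sigma}+O(|z|)$; combined with $|b|^{1/2}\sim|T|^{-3/2}$, $\psi_{TY_\sigma}\lesssim (t-T)^{3/2}$, and $t-T\leq 2T_0$, this yields the claimed $O((T_0/|T|)^{3/2})$. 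On $\mathfrak{R}_4$, $\omega_T$ satisfies \eqref{normalizingpotential2}, so $f_\sigma$ is a logarithmic comparison of $(\Phi_\sigma^{-1}\circ\Psi_\sigma)_*(\Omega_\sigma\wedge\overline{\Omega}_\sigma)$ with $\Omega_{\mathcal C}\wedge\overline{\Omega}_{\mathcal C}$, plus a non-holomorphicity term for $\Phi_\sigma$ from Lemma~\ref{Phis}: on the cusp side Lemma~\ref{OmegaOC} gives $O(e^{(1/2-\varepsilon)t})$, while on the TY side Lemmas~\ref{Hvolumeratio} and~\ref{p:TY-asym} (after the $m_\sigma$ rescaling) give $O(e^{-(1/2-\varepsilon)(t-T)})$. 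On $\mathfrak{R}_1$, Lemma~\ref{outdiff}(3) yields $\omega_{glue,\sigma}=(G_\sigma^{-1})^*\omega_{KE,0}+O(|\sigma|)$ together with a matching $O(|\sigma|)$ error on the volume form ratio, so the KE equation for $\omega_{KE,0}$ gives $|f_\sigma|=O(|\sigma|)$. On $\mathfrak{R}_2$, the only source of non-holomorphicity in $G_\sigma=\Psi_\sigma^{-1}\circ\Phi_\sigma\circ\Psi_0$ is $\Phi_\sigma$; Lemma~\ref{Phis} combined with Proposition~\ref{KE-CUSP} and passage to $\omega_{cusp}$-norms (which contributes a $|t|^{O(1)}$ factor absorbable into $\varepsilon$) gives $O(e^{-(1/2-\varepsilon)(t-T)})$.

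For the transition regions, the cutoff produces an error of the form $(|\nabla\chi_i|_{\omega}^2+|\nabla^2\chi_i|_{\omega})\cdot|\psi_1-\psi_2|_{C^2(\omega)}$. On $\mathfrak{R}_3$, $|\nabla\chi_2|$ and $|\nabla^2\chi_2|$ are bounded with respect to $\omega_{cusp}$, and Proposition~\ref{KE-CUSP} together with Lemma~\ref{CusptoNeckerror} bound the relevant $C^2$-difference between the cusp and horn models by $O(|b||\tau|^3)+O(e^{-\delta_0\sqrt{-t}})$, yielding the stated bound. On $\mathfrak{R}_5$, the cutoff error between $\psi_T$ and $\mathfrak{c}|b|^{1/2}\psi_{TY_\sigma}+\psi_T(T)$ is controlled by Proposition~\ref{prop:estimate_of_E} and \eqref{eq-Psi-t-T-T0} and gives only $O((T_0/|T|)^3)$, but the background Tian-Yau contribution $-\mathfrak{c}|b|^{1/2}\psi_{TY_\sigma}$ from $\mathfrak{R}_6$ persists and dominates at $O((T_0/|T|)^{3/2})$. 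The gradient estimates follow in each case by working in the appropriate quasi-coordinate chart on a ball of radius $\mathbf{r}_\sigma$ (Lemma~\ref{lem-Ca-coor} for Tian-Yau and the cusp quasi-coordinates \eqref{eq-quasi}).

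The main obstacle is ensuring that one global additive constant makes all seven bounds hold simultaneously. This is controlled by the chain of normalizations $(\Psi_0^{-1})^*\Omega_0\sim\Omega_{\mathcal C}$ (Lemma~\ref{OmegaOC} with $C=1$, adopted before Proposition~\ref{KE-CUSP}), $(\Psi_\sigma^{-1})^*\Omega_\sigma=(1+O(|z|))\Omega_{TY_\sigma}$ (Lemma~\ref{Hvolumeratio}), \eqref{normalizingpotential2}, and Lemma~\ref{NormalizeKE}, together with \eqref{eq-psi-10} and \eqref{eq-psi-T}: these guarantee that the additive constant $\log(\mathfrak{c}^2|b|)-\psi_T(T)$ arising on $\mathfrak{R}_6\cup\mathfrak{R}_7$ is compatible, up to the stated error rates, with the constants forced by $\psi_T(\tau)=\psi_{cusp}(\tau)$ on the cusp side. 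The balancing choice \eqref{eq-fix-tau} of $\tau$ is essential here because it makes the cusp-horn and horn-TY gluing errors of the same order, so that no single region spoils the others.
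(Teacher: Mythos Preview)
Your region-by-region strategy and the lemmas you invoke are essentially those of the paper's proof. The gap is in your last paragraph, where you misidentify the mechanism that kills the additive constant.

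The quantity $\log(\mathfrak{c}^2|b|)-\psi_T(T)$ that you correctly isolate on $\mathfrak{R}_6\cup\mathfrak{R}_7$ does not depend on $\tau$ at all: by \eqref{eq-psi-5}, $\psi_T(T)=\log|b|-a$, so this constant equals $\log(\mathfrak{c}^2)+a$, a number independent of $\sigma$. If it were nonzero it would be an $O(1)$ contribution to $f_\sigma$, immediately destroying the $O(|\sigma|)$ bound on $\mathfrak{R}_1$ (and the bounds on $\mathfrak{R}_2,\mathfrak{R}_4$ too, since those are super-polynomially small in $|T|$). The balancing choice \eqref{eq-fix-tau} of $\tau$ is irrelevant to this constant and is in fact not essential to the theorem at all; it only equalizes the two already-admissible error terms on $\mathfrak{R}_3$.

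What the paper actually does is verify the \emph{exact} identity $\mathfrak{c}^2|b|=e^{\psi_T(T)}$, equivalently $a=-\log 2$ for $n=2$. This comes from $\mathfrak{c}=\sqrt{2}$ in \eqref{cccccc} together with the normalization $e^{-\psi_{cusp}}\omega_{cusp}^2=\Omega_{\mathcal C}\wedge\overline{\Omega}_{\mathcal C}=\omega_{\mathcal C}^2$ (Lemma~\ref{Normpsicusp} and \eqref{calabiyau}) and the explicit formulas \eqref{eq-psi-cusp-2}, \eqref{eq-psi-C}. None of the references you list (\eqref{eq-psi-10}, \eqref{eq-psi-T}, Lemmas~\ref{OmegaOC}, \ref{Hvolumeratio}, \ref{NormalizeKE}, \eqref{normalizingpotential2}) pins down the value of $a$, so as written your argument leaves an uncontrolled $O(1)$ term in $f_\sigma$ on the Tian--Yau cap.
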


\begin{remark}
    The shape of \eqref{eq:f_sigma_estimate} makes sense intuitively: $f_\sigma$ decays faster than any polynomial in $|T|$ on $\mathfrak{R}_1 \cup \mathfrak{R}_2 \cup \mathfrak{R}_4$ because $\omega_{glue,\sigma}$ is almost an exact solution of the negative Kähler-Einstein equation in these regions. On the other hand, on $\mathfrak{R}_5 \cup \mathfrak{R}_6 \cup \mathfrak{R}_7$ we are gluing with the scaled Tian-Yau metric, which is Ricci-flat, so up to some additive constant $f_\sigma$ then equals minus the Kähler potential of this metric, whose oscillation is $\sim$ $(T_0/|T|)^{3/2}$. The error in the gluing region $\mathfrak{R}_3$ is also polynomial in $|T|$ but it is $O(|T|^{-3}|{\log |T|}|^6)$, i.e., almost quadratic compared to the error in the Tian-Yau region. 
\end{remark}

\begin{remark}\label{rem:savin}
    Using only \eqref{eq:f_sigma_estimate} and standard facts from the theory of the complex Monge-Ampère equation, one can already deduce quite a bit of information about the solution $u_\sigma$. For example, the maximum principle applied to \eqref{mainMA} immediately yields that
\begin{align}\label{eq-insuf}
   \sup\nolimits_{\mathcal{X}_\sigma} |u_\sigma| \leq \sup\nolimits_{\mathcal{X}_\sigma} |f_\sigma| = O(|b|^\frac{1-\alpha}{2}).
\end{align}
On $\mathfrak{R}_1 \cup \mathfrak{R}_2 \cup \mathfrak{R}_3$ and on a large portion of the middle neck $\mathfrak{R}_4$, the regularity scale $\mathbf{r}_\sigma$ is uniformly bounded below, i.e., we have uniform $C^\infty$ bounds for $\omega_{glue,\sigma}$ on the universal cover of a geodesic ball of definite size centered at any point. Thus, on all of these regions, $|\ddbar u_\sigma|_{\omega_{glue,\sigma}} = O(|b|^{(1-\alpha)/2})$ by combining \eqref{eq-insuf} and Savin's small perturbation theorem \cite[Thm 1.3]{Savin}. This estimate already implies $C^\infty_{loc}$ convergence of $\omega_{KE,\sigma}$ to $\omega_{KE,0}$ away from any fixed neighborhood of the origin in $\mathbb{C}^3$.

On the other hand, moving towards the left boundary of the middle neck $\mathfrak{R}_4$ and into $\mathfrak{R}_5 \cup \mathfrak{R}_6 \cup \mathfrak{R}_7$, the regularity scale $\mathbf{r}_\sigma$ decays until it reaches its minimum of $|b|^{1/4}$ on the Tian-Yau cap $\mathfrak{R}_7$. To be able to apply Savin's theorem in this situation we would need that $|u_\sigma| \ll \mathbf{r}_\sigma^2$, i.e., $|u_\sigma| \ll |b|^{1/2}$ on $\mathfrak{R}_7$. This is obviously out of reach of \eqref{eq-insuf} no matter how small we make $\alpha$.
\end{remark}

The point of the weighted Hölder space theory developed in the rest of the paper (after the proof of Theorem \ref{f:Riccipotential}) is precisely to improve the naive $C^0$ estimate \eqref{eq-insuf} in the Tian-Yau region. We will for instance be able to prove that $\sup_{\mathfrak{R}_7} |u_\sigma| = O_\varepsilon(|b|^{(5/6)-\varepsilon})$ for all $\varepsilon>0$. This is enough to obtain $C^{1,\beta}$ closeness of $\omega_{KE,\sigma}$ to the scaled Tian-Yau metric on $\mathfrak{R}_7$ for all $\beta < \frac{1}{3}$, which in particular proves the Main Theorem. In fact, we conjecture that a more systematic approach to the obstruction theory in Section \ref{sec:kill} would even yield $\sup_{\mathfrak{R}_7} |u_\sigma| = O_\varepsilon(|b|^{1-\varepsilon})$ and thus $C^{1,\beta}$ closeness for all $\beta < 1$. This would then be optimal because a Ricci-flat metric cannot be $C^{1,1}$ close to a metric with Ricci $=$ $-1$.

\subsection{Proof of the Ricci potential estimate}\label{sec:proof_ric_pot}

This section is dedicated to the proof of Theorem \ref{f:Riccipotential}. This will be done at the end of this section, as a consequence of a long sequence of lemmas.

We shall pull back everything back to $TY_0$ and then estimate.

\begin{lemma}\label{l:psiJerror}
On $TY_0$, the following hold.
\begin{enumerate}\item[$(1)$] If $t\in [T+T_0,\tau]$, one has
\begin{equation}
|\nabla_{\omega_T}^k(\Phi^*_\sigma J_{TY_\sigma}-J_{\cC})|_{\omega_{T}}=O(e^{-(\frac{1}{2}-\epsilon)(t-T)})\;\, \text{for all} \;\, k \geq 0, \epsilon > 0.\end{equation}
 
\item[$(2)$] If $t\in [2\tau,-N]$, one has
\begin{equation}
|\nabla_{\omega_{cusp}}^k(\Phi^*_\sigma J_{TY_\sigma}-J_{\cC})|_{\omega_{cusp}}=O(e^{-(\frac{1}{2}-\epsilon)(t-T)})\;\, \text{for all} \;\, k \geq 0, \epsilon > 0.
\end{equation}
\end{enumerate}
\end{lemma}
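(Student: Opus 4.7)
The plan is to deduce both parts from Lemma \ref{TYerror}, whose first estimate already gives
\begin{align*}
|\nabla_{|b|^{1/2}g_{\mathcal C,\sigma}}^{k}(\Phi^{*}_{\sigma}J_{TY_{\sigma}}-J_{\cC})|_{|b|^{1/2}g_{\mathcal C,\sigma}}=O_{\varepsilon,k}\bigl(|b|^{-k/4}e^{-(\frac{1}{2}-\epsilon)(t-T)}\bigr),
\end{align*}
so the only task is to replace the reference metric $|b|^{1/2}g_{\mathcal C,\sigma}$ by $\omega_{T}$ on part~(1), respectively $\omega_{cusp}$ on part~(2). Since $\Phi_\sigma^*J_{TY_\sigma}-J_{\cC}$ is a $(1,1)$-tensor (endomorphism of $T_{\mathbb R}TY_{0}$), its pointwise norm transforms by the ratio of the two metrics viewed as Hermitian forms, and covariant derivatives transform analogously once one accounts for the Christoffel symbol difference; both of these will be shown to cost at most a polynomial factor in $|T|$, which the exponential $e^{-(\frac{1}{2}-\varepsilon)(t-T)}$ absorbs freely, since $t-T\geq T_{0}=|T|^{\alpha}$ throughout the relevant ranges.

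For part~(1), I split $[T+T_{0},\tau]$ into the orange gluing region $[T+T_{0},T+2T_{0}]$ and the interior of the middle neck $[T+2T_{0},\tau]$. On the orange region, Proposition~\ref{prop:estimate_of_E} together with Lemma~\ref{lem-Ca-coor} provides quasi-coordinates \eqref{eq-TY-coor}--\eqref{eq-TY-coor-2} in which both $\omega_{T}$ and the shifted Calabi metric $|b|^{1/2}\omega_{\cC}$ are uniformly smoothly equivalent to the Euclidean metric, so the two norms agree up to constants and the estimate transfers verbatim (one also needs to keep in mind that $g_{\mathcal C,\sigma}$ is essentially the same shifted Calabi metric, because of the $(m_{\sigma}')^{*}$ pullback). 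On the interior of the middle neck, $\omega_{T}$ is $C^{k}$-close to $\omega_{cusp}$ by Corollary~\ref{cor-control-on-neck}; in the cuspidal quasi-coordinates \eqref{eq-quasi-1} centered at points of this region, a direct computation gives uniform polynomial-in-$|T|$ bounds on the entries and Christoffel symbols of $|b|^{1/2}g_{\mathcal C,\sigma}$, as well as on the ratio between $\omega_{T}$ and $|b|^{1/2}g_{\mathcal C,\sigma}$. Absorbing these polynomial factors into the exponential (at the cost of an arbitrarily small further loss of $\varepsilon$) yields the claim.

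Part~(2) proceeds by the same mechanism, using the fact that in the range $[2\tau,-N]$ we already have $t-T\sim |T|$, so that the polynomial loss incurred when switching from $|b|^{1/2}g_{\mathcal C,\sigma}$ to $\omega_{cusp}$ in the standard cusp quasi-coordinates (as in the proof of Lemma~\ref{OmegaOC}) is absorbed by the exponential factor. The main technical obstacle will therefore be the simultaneous $C^{k}$-control of $\omega_{T}$ (or $\omega_{cusp}$) and $|b|^{1/2}g_{\mathcal C,\sigma}$ in a common family of quasi-coordinates across the middle neck, in particular tracking the polynomial dependence on $|T|$ of the ratio of the metrics and of the Christoffel symbol difference. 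The computation is in spirit parallel to that of Lemma~\ref{lem-Ca-coor} and Corollary~\ref{cor-control-on-neck}, but must be redone for $|b|^{1/2}g_{\mathcal C,\sigma}$ in the cuspidal coordinates of the neck, where the two metrics are no longer uniformly equivalent but only polynomially so.
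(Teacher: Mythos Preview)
Your strategy is the same as the paper's: start from Lemma~\ref{TYerror} and change the reference metric at polynomial cost, absorbed by the exponential since $t-T\geq |T|^{\alpha}$. There is, however, a genuine gap in your decomposition for part~(1). You split at $t=T+2T_{0}$ and then invoke Corollary~\ref{cor-control-on-neck} on $[T+2T_{0},\tau]$, but that corollary requires $\eta=t/T\leq\delta$ for some \emph{fixed} $\delta\in(0,1)$, i.e.\ $t\in[\delta T,0)$. Since $T+2T_{0}<\delta T$ for any such $\delta$ once $|\sigma|$ is small (because $T_{0}/|T|=|T|^{\alpha-1}\to 0$), the sub-range $t\in[T+2T_{0},\delta T]$ is not covered by Corollary~\ref{cor-control-on-neck}, and on that sub-range $\omega_{T}$ is \emph{not} close to $\omega_{cusp}$ (indeed $\eta\to 1$ there, and the $O_{\delta}(\eta^{n+1})$ error becomes uncontrolled as $\delta\uparrow 1$).

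The paper fixes this by splitting instead at $t/T=1-\delta$ for a small fixed $\delta$. On the horn side $\{t/T\geq 1-\delta\}$, which is much wider than the orange region, it uses \eqref{eq-Psi-T-11}--\eqref{eq-Psi-T-12} to compare $\omega_{T}$ directly to $|b|^{1/2}\omega_{\cC,\sigma}$ (these formulas are valid for all $s\leq\delta$, not just for $s\sim T_{0}/|T|$). For the higher covariant derivatives the paper does not redo the radial computations but instead uses that $\omega_{T}$ is Einstein: once $\omega_{T}$ is $C^{0}$-equivalent to $|b|^{1/2}\omega_{\cC,\sigma}$ in quasi-coordinates of curvature scale $\sim(1-t/T)^{3/4}$, elliptic regularity bounds each further derivative at the cost of a factor $(T_{0}/|T|)^{-3/4}$. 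On the cusp side $\{t/T\leq 1-\delta\}$ the paper then proceeds essentially as you outline. Adopting this split (and the Einstein bootstrap) closes the gap with no extra work.
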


\begin{proof}
In Lemma \ref{TYerror} we have proved that this holds for the reference metric $|b|^{1/2}\omega_{\cC,\sigma}$ instead of $\omega_T$ or $\omega_{cusp}$, assuming only that $t-T \to \infty$. To compare these reference metrics, we now estimate
\begin{equation}\left|\nabla^k_{|b|^{\frac{1}{2}}\omega_{\mathcal{C},\sigma}}\omega_T\right|_{|b|^{\frac{1}{2}}\omega_{\cC,\sigma}} \,\,\,\textnormal{and}\,\,\,\left|\nabla^k_{|b|^{\frac{1}{2}}\omega_{\mathcal{C},\sigma}}\omega_{cusp}\right|_{|b|^{\frac{1}{2}}\omega_{\cC,\sigma}}\,\,\,\textnormal{for all}\,\,\, k \geq 0.
\end{equation}

For $\delta>0$ fixed and $t/T \geq 1-\delta$, \eqref{eq-Psi-T-11}--\eqref{eq-Psi-T-12} say that $\omega_T$ is uniformly comparable to $|b|^{1/2}\omega_{\cC,\sigma}$.  For $k>0$, first notice that by \eqref{horderpsiT}, the $t$-derivatives of $\psi_T$ blow up at worst polynomially in $|T|$. Using the quasi-coordinates \eqref{eq-TY-coor} for $\omega_{\cC,\sigma}$ and $|b|\sim |T|^{-3}$, we deduce  that the $k$-th derivative of $\omega_T$ with respect to $|b|^{1/2}\omega_{\cC,\sigma}$ is bounded by $|T|^{N_k}$ for some $N_k \in \mathbb{N}$. The exponential term $e^{-(1/2-\varepsilon)(t-T)}$ absorbs these polynomial factors because $t-T \geq T_0 = |T|^\alpha$. This proves (1) for $t/T \geq 1-\delta$.

Now assume $t/T \leq 1-\delta$ and $t \leq -N$. We first compare $\omega_T$ to $\omega_{cusp}$ in a similar fashion, using Proposition \ref{prop-T-c}. Then we compare $\omega_{cusp}$ to $|b|^{1/2}\omega_{\cC,\sigma}$ using the fact that these metrics are explicit. This yields the remaining case of (1) ($t/T \leq 1-\delta$, $t \leq \tau$) and all cases of (2) ($2\tau \leq t \leq -N$).
\end{proof}

Similar arguments yield:

\begin{lemma}\label{l:potentialest}
On $TY_0$, the following hold.
\begin{enumerate}\item[$(1)$] If $t\in [T+T_0,\tau]$,  then for all $k\geq 0$ there exists a positive integer $N_k$ such that 
\begin{equation}
|\nabla_{\omega_T}^k\psi_T|_{\omega_{T}}=O(|T|^{N_k}).
\end{equation}
 
\item[$(2)$] If $t\in [2\tau,-N]$, then for all $k\geq 0$ there exists a positive integer $N_k$ such that
\begin{equation}
|\nabla_{\omega_{cusp}}^k\psi_{cusp}|_{\omega_{cusp}}=O(|T|^{N_k}).
\end{equation}
\end{enumerate}
\end{lemma}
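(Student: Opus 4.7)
The plan is to combine two ingredients: (a) explicit bounds for the radial potentials and all of their ordinary derivatives $\psi^{(k)}(t)$, polynomial in $|T|$, obtained directly from the first-integral ODE \eqref{eq-psi-3}; and (b) the existence of quasi-coordinate charts in which $\omega_T$ (resp. $\omega_{cusp}$) is uniformly smoothly equivalent to the Euclidean metric, so that converting radial $t$-derivatives to metric derivatives costs only polynomial factors in $|T|$.

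I would handle part (2) first, since it is the model case. On $[2\tau,-N]$ we have $|t| \leq 2|\tau| = O(|\log|T||^2) \leq |T|$, so $|\psi_{cusp}(t)| = (n+1)\log(-t)+O(1) = O(\log|T|)$ and $|\psi_{cusp}^{(k)}(t)| = (n+1)(k-1)!|t|^{-k} = O(1)$. Around any base point $t_* \in [2\tau,-N]$, the quasi-coordinates \eqref{eq-quasi-1} render $\omega_{cusp}$ uniformly smoothly comparable to the Euclidean metric with bounds independent of $t_*, T$; in these coordinates $t = t_*/\check x$ has derivatives $\partial_{\check x}^k t = O(|t_*|) = O(|T|)$ on $\check x \in [1/2,2]$. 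The chain rule then gives $|\partial^k_{\check x,\check y,\check \theta}(\psi_{cusp}\circ t)| = O(|T|^{N_k})$ for appropriate $N_k$ (the precise exponents are irrelevant for the conclusion).

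For part (1), I would split $[T+T_0,\tau]$ into two overlapping sub-intervals.

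\emph{Near the cusp, $t \in [\delta T,\tau]$ with $\delta>0$ fixed and small}: Corollary \ref{cor-control-on-neck} (precisely \eqref{eq-Psi-T-k-eta} and \eqref{eq-T-c-Dk-2}) shows that $\psi_T^{(k)}$ agrees with $\psi_{cusp}^{(k)}$ up to a factor $(1+o(1))$ and that $\omega_T$ agrees with $\omega_{cusp}$ in $C^k(\omega_{cusp})$ up to terms of size $o(1)$. Hence the bounds transfer directly from part (2).

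\emph{In the middle neck, $t \in [T+T_0,\delta T]$}: From \eqref{eq-psi-T-max} we have $\log|b|-a \leq \psi_T(t) \leq \psi_{cusp}(t)+C$, so $|\psi_T| = O(\log|T|)$. Differentiating \eqref{eq-psi-3} yields $\psi_T'(t) = (n+1)^{1/(n+1)}(e^{\psi_T+a}+b)^{1/(n+1)}$; since $e^{\psi_T+a} \leq (n+1)^n(-\tau)^{-(n+1)}$ and $|b| \leq C|T|^{-3}$, $\psi_T'$ is polynomially bounded in $|T|$, and inductively so are all $\psi_T^{(k)}(t)$. To pass to metric derivatives I would combine two types of charts: the scaled Calabi chart \eqref{eq-TY-coor}--\eqref{eq-TY-coor-2} near $t = T+T_0$ (where Lemma \ref{lem-Ca-coor} gives uniform bounds on $|b|^{1/n}\omega_{\mathcal{C}}$ and Lemma \ref{TYerror} then transfers this to $\omega_T$), and the $s = 1-t/T$ chart together with Proposition \ref{prop-L-s} further in. In each chart, the coordinate transformation between $t$ and the chart variables has derivatives that are at most polynomial in $|T|$ (involving fixed negative powers of $|b|$ and $|T|$). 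Since the target bound is only polynomial in $|T|$, tracking the chain-rule factors to any finite order $k$ yields $|\nabla^k_{\omega_T}\psi_T|_{\omega_T} = O(|T|^{N_k})$ for some $N_k$ depending only on $k$ and $n$.

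The hard part, though ultimately routine, will be organizing the quasi-coordinate bookkeeping across the whole middle neck where the regularity scale $\mathbf{r}_\sigma$ varies from $|b|^{1/4}T_0^{3/4}$ at the Tian-Yau end to $O(1)$ in the cuspidal region. One must verify that the three families of charts (Calabi-type, horn-type, cusp-type) cover $[T+T_0,\tau]$ with overlaps on which both $\omega_T$ and the composition $\psi_T \circ t$ are uniformly controlled in either chart. The generous ``polynomial in $|T|$'' target allows us to avoid any delicate tracking of exponents and to absorb the worst-case scaling losses coming from $|b|^{-O(1)}$ and $|T|^{O(1)}$ factors in the chart transformations.
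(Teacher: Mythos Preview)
Your proposal is correct and follows essentially the same approach the paper intends: the paper's own proof is literally the single phrase ``Similar but easier arguments yield:'', referring back to the proof of Lemma~\ref{l:psiJerror}, whose structure (splitting at $t/T = 1-\delta$, comparing $\omega_T$ to $|b|^{1/2}\omega_{\mathcal{C},\sigma}$ on one side via \eqref{eq-Psi-T-11}--\eqref{eq-Psi-T-12} and to $\omega_{cusp}$ on the other via Proposition~\ref{prop-T-c}, then working in quasi-coordinates) is exactly what you have written out. One minor bibliographic slip: the comparison of $\omega_T$ with the scaled Calabi metric near $t=T+T_0$ comes from \eqref{eq-Psi-T-11}--\eqref{eq-Psi-T-12} and Proposition~\ref{prop:estimate_of_E}, not from Lemma~\ref{TYerror} (which compares $\omega_{TY_\sigma}$ to $\omega_{\mathcal{C},\sigma}$).
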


Now we combine Lemma \ref{l:potentialest} and Lemma \ref{l:psiJerror} to get the following lemma, which essentially measures the non-holomorphicity of $\Phi_\sigma$.

\begin{lemma}\label{Jerror} On $TY_0$, the following hold for all $k\geq 0$, $\epsilon>0$.
\begin{enumerate}
\item[$(1)$]
If $t\in [T+T_0,\tau]$, we have that
\begin{equation}
 |\nabla^k_{\omega_T}(\Phi_\sigma^*\ddbars (\Phi_\sigma^{-1})^*\psi_{T}-\omega_{T})|_{\omega_T}=O(e^{-(\frac{1}{2}-\epsilon)(t-T)}).
\end{equation}

\item[$(2)$] If $t\in[2\tau, -N]$, we have that
\begin{equation}
|\nabla^k_{\omega_{cusp}}(\Phi_\sigma^*\ddbars (\Phi_\sigma^{-1})^*\psi_{cusp}-\omega_{cusp})|_{\omega_{cusp}}=O(e^{-(\frac{1}{2}-\epsilon)(t-T)}).
\end{equation}
\end{enumerate}
\end{lemma}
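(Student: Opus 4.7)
The plan is to rewrite the difference on the left-hand side in terms of the single tensor $K_\sigma := \Phi_\sigma^* J_{TY_\sigma}-J_{\cC}$ together with the first two derivatives of $\psi$, and then combine the exponential decay of $K_\sigma$ coming from Lemma \ref{l:psiJerror} with the polynomial growth bounds for the derivatives of $\psi$ coming from Lemma \ref{l:potentialest}.

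First, for any real function $u$ and any almost complex structure $J'$, the identity $2i\partial_{J'}\overline{\partial}_{J'}u = dd^c_{J'}u$ with $d^c_{J'}u := -du\circ J'$ gives
\begin{align}
\Phi_\sigma^*\ddbars(\Phi_\sigma^{-1})^*\psi-\ddbar\psi = -\tfrac{1}{2}\,d\bigl(d\psi\circ K_\sigma\bigr).
\end{align}
In local coordinates the right-hand side is an algebraic expression that is bilinear in the pair $(K_\sigma,\nabla^2\psi)$ and in the pair $(\nabla K_\sigma,\nabla\psi)$. Differentiating $k$ more times in either of the two reference metrics and applying the Leibniz rule produces a finite sum of terms each bounded pointwise by $|\nabla_g^{k+2-j}\psi|_g\cdot|\nabla_g^j K_\sigma|_g$ for $0\leq j\leq k+1$, with universal constants depending only on $k$.

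Next, I plug in the estimates from the two earlier lemmas. In case (1), take $g=\omega_T$ and $\psi=\psi_T$: Lemma \ref{l:potentialest}(1) bounds the derivative factors of $\psi_T$ by $O(|T|^{N})$ with $N=N(k)$ an integer, while Lemma \ref{l:psiJerror}(1) bounds each $|\nabla_{\omega_T}^j K_\sigma|_{\omega_T}$ by $O(e^{-(1/2-\epsilon')(t-T)})$ for any $\epsilon'>0$. The product is $O(|T|^{N}e^{-(1/2-\epsilon')(t-T)})$, and since $t-T\geq T_0=|T|^\alpha\to+\infty$ on this region, the polynomial factor $|T|^{N}$ is harmlessly absorbed into the exponential at the cost of enlarging $\epsilon'$ slightly to $\epsilon$. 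Case (2) is identical with $g=\omega_{cusp}$ and $\psi=\psi_{cusp}$, using parts (2) of the two lemmas; the condition $\tau/T\to 0$ forces $t-T\to+\infty$ uniformly on $[2\tau,-N]$, so the same absorption argument applies.

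I do not expect any substantive obstacle, as all of the geometric analysis has already been packaged into Lemmas \ref{l:psiJerror} and \ref{l:potentialest} and what remains is essentially bookkeeping. The one minor point requiring attention is that the Leibniz expansion must be carried out using the same reference metric throughout, so that the bounds on $\nabla^j K_\sigma$ and on $\nabla^{k+2-j}\psi$ combine in a consistent frame; this is automatic here since we measure everything in $\omega_T$ in case (1) and in $\omega_{cusp}$ in case (2).
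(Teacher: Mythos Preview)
Your proposal is correct and follows essentially the same approach as the paper: rewrite the difference via the $dd^c$ identity as a contraction of $K_\sigma=\Phi_\sigma^*J_{TY_\sigma}-J_{\cC}$ with derivatives of $\psi$, then combine Lemma~\ref{l:psiJerror} (exponential decay of $K_\sigma$) with Lemma~\ref{l:potentialest} (polynomial growth of $\nabla^j\psi$) and absorb the polynomial into the exponential using $t-T\geq |T|^\alpha$. The paper's version is organized around a general function $f$ on $TY_\sigma$ and then specializes to $f=(\Phi_\sigma^{-1})^*\psi$, but this is only a cosmetic difference.
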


\begin{proof}
Let $A_\sigma:=\Phi_\sigma^* J_\sigma-J_0$. Then for all functions $f$ on $TY_\sigma$ and Kähler metrics $\omega$ on $TY_0$,
\begin{align}\label{eq-smallness}
 \begin{split}
    \Phi_\sigma^*\ddbars f-i\partial_0\overline\partial_0\Phi_\sigma^*f&=\Phi_\sigma^*dd^c_\sigma f-dd^c_0\Phi_\sigma^* f\\
    &=d\Phi_\sigma^* J_\sigma d f-dJ_0 d \Phi_\sigma^* f\\
    &=d\Big((A_\sigma+J_0)\Phi_\sigma^*(df)\Big) -dJ_0 d \Phi_\sigma^* f\\
    &=(\nabla_{\omega} A_\sigma) \circledast (\nabla_{\omega}(\Phi_\sigma^*f)) + A_\sigma \circledast (\nabla^2_{\omega}(\Phi_\sigma^*f)).
     \end{split}
\end{align}
Here $\circledast$ denotes a tensorial contraction involving also the reference metric $\omega$. Now we apply this with $\omega = \omega_T, \omega_{cusp}$ and with $f = (\Phi_{\sigma}^{-1})^* \psi_T, (\Phi_\sigma^{-1})^*\psi_{cusp}$, respectively. By Lemma \ref{l:psiJerror}, $A_\sigma$ and its covariant derivatives are bounded by $e^{-(1/2-\epsilon)(t-T)}$. On the other hand, $\psi_{cusp},\psi_{T}$ and their covariant derivatives blow up at worst polynomially in $|T|$ by Lemma \ref{l:potentialest}. Since $t-T\geq T_0 = |T|^\alpha$, these polynomial factors are again absorbed by the exponential decay as in the proof of Lemma \ref{l:psiJerror}.
\end{proof}

The following auxiliary lemma will be used in Lemma \ref{l:FubiniVolErr}.

\begin{lemma}\label{l:Fubini}
Abuse notation by denoting $(\Psi_\sigma^{-1})^*\psi_{FS,\sigma}$ by $\psi_{FS,\sigma}$ for all sufficiently small values of $\sigma$, including for $\sigma = 0$. This is a Kähler potential on the intersection of $TY_\sigma$ with some fixed neighborhood of the origin in $\mathbb{C}^3$. Let $z\in TY_0$ with 
\begin{align}
|\sigma|^{\frac{1}{3}}R \leq |z|\leq \epsilon,
\end{align}
where $\epsilon$ is a sufficiently small constant independent of $\sigma$ and $R$ is a large constant as in Lemma $\ref{Phis}$.  Let $\omega_{flat}:=\ddbar|z|^2$ be the flat metric on $\mathbb C^3$.  Then for $k=0,1$ and for $|\sigma|^{-1/3} |z| \to \infty$,
\begin{align}\label{eq-e1}
\left|\nabla^k_{\omega_{flat}|_{TY_0}}\left(\Phi^*_\sigma\ddbars\psi_{FS,\sigma}-\ddbar\psi_{FS,0}\right)\right|_{\omega_{flat}|_{TY_0}}=O(|\sigma||z|^{-3-k}),\\
\label{eq-e2}
\left|\nabla^k_{\omega_{flat}|_{TY_0}}\ddbar \left(\Phi^*_\sigma \psi_{FS,\sigma}-\psi_{FS,0}\right)\right|_{\omega_{flat}|_{TY_0}}=O(|\sigma||z|^{-3-k}).
\end{align}
\end{lemma}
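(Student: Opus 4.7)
The plan is to treat both estimates as restrictions to $TY_0$ of ambient computations on $\mathbb{C}^3$. Observe from \eqref{Psi} that the formula defining $\Psi_\sigma$ is actually independent of $\sigma$, so under the abuse of notation in the statement, $\psi_{FS,\sigma}$ is simply the restriction to $TY_\sigma$ of the fixed smooth function $\widetilde\psi_{FS}:=\psi_{FS}\circ\Psi_0^{-1}$ on a fixed neighborhood of the origin in $\mathbb{C}^3$. Extend $\Phi_\sigma$ by the same formula to a smooth ambient map $\tilde\Phi_\sigma:\mathbb{C}^3\setminus\overline{B}_{|\sigma|^{1/3}R}\to\mathbb{C}^3$, $\tilde\Phi_\sigma(z)=z+v(z)$ with $v(z):=\nu_\sigma(z)(\bar z_1^2,\bar z_2^2,\bar z_3^2)$. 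A direct product-rule calculation from Lemma \ref{Phis} yields $|\nabla^k v|_{\omega_{flat}}=O(|\sigma||z|^{-2-k})$ for all $k\ge 0$.

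To prove \eqref{eq-e2}, note that $\Phi_\sigma^*\psi_{FS,\sigma}-\psi_{FS,0}=(\tilde\Phi_\sigma^*\widetilde\psi_{FS}-\widetilde\psi_{FS})|_{TY_0}$; since $\ddbar$ commutes with restriction to complex submanifolds, it suffices to bound the ambient $\ddbar$ of $\tilde\Phi_\sigma^*\widetilde\psi_{FS}-\widetilde\psi_{FS}$ together with one further derivative. Taylor expansion gives
\[
\tilde\Phi_\sigma^*\widetilde\psi_{FS}-\widetilde\psi_{FS}=\sum_i\bigl((\partial_{w_i}\widetilde\psi_{FS})\,v_i+(\partial_{\bar w_i}\widetilde\psi_{FS})\,\bar v_i\bigr)+R(z),
\]
where $|R(z)|\le C|v|^2\sup|\nabla^2\widetilde\psi_{FS}|=O(|\sigma|^2|z|^{-4})$, with analogous bounds for its derivatives. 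The key structural observation is that $\widetilde\psi_{FS}$ is real-analytic with only even total-degree terms in its Taylor series at $0$ (a property inherited from $\psi_{FS}=\log(1+P)$ with $P$ of only even degrees, since $\Psi_0^{-1}$ is holomorphic), so $|\nabla^k\widetilde\psi_{FS}(w)|=O(|w|^{\max(0,2-k)})$ near $0$ with uniform bounds for larger $k$. Combined with $|\nabla^k v|=O(|\sigma||z|^{-2-k})$, the product rule gives the ambient bounds $O(|\sigma||z|^{-3})$ at the $\ddbar$ level and $O(|\sigma||z|^{-4})$ at one further derivative. On the region $|z|\ge|\sigma|^{1/3}R$ we have $|\sigma||z|^{-3}\le R^{-3}$, which absorbs the quadratic remainder $R(z)$ and its derivatives into these bounds.

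For \eqref{eq-e1}, split
\[
\Phi_\sigma^*\ddbars\psi_{FS,\sigma}-\ddbar\psi_{FS,0}=\bigl(\Phi_\sigma^*\ddbars\psi_{FS,\sigma}-\ddbar\Phi_\sigma^*\psi_{FS,\sigma}\bigr)+\ddbar(\Phi_\sigma^*\psi_{FS,\sigma}-\psi_{FS,0}).
\]
The second summand is controlled by \eqref{eq-e2}. For the first, apply \eqref{eq-smallness} with $A_\sigma:=\Phi_\sigma^*J_\sigma-J_0$ and $f:=\psi_{FS,\sigma}$. A direct computation from $\tilde\Phi_\sigma=\mathrm{id}+v$ gives $|\nabla^k_{\omega_{flat}}A_\sigma|_{\omega_{flat}}=O(|\sigma||z|^{-3-k})$, while $|\nabla^k_{\omega_{flat}|_{TY_0}}(\Phi_\sigma^*\psi_{FS,\sigma})|_{\omega_{flat}}=O(|z|^{\max(0,2-k)})$ near the origin by the same even-degree structure. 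Plugging into \eqref{eq-smallness} and differentiating once more, every resulting term is of the desired size $O(|\sigma||z|^{-3-k})$. The main technical point in both estimates is the cancellation coming from the even-degree structure of $\psi_{FS}$ at the origin: since $\omega_{FS}(z)=i\sum_a dz_a\wedge d\bar z_a+O(|z|^2)$ and hence $\nabla\omega_{FS}=O(|z|)$ near $0$, the otherwise dangerous Taylor contribution $v\cdot\nabla\omega_{FS}$ in $\tilde\Phi_\sigma^*\omega_{FS}-\omega_{FS}$ is $O(|\sigma||z|^{-1})$ rather than $O(|\sigma||z|^{-2})$, comfortably fitting into the target after two derivatives. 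Once this observation is made, the rest is straightforward product-rule bookkeeping.
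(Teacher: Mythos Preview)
Your approach is essentially correct and is a clean alternative to the paper's proof, but there are two points that need attention.

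First, the claim that $\widetilde\psi_{FS}=\psi_{FS}\circ\Psi_0^{-1}$ has only even total-degree terms is false: $\Psi_0^{-1}(w_i)=w_i-\tfrac{1}{3}w_i^4+\cdots$, so $|w_i-\tfrac{1}{3}w_i^4|^2$ contributes the degree-$5$ term $-\tfrac{2}{3}\mathrm{Re}(w_i^4\bar w_i)$. Holomorphicity of $\Psi_0^{-1}$ has nothing to do with parity. What you actually use, namely $|\nabla^k\widetilde\psi_{FS}|=O(|w|^{\max(0,2-k)})$, only requires $\widetilde\psi_{FS}$ to vanish to order~$2$ at the origin, and this does hold since $\psi_{FS}(0)=0$, $d\psi_{FS}(0)=0$, and $\Psi_0^{-1}(0)=0$ with $d\Psi_0^{-1}(0)=\mathrm{id}$. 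Relatedly, some product-rule terms (e.g.\ $v\cdot\nabla^3\widetilde\psi_{FS}$ at the $\ddbar$ level, or $v\cdot\nabla^4\widetilde\psi_{FS}$ at one further derivative) are only $O(|\sigma||z|^{-2})$ with your stated bounds; you need the hypothesis $|z|\le\epsilon$ to absorb these into $O(|\sigma||z|^{-3-k})$, exactly as the paper does when it writes ``because $|z|^{-1}\le\epsilon^2|z|^{-3}$''.

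Second, and more importantly, the passage from ambient $\mathbb{C}^3$ bounds to the intrinsic covariant derivative $\nabla_{\omega_{flat}|_{TY_0}}$ for $k=1$ is not free. The cone $TY_0$ is singular at the origin, and the Christoffel symbols of $\omega_{flat}|_{TY_0}$ blow up like $|z|^{-1}$ (equivalently, $\nabla_{\omega_{flat}|_{TY_0}}(dz_i|_{TY_0})=O(|z|^{-1})$). This contributes an extra term of size $O(|z|^{-1})\cdot O(|\sigma||z|^{-3})=O(|\sigma||z|^{-4})$, which happens to match the target, but it must be checked. The paper does this explicitly: it fixes coordinates $z_1,z_2$ on $\{|z_3|\ge\max(|z_1|,|z_2|)\}$, computes the induced metric and its inverse, and bounds $\partial z_3/\partial z_i=-z_i^2/z_3^2=O(1)$ together with its derivative. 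Your sketch would be complete once you add this step; as written, ``the rest is straightforward product-rule bookkeeping'' hides the one genuinely non-flat feature of the problem.

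By comparison, the paper's proof is more hands-on: it writes out $\Phi_\sigma^*(dz_i)$ and $\Phi_\sigma^*a_{i\bar\jmath}-a_{i\bar\jmath}$ directly, obtaining coefficient functions in the class $\tilde O(|\sigma||z|^{-3})$, and then spends the second half on the restriction to $TY_0$. Your ambient Taylor/$A_\sigma$ decomposition is more conceptual and avoids writing out $\Phi_\sigma^*(dz_i)$ term by term, at the cost of the two loose ends above.
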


\begin{proof}
We only prove \eqref{eq-e1} as the proof of \eqref{eq-e2} is similar. To prove \eqref{eq-e1}, we start with

\begin{claim}\label{claim:lalilu}
One has
\begin{align}\label{nnnn}
\begin{split}
\Phi^*_\sigma\ddbars\psi_{FS,\sigma}-\ddbar\psi_{FS,0}=\biggl(\sum\nolimits_{|I|+|J|=2}\tilde O(|\sigma||z|^{-3})  dz_I\wedge d\overline{z}_J\biggr)\biggr|_{TY_0},\\
\end{split}
\end{align}
where $\tilde O(|\sigma||z|^{-3})$ is a function on $B_\varepsilon(0)\subset \mathbb{C}^3$ such that for all $k \geq 0$ and $|\sigma|^{-1/3}|z|\to\infty$,
\begin{equation}\label{mmmm}
\left|\partial^k\tilde O(|\sigma||z|^{-3})\right|=O(|\sigma||z|^{-3-k}).
\end{equation}
\end{claim}

\begin{proof}[Proof of Claim \ref{claim:lalilu}] The key is the following expression of $\Phi_\sigma$ from Lemma \ref{Phis}:
\begin{equation}\label{ooo}
\Phi_\sigma^*z_i=z_i+\nu_\sigma(z)\overline{z}_i^2\;\,(i = 1,2,3).
\end{equation}
Hence, dropping the restriction symbols for simplicity,
\begin{equation}\label{DDD}
    \Phi_\sigma^* (dz_i)=dz_i+2\nu_\sigma(z)\overline{z}_idz_i+\sum_{j=1}^3\frac{\partial\nu_\sigma(z)}{\partial z_j}\overline{z}_i^2dz_j+\sum_{j=1}^3\frac{\partial\nu_\sigma(z)}{\partial\overline{z}_j}\overline{z}_i^2d\overline{z}_j.
\end{equation}
From Lemma \ref{Phis} we have for all $\ell \geq 0$ and for $|\sigma|^{-1/3} |z| \to \infty$ that 
\begin{equation}\label{EEEE}
\left|\partial^\ell\nu_\sigma(z)\right|=O(|\sigma||z|^{-4-\ell}).
\end{equation}
Therefore
\begin{equation}\label{Pullbackz}
\Phi_\sigma^*(dz_i)=dz_i+\sum_{j=1}^{3}\tilde O(|\sigma||z|^{-3})dz_j+\sum_{j=1}^{3}\tilde O(|\sigma||z|^{-3})d\overline{z}_j.
\end{equation}

On $\mathbb{C}^3$, when $|z|\leq\epsilon$ for some sufficiently small constant $\epsilon$, we have that
\begin{align}\label{FFFF}
\ddbar\psi_{FS}=\sum_{i,j=1}^3 a_{i \bar\jmath}(z)dz_i\wedge d\overline{z}_j,\;\,a_{i\bar\jmath} = \delta_{i\bar\jmath} + O(|z|^2)\;\,\text{as}\;\,|z| \to 0,
\end{align}
and  $a_{i \bar\jmath}(z)$ is actually real-analytic in $z$. Then, from \eqref{ooo} and \eqref{EEEE},
\begin{equation}\label{hhhhhhh}\Phi_\sigma^* a_{i\bar \jmath}(z)-a_{i\bar \jmath}(z)=\tilde O(|\sigma||z|^{-1})\;\,
\text{as}\;\,|\sigma|^{-\frac{1}{3}}|z|\to\infty.
\end{equation}
For example, if $z_1^2$ appears in $a_{i \bar \jmath}(z)$, then the relevant difference is
\begin{align}\label{ex-z1}\begin{split}
(z_1+\nu_\sigma(z)\overline{z}_1^2)^{2}    -z_1^{2} &= 2\nu_\sigma(z) z_1 \overline{z}_1^2+\nu_\sigma(z)^2\overline{z}_1^4
\\&=O(|\sigma||z|^{-1}) +O(|\sigma|^2|z|^{-4})\;\,\text{as}\;\,|\sigma|^{-\frac{1}{3}}|z| \to \infty,
\end{split}
\end{align}
which is $O(|\sigma||z|^{-1})$ because $|\sigma||z|^{-3} =o(1)$.

Then Claim \ref{claim:lalilu} follows from \eqref{Pullbackz} and \eqref{hhhhhhh} because $|z|^{-1} \leq \epsilon^{2}|z|^{-3}$.
\end{proof}

Next, we estimate the $2$-form \eqref{nnnn} with respect to the metric $\omega_{flat}|_{TY_0}$. To do so, without loss of generality (using the symmetry of $z_1,z_2,z_3$) we may assume that $|z_3|\geq \max\{|z_1|,|z_2|\}$. In particular, we are working on the affine piece $z_3\neq 0$. From the defining equation of  $TY_0$, 
\begin{equation}\label{eq:dfgh}
dz_3=-\frac{z_1^2}{z_3^2}dz_1-\frac{z_2^2}{z_3^2}dz_2,
\end{equation} 
where we are again dropping the restriction symbols. Thus,
\begin{align}\label{ggggg}
\begin{split}
\omega_{flat}|_{TY_0}=&\left(1+\left|\frac{z_1}{z_3}\right|^4\right)dz_1\wedge d\overline{z}_1+\left(1+\left|\frac{z_2}{z_3}\right|^4\right)dz_2\wedge d\overline{z}_2\\&+\left(\frac{z_1^2}{z_3^2}\frac{\overline{z}_2^2}{\overline{z}_3^2}\right)dz_1\wedge d\overline{z}_2+\left(\frac{z_2^2}{z_3^2}\frac{\overline{z}_1^2}{\overline{z}_3^2}\right)dz_2\wedge d\overline{z}_1.
\end{split}
\end{align}
Viewing $z_1,z_2$ as local coordinates and denoting the coefficients of the K\"ahler form in \eqref{ggggg} by $g_{i\bar \jmath}$, it is easy to see that $\det(g_{i\bar\jmath})\geq 1$, so every component of the inverse matrix $g^{\bar\imath j}$ is bounded by $2$ because $|z_3| \geq \max\{|z_1|,|z_2|\}$. This fact and \eqref{nnnn} imply the desired estimate \eqref{eq-e1} for $k = 0$.

To finish the proof, we also need to estimate the first covariant derivative of \eqref{nnnn} with respect to $\omega_{flat}|_{TY_0}$. Applying $-z_3^3=z_1^3+z_2^3$ and $|z_3|
 \geq \max\{|z_1|,|z_2|\}$, we have that
 \begin{align}\label{kkk}\begin{split}\frac{\partial z_3}{\partial z_1}=-\frac{z_1^2}{z_3^2} = O(1),\;\,
 \frac{\partial z_3}{\partial z_2}=-\frac{z_2^2}{z_3^2} = O(1).
 \end{split}\end{align}
 Combining the claimed estimates \eqref{mmmm} and \eqref{kkk}, it follows from the chain rule that
\begin{align}\label{aaaaaa}
 \begin{split}\left\vert\frac{\partial\tilde O(|\sigma||z|^{-3})}{\partial z_i} \right\vert
 + \left\vert\frac{\partial\tilde O(|\sigma||z|^{-3})}{\partial \overline{z}_i} \right\vert=O(|\sigma||z|^{-4})\;\,(i=1,2).
 \end{split}\end{align}
The remaining task is to bound the first covariant derivative of $dz_1, dz_2, dz_3$ with respect to $\omega_{flat}|_{TY_0}$. Since $g^{\bar\imath j}$ is bounded, it is enough to bound the second partials of $z_1,z_2,z_3$ and the first partials of $g_{i\bar\jmath}$. This can be done by differentiating \eqref{kkk} and \eqref{ggggg}, respectively. Because of homogeneity reasons it is clear that all of these derivatives are bounded by a constant times $|z|^{-1}$.
\end{proof}

\begin{lemma}\label{l:FubiniVolErr}
We work on $TY_0$ and abuse notation by replacing
\begin{align}
\begin{split}
    (\Psi_0^{-1})^*\omega_{KE,0} \leadsto \omega_{KE,0},\;\;
    (\Psi_0^{-1})^*\psi_0 \leadsto \psi_0, \;\;
    (\Psi_\sigma^{-1})^*\psi_{FS,\sigma} \leadsto \psi_{FS,\sigma}.
\end{split}
\end{align}
Let $N>0$ be a fixed large constant.
Then for $k=0,1$ and $t\in [2\tau, -N]$ we have that
\begin{equation}\label{4444}
\left|\nabla^k_{\omega_{KE,0}}\frac{(\Phi^*_\sigma\ddbars ((\Phi^{-1}_\sigma)^*\psi_0+\psi_{FS,\sigma}))^2-\omega^2_{KE,0}}{\omega^2_{KE,0}}\right|_{\omega_{KE,0}}=O(e^{-(\frac{1}{2}-\epsilon)(t-T)}).
\end{equation}
\end{lemma}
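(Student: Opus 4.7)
The plan is to split the error into a part that measures the difference between the Fubini-Study potentials at parameter $\sigma$ and $0$, controlled by Lemma \ref{l:Fubini}, and a commutator part that measures the non-holomorphicity of $\Phi_\sigma$, controlled by Lemma \ref{l:psiJerror}. Write $\tilde\psi := (\Phi_\sigma^{-1})^*\psi_0+\psi_{FS,\sigma}$, so $\Phi_\sigma^*\tilde\psi=\psi_0+\Phi_\sigma^*\psi_{FS,\sigma}$. Applying the identity \eqref{eq-smallness} from the proof of Lemma \ref{Jerror}, with $\omega=\omega_{KE,0}$, gives
\begin{align*}
\Phi_\sigma^*\ddbars\tilde\psi-\omega_{KE,0}=\ddbar(\Phi_\sigma^*\psi_{FS,\sigma}-\psi_{FS,0})+(\nabla_\omega A_\sigma)\circledast\nabla_\omega(\Phi_\sigma^*\tilde\psi)+A_\sigma\circledast\nabla^2_\omega(\Phi_\sigma^*\tilde\psi),
\end{align*}
using $\omega_{KE,0}=\ddbar(\psi_0+\psi_{FS,0})$. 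Call the first term $E_1$ and the remaining terms $E_2$.

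For $E_1$, Lemma \ref{l:Fubini}\eqref{eq-e2} yields $|\nabla^k_{\omega_{flat}|_{TY_0}}E_1|_{\omega_{flat}|_{TY_0}}=O(|\sigma||z|^{-3-k})$ for $k=0,1$. On the range $t\in[2\tau,-N]$, $|z|$ stays bounded away from $0$ (since $t\leq -N$) and above, so $E_1=O(|\sigma|)$ in the flat $C^1$ sense; converting to $\omega_{KE,0}$ costs at most polynomial factors in $|T|$, which are dwarfed by $|\sigma|=e^{(3/2)T}$ versus the target $e^{-(1/2-\varepsilon)(t-T)}\approx e^{-(1/2-\varepsilon)|T|}$ (recall $|t|\ll|T|$ throughout $[2\tau,-N]$).

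For $E_2$, Lemma \ref{l:psiJerror}(2) states that $A_\sigma$ and its first covariant derivatives are $O(e^{-(1/2-\varepsilon)(t-T)})$ with respect to $\omega_{cusp}$; since $\omega_{cusp}$ and $\omega_{KE,0}$ are uniformly $C^k$-equivalent on $\{t\leq -N\}$ by Proposition \ref{KE-CUSP}, the same holds for $\omega_{KE,0}$. Next, $\Phi_\sigma^*\psi_{FS,\sigma}$ is smooth with uniformly bounded covariant derivatives in this region, and $\psi_0=\psi_{KE,0}-\psi_{FS,0}$ is comparable (modulo smooth bounded terms) to $\psi_{cusp}$ by Proposition \ref{KE-CUSP}, so its $\omega_{cusp}$-covariant derivatives (equivalently $\omega_{KE,0}$) grow at worst polynomially in $|T|$ by Lemma \ref{l:potentialest}(2). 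Therefore $E_2=O(e^{-(1/2-\varepsilon)(t-T)}|T|^{N_0})$ in $C^1(\omega_{KE,0})$, and for large $|T|$ the polynomial factor is absorbed by slightly shrinking $\varepsilon$.

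Finally, converting the bound $|\omega_1-\omega_{KE,0}|_{\omega_{KE,0}}=O(e^{-(1/2-\varepsilon)(t-T)})$, where $\omega_1:=\Phi_\sigma^*\ddbars\tilde\psi$, into the ratio estimate \eqref{4444} is a routine linear-algebra step: in a local $\omega_{KE,0}$-orthonormal frame one has $\omega_1^2/\omega_{KE,0}^2-1=\operatorname{tr}M+\det M$, where $M$ is the Hermitian matrix representing $\omega_1-\omega_{KE,0}$, so the $k=0$ bound is immediate and the $k=1$ bound follows by differentiating this algebraic identity and using the corresponding $C^1(\omega_{KE,0})$ control on $\omega_1-\omega_{KE,0}$ established above. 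The main technical obstacle in writing this out carefully is bookkeeping the transfer between $\omega_{flat}$, $\omega_{cusp}$, and $\omega_{KE,0}$ norms and confirming that all polynomial-in-$|T|$ losses are genuinely dominated by the exponential decay on the full range $t\in[2\tau,-N]$; this works precisely because $t-T\sim|T|$ uniformly on this annulus.
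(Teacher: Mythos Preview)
Your approach matches the paper's: split $\Phi_\sigma^*\ddbars\tilde\psi-\omega_{KE,0}$ into the Fubini--Study term $E_1=\ddbar(\Phi_\sigma^*\psi_{FS,\sigma}-\psi_{FS,0})$ and the commutator $E_2$ coming from the non-holomorphicity of $\Phi_\sigma$, then pass from the $(1,1)$-form estimate to the volume ratio via \eqref{eq:furz}. The $E_2$ part and the final linear-algebra step are fine.

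However, your treatment of $E_1$ contains a genuine error. You write that on $t\in[2\tau,-N]$, ``$|z|$ stays bounded away from $0$ (since $t\leq -N$)''. This is backwards: since $t\sim 2\log|z|$, the condition $t\leq -N$ gives the \emph{upper} bound $|z|\lesssim e^{-N/2}$, while the lower bound comes from $t\geq 2\tau$ and yields only $|z|\gtrsim e^{\tau}\to 0$. Consequently your claim ``$E_1=O(|\sigma|)$ in the flat $C^1$ sense'' is false: the flat norm is $O(|\sigma||z|^{-3-k})$, and $|z|^{-3}$ is unbounded on this range. The correct observation (which the paper uses) is that $|\sigma||z|^{-3}\sim e^{-(3/2)(t-T)}$, which is already much stronger than the target $e^{-(1/2-\varepsilon)(t-T)}$ and absorbs any sub-exponential loss.

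For the metric conversion you assert that passing from $\omega_{flat}|_{TY_0}$ to $\omega_{KE,0}$ costs ``at most polynomial factors in $|T|$''. This is too crude to justify as stated. The paper proves the precise comparison $|\nabla^j_{\omega_{KE,0}}(\omega_{flat}|_{TY_0})|_{\omega_{KE,0}}=O(|z|^2|t|^{K_j})$ (the Claim around \eqref{hhh1}) using quasi-coordinates for $\omega_{cusp}$; the factor $|z|^2$ (not merely a power of $|T|$) is what makes the conversion harmless. With this correction and the right reading of $|\sigma||z|^{-3}$, your argument for $E_1$ goes through and coincides with the paper's computations \eqref{111}--\eqref{2222}.
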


\begin{proof}
Inserting one more term, we have
\begin{align}\label{fff}
\begin{split}
\Phi^*_\sigma(\ddbars((\Phi^{-1}_\sigma)^*\psi_0+\psi_{FS,\sigma}))^2-\omega^2_{KE,0}=(\ddbar (\psi_0+\Phi^*_\sigma\psi_{FS,\sigma}))^2-\omega^2_{KE,0}\\
+\;\Phi^*_\sigma(\ddbars((\Phi^{-1}_\sigma)^*\psi_0+\psi_{FS,\sigma}))^2-(\ddbar (\psi_0+\Phi_\sigma^*\psi_{FS,\sigma}))^2.
\end{split}
\end{align}
We begin by estimating the first line of \eqref{fff}. Canceling $\ddbar\psi_0$, one has
\begin{align}
\ddbar (\psi_0+\Phi^*_\sigma\psi_{FS,\sigma})-\omega_{KE,0}=\ddbar (\Phi_\sigma^*\psi_{FS,\sigma}-\psi_{FS,0}).
\end{align}
For 2-forms $\omega_1, \omega_2$, one has
\begin{align}\label{eq:furz}
\omega_1^2 - \omega_2^2 &=2(\omega_1 -\omega_2) \wedge \omega_2 +(\omega_1-\omega_2)^2,
\end{align}
which implies that
\begin{equation}\label{mmm}
\frac{(\ddbar (\psi_0+\Phi^*_\sigma\psi_{FS,\sigma}))^2-\omega^2_{KE,0}}{\omega^2_{KE,0}}=2\Delta_{\omega_{KE,0}}(\Phi^*_\sigma\psi_{FS,\sigma}-\psi_{FS,0})+\frac{(\ddbar(\Phi^*_\sigma\psi_{FS,\sigma}-\psi_{FS,0}))^2}{\omega^2_{KE,0}}.
\end{equation}
To estimate these terms, we replace the reference metric $\omega_{KE,0}$ by $\omega_{flat}|_{TY_0}$.

\begin{claim}\label{claim:herrdeninger}
    For any $j\geq 0$ there is a positive integer $K_j$ such that as $|z| \to 0$, 
\begin{equation}\label{hhh1}
|\nabla^j_{\omega_{KE,0}}(\omega_{flat}|_{TY_0})|_{\omega_{KE,0}} = O(|z|^2|t|^{K_j}).
\end{equation} 
Moreover, for $j = 0$ the left-hand side is actually uniformly equivalent to $|z|^2 |t|^{K_0}$.
\end{claim}

\begin{proof}[Proof of Claim \ref{claim:herrdeninger}]
Thanks to \cite[Thm 1.4]{DFS}, $\omega_{cusp}$ and $\omega_{KE,0}$ are uniformly equivalent to any order for $t\leq -N$, so it suffices to prove the estimate using $\omega_{cusp}$ as the reference metric.

On $TY_0\setminus \{0\}$ we clearly have that $|z|^2 = e^{t + \psi}$ for some smooth function $\psi$ on the elliptic curve $E$, viewed as a $0$-homogeneous function on $TY_0 \setminus \{0\}$. Thus,
\begin{align}\label{eq:shgjl}
\omega_{flat}|_{TY_0}=\ddbar e^{t+\psi}&=e^{t+\psi}\ddbar(t+\psi)+e^{t+\psi}i\partial(t+\psi)\wedge\overline{\partial}(t+\psi).
\end{align}
For any fixed $t_* \leq -N$, quasi-coordinates for $\omega_{cusp}$ on a neighborhood of the hypersurface $\{t = t_*\}$ are given by $(\rho,\check{x},\check{y},\check{\theta})$, where $\rho = \log |t|$, $(\check{x},\check{y}) = |t_*|^{-1/2}(x,y)$ for a fixed pair $(x,y)$ of linear coordinates on $E$ and $\check\theta = |t_*|^{-1}\theta$ for a fixed angular coordinate $\theta$ along the Hopf circles in $\mathbb{C}^3$. Moreover,
\begin{align}\begin{split}\label{eq:ehtn}
    \ddbar t = |t_*| \cdot (\textit{a fixed $2$-form in $\check{x},\check{y}$}),\;\,\partial t = -\frac{1}{2}e^\rho d\rho + |t_*| \cdot (\textit{a fixed $1$-form in $\check{x},\check{y},\check{\theta}$}).
    \end{split}
\end{align}
Because covariant derivatives with respect to $\omega_{cusp}$ are equivalent to ordinary partial derivatives with respect to $(\rho,\check{x},\check{y},\check{\theta})$, the claim follows from \eqref{eq:shgjl} and \eqref{eq:ehtn}.
\end{proof}

We now estimate the terms on the right-hand side of \eqref{mmm}. By  Lemma \ref{l:Fubini} and Claim \ref{claim:herrdeninger},
\begin{align}\label{111}
\begin{split}
    &|\Delta_{\omega_{KE,0}}(\Phi^*_\sigma\psi_{FS,\sigma}-\psi_{FS,0})|\\
    &\leq|i\partial\overline{\partial} (\Phi^*_\sigma\psi_{FS,\sigma}-\psi_{FS,0})|_{\omega_{flat}|_{TY_0}} \cdot {\rm{tr}}_{\omega_{KE,0}}(\omega_{flat}|_{TY_0})\\
    &=O(|\sigma||z|^{-3} |z|^2|t|^{K_0})
\end{split}
\end{align}
as $|\sigma|^{-1/3}|z| \sim e^{(1/2)(t-T)} \to \infty$. Because the latter condition is satisfied for $t \in [2\tau,-N]$, the claimed estimate \eqref{4444} follows for the Laplacian term for $k = 0$ (with a good factor of $|\sigma|^{2/3}$ to spare). The first derivative of this term can be estimated in a similar fashion:
\begin{align}\label{2222}
\begin{split}
    &|\nabla_{\omega_{KE,0}}(\Delta_{\omega_{KE,0}}(\Phi^*_\sigma\psi_{FS,\sigma}-\psi_{FS,0}))|_{\omega_{KE,0}}\\
    &\leq|\nabla_{\omega_{KE,0}}(\ddbar(\Phi^*_\sigma\psi_{FS,\sigma}-\psi_{FS,0}))|_{\omega_{KE,0}}\\
    &\leq |\nabla_{\omega_{flat}|_{TY_0}}(\ddbar(\Phi^*_\sigma\psi_{FS,\sigma}-\psi_{FS,0}))|_{\omega_{flat}|_{TY_0}} \cdot ({\rm{tr}}_{\omega_{KE,0}}(\omega_{flat}|_{TY_0}))^{\frac{3}{2}}\\
    &+ |(\ddbar(\Phi^*_\sigma\psi_{FS,\sigma}-\psi_{FS,0}))|_{\omega_{KE,0}} \cdot {\rm{tr}}_{\omega_{flat}|_{TY_0}}(\omega_{KE,0}) \cdot |\nabla_{\omega_{KE,0}}(\omega_{flat}|_{TY_0})|_{\omega_{KE,0}}\\
    &= O(|\sigma||z|^{-4}(|z|^2|t|^{K_0})^{\frac{3}{2}}+|\sigma||z|^{-3} |z|^{-2}|t|^{-K_0}|z|^2|t|^{K_1})
\end{split}
\end{align}
as $|\sigma|^{-1/3}|z| \to \infty$. This is again of the desired order with a factor of $|\sigma|^{2/3}$ to spare. The estimate of the quadratic term and its first derivative is very similar and we omit it.

It remains to estimate the second line of \eqref{fff}. Using \eqref{eq-smallness} with $f = (\Phi_\sigma^{-1})^*\psi_0 + \psi_{FS,\sigma}$ and with reference metric $\omega = \omega_{KE,0}$, and using Lemma \ref{l:psiJerror} and \cite[Thm 1.4]{DFS} (to compare $\omega_{cusp}$ to $\omega_{KE,0}$),
\begin{align}\label{eq:hgec}
\sum_{k=0}^1 |\nabla^k_{\omega_{KE,0}}(\Phi^*_\sigma\ddbars f-\ddbar \Phi_\sigma^*f)|_{\omega_{KE,0}} \leq O(e^{-(\frac{1}{2}-\varepsilon)(t-T)})\sum_{k=1}^3 |\nabla^k_{\omega_{cusp}}\Phi_\sigma^*f|_{\omega_{cusp}}.
\end{align}
We will first show that the three derivatives of $\Phi_\sigma^*f$ that appear on the right-hand side are $O(1)$.

To this end, observe that
\begin{align}\label{eq:gfhj}
\Phi_\sigma^*f = (\psi_{FS,0} + \psi_0) + (\Phi_\sigma^*\psi_{FS,\sigma} - \psi_{FS,0}).
\end{align}
The first term is equal to the potential $\psi_{KE,0}$ of $\omega_{KE,0}$. Using Proposition \ref{KE-CUSP} and quasi-coordinates for $\omega_{cusp}$, one checks that the $\omega_{cusp}$-gradient and all higher covariant derivatives of this potential are $O(1)$. For the second term in \eqref{eq:gfhj}, we first show as in the proof of \eqref{hhhhhhh} that
\begin{align}\label{eq:xbdf}
    \Phi_\sigma^*\psi_{FS,\sigma} - \psi_{FS,0} = \tilde{O}(|\sigma||z|^{-1})|_{TY_0} 
\end{align}
as $|\sigma|^{-1/3}|z| \to \infty$, using the fact that $\psi_{FS}(z)$ is real-analytic and $O(|z|^2)$ as $|z| \to 0$. Second, as in \eqref{eq:dfgh}--\eqref{aaaaaa}, we use \eqref{eq:xbdf} to further estimate the covariant derivatives of this function with respect to $\omega_{flat}|_{TY_0}$. Here we also need the second covariant derivative of $dz_i|_{TY_0}$ ($i = 1,2,3$), but this is seen to be bounded by a constant times $|z|^{-2}$ by the same argument as after \eqref{aaaaaa}. Thus,
\begin{align}
    |\nabla^k_{\omega_{flat}|_{TY_0}}(\Phi_\sigma^*\psi_{FS,\sigma} - \psi_{FS,0})|_{\omega_{flat}|_{TY_0}} = O(|\sigma||z|^{-1-k})
\end{align}
for $k=1,2,3$ and $|\sigma|^{-1/3}|z|\to\infty$. Lastly, by using \eqref{hhh1} and following the pattern of \eqref{111}--\eqref{2222} (i.e., expanding  $\nabla^k_{\omega_{flat}|_{TY_0}}u$  in $\omega_{cusp}$-normal coordinates and solving for $\partial^k u= \nabla_{\omega_{cusp}}^k u$), we get
\begin{align}\label{eq:goo}
    |\nabla^k_{\omega_{cusp}}(\Phi_\sigma^*\psi_{FS,\sigma} - \psi_{FS,0})|_{\omega_{cusp}} = O(|\sigma||z|^{-1}|t|^{K}) = o(1)
\end{align}
for $k=1,2,3$ and $|\sigma|^{-1/3}|z|\to\infty$, where $K > 0$. To summarize, the three derivatives of $\Phi_\sigma^*f$ on the right-hand side of \eqref{eq:hgec} are $O(1)$, and this is sharp due to the contribution of $\nabla_{\omega_{cusp}}\psi_{KE,0}$.

To finish the proof of the lemma, we estimate the second line of \eqref{fff} using \eqref{eq:hgec}. To do so, let us write the second line of \eqref{fff} as $\omega_1^2 - \omega_2^2$. Then, taking norms with respect to $\omega_{KE,0}$,
\begin{align}\begin{split}
&\left|\frac{\omega_1^2-\omega_2^2}{\omega_{KE,0}^2}\right|
= |\omega_1^2 - \omega_2^2| \leq |\omega_1-\omega_2| \cdot (2|\omega_2| + |\omega_1 - \omega_2|).
\end{split}
\end{align}
We estimated $|\omega_1-\omega_2|$ in \eqref{eq:hgec}, and $|\omega_2| = O(1)$ because $\omega_2 = \omega_{KE,0} + \ddbar(\Phi_\sigma^*\psi_{FS,\sigma} - \psi_{FS,0})$ and the $\omega_{KE,0}$-norm of the second term was proved to be $o(1)$ in \eqref{111}. The first derivative of the second line of \eqref{fff} can be bounded in a similar fashion, using also \eqref{eq:hgec} with $k = 1$ and \eqref{2222}.
\end{proof}

\begin{lemma}\label{l:metricerror}
Abuse notation by replacing $(\Psi_0^{-1})^*\psi_0 \leadsto \psi_0$ and $(\Psi_\sigma^{-1})^*\psi_{FS,\sigma} \leadsto \psi_{FS,\sigma}$.
Then:
\begin{enumerate}
\item[$(1)$] For all $k \geq 0$ and $t\in [2\tau,\tau]$, one has
\begin{align}
\begin{split}
&|\nabla^k_{\omega_{cusp}}(\Phi_\sigma^*\ddbars (\Phi_\sigma^{-1})^*\psi_{cusp}-\Phi_\sigma^*\ddbars (\Phi_\sigma^{-1})^*\psi_{T})|_{\omega_{cusp}}\\
    &=O(|b||\tau|^{3})+O(e^{-(\frac{1}{2}-\epsilon)(t-T)}).
\end{split}
\end{align}

\item[$(2)$] For $k=0,1$ and $t\in [2\tau,\tau],$ one has
\begin{align}
\begin{split}
     &|\nabla^k_{\omega_{cusp}}(\Phi_\sigma^*\ddbars (\Phi_\sigma^{-1})^*\psi_{cusp}-\Phi_\sigma^*\ddbars ((\Phi_\sigma^{-1})^*\psi_{0}+\psi_{FS,\sigma}))|_{\omega_{cusp}}\\
     &= O(e^{-\delta_0\sqrt{-t}}).
\end{split}
\end{align}
     \item [$(3)$]For $k=0,1$ and $t\in [T+T_0, T+2T_0]$,
     one has
\begin{align}
\begin{split}
    &|\nabla^k_{|b|^{\frac{1}{2}}\omega_{\cC,\sigma}}(\Phi_\sigma^*\ddbars (\Phi_\sigma^{-1})^*\psi_{T}-|b|^{\frac{1}{2}}\Phi^*_\sigma\omega_{TY,\sigma})|_{\omega_{cusp}}\\
&=O(e^{-\delta_0 (t-T)^{1/2}})+ O(|b|^{\frac{2-k}{4}}T_0^{\frac{3(2-k)}{4}}).
\end{split}
\end{align}
     \end{enumerate}
\end{lemma}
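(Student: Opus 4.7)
The plan is to prove all three parts by the same device: insert intermediate terms, split off the pure non-holomorphicity error of $\Phi_\sigma$ (which is controlled uniformly by Lemma \ref{Jerror} via formula \eqref{eq-smallness}), and reduce to intrinsic comparisons between radial model potentials on $TY_0$. The three reference metrics $\omega_{cusp}$, $\omega_T$ and $|b|^{1/2}\omega_{\cC,\sigma}$ are interchangeable on the relevant regions: $\omega_{cusp}\sim\omega_T$ to all orders on $[2\tau,\tau]$ by Proposition \ref{prop-T-c} and Corollary \ref{cor-control-on-neck}, while $\omega_T\sim\mathfrak{c}|b|^{1/2}\omega_{\cC}$ on $[T+T_0,T+2T_0]$ by \eqref{ErrorE} and Proposition \ref{prop:estimate_of_E}.

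For part (1), I write
\begin{align*}
\Phi_\sigma^*\ddbars(\Phi_\sigma^{-1})^*\psi_{cusp}-\Phi_\sigma^*\ddbars(\Phi_\sigma^{-1})^*\psi_T
= \bigl[\Phi_\sigma^*\ddbars(\Phi_\sigma^{-1})^*\psi_{cusp}-\omega_{cusp}\bigr]-\bigl[\Phi_\sigma^*\ddbars(\Phi_\sigma^{-1})^*\psi_T-\omega_T\bigr]+(\omega_{cusp}-\omega_T).
\end{align*}
The two bracketed $J$-error terms are $O_k(e^{-(1/2-\varepsilon)(t-T)})$ in the $\omega_T$ and $\omega_{cusp}$ norms by Lemma \ref{Jerror}, and the residual radial piece $\omega_{cusp}-\omega_T$ is $O_k(|b||\tau|^3)$ by Lemma \ref{CusptoNeckerror}.

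For part (2), I use $\psi_0+\psi_{FS,0}=\psi_{KE,0}$ to split
\begin{align*}
(\Phi_\sigma^{-1})^*\psi_0+\psi_{FS,\sigma}-(\Phi_\sigma^{-1})^*\psi_{cusp}
= (\Phi_\sigma^{-1})^*(\psi_{KE,0}-\psi_{cusp})+\bigl(\psi_{FS,\sigma}-(\Phi_\sigma^{-1})^*\psi_{FS,0}\bigr).
\end{align*}
By Proposition \ref{KE-CUSP} combined with the normalization $\mathfrak{s}=0$ from Remark \ref{scalelambda}, $\psi_{KE,0}-\psi_{cusp}=u+v$ with $u$ satisfying \eqref{eq:KE-CUSP-2} and $v$ pluriharmonic satisfying \eqref{bbbb}. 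Hence $\Phi_\sigma^*\ddbars(\Phi_\sigma^{-1})^*(u+v)$ equals $\ddbar u=O(e^{-\delta_0\sqrt{-t}})$ up to a $J$-error which by \eqref{eq-smallness}, Lemma \ref{l:psiJerror} and \eqref{bbbb} is bounded by $O(e^{-(1/2-\varepsilon)(t-T)+(1-\varepsilon)t/2})$; since $t-T\sim|T|$ on $[2\tau,\tau]$ this is super-polynomially small, and in particular dominated by $e^{-\delta_0\sqrt{-t}}$. The Fubini--Study perturbation is handled by Lemma \ref{l:Fubini}, which yields $\ddbar(\Phi_\sigma^*\psi_{FS,\sigma}-\psi_{FS,0})=O(|\sigma||z|^{-3})=O(e^{-3(t-T)/2})$, plus a further $J$-error from \eqref{eq-smallness}, both again negligible compared to $e^{-\delta_0\sqrt{-t}}$.

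For part (3), \eqref{ErrorE} gives $\psi_T-\psi_T(T)=\mathfrak{c}|b|^{1/2}\psi_{\cC}+E$ and hence $\omega_T=\mathfrak{c}|b|^{1/2}\omega_{\cC}+\ddbar E$. Inserting $\omega_T$ and $\mathfrak{c}|b|^{1/2}\omega_{\cC}$ as intermediaries (and absorbing the factor $\mathfrak{c}$ into the statement's normalization of $\omega_{TY_\sigma}$), we split
\begin{align*}
\Phi_\sigma^*\ddbars(\Phi_\sigma^{-1})^*\psi_T-\mathfrak{c}|b|^{1/2}\Phi_\sigma^*\omega_{TY_\sigma}
=\bigl[\Phi_\sigma^*\ddbars(\Phi_\sigma^{-1})^*\psi_T-\omega_T\bigr]+\ddbar E+\mathfrak{c}|b|^{1/2}\bigl(\omega_{\cC}-\Phi_\sigma^*\omega_{TY_\sigma}\bigr).
\end{align*}
The bracketed $J$-error is $O_k(e^{-(1/2-\varepsilon)T_0})$ by Lemma \ref{Jerror} (negligible on the orange region), the middle piece is exactly Proposition \ref{prop:estimate_of_E}, yielding the claimed $O(|b|^{(2-k)/4}T_0^{3(2-k)/4})$ in the $|b|^{1/2}\omega_{\cC}$-norm, and the last term is Lemma \ref{TYerror}, yielding $O(e^{-\delta_0(t-T)^{1/2}})$ in the same norm. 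The main obstacle across all three parts will be the careful bookkeeping of scaling factors when switching reference metrics (a constant rescaling $g\mapsto cg$ multiplies the $|\cdot|_g$-norm of a $2$-form by $c^{-1}$ and each of its covariant derivatives by a further factor of $c^{-1/2}$), so that Lemma \ref{Jerror}, stated in $\omega_T$ or $\omega_{cusp}$, and Proposition \ref{prop:estimate_of_E}, stated in $|b|^{1/2}\omega_{\cC}$, deliver the correct powers of $|b|$ and $T_0$ without loss.
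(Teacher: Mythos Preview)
Your proposal is correct and follows essentially the same approach as the paper: insert intermediate forms, isolate the $J$-error of $\Phi_\sigma$ via \eqref{eq-smallness} and Lemma~\ref{Jerror}, and reduce to the intrinsic radial comparisons (Lemma~\ref{CusptoNeckerror}, Proposition~\ref{KE-CUSP}, Proposition~\ref{prop:estimate_of_E}, Lemma~\ref{TYerror}). One point you pass over a bit quickly in part~(2) is that Lemma~\ref{l:Fubini} gives the Fubini--Study error in the $\omega_{flat}|_{TY_0}$-norm, not the $\omega_{cusp}$-norm; the paper converts via the comparison $|\nabla^j_{\omega_{KE,0}}(\omega_{flat}|_{TY_0})|=O(|z|^2|t|^{K_j})$ (the Claim inside Lemma~\ref{l:FubiniVolErr}), and this same conversion is needed to bound $\nabla^{1,2,3}_{\omega_{cusp}}\psi_{FS,0}$ in the $J$-error term---but the resulting losses are only polynomial in $|t|$ and are indeed swallowed by $e^{-\delta_0\sqrt{-t}}$ as you claim.
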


\begin{proof}
For item $(1)$: By Lemma \ref{CusptoNeckerror}, one has
 for any $k \geq 0$ and for $t \in [2\tau,\tau]$ that
\begin{align}\label{eq-psi-a}
|\nabla_{\omega_{cusp}}^k (\omega_T - \omega_{cusp})|_{\omega_{cusp}} \leq C_k |b||\tau|^{3}.
\end{align}
Here $C_k$ does not depend on $b, \tau$. Then by the triangle inequality and Lemma \ref{Jerror},
\begin{equation}\label{eq:nfhf}
   |\nabla^k_{\omega_{cusp}}(\Phi_\sigma^*\ddbars (\Phi_\sigma^{-1})^*\psi_{cusp}-\Phi_\sigma^*\ddbars (\Phi_\sigma^{-1})^*\psi_{T})|_{\omega_{cusp}}\leq  C_k( |b||\tau|^{3}+e^{-(\frac{1}{2}-\epsilon)(t-T)}).
 \end{equation}

  For item $(2)$: By Lemma \ref{Jerror}, we can replace $\Phi_\sigma^*\ddbars (\Phi_\sigma^{-1})^*\psi_{cusp}$ on the left-hand side of the claim by $\omega_{cusp}$ up to an error of $O(e^{-(1/2-\varepsilon)(t-T)})$, which is negligible because $(t-T)/|t| \rightarrow \infty$ for $t\in [2\tau,\tau]$. Similarly, by Lemma \ref{l:Fubini} and \eqref{hhh1} we can replace $\Phi_\sigma^*\ddbars \psi_{FS,\sigma}$ by $\omega_{FS,0}$ (note that a closely related implication was already proved in \eqref{111}--\eqref{2222}). Next,
  \begin{align}
  \omega_{cusp} - \omega_{FS,0} = (\omega_{cusp}-\omega_{KE,0}) + \ddbar \psi_0,
  \end{align}
  and the first term here was estimated in Proposition  \ref{KE-CUSP}. Thus, it remains to estimate
  \begin{align}
      |\nabla_{\omega_{cusp}}^k (\ddbar \psi_0 - \Phi_\sigma^*\ddbars ((\Phi_\sigma^{-1})^*\psi_{0}))|_{\omega_{cusp}}
  \end{align}
for $k = 0,1$. To do so, we use \eqref{eq-smallness} with $\omega = \omega_{cusp}$ and $f = (\Phi_\sigma^{-1})^*\psi_0$. For this we require a bound on the first three $\omega_{cusp}$-derivatives of $\Phi_\sigma^*f = \psi_0 = \psi_{KE,0} - \psi_{FS,0}$. For $\psi_{KE,0}$, such a bound, which is actually $O(1)$, follows from Proposition \ref{KE-CUSP} using quasi-coordinates for $\omega_{cusp}$. For $\psi_{FS,0}$, we need to go through the same steps as after \eqref{eq:nfhf}: for $k = 1,2,3$ and $|z| \to 0$ we clearly have that
\begin{align}
    |\nabla_{\omega_{flat}|_{TY_0}}^k \psi_{FS,0}|_{\omega_{flat}|_{TY_0}} = O(|z|^{2-k}),
\end{align}
and hence, by \eqref{hhh1} and by computations in $\omega_{cusp}$-normal coordinates, for some $K > 0$,
\begin{align}
|\nabla^k_{\omega_{cusp}}\psi_{FS,0}|_{\omega_{cusp}} = O(|z|^2 |t|^K) = o(1).
\end{align}
In sum, all errors are negligible compared to the one from $\omega_{cusp}-\omega_{KE,0}$, which is $O(e^{-\delta_0\sqrt{-t}})$.

For item $(3)$: Using \eqref{TYError},  \eqref{TY-neckError} and Lemma \ref{Jerror}, one has
 for $k=0,1$,
 \begin{align}
 \begin{split}
  &\left|\nabla^k_{|b|^\frac{1}{2}\omega_{\cC,\sigma}}(\Phi_\sigma^*\ddbars (\Phi_\sigma^{-1})^*\psi_{T}-|b|^{\frac{1}{2}}\Phi^*_\sigma\omega_{TY_\sigma})\right|_{|b|^{\frac{1}{2}}\omega_{\cC,\sigma}}
  \\&=O(e^{-(\frac{1}{2}-\epsilon)(t-T)})+ O(|b|^{\frac{2-k}{4}}T_0^{\frac{3(2-k)}{4}})+O(e^{-\delta_0 (t-T)^{1/2}}).
\end{split}
\end{align}
 When applying \eqref{TYError}, we absorb the factor $|b|^{(2-k)/4}$ into $e^{-\delta_0(t-T)^{1/2}}$ by slightly changing $\delta_0$.
  \end{proof}

\begin{lemma}\label{holovolumeError}
Abuse notation by replacing $(\Psi_\sigma^{-1})^*\Omega_\sigma \leadsto \Omega_\sigma$. Then for some $K > 0$ and for all $\varepsilon>0$, for $T+T_0\leq t\leq -N$, for $j = 0,1$ and for $\omega$ equal to either $\omega_{cusp}$ or $\omega_T$, it holds on $TY_0$ that 
\begin{equation}
\left|\nabla^j_{\omega}\log\frac{\Phi_\sigma^*\Omega_\sigma\wedge\Phi_\sigma^*\overline{\Omega}_\sigma}{\Omega_0\wedge\overline{\Omega}_0}\right|_\omega=O(|\sigma||z|^{-3}|t|^K)=O(e^{-(\frac{3}{2}-\varepsilon)(t-T)}).
\end{equation}
\end{lemma}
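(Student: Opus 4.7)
The plan is to decompose the log-ratio into one piece that depends only on intrinsic holomorphic data of the two families and another that measures the non-holomorphicity of $\Phi_\sigma$, exploiting the specific algebraic structure of our defining equation. I would first check that $(\Psi_\sigma^{-1})^*\Omega_\sigma = F(\tilde z)\,\Omega_{TY_\sigma}$ on $TY_\sigma$, where $F$ is a single $\sigma$-independent holomorphic function on a neighborhood of the origin in $\mathbb C^3$. Indeed, using \eqref{Psi} together with the local formulas $\Omega_\sigma = \gamma\,dz_2 \wedge dz_3/[3z_1^2(1+2z_1^3)]$ and $\Omega_{TY_\sigma} = \gamma'\,d\tilde z_2 \wedge d\tilde z_3/(3\tilde z_1^2)$ on the chart $\{z_1 \neq 0\}$, one obtains
\[
F(\tilde z) = \frac{\gamma}{\gamma'}\prod_{i=1}^3 \frac{(1+z_i(\tilde z_i)^3)^{2/3}}{1+2z_i(\tilde z_i)^3},
\]
where $z_i(\tilde z_i)$ is the $\sigma$-independent inverse of $\tilde z_i = z_i(1+z_i^3)^{1/3}$. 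With the normalization $C = 1$ of Lemma \ref{OmegaOC} this forces $\gamma/\gamma' = 1$, and a direct Taylor expansion yields $F(\tilde z) = 1 - \tfrac{4}{3}\sum_i \tilde z_i^3 + O(|\tilde z|^6)$; in particular $|\nabla F(\tilde z)| = O(|\tilde z|^2)$ and $F$ is bounded away from zero near the origin. Thus on $TY_0$ we have
\[
\log\frac{\Phi_\sigma^* \Omega_\sigma \wedge \overline{\Phi_\sigma^*\Omega_\sigma}}{\Omega_0 \wedge \overline\Omega_0} = 2\log\left|\frac{F \circ \Phi_\sigma}{F}\right| + \log \frac{\Phi_\sigma^*\Omega_{TY_\sigma}\wedge \overline{\Phi_\sigma^*\Omega_{TY_\sigma}}}{\Omega_{TY_0}\wedge\overline\Omega_{TY_0}}.
\]

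The first term is controlled via $F\circ \Phi_\sigma(z) - F(z) = \int_0^1 \nabla F(z + s(\Phi_\sigma(z) - z))\cdot (\Phi_\sigma(z) - z)\,ds$. Combining $|\nabla F(w)| = O(|w|^2)$ with the Lemma \ref{Phis} estimate $|\Phi_\sigma(z) - z| = O(|\sigma||z|^{-2})$ gives $|F\circ\Phi_\sigma - F| = O(|\sigma|)$, whence $2\log|F\circ\Phi_\sigma/F| = O(|\sigma|)$ since $F$ is bounded below. For the second term, I would expand $\Phi_\sigma^*(dz_i) = dz_i + 2\nu_\sigma \overline z_i\,d\overline z_i + \overline z_i^2\,d\nu_\sigma$ and use the Lemma \ref{Phis} bounds $|\partial^k \nu_\sigma| = O(|\sigma||z|^{-4-k})$ to conclude that $\Phi_\sigma^*\Omega_{TY_\sigma}$ deviates from $\Omega_{TY_0}$ by a 2-form of all bidegrees whose pointwise relative size is $O(|\sigma||z|^{-3})$; wedging with the conjugate and taking the logarithm preserves this bound. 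Since $|z|^{-3} \geq 1$ whenever $t \leq 0$, the two terms combine to $O(|\sigma||z|^{-3})$, establishing the $j = 0$ case of the claim.

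For $j = 1$ I would pass to quasi-coordinates for $\omega_{cusp}$ (as in the proof of Lemma \ref{l:Fubini}) or for $\omega_T$ (as in Corollary \ref{cor-control-on-neck}), under which covariant derivatives are uniformly equivalent to partial derivatives modulo scaling factors polynomial in $|t|$. Differentiating each of the two pointwise estimates once in these coordinates then introduces a loss of at most $|t|^{K_1}$ for some fixed $K_1$, which is absorbed into the factor $|t|^K$ in the stated bound. The main technical obstacle is the careful bookkeeping of these polynomial losses uniformly across the full range $t \in [T+T_0,-N]$, accounting for the differing curvature scales of the two reference metrics; in particular $\omega_T$ develops a horn singularity at $t = T$, so its quasi-coordinates degenerate differently from those of $\omega_{cusp}$. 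Nevertheless, Proposition \ref{prop-T-c} and Corollary \ref{cor-control-on-neck} provide the uniform comparison of $\omega_T$ with $\omega_{cusp}$ needed to deduce the estimate for either choice of $\omega$.
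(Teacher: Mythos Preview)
Your $j=0$ argument is essentially correct and parallels the paper's: both write $(\Psi_\sigma^{-1})^*\Omega_\sigma = F\cdot\Omega_{TY_\sigma}$ with $F$ holomorphic, $\sigma$-independent and $F(0)=1$ (the paper's Lemma~\ref{Hvolumeratio}), expand $\Phi_\sigma^*(dz_i)$ via Lemma~\ref{Phis}, and collect terms of size $O(|\sigma||z|^{-3})$. Your explicit formula for $F$ and the observation $|\nabla F|=O(|z|^2)$ are a pleasant refinement but not needed for the stated bound.

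For $j=1$ there is a genuine gap. Your proposed resolution, that Proposition~\ref{prop-T-c} and Corollary~\ref{cor-control-on-neck} furnish a \emph{uniform} comparison of $\omega_T$ with $\omega_{cusp}$ over the full range $[T+T_0,-N]$, is not correct: those results apply only for $\eta=t/T\in(0,\delta]$ with $\delta<1$ fixed, i.e.\ on the cusp side. Near the horn ($t\in[T+T_0,\delta T]$) the metrics $\omega_T$ and $\omega_{cusp}$ are \emph{not} uniformly equivalent; by \eqref{eq-Psi-T-11}--\eqref{eq-Psi-T-12} one has $\psi_T'/\psi_{cusp}'\sim s^{1/2}\to 0$ and $\psi_T''/\psi_{cusp}''\sim s^{-1/2}\to\infty$, so no polynomial-in-$|t|$ loss can relate the two gradients there. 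The paper handles this by a different route: it first establishes $|\nabla^j_{\omega_{flat}|_{TY_0}}\log(\cdots)|_{\omega_{flat}|_{TY_0}}=O(|\sigma||z|^{-3-j})$ directly (the Euclidean derivative bounds on $\nu_\sigma$ live naturally in $\omega_{flat}$), then converts to $\omega_{cusp}$ via the comparison \eqref{hhh1}, and finally treats $\omega_T$ by splitting into the two subregions $t/T\lessgtr 1-\delta$, comparing $\omega_T$ with $\omega_{cusp}$ in the first and with $|b|^{1/2}\omega_{\cC,\sigma}$ in the second (redoing the analogue of \eqref{hhh1} for the Calabi model). You would need this two-region splitting, or something equivalent, to close the $\omega_T$ case.

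A secondary remark: even for $\omega_{cusp}$, your reference to ``quasi-coordinates\ldots as in the proof of Lemma~\ref{l:Fubini}'' is off---that lemma works entirely in $\omega_{flat}|_{TY_0}$. Going directly to $\omega_{cusp}$ quasi-coordinates forces you to track how the ambient $z_i$ and the $\tilde O(|\sigma||z|^{-3})$ functions transform under $(\check x_\alpha,\check y_\alpha,\check x,\check\theta)$, which is awkward; the paper's intermediate passage through $\omega_{flat}|_{TY_0}$ is what makes the derivative bookkeeping tractable.
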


\begin{proof}
In the following, $z_1,z_2,z_3$ and $dz_1,dz_2,dz_3$ are automatically understood to be restricted from $\mathbb{C}^3$ to $TY_\sigma$ or $TY_0$. Similarly, $\tilde{O}(\ldots)$ as in \eqref{mmmm} denotes a function on a neighborhood of the origin in $\mathbb{C}^3$, and we will restrict this function to $TY_\sigma$ or $TY_0$ without writing the restriction symbol.

By the proof of Lemma \ref{Hvolumeratio} we have for all $\sigma$ (including $\sigma=0$) that
\begin{equation}\label{eq-Omega-sigma-1}
\Omega_\sigma=(1+f(z))\Omega_{TY_\sigma},
\end{equation}
 where $f(z)$ is holomorphic on some neighborhood of the origin in $\mathbb{C}^3$ with $f(0) = 0$. We may assume without loss of generality that $|z_3|\geq\max\{|z_1|,|z_2|\}$ at the point of $TY_0$ at which we are working. Then $z_3 \neq 0$ because $t > -\infty$. Also, $\Phi_\sigma(z)_3 = z_3 + \nu_\sigma(z)\overline{z}_3^2 \neq 0$ because otherwise $|z_3| = O(|\sigma||z|^{-4})|z_3|^2$, so $1 = O(|\sigma||z|^{-4})|z_3| = O(|\sigma| |z|^{-3}) = o(1)$ because $t-T \to \infty$, and this is a contradiction. This means that we can use the same Poincar\'e residue to represent $\Omega_{TY_0}|_z$ and $\Omega_{TY_\sigma}|_{\Phi_\sigma(z)}$, i.e., 
 \begin{align}\label{eq:poinc}
\Omega_{TY_0}|_z = \frac{dz_1 \wedge dz_2}{z_3^2},\;\, \Omega_{TY_\sigma}|_{\Phi_\sigma(z)} = \frac{dz_1 \wedge dz_2}{\Phi_\sigma(z)_3^2}.
\end{align}
Then from \eqref{ooo} and \eqref{Pullbackz} we have that
\begin{align}\label{eq:ilikethis}
\begin{split}\Phi_\sigma^*(z_i)&=z_i+\tilde O(|\sigma||z|^{-2}),\\
 \Phi_\sigma^*(dz_1)\wedge  \Phi_\sigma^*(dz_2)&=dz_1\wedge dz_2+\sum\nolimits_{|I|+|J|=2} \tilde O(|\sigma||z|^{-3}) \, dz_I\wedge d\overline{z}_J,
 \end{split}
\end{align}
where $I,J$ are multi-indices with values in $\{1,2,3\}$. Combining \eqref{eq-Omega-sigma-1}, \eqref{eq:poinc} and \eqref{eq:ilikethis} with
\begin{align}
    dz_3=-\frac{z_1^2}{z_3^2}dz_1-\frac{z_2^2}{z_3^2}dz_2,
\end{align}
which holds identically on $TY_0 \cap \{z_3 \neq 0\}$, we obtain that
\begin{align}\label{eq:tod}\begin{split}
\frac{\Phi_\sigma^*\Omega_\sigma\wedge\Phi_\sigma^*\overline{\Omega}_\sigma}{\Omega_0\wedge\overline{\Omega}_0}
&=\left|\frac{1+f(\Phi_\sigma(z))}{1+f(z)} \right|^2 \cdot \frac{\Phi_\sigma^*(dz_1) \wedge \Phi_\sigma^*(dz_2) \wedge \Phi_\sigma^*(d\overline{z}_1) \wedge \Phi_\sigma^*(d\overline{z}_2)}{dz_1 \wedge dz_2 \wedge d\overline{z}_1 \wedge d\overline{z}_2} \cdot \left|\frac{z_3}{\Phi_\sigma^*(z_3)}\right|^4\\
&= \left|\frac{1+f(\Phi_\sigma(z))}{1+f(z)} \right|^2 \cdot \left(1 + {\rm Re}\left(\tilde O(|\sigma||z|^{-3})\hspace{-0.5mm}\left[\frac{z_1}{z_3},\frac{z_2}{z_3}\right]\right)\right) \cdot \left|1 + z_3^{-1} \tilde{O}(|\sigma| |z|^{-2}) \right|^{-4}.
\end{split}\end{align}
The square bracket notation stands for a polynomial in $z_1/z_3, z_2/z_3$ with $\tilde{O}(|\sigma||z|^{-3})$ coefficients.

We can analyze $|\nabla^j_\omega \log \eqref{eq:tod} |_\omega$ using the same technique as in the proofs of Lemmas \ref{l:Fubini} and \ref{l:FubiniVolErr}: First, view $z_1,z_2$ as local coordinates on $TY_0 \cap \{z_3 \neq 0\}$, represent $\omega_{flat}|_{TY_0}$ by a matrix and estimate the $\omega_{flat}|_{TY_0}$-covariant derivatives of $\log \eqref{eq:tod}$ in terms of its partial derivatives with respect to $z_1,z_2$. Here, the properties $-z_3^3 = z_1^3 + z_2^3$ and $|z_3| \geq \max\{|z_1|,|z_2|\}$ need to be used to eliminate or estimate terms that explicitly depend on $z_3$. The upshot is that for $j = 0,1$ and $|\sigma|^{-1/3}|z| \to \infty$,
\begin{align}\label{eq:tod2}
\left|\nabla^j_{\omega_{flat}|_{TY_0}} \log \frac{\Phi_\sigma^*\Omega_\sigma\wedge\Phi_\sigma^*\overline{\Omega}_\sigma}{\Omega_0\wedge\overline{\Omega}_0} \right|_{\omega_{flat}|_{TY_0}} = O(|\sigma||z|^{-3-j}).
\end{align}
Secondly, we can rewrite \eqref{eq:tod2} in terms of $\omega_{cusp}$ instead of $\omega_{flat}|_{TY_0}$ by using \eqref{hhh1}. This implies the statement of the lemma for $\omega = \omega_{cusp}$. For $\omega = \omega_T$ we use a reduction as in the proof of Lemma \ref{l:psiJerror}. Fix a small $\delta > 0$. If $t/T \leq 1-\delta$, then $\omega_T$ and $\omega_{cusp}$ are uniformly comparable by Proposition \ref{prop-T-c}. Thus, the version of the lemma for $\omega_{T}$ follows from the one for $\omega_{cusp}$ in this case. If $t/T \geq 1-\delta$, then $\omega_T$ is comparable to $|b|^{1/2}\omega_{\cC,\sigma}$ by \eqref{eq-Psi-T-11}--\eqref{eq-Psi-T-12}. Redoing the proof of \eqref{hhh1} for $\omega_{\cC,\sigma}$ instead of $\omega_{cusp}$ as the reference metric, it is clear that $|\omega_{flat}|_{TY_0}|_{\omega_{\cC,\sigma}} = O(|z|^2 |b|^{-L} (t - T)^{L})$ for some $L > 0$ if $|z| \to 0$ with $t - T \geq 1$. Thus, the statement of the lemma again follows from \eqref{eq:tod2}.
\end{proof}

We now finally come to the proof of our main result, Theorem \ref{f:Riccipotential}.

\begin{proof}[Proof of Theorem \ref{f:Riccipotential}]

Firstly, let us recall from \eqref{riccipotential} the expression of the Ricci potential:
\begin{equation}
    f_\sigma|_{\mathcal{Y}_\sigma} = \log\left(\frac{\omega_{glue,\sigma}^2}{\Omega_\sigma\wedge\overline{\Omega}_\sigma}\right)-\psi_{glue,\sigma}.
\end{equation}
We will do the proof case by case by analyzing this formula.\medskip\

\noindent {\bf Region $\mathfrak{R}_1$:} In this region, we have that $t>-N$. It follows from the smoothness of $\Omega_\sigma$ and $\psi_{FS,\sigma}$ with respect to $\sigma$ and from the K\"ahler-Einstein equation 
\begin{equation}\label{BBB}
    \log\frac{\omega_{KE,0}^2}{\Omega_0\wedge\overline{\Omega}_0}=\psi_{KE,0}
\end{equation}
that $|\nabla^k_{\omega_{glue,\sigma}}f_\sigma|_{\omega_{glue,\sigma}}=O(|\sigma|)$ for $k=0,1$.\medskip\

\noindent {\bf Region $\mathfrak{R}_2$:} For region $\mathfrak{R}_2$,  as in Lemma \ref{l:FubiniVolErr},  we omit the pull-back map $\Psi_\sigma^{-1}$ for simplicity. In this region,  ${\tau}<t<-N$. Using \eqref{BBB}, we have that
   \begin{align}\label{eq-f-R3}
   \begin{split}
     \Phi^*_\sigma f_\sigma&= \log\frac{\Phi^*_\sigma(\ddbars\psi_{glue,\sigma})^2}{\Phi^*_\sigma\Omega_\sigma\wedge\Phi^*_\sigma\overline{\Omega}_\sigma}+\log\frac{\Omega_0\wedge\overline{\Omega}_0}{\omega^2_{KE,0}}-\psi_0-\Phi_\sigma^*\psi_{FS,\sigma}+\psi_{KE,0}\\
     &=\log\frac{\Phi^*_\sigma(\ddbars\psi_{glue,\sigma})^2}{\omega_{KE,0}^2}+\log\frac{\Omega_0\wedge\overline{\Omega}_0}{\Phi^*_\sigma\Omega_\sigma\wedge\Phi^*_\sigma\overline{\Omega}_\sigma}-\Phi_\sigma^*\psi_{FS,\sigma}+\psi_{FS,0}.
     \end{split}
    \end{align}
We estimate the three terms separately. 
By Lemma \ref{l:FubiniVolErr}, for $k=0,1$, we have that
\begin{align}
\left|\nabla^k_{\omega_{KE,0}}\log\frac{\Phi^*_\sigma(\ddbars\psi_{glue,\sigma})^2}{\omega_{KE,0}^2}\right|_{\omega_{KE,0}}=O(e^{-(\frac{1}{2}-\epsilon)(t-T)}).\end{align}
By Lemma \ref{holovolumeError}, for $k=0,1$, the second term can be estimated as
\begin{equation}\label{eq-Omega-ratio}
\left|\nabla^k_{\omega_{KE,0}}\log\frac{\Phi_\sigma^*\Omega_\sigma\wedge\Phi_\sigma^*\overline{\Omega}_\sigma}{\Omega_0\wedge\overline{\Omega}_0}\right|_{\omega_{KE,0}}=O(e^{-(\frac{3}{2}-\epsilon)(t-T)}).
\end{equation}
For the third term, for $k=0,1$, by \eqref{eq:nfhf} and Proposition \ref{KE-CUSP},
\begin{equation}|\nabla^k_{\omega_{KE,0}}(\Phi_\sigma^*\psi_{FS,\sigma}-\psi_{FS,0})|_{\omega_{KE,0}}
=O(|\sigma||z|^{-1}|t|^{K}) = O(|z|^2 e^{-(\frac{3}{2}-\epsilon)(t-T)}).
\end{equation}
Dropping the smaller terms, we get
\begin{align}
|\nabla_{\omega_{KE,0}}^k\Phi_\sigma^* f_\sigma|_{\omega_{KE,0}}=O(e^{-(\frac{1}{2}-\epsilon)(t-T)}) \;\,(k=0,1).
\end{align} 
Using the metric equivalence of $\Phi^*_\sigma g_{glue,\sigma}$ and $g_{KE,0}$ in this region, which follows from Lemmas \ref{l:psiJerror} and \ref{Jerror} (using also \eqref{eq:goo} to compare the term $\psi_{FS,\sigma}$ in $\psi_{glue,\sigma}$ to $\psi_{FS,0}$), we conclude that
\begin{align}\label{eq-Df-R3}
|\nabla^k_{\omega_{glue,\sigma}} f_\sigma|_{\omega_{glue,\sigma}}=O(e^{-(\frac{1}{2}-\epsilon)(t-T)})\;\,(k=0,1).\end{align} 

\noindent \textbf{Region $\mathfrak{R}_4$:} In this case $\psi_{glue,\sigma}=(\Phi_\sigma^{-1})^*\psi_T$ and $\omega_{glue,\sigma}=\ddbars(\Phi_\sigma^{-1})^*\psi_T$.
    So we have 
\begin{equation}
    f_\sigma=\log\frac{(\ddbars\psi_{glue,\sigma})^2}{\Omega_\sigma\wedge\overline{\Omega}_\sigma}-(\Phi^{-1}_\sigma)^*\psi_T.
\end{equation}
\noindent Hence
\begin{align}
\begin{split}
   \Phi^*_\sigma f_\sigma&= \log\frac{\Phi^*_\sigma(\ddbars\psi_{glue,\sigma})^2}{\Phi^*_\sigma\Omega_\sigma\wedge\Phi^*_\sigma\overline{\Omega}_\sigma}-\psi_T\\
   &=\log\frac{\Phi^*_\sigma(\ddbars\psi_{glue,\sigma})^2}{\omega_T^2}+\log\frac{\Omega_0\wedge\overline{\Omega}_0}{\Phi^*_\sigma\Omega_\sigma\wedge\Phi^*_\sigma\overline{\Omega}_\sigma}+\log\frac{\Omega_{\cC}\wedge\overline{\Omega}_{\cC}}{\Omega_0\wedge\overline{\Omega}_0}+\log\frac{\omega^2_T}{\Omega_{\cC}\wedge\overline{\Omega}_{\cC}}-\psi_T.
   \end{split}
\end{align}
By equation \eqref{normalizingpotential}, the last two terms combine to zero. 

Now we estimate the first three terms one by one. Using Lemma \ref{Jerror}, for $j=0,1$ one has
\begin{equation}
\left|\nabla^j_{\omega_T}\left(\log\frac{\Phi^*_\sigma(\ddbars\psi_{glue,\sigma})^2}{\omega_T^2}\right)\right|_{\omega_T}=O(e^{-(\frac{1}{2}-\epsilon)(t-T)}).
\end{equation}
By Lemma \ref{holovolumeError}, for $j=0,1,$ we estimate the second term by
\begin{equation}
\left|\nabla^j_{\omega_T}\left(\log\frac{\Phi_\sigma^*\Omega_\sigma\wedge\Phi_\sigma^*\overline{\Omega}_\sigma}{\Omega_0\wedge\overline{\Omega}_0}\right)\right|_{\omega_T}=O(e^{-(\frac{3}{2}-\epsilon)(t-T)}).
\end{equation}
By \eqref{Omega0-OmegaC} and \eqref{ggg} with $C=1$, for $j = 0,1$ we estimate the third term by
\begin{equation}\left|\nabla^j_{\omega_T}\left(\log\frac{\Omega_{\cC}\wedge\overline{\Omega}_{\cC}}{\Omega_0\wedge\overline{\Omega}_0}\right)\right|_{\omega_T}=O(e^{\frac{(1-\epsilon)t}{2}}).
\end{equation}
Dropping the smaller term,  one has
\begin{equation}
|\nabla_{\omega_T}^j\Phi^*_\sigma f_\sigma|_{\omega_T}=O(e^{-(\frac{1}{2}-\epsilon)(t-T)})+O(e^{(\frac{1}{2}-\varepsilon)t})\;\,(j = 0,1).
\end{equation}
 Using the metric equivalence of $\Phi^*_\sigma g_{glue,\sigma}$ and $g_T$ in this region, we conclude that 
\begin{align}\label{eq-f-R7}
|\nabla_{\omega_{glue,\sigma}}^j f_\sigma|_{\omega_{glue,\sigma}}=O(e^{-(\frac{1}{2}-\epsilon)(t-T)})+O(e^{(\frac{1}{2}-\epsilon)t})\;\,(j = 0,1).\end{align} 

\noindent {\bf Regions $\mathfrak{R}_6, \mathfrak{R}_7$:}  For regions $\mathfrak{R}_6,\mathfrak{R}_7$ we estimate $f_\sigma$ directly on $TY_\sigma$. In these two cases, one has
\begin{equation}\label{eq:i_need_this}
    f_\sigma=\log\frac{(\mathfrak{c}|b|^{\frac{1}{2}}m_\sigma^*\ddbar\psi_{TY_1})^2}{\Omega_\sigma\wedge\overline{\Omega}_\sigma}-\mathfrak{c}|b|^{\frac{1}{2}}m_\sigma^*\ddbar\psi_{TY_1}-\psi_T(T).
\end{equation}
It follows from 
\begin{equation}
    {(m_\sigma^*\ddbar\psi_{TY_1})^2}={m_\sigma^*\Omega_{TY_1}\wedge m_\sigma^*\overline{\Omega}_{TY_1}}
\end{equation}
 that
\begin{equation}
    \log\frac{(\mathfrak{c}|b|^{\frac{1}{2}}m_\sigma^*\ddbar\psi_{TY_1})^2}{\Omega_\sigma\wedge\overline{\Omega}_\sigma}-\psi_T(T)=\log\Big(\mathfrak{c}^2|b||F_\sigma|^2e^{-\psi_{T}(T)}\Big),
\end{equation}
where
\begin{align}
    F_\sigma:=\frac{m_\sigma^*\Omega_{TY_1}}{\Omega_\sigma}.
\end{align}

Now we claim that
\begin{equation}\label{eq:herrnikolaus}\mathfrak{c}^2|b|=e^{\psi_T(T)}.
\end{equation}
This can easily be checked as follows: Using the fact that $\mathfrak{c} = \sqrt{2}$ in dimension $n = 2$ and that 
\begin{equation}
    e^{\psi_T(T)+a}+b=0,
\end{equation}
we see that \eqref{eq:herrnikolaus} is equivalent to $a = -{\log 2}$, which follows from the equation
\begin{equation}
e^{-\psi_{cusp}}\omega_{cusp}^2 =\Omega_{\mathcal C}\wedge\overline{\Omega}_{\mathcal C} = \omega_{\cC}^2
\end{equation}
and from the explicit form of $\psi_{cusp}, \psi_{\mathcal C}$ in \eqref{eq-psi-cusp-2}, \eqref{eq-psi-C} respectively.

Hence 
\begin{equation}
    \log\Big(\mathfrak{c}^2|b||F_\sigma|^2e^{-\psi_T(T)}\Big)=\log|F_\sigma|^2.
\end{equation}
On $TY_1$, if $|z|\gg1$, then $|z|^2$ is uniformly comparable to $h = e^t$.
Recall that $m_\sigma(z) = \sigma^{-1/3} z$. So on $TY_\sigma$, points in regions $\mathfrak{R}_6$ and $\mathfrak{R}_7$ satisfy, for some constant $C$ independent of $\sigma$,
\begin{align}
|z|^2 \leq C |\sigma|^{\frac{2}{3}}e^{T_0} = Ce^{T + T_0}.   
\end{align}
Using Lemma \ref{Hvolumeratio}, one has
\begin{equation}\label{AAA}\log|F_\sigma|=\log\left|\frac{m_\sigma^*\Omega_{TY_1}}{\Omega_\sigma}\right|=\tilde O(|z|)= O(e^{\frac{T+T_0}{2}}).
\end{equation}
Noticing that $\mathfrak{c}|b|^{1/2}m_\sigma^*\ddbar\psi_{TY_1}$ is Ricci-flat, we can deduce from the Cheng-Yau gradient estimate for harmonic functions \cite[p.350, Thm 6]{CY} and from \eqref{AAA} that 
\begin{equation}\label{eee}
\left|\nabla_{\mathfrak{c}|b|^{\frac{1}{2}}m_\sigma^*\omega_{TY_1}} (\log|F_\sigma|^2)\right|_{\mathfrak{c}|b|^{\frac{1}{2}}m_\sigma^*\omega_{TY_1}}=O(|b|^{-\frac{1}{4}}e^{\frac{T+T_0}{2}}).
\end{equation}
Using that $|b|$ is comparable to $|T|^{-3}$, we have
\begin{equation}\label{ffff}
\mathfrak{c}|b|^{\frac{1}{2}}m_\sigma^*\psi_{TY_1}=O((T_0/|T|)^{\frac{3}{2}}).
\end{equation}
Also, for $\log h \leq T_0$,
\begin{equation}
\left|\nabla_{\omega_{TY_1}}\psi_{TY_1}\right|_{\omega_{TY_1}}=O(T_0^{\frac{3}{4}}),
\end{equation}
and hence
\begin{equation}
\left|\nabla_{\mathfrak{c}|b|^{\frac{1}{2}}m_\sigma^*\psi_{TY_1}}\mathfrak{c}|b|^{\frac{1}{2}}m_\sigma^*\psi_{TY_1}\right|_{\mathfrak{c}|b|^{\frac{1}{2}}m_\sigma^*\psi_{TY_1}}=O(|b|^{\frac{1}{4}}T_0^{\frac{3}{4}}).
\end{equation}
Thus, combining the $C^0$ norm and the $C^{1}$ seminorm (weighted by $\mathbf{r}_\sigma$) and dropping the smaller terms \eqref{AAA}--\eqref{eee}, we finish the proof of the case of regions $\mathfrak{R}_6$ and $\mathfrak{R}_7$.\medskip

\noindent {\bf Region $\mathfrak{R}_3$:} In this region, we glue the two Kähler-Einstein metrics $\omega_{KE,0}$ and $\omega_T$. Set $\psi_1, \psi_2$ to be the expression of
$\Phi_\sigma^* \psi_{glue, \sigma}$ in regions $\mathfrak{R}_2,\mathfrak{R}_4$ respectively. By \eqref{eq-f-R3} and \eqref{eq-Omega-ratio},
\begin{align}
\Phi_\sigma^* f_\sigma = \log \frac{\Phi_\sigma^* (i\partial_\sigma\overline{\partial}_\sigma ( (\Phi_\sigma^{-1})^*(\psi_2 + \chi_2 (\psi_1-\psi_2) )))^2}{\omega_{KE, 0}^2}  - (\psi_2 + \chi_2 (\psi_1-\psi_2))+O(e^{-(\frac{3}{2}-\epsilon)(t-T)}).  
\end{align}
If the error term $\chi_2(\psi_1 -\psi_2)$ vanishes, this is  exactly the case of region $\mathfrak{R}_4$, see \eqref{eq-f-R7}. 
In general, we first apply \eqref{eq-smallness} to change 
$\Phi_\sigma^*\ddbars f$ to $i\partial\overline\partial\Phi_\sigma^*f$ for $f=(\Phi_\sigma^{-1})^*(\psi_2 + \chi_2 (\psi_1-\psi_2) )$.
Then we only need to estimate the following additional terms:
\begin{align}\label{eq-error-R3}
\frac{2i\partial\overline{\partial} \psi_2 \wedge  i\partial\overline{\partial}  (\chi_2 (\psi_1-\psi_2) ) + (i\partial\overline{\partial}(\chi_2 (\psi_1-\psi_2)))^2}{\omega_{KE, 0}^2} -\chi_2(\psi_1-\psi_2).
\end{align}

As $t\in [2\tau, \tau],$ we have, for all $j \geq 0$, \begin{align}
|\chi_2^{(j)}(t)| \leq C_j|\tau|^{-j}.   
\end{align}
Applying the quasi-coordinates \eqref{eq-quasi}, we obtain that
\begin{align}\label{gggggg}
|\nabla^j_{\omega_{KE,0}}\chi_2| \leq C_j.   
\end{align}
Combining \eqref{gggggg} with the estimate of $\psi_1-\psi_2$  from Proposition \ref{KE-CUSP} and \eqref{eq-cusp-T-1}, we deduce that
\begin{align}\label{aa}
|\nabla^j_{\omega_{KE,0}} (\chi_2 (\psi_1-\psi_2))|\leq C_j(|b||\tau|^3 + e^{-\delta_0\sqrt{-t}})\;\, (j = 0,1).  
\end{align}
Thus, since $g_{KE, 0}$ and $\Phi_\sigma^*g_{glue, \sigma}$ are uniformly equivalent for $t\in [2\tau, \tau]$,
\begin{align}
|\nabla^j_{\Phi_\sigma^*g_{glue,\sigma}} (\chi_2 (\psi_1-\psi_2))|\leq C_j(|b||\tau|^3 + e^{-\delta_0\sqrt{-t}}) \;\, (j = 0,1). 
\end{align}
This provides the required estimate of the cutoff errors \eqref{eq-error-R3}.\medskip\

\noindent {\bf Region $\mathfrak{R}_5$:}  The situation is similar to $\mathfrak{R}_3$. Here we glue $\mathfrak{c}|b|^{1/2}m_\sigma^*\omega_{TY_1}$ and $\omega_T$. From the estimate \eqref{ffff} in region $\mathfrak{R}_6$ and elementary inequalities, we deduce that
\begin{equation}\label{vvv2}
|f_\sigma|+\mathbf{r}_\sigma|\nabla_{\omega_{glue,\sigma}}f_\sigma|_{\omega_{glue,\sigma}}\leq C(T_0/|T|)^{\frac{3}{2}} + |\ddbar(\chi_1E)|_{|b|^{\frac{1}{2}}\omega_{\cC}}+\mathbf{r}_\sigma\left|\nabla_{|b|^{\frac{1}{2}}\omega_{\cC}}\ddbar(\chi_1E)\right|_{|b|^{\frac{1}{2}}\omega_{\cC}},
\end{equation}
where $E(t):= \psi_T(t) - \psi_T(T) - \mathfrak{c} |b|^\frac{1}{2} \psi_{\mathcal{C}}(t)$ is the difference of K\"ahler potentials defined in \eqref{ErrorE}. We estimate $\chi_1E$ as follows, using the same idea as in Proposition \ref{prop:estimate_of_E}.  Firstly, for $t-T \in [T_0, 2T_0]$,
\begin{align}\label{eq-chi}
|\chi_1^{(j)}(t)| \leq C_j T_0^{-j}\;\,(j \geq 0).
\end{align}
Using the estimate of $E$ from \eqref{eq-E-j}, we further deduce that there exists a $C$ such that, for $t-T \in [T_0, 2T_0]$ and $j=0,1,2,3$,
\begin{align}\label{eq-f-j}
|((t-T)\partial_t)^j (\chi_1E)|\leq C |b|T_0^3.
\end{align}
Using quasi-coordinates, we have,
for $t-T\in [T_0, 2T_0]$ and $j=0,1$,
\begin{equation}\label{vvv1}
\left| \nabla^j_{|b|^{\frac{1}{2}}\omega_{\cC}} (i\partial   \overline{\partial}
(\chi_1 E))\right|_{|b|^{\frac{1}{2}}\omega_{\cC}} \leq  C|b|^{\frac{1}{4}(2-j)}T_0^{\frac{3}{4}(2-j)}.
\end{equation} 
The claimed estimate in region $\mathfrak{R}_5$ now follows from \eqref{vvv2}
and \eqref{vvv1}.\medskip

This completes the proof of Theorem \ref{f:Riccipotential}.
\end{proof}

\subsection{Definition of the weight functions and weighted Hölder norms}
         
We have set up the pre-glued manifold $(\mathcal{X}_\sigma, \omega_{glue,\sigma})$ and the relevant Monge-Amp\`ere equation in \eqref{def:metric}--\eqref{mainMA} and estimated its right-hand side, i.e., the Ricci potential $f_\sigma$ of $\omega_{glue,\sigma}$, in Theorem \ref{f:Riccipotential}. In Remark \ref{rem:savin} we discussed what can be said about the solution $u_\sigma$ using off-the-shelf arguments. This was insufficient (only) on the Tian-Yau region, motivating the development of a weighted Hölder space theory.
 
\begin{definition}\label{def:weights}
Fix a parameter $\delta > 0$. In the rest of the paper $\delta$ will always be chosen arbitrarily close to zero. With this in mind we define two weight functions ${w}_\sigma, \tilde{w}_\sigma: \mathcal{X}_\sigma \to \mathbb{R}^+$ as follows:
\begin{align}\label{eq:def_weight}
w_\sigma &:= 
\begin{cases}
|T|^{-\delta} &\text{on}\;\,\Psi_\sigma^{-1}(\Phi_\sigma(\{t > -N\})) \cup (\mathcal{X}_\sigma \setminus {\rm dom}\,\Psi_\sigma),\\
\Psi_\sigma^*(\Phi_\sigma^{-1})^*\left((t-T)^{-\delta}\right)&\text{on}\;\,\Psi_\sigma^{-1}(\Phi_\sigma(\{t < -N\})),\\
1 &\text{on}\;\,\mathfrak{R}_7,
\end{cases}\\
\tilde{w}_\sigma &:= \mathbf{r}_\sigma^{-2} w_\sigma.\label{eq-w-tilde}
\end{align}
 
\begin{definition}
With the weight $w_\sigma$ from \eqref{eq:def_weight} and the regularity scale $\mathbf{r}_\sigma$ from \eqref{eq:reg_scale} we define for all $0 \leq k \leq 4$ and for all locally $C^k$ functions $\phi$ on $\mathcal{X}_\sigma$:
\begin{align}\label{eq:def_wtd_norm}
\|\phi\|_{C^k_{w}} : =\sum_{j=0}^k \left\|w_\sigma^{-1}\mathbf{r}_\sigma^{j}\left|\nabla^j_{\omega_{glue,\sigma}}\phi\right|_{\omega_{glue,\sigma}}\right\|_{L^\infty(\mathcal{X}_\sigma)}.
\end{align}
Moreover, for all $0 \leq k \leq 3$ and $\bar\alpha \in (0,1)$ and for all locally $C^{k,\bar\alpha}$ functions $\phi$ on $\mathcal{X}_\sigma$,
\begin{align}\label{eq-C-k-alpha-nrom}
[\phi]_{C^{k,\bar\alpha}_w}:=\sup\left\{\frac{\mathbf{r}_\sigma(p)^{k+\bar\alpha}}{w_\sigma(p)}
\frac{|(\nabla^k_{\omega_{glue,\sigma}}\phi)(p)-(\nabla^k_{\omega_{glue,\sigma}}\phi)(q)|_{\omega_{glue,\sigma}}}{d_{\omega_{glue,\sigma}}(p,q)^{\bar\alpha}}: 0 < d_{\omega_{glue,\sigma}}(p,q) < \mathbf{r}_\sigma(p) \right\}.
\end{align}
Here the numerator of the difference quotient is to be understood using the trivialization of the tangent bundle in quasi-coordinates. Given this, we define
\begin{align}\label{def:weightnorm}
\|\phi\|_{C^{k,\bar\alpha}_{{w}}}:=\|\phi\|_{C^{k}_w}+[\phi]_{C^{k,\bar\alpha}_w}.
\end{align}
Replacing $w_\sigma$ by $\tilde w_\sigma$, we may similarly define a weighted $C^k_{\tilde w}$ and $C^{k,\bar\alpha}_{\tilde w}$ norm. For us, $\bar\alpha$ will always be an arbitrary number in $(0,1)$ whose choice affects neither the arguments nor the results.
\end{definition}

Table \ref{tab:weight} summarizes the behavior of the scale and weight functions and of the Ricci potential in a simplified manner (ignoring constant factors and allowing slightly suboptimal exponents). The only remaining geometric parameters are $|T| \sim |{\log |\sigma|}|$ and $|\tau| \sim (\log |T|)^2$. This information is sufficient to understand almost all of the numerology in Sections \ref{sec-L-inf}--\ref{s:obstruction_theory}.

\begin{table}[!ht]
\caption{Regularity scale $\mathbf{r}_\sigma$, weights $w_\sigma, \tilde{w}_\sigma$, Ricci potential $f_\sigma$.}
\label{tab:weight}\begin{tabular}{|l||l|l|l|l|l|l|}\hline
Region & $\mathfrak{R}_1$ & $\mathfrak{R}_2$ & $\mathfrak{R}_3$ & $\mathfrak{R}_4$  & $\mathfrak{R}_5 \cup \mathfrak{R}_6$ & $\mathfrak{R}_7$\\\hline\hline
Range of $t$ & \diagbox[width=16mm,height=7mm]{}{} & $(\tau,-N)$ & $(2\tau,\tau)$ & $(T + 2|T|^\alpha,2\tau)$ & $(T+ \log R, T + 2|T|^\alpha)$ & \diagbox[width=22mm,height=7mm]{}{}\\\hline
$\mathbf{r}_\sigma$ &$1$&$1$&$1$&$(1-t/T)^{\frac{3}{4}}$&$(1-t/T)^{\frac{3}{4}}$&$|T|^{-\frac{3}{4}}$\\\hline
$w_\sigma$ &$|T|^{-\delta}$&$|T|^{-\delta}$&$|T|^{-\delta}$&$(t-T)^{-\delta}$&$(t-T)^{-\delta}$&$1$\\\hline
$\tilde{w}_\sigma$ &$|T|^{-\delta}$&$|T|^{-\delta}$&$|T|^{-\delta}$&$|T|^{\frac{3}{2}}(t-T)^{-\frac{3}{2}-\delta}$&$|T|^{\frac{3}{2}}(t-T)^{-\frac{3}{2}-\delta}$&$|T|^{\frac{3}{2}}$\\\hline
$f_\sigma$ &$e^{-1.5|T|}$&$e^{-0.4|T|}$&$|\tau|^3|T|^{-3}$&$|T|^{-{\log |T|}}$&$|T|^{-\frac{3}{2}(1-\alpha)}$&$|T|^{-\frac{3}{2}(1-\alpha)}$\\\hline
\end{tabular}
\vspace{-2mm}
\end{table}
\end{definition}

It is clear from standard Schauder theory on a ball of radius $\mathbf{r}_\sigma$ in $\C^2$ that for every $\bar\alpha \in (0,1)$ there exists a constant $C(\bar\alpha)$ independent of $\sigma$ such that the linearization
\begin{align}
    L_\sigma := \Delta_{\omega_{glue,\sigma}} - {\rm Id}
\end{align}
of the complex Monge-Amp\`ere operator satisfies the estimate
\begin{align}\label{eq:weighted_schauder}
\|\phi\|_{C^{2,\bar\alpha}_w} \leq C(\bar\alpha)(\|L_\sigma\phi\|_{C^{0,\bar\alpha}_{\tilde{w}}} + \|\phi\|_{C^0_w})
\end{align}
for all functions $\phi$. It is also clear that $L_\sigma$ is invertible and that we have a uniform $L^2$ bound for $L_\sigma^{-1}$, i.e., $\|\phi\|_{L^2} \leq \|L_\sigma\phi\|_{L^2}$ for all $\phi$ with respect to the $L^2(\mathcal{X}_\sigma,\omega_{glue,\sigma})$ norm on both sides. However, we will see in Section \ref{sec-L-inf} that the obvious desirable strengthening, $\|\phi\|_{C^0_w} \leq C\|L_\sigma\phi\|_{C^0_{\tilde{w}}}$, does not hold for  all $\phi$ for any $C$ independent of $\sigma$. This requires us to introduce an obstruction space.
 
\section{Uniform estimate of the inverse of the linearization modulo obstructions}\label{sec-L-inf}

In Section \ref{ss:def_obstr} we construct a $1$-dimensional function space $\mathbb{R}\cdot \hat{u}_\sigma \subset C^\infty(\mathcal{X}_\sigma)$ such that there is a chance of proving uniform weighted Hölder estimates for the inverse of the restriction of $L_{\sigma}$ to the $L^2(\mathcal{X}_\sigma,\omega_{glue,\sigma})$-orthogonal complement of $\mathbb{R} \cdot \hat{u}_\sigma$. In the rest of this section, starting in Section \ref{ss:uniform-statement}, we then prove via a standard blowup-and-contradiction scheme that these uniform estimates are actually true. The lack of uniformity on the obstruction space $\mathbb{R}\cdot\hat{u}_\sigma$ will be dealt with in Section \ref{s:obstruction_theory}.

\subsection{Definition of the obstruction space}\label{ss:def_obstr}

We first sketch the idea: $\hat{u}_\sigma$ is constant to the left and zero to the right of the middle neck $\mathfrak{R}_4$, and converges in a sufficiently strong sense to $\hat{u}$, a solution to an ODE $L_\infty \hat{u} = 0$ on the half-line $\mathbb{R}^+ = {\rm GH}\lim_{\sigma\to 0}\mathfrak{R}_4$ with boundary values $1$ on the left and $0$ on the right. The solution $\hat{u}$ is uniquely determined by these conditions and is precisely the obstruction that breaks the obvious attempt at proving uniform estimates for $L_{\sigma}^{-1}$ via blowup and contradiction. However, going from $\hat{u}$ to $\hat{u}_\sigma$ turns out to be quite complicated because $\hat{u}_\sigma$ is \emph{not} uniquely defined by the properties we need it to satisfy, so there is no canonical choice of $\hat{u}_\sigma$ and we need to come up with some construction that works. We now describe our solution to this problem.

The first step is to construct good coordinates on the neck $\mathfrak{R}_4$, which will be used throughout this subsection and which will allow us to state the crucial Proposition \ref{p:construct_obstruct}.

As in Corollary \ref{cor-control-on-neck} and Proposition \ref{prop-L-s}, we parametrize the model neck by
\begin{align}
    s = 1-\frac{t}{T} \in (0,1)\;\,\text{and}\;\,\eta = 1-s = \frac{t}{T} \in (0,1).
\end{align}
For any $s_1 < s_2$ in $(0,1)$ we introduce coordinates on the universal cover of $\{s_1 < s < s_2\}$ via 
\begin{align}\label{eq-quasi-10}
\begin{split}
(\check x_\alpha, \check y_\alpha, \check x, \check \theta) := ((-t_*)^{-\frac{1}{2}} x_\alpha, (-t_*)^{-\frac{1}{2}} y_\alpha, -t_* x, (-t_*)^{-1}\theta ),\\
t_* := \left(1-\frac{s_1+s_2}{2}\right)T, \;\, x := -\frac{1}{t}.
\end{split}
\end{align}
Note that, as a function on the universal cover,
\begin{align}
s = s(\check{x}) = 1 - \frac{1-\frac{s_1+s_2}{2}}{\check{x}}
\end{align}
is increasing in $\check{x}$, and is uniformly smoothly bounded in the chart \eqref{eq-quasi-10} if $s_1,s_2$ are fixed.

\begin{convention}\label{conv}
From now on we identify the middle neck $\mathfrak{R}_4 \subset \mathcal{X}_\sigma$ with $\{T+2T_0 < t < 2\tau\} \subset TY_0$ via the diffeomorphism $\Phi_\sigma^{-1} \circ \Psi_\sigma$. Then $s,\eta$ naturally become functions on $\mathfrak{R}_4$ and $\check x_\alpha, \check y_\alpha, \check x, \check \theta$ become functions on the universal cover of $\mathfrak{R}_4$. The range of $s$ on $\mathfrak{R}_4$ is an open interval which exhausts all of $(0,1)$ as $\sigma \to 0$. Also, the $J_\sigma$-Kähler metric $\omega_{glue,\sigma}$ on $\mathfrak{R}_4$ then has the same Kähler potential, $\psi_T$, as the $J_0$-Kähler metric $\omega_T$. By Lemma \ref{Jerror}, the difference of the associated metric tensors $g_{glue,\sigma}$ and $g_T$ measured with respect to either of them is $O(e^{-(1/2-\epsilon)(t-T)})$ including all covariant derivatives.
\end{convention}

In addition to defining the obstruction function $\hat{u}_\sigma$ (which will be done in Definition \ref{def:obstr}), our final goal in this subsection is to prove the following property of $\hat{u}_\sigma$, a key result of this paper. The main cause of complication in our definition of $\hat{u}_\sigma$ is the need to ensure that this is true. Recall the radial volume density $\mu_\infty(s)$ from Lemma \ref{l:volumeform} and the weight $w_\sigma$ from Definition \ref{def:weights}.

\begin{proposition}\label{p:construct_obstruct}
For $\sigma_i \to 0$ let $x_i \in \mathfrak{R}_4 \subset \mathcal{X}_{\sigma_i}$ satisfy $s(x_i) \to c \in (0,1)$ and let $\psi_i \in C^0(\mathcal{X}_{\sigma_i})$ satisfy $w_{\sigma_i}(x_i) |\psi_i| \leq w_{\sigma_i}$ on $\mathcal{X}_{\sigma_i}$. Assume that there exists $\psi_\infty: (0,1) \to \mathbb{R}$ such that for all $s_1 < s_2$ in $(0,1)$, $\psi_i \to \psi_\infty \circ s$ uniformly in the coordinates \eqref{eq-quasi-10} on the universal cover of $\{s_1 < s < s_2\}$. Then
    \begin{align}\label{eq:limit_orth}
       \lim_{i\to\infty}|T_i|^2 \int_{\mathcal{X}_{\sigma_i}} \psi_i\hat{u}_{\sigma_i} \, \omega_{glue,\sigma_i}^2 = 2\pi {\rm Vol}(E) \int_0^1 \psi_\infty(s) \hat{u}(s)\,\mu_\infty(s)\, ds.
    \end{align}
\end{proposition}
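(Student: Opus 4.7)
My plan is to split the integral into the contribution from the middle neck $\mathfrak{R}_4$ and the rest, verify that the rest is negligible, and on $\mathfrak{R}_4$ reduce to a one-dimensional integral via Lemma \ref{l:volumeform} before applying dominated convergence. By Definition \ref{def:obstr} and \eqref{eq:def:hatvsigma}, $\hat{u}_{\sigma_i}$ vanishes identically on $\mathfrak{R}_1\cup\mathfrak{R}_2\cup\mathfrak{R}_3$ (where $\hat{v}_{\sigma_i}\equiv 0$) and equals the constant $-\hat{v}(0)$ on the Tian-Yau cap $\mathfrak{R}_5\cup\mathfrak{R}_6\cup\mathfrak{R}_7$ (where $\hat{v}_{\sigma_i}\equiv\hat{v}(0)$, so $L_{\sigma_i}\hat{v}_{\sigma_i}=-\hat{v}(0)$). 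Pushing the cap forward to $TY_1$ via $m_{\sigma_i}\circ\Psi_{\sigma_i}$ and using $\omega_{glue,\sigma_i}=\mathfrak{c}|b|^{1/2}\omega_{TY_{\sigma_i}}$ there together with the linear-in-$t$ volume growth of $\omega_{TY_1}$ inside $\{t<T_0=|T_i|^\alpha\}$, the $\omega_{glue,\sigma_i}^2$-volume of the cap is $O(|b|\cdot|T_i|^\alpha)=O(|T_i|^{-3+\alpha})$. The hypothesis combined with $w_{\sigma_i}\leq 1$ on the cap and $w_{\sigma_i}(x_i)^{-1}\leq C|T_i|^\delta$ gives $|\psi_i|\leq C|T_i|^\delta$, so the total cap contribution after multiplication by $|T_i|^2$ is $O(|T_i|^{-1+\alpha+\delta})\to 0$ because $\alpha,\delta$ are arbitrarily small.

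On $\mathfrak{R}_4$, Convention \ref{conv} identifies the region with $\{T+2T_0<t<2\tau\}\subset TY_0$. Lemma \ref{Jerror} shows that $\omega_{glue,\sigma_i}^2=(1+O(e^{-(1/2-\epsilon)|T_i|^\alpha}))\omega_{T_i}^2$ there, and Lemma \ref{l:volumeform} gives
\begin{equation*}
\omega_{T_i}^2=|T_i|^{-2}\mu_{T_i}(s)\,ds\wedge d\theta\wedge d{\rm Vol}_E,\qquad s=1-t/T_i,
\end{equation*}
where the range of $s$ exhausts $(0,1)$ as $i\to\infty$. Since $\hat{v}_{\sigma_i}$ depends only on $s$ and $\Delta_{\omega_{T_i}}$ maps radial functions to radial functions, $\hat{u}_{\sigma_i}$ is radial on $\mathfrak{R}_4$ up to an exponentially small non-radial correction. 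Multiplying by $|T_i|^2$ and applying Fubini then gives
\begin{equation*}
|T_i|^2\int_{\mathfrak{R}_4}\psi_i\hat{u}_{\sigma_i}\omega_{glue,\sigma_i}^2=\int\Big(\int_{E\times S^1}\psi_i\,d\theta\,d{\rm Vol}_E\Big)\hat{u}_{\sigma_i}(s)\mu_{T_i}(s)\,ds+o(1),
\end{equation*}
the error absorbing both the multiplicative volume form perturbation and the non-radial piece of $\hat{u}_{\sigma_i}$, both of which are exponentially small and harmless once the main integral is shown to be $O(1)$.

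For dominated convergence, pointwise convergence of the inner integrand on compact $[s_1,s_2]\subset(0,1)$ follows from the hypothesis (giving $\psi_i\to\psi_\infty(s)$ uniformly on $(E\times S^1)$-slices), Lemma \ref{l:conv_uhat}(1) (giving $\hat{u}_{\sigma_i}\to\hat{u}(s)$ uniformly), and Lemma \ref{l:volumeform}(2) (giving $\mu_{T_i}\to\mu_\infty$ pointwise). For the global $L^1$ dominator on $(0,1)$, the hypothesis combined with $w_{\sigma_i}=(s|T_i|)^{-\delta}$ on $\mathfrak{R}_4$ and $s(x_i)\to c>0$ yields $|\psi_i|\leq Cs^{-\delta}$, Lemma \ref{l:conv_uhat}(2) gives $|\hat{u}_{\sigma_i}|\leq C(1-s)^3$, and Lemma \ref{l:volumeform}(1) gives $\mu_{T_i}(s)\leq C(1-s)^{-3}$; their product is dominated by the integrable function $Cs^{-\delta}$. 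Recognizing $\int_{E\times S^1}d\theta\,d{\rm Vol}_E=2\pi\,{\rm Vol}(E)$ and passing to the limit then produces \eqref{eq:limit_orth}.

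The main obstacle is the uniform $L^1$ domination at $s\to 1^-$, where $\mu_\infty\sim(1-s)^{-3}$ blows up; this is cancelled precisely by the matching decay $|\hat{u}_\sigma|=O((1-s)^3)$ from Lemma \ref{l:conv_uhat}(2), whose proof rests on the careful construction of $\hat{v}$ in Lemma \ref{l:construct_vhat}. Specifically, the $O(\eta^{n+1})$ decay of $\hat{v}$ at $s=1$ drives the decay of $\hat{u}_\sigma$ there, while the Neumann condition $C_1=0$ at $s=0$ is what keeps $\hat{u}_\sigma$ uniformly bounded at the other endpoint. Without either structural feature the dominator would blow up too fast at one of the endpoints and the argument would fail.
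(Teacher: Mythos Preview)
Your proof is correct and follows essentially the same approach as the paper: the same three-region decomposition, the same volume bound on the Tian-Yau cap giving an $O(|T_i|^{-1+\alpha+\delta})$ contribution, and the same Fubini plus dominated convergence argument on $\mathfrak{R}_4$ with the dominator $Cs^{-\delta}$ coming from the product of the bounds in Lemma \ref{l:conv_uhat}(2) and Lemma \ref{l:volumeform}(1). The only cosmetic difference is that the paper averages $\psi_i\hat{u}_{\sigma_i}$ over level sets directly rather than splitting off the non-radial correction of $\hat{u}_{\sigma_i}$, but this is immaterial.
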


The remaining steps are now roughly as follows:
\begin{itemize}
\item In Lemma \ref{l:modeloperators} we prove that as $\sigma \to 0$ the operators $L_{\sigma}|_{\mathfrak{R}_4}$ collapse to a second-order ordinary differential operator $L_\infty$ on the interval $s \in (0,1)$. Moreover, $L_\infty$ is asymptotic to an explicit model operator $L_\infty^\pm$ near each of the two endpoints.
\item Fundamental solutions for $L_\infty^\pm$ can be calculated explicitly (Lemma \ref{l:fundsolutions}).
\item These endpoint asymptotics imply a Liouville theorem (Lemma \ref{l:construct_uhat}) of independent interest: up to scalar multiples there exists a unique entire solution $\hat{u}$ to $L_\infty \hat{u} = 0$ compatible with our weights $w_{\sigma}$, and we have that $\hat{u}(s) \to 1$ as $s \to 0^+$ and $\hat{u}(s) \to 0$ as $s \to 1^-$.
\item The inhomogeneous ODE $L_\infty \hat{v} = \hat{u}$ has a unique solution $\hat{v}$ satisfying $\hat{v}(s) \to 0$ as $s \to 1^-$ and satisfying a Neumann type boundary condition as $s \to 0^+$ (Lemma \ref{l:construct_vhat}).
\item We transplant $\hat{v}$ to a function $\hat{v}_\sigma$ on $\mathcal{X}_\sigma$ using radial cutoff functions and define $\hat{u}_\sigma := L_\sigma \hat{v}_\sigma$ (Definition \ref{def:obstr}). These functions $\hat{u}_\sigma$ converge back to $\hat{u}$ as $\sigma \to 0$ (Lemma \ref{l:conv_uhat}).
\item Using the Neumann property of $\hat{v}$, we prove that $\hat{u}_\sigma$ also satisfies Proposition \ref{p:construct_obstruct}.
\end{itemize}
We will carry out these steps in the following sub-subsections.

\subsubsection{Collapse to an ODE on an interval}

The study of the limit ODE operator $L_\infty$ requires careful attention to various fractional exponents. We work in the general $n$-dimensional setting because the values of these exponents might look like random numbers for $n = 2$.

\begin{lemma}\label{l:modeloperators}
There exists a smooth second-order linear differential operator $L_\infty$ on $(0,1)$ such that:
\begin{itemize}
    \item[$(1)$] Let $\sigma_i \to 0$ as $i \to \infty$. Let $\psi_i \in C^{2}(\mathcal{X}_{\sigma_i})$ be such that for any fixed $s_1 < s_2$ in $(0,1)$,
\begin{equation}\label{ass}
    \sup_{\{s_1 < s < s_2\}} |\psi_i| = O_{s_1,s_2}(1)\;\,\text{and}\;\,\sup_{\{s_1 < s < s_2\}} |L_{\sigma_i}\psi_i| = o_{s_1,s_2}(1)\;\,\text{as}\;\,i\to\infty.
\end{equation}
Then there is a smooth function $\psi_\infty: (0,1) \to \mathbb{R}$ such that $L_\infty \psi_\infty = 0$ and, after passing to a subsequence and pulling back to the universal cover, we have for all $s_1 < s_2$ that $\psi_i \to \psi_\infty \circ s$ weakly in $W^{2,p}_{loc}$ and strongly in $C^{1,\beta}_{loc}$ with respect to the coordinates \eqref{eq-quasi-10} for all $p$ and $\beta$.
    \item[$(2)$] While there is no simple formula for $L_\infty$ globally on $(0,1)$, at the endpoints we have that
    \begin{align}
    L_\infty &= \frac{1}{n+1} L_\infty^{+} + O(\eta^{n+1})\;\,\text{as}\;\,\eta = 1-s \to 0^+,\label{ModelOperator1}\\
    L_\infty &= \frac{1}{d(n)} L_\infty^{-} + O(s^{\frac{n+1}{n}})\;\,\text{as}\;\,s \to 0^+,\;\, d(n) := \left(\frac{n}{n+1}\right)^{\frac{1}{n}}c(n)^{\frac{n+1}{n}}.\label{ModelOperator2}
\end{align}
Here the $O$ notation is to be understood in the sense of Definition \ref{def-pert}, $c(n)$ is as in \eqref{eq:def_c(n)}, and the two model operators $L_\infty^{\pm}$ are given by
\begin{align}
    L_\infty^{+} &:= \eta^2\partial_{\eta\eta}^2 - (n-1)\eta \partial_\eta - (n+1)\cdot{\rm Id},\label{eq:DefMO-}\\
    L_\infty^{-} &:= n s^\frac{n-1}{n}\partial^2_{ss} + (n-1)s^{-\frac{1}{n}}\partial_s - d(n) \cdot {\rm Id}.\label{eq:DefMO+}
\end{align}
\end{itemize}
\end{lemma}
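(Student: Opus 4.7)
The plan is in three conceptual steps: first, construct $L_\infty$ explicitly from the restriction of $\Delta_{\omega_T}-{\rm Id}$ to radial functions; second, read off the endpoint asymptotics from Propositions~\ref{prop-T-c} and~\ref{prop-L-s}; and third, prove the compactness/radiality statement by combining interior Schauder estimates in the quasi-coordinates~\eqref{eq-quasi-10} with a fiber-averaging argument over the isometric $T^{2n-1}$-action that preserves $\omega_T$.

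For the construction, I observe that $\omega_T$ is a radial K\"ahler metric on the total space of $L\to E$, so a direct computation with Lemma~\ref{lem-g-inv}, in which the mixed $Q$-terms cancel identically, yields
\begin{align}
\Delta_{\omega_T}u(t) \;=\; \frac{u''(t)}{\psi_T''(t)} + (n-1)\frac{u'(t)}{\psi_T'(t)}
\end{align}
for any radial $u$. Passing to $s = 1-t/T$ via $u_t = -u_s/T$, $u_{tt} = u_{ss}/T^2$, Proposition~\ref{prop-T-c} together with~\eqref{eq-T-b} shows that $-T\psi_T'(\eta T)$ and $T^2\psi_T''(\eta T)$ converge smoothly on compact subsets of $(0,1)$, as $\sigma\to 0$, to positive functions $A(\eta)$ and $B(\eta)$, and I define $L_\infty u(s) := u_{ss}/B(1-s) + (n-1)u_s/A(1-s) - u$. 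Near $\eta \to 0^+$, Proposition~\ref{prop-T-c} gives $A \sim (n+1)/\eta$ and $B \sim (n+1)/\eta^2$ modulo relative error $O(\eta^{n+1}) + O(|b|^{1/(n+1)}\tau)$, and converting the $s$-derivatives back to $\eta$-derivatives by the chain rule produces $\frac{1}{n+1}L_\infty^+$ as in~\eqref{eq:DefMO-}, establishing~\eqref{ModelOperator1}. Near $s \to 0^+$, equations~\eqref{eq-Psi-T-11}--\eqref{eq-Psi-T-12} combined with $|b|^{1/(n+1)}|T| \to c(n)(n+1)^{-1/(n+1)}$ extracted from~\eqref{eq-T-b} give $-T\psi_T' \to d(n)s^{1/n}$ and $T^2\psi_T'' \to d(n)s^{(1-n)/n}/n$ modulo $O(s^{(n+1)/n})$, which unwinds to $\frac{1}{d(n)}L_\infty^-$ and hence~\eqref{ModelOperator2}.

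For the compactness statement, fix $s_1 < s_2$ in $(0,1)$. On the universal cover of $\{s_1 < s < s_2\}$ in the coordinates~\eqref{eq-quasi-10}, the metric $g_T$ is uniformly smoothly comparable to a fixed Riemannian metric by~\eqref{eq-T-c-Dk-2}, and by Convention~\ref{conv} so is $g_{glue,\sigma_i}$ up to an $O(e^{-(1/2-\varepsilon)(t-T)})$ correction. Hence $L_{\sigma_i}$ is uniformly elliptic with $C^\infty$-bounded coefficients, and~\eqref{ass} combined with interior Schauder theory gives uniform $C^{2,\beta}_{loc}$ bounds, producing a subsequential limit in $C^{1,\beta}_{loc}\cap W^{2,p}_{loc}$. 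To show this limit depends only on $s$, I decompose $\psi_i = \bar\psi_i(t) + \tilde\psi_i$, where $\bar\psi_i$ is the fiber average over the $T^{2n-1}$-orbits of the isometric action that preserves $\omega_T$ (translations on the flat elliptic curve $E$, combined with $\theta$-rotations). Since fiber-averaging commutes with $L_{\sigma_i}$ up to the same exponentially small error that measures the failure of $\omega_{glue,\sigma_i}$ to be $T^{2n-1}$-invariant, both $L_{\sigma_i}\bar\psi_i$ and $L_{\sigma_i}\tilde\psi_i$ are $o(1)$ on $\{s_1 < s < s_2\}$. The radial function $\bar\psi_i(t)$ then satisfies the ODE from the first step and subconverges in $C^\infty_{loc}$ to a function $\psi_\infty$ with $L_\infty\psi_\infty = 0$, while for $\tilde\psi_i$, Poincar\'e on each fiber---using that $\lambda_1(\mathrm{fiber},g_T|_{\mathrm{fiber}})$ grows at least like $|T|$, because the $E$-direction contributes $\sim|t|$ and the $\theta$-direction $\sim t^2$---gives $L^2$-decay of $\tilde\psi_i$ of order $|T|^{-1/2}$, which the interior Schauder bounds upgrade to $C^{1,\beta}_{loc}$-decay in the quasi-coordinates.

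The main obstacle is this last sub-argument: handling the nonzero commutator between $L_{\sigma_i}$ and fiber-averaging without destroying the ODE limit, running a clean Caccioppoli/Poincar\'e estimate for $\tilde\psi_i$ on a slightly smaller subinterval $s_1' < s_2'$ to avoid uncontrolled boundary terms, and passing from the resulting global $L^2$-smallness of the fiber-nontrivial part to a pointwise bound in the noncollapsed quasi-coordinate picture. Once this is in hand, the construction of $L_\infty$ and the endpoint asymptotics are essentially bookkeeping.
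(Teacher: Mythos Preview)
Your construction of $L_\infty$ as the radial part of $\Delta_{\omega_T}-{\rm Id}$ and your derivation of the endpoint asymptotics are correct and match what the paper does implicitly via Corollary~\ref{cor-control-on-neck} and Proposition~\ref{prop-L-s}. Two points, however, are worth flagging.

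First, a minor correction: you cannot invoke interior Schauder theory to get $C^{2,\beta}_{loc}$ bounds, because the hypothesis~\eqref{ass} only gives $|L_{\sigma_i}\psi_i|=o(1)$ in $C^0$, with no H\"older control on the right-hand side. The paper instead uses $L^p$ elliptic theory to obtain uniform $W^{2,p}_{loc}$ bounds for every $p$, and then Morrey embedding gives $C^{1,\beta}_{loc}$; this is why the convergence in the statement is only weak-$W^{2,p}_{loc}$ and strong-$C^{1,\beta}_{loc}$, not strong-$C^{2,\beta}_{loc}$.

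Second, and more substantially, your fiber-averaging plus Poincar\'e argument for radiality is correct in principle but far more elaborate than needed, and the ``main obstacle'' you identify (commutators, boundary terms, $L^2$-to-pointwise upgrade) can be avoided entirely. The paper's argument is one line: once you have a uniform $C^1$ bound on $\psi_i$ in the quasi-coordinates~\eqref{eq-quasi-10}, observe that the deck group $\Gamma_i$ of the universal cover acts on the tangential coordinates $(\check\theta,\check x_\alpha,\check y_\alpha)$ with fundamental domain of diameter $O(|T_i|^{-1/2})\to 0$. Hence any two points $p,q$ with $s(p)=s(q)$ can be joined, after translating by $\Gamma_i$, by a curve of $\omega_{glue,\sigma_i}$-length $O(|T_i|^{-1/2})$, so $|\psi_i(p)-\psi_i(q)|\to 0$ by the gradient bound. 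This immediately forces $\psi_\infty$ to depend only on $s$; the tangential first derivatives of $\psi_\infty$ then vanish classically, their weak gradients vanish a.e., and so all tangential terms in $\Delta_{\omega_{T_i}}\psi_i$ (the second line of~\eqref{eq:operator_in_quasi_coords}) go to zero weakly in $L^p_{loc}$, leaving exactly the radial ODE. No spectral gap or averaging decomposition is needed.
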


\begin{proof}
The chart $(\check{x},\check{\theta},\check{x}_\alpha,\check{y}_\alpha)$ identifies the universal cover of $\{s_1 < s < s_2\}$ with
\begin{equation}
\left(\frac{1-\frac{s_1+s_2}{2}}{1-s_1},\frac{1-\frac{s_1+s_2}{2}}{1-s_2}\right) \times \mathbb{R}^{2n-1},
\end{equation}
where the interval is compactly contained in $(0,1)$. As recalled in Convention \ref{conv}, $g_{glue,\sigma}$ and $g_T$ are uniformly smoothly bounded on this coordinate slab and differ from each other by 
\begin{align}
    O(e^{-(\frac{1}{2}-\epsilon)(t-T)})\;\,\text{with}\;\, t-T \geq 2|T|^\alpha \;\,\text{as}\;\,\sigma \to 0,
\end{align}
including all derivatives. Also, by construction, $g_{glue,\sigma_i}$, $g_{T_i}$ and $\psi_i$ are invariant under the deck group $\Gamma_i$ of the universal cover, which preserves $\check{x}$ and acts as a discrete Heisenberg group with fundamental domain diameter $\sim (-t_{*,i})^{-1/2} \sim |T_i|^{-1/2} \sim |{\log |\sigma_i|}|^{-1/2}$ on the coordinates $(\check{\theta},\check{x}_\alpha,\check{y}_\alpha) \in \mathbb{R}^{2n-1}$.

By assumption, the $\Gamma_i$-invariant functions $|\psi_i|$ are uniformly bounded and the $\Gamma_i$-invariant functions $|L_{\sigma_i}\psi_i|$ uniformly converge to zero as $i\to\infty$. Here, $L_{\sigma_i} = \Delta_{\omega_{glue,\sigma_i}} - {\rm Id}$ is a uniformly elliptic sequence of differential operators whose coefficients are uniformly smoothly bounded with respect to $i$. Thus, by standard $L^p$ elliptic regularity theory on balls, $\psi_i$ is locally uniformly bounded in $W^{2,p}$ for every $p \in (1,\infty)$ and hence, by Morrey embedding, in $C^{1,\beta}$ for every $\beta \in (0,1)$. By applying the Alaoglu and Arzel\`a-Ascoli compactness theorems and passing to a diagonal subsequence, we have that $\psi_i \to \psi_\infty$ locally weakly in $W^{2,p}$ for all $p \in (1,\infty)$ and strongly in $C^{1,\beta}$ for all $\beta \in (0,1)$. The subsequence can be taken to be independent of $s_1,s_2$ by letting $s_1 \to 0$, $s_2 \to 1$ and diagonalizing. Moreover, for any points $p,q$ in our chart with $s(p) = s(q)$, we can estimate $|\psi_i(p)-\psi_i(q)|$ as follows: Without loss of generality, $p,q$ lie in a fundamental domain of $\Gamma_i$ (because $\psi_i$ is invariant under $\Gamma_i$) and are joined by a curve $\gamma$ with $s \circ \gamma = const$ whose $\omega_{glue,\sigma_i}$-length is $O((-t_{*,i})^{-1/2}) = O(|{\log |\sigma_i|}|^{-1/2}) \to 0$. Since the gradient of $\psi_i$ with respect to $\omega_{glue,\sigma_i}$ is uniformly bounded, this implies that $|\psi_i(p) - \psi_i(q)| = O(|{\log |\sigma_i|}|^{-1/2})$ and hence $\psi_\infty(p)=\psi_\infty(q)$. Thus, $\psi_\infty$ depends only on the radial coordinate $s$. This means that the first-order classical derivatives $\psi_{\infty, \check{x}_\alpha}, \psi_{\infty,\check{y}_\alpha}, \psi_{\infty,\check{\theta}}$ are identically zero, so their weak gradients, which exist locally in $L^p$, are zero a.e. Thus, the corresponding first-order derivatives of $\psi_i$ go to zero strongly in $C^{0,\beta}_{loc}$ for every $\beta$ and their gradients go to zero weakly in $L^p_{loc}$ for every $p$.

We now argue that $\psi_\infty$ is a weak, hence classical, solution of an ODE $L_\infty\psi_\infty = 0$, where $L_\infty$ is a smooth second-order linear differential operator in $s \in (0,1)$ which is independent of the sequence $\psi_i$ and is asymptotically modeled by $L_\infty^\pm$ at the two endpoints. The key to this are Corollary \ref{cor-control-on-neck} and Proposition \ref{prop-L-s}. Written in terms of $(\eta, \check{x}_\alpha,\check{y}_\alpha,\check{\theta})$, the former says that for $\eta \leq \delta < 1$,
\begin{align}\label{eq:operator_in_quasi_coords}
\begin{split}
    \Delta_{\omega_{T_i}} &= \frac{1}{n+1}(\eta^2\partial_{\eta\eta}^2 - (n-1)\eta \partial_\eta) \cdot e^{O_\delta(\eta^{n+1})}\\
    &+ \check{C} \eta^2 \partial^2_{\check{\theta}\check{\theta}} +  \eta \check{T} +(-t_{*,i})^{-\frac{1}{2}} \eta \check{H}_i+O_\delta(|b_i|^{\frac{1}{n+1}}\tau_i),
    \end{split}
\end{align}
\noindent where $\check{C}>0$ is a constant, $\check{T}$ is a constant coefficient Laplacian in the $\check{x}_\alpha,\check{y}_\alpha$ coordinates on $\mathbb{R}^{2n-2}$, and $\check{H}_i$ is a differential operator linear homogeneous in 
\begin{align}
       \partial^2_{\check{x}_\alpha \check{\theta}},  \partial^2_{\check{y}_\alpha \check{\theta}}, (-t_{*,i})^{-\frac{1}{2}}\partial^2_{\check{\theta}\check{\theta}}
\end{align}
with coefficients that are uniformly smoothly bounded and $\Gamma_i$-invariant functions of $\check{x}_\alpha, \check{y}_\alpha$. Applying the identity \eqref{eq:operator_in_quasi_coords} to $\psi_i$ and subtracting $\psi_i$, we now observe the following:
\begin{itemize}
    \item The left-hand side can be written as
    \begin{align}
    (\Delta_{\omega_{T_i}}-{\rm Id})\psi_i = L_{\sigma_i}\psi_i+O(e^{-\frac{1}{2}|T_i|^\alpha}) \circledast (\check\partial\psi_i, \check\partial^2\psi_i),
    \end{align}
    with $\check\partial$ the standard gradient operator in our fixed coordinate chart. The first term uniformly converges to zero by assumption. The second term goes to zero strongly in $L^p_{loc}$ for every $p$.
    \item On the right-hand side, every term with at least one tangential derivative, i.e., the second line of \eqref{eq:operator_in_quasi_coords} applied to $\psi_i$, goes to zero weakly in $L^p_{loc}$ for every $p$.
\end{itemize}
This shows that $L_\infty\psi_\infty = 0$ weakly, hence classically, with $L_\infty + {\rm Id}$ the smooth ordinary differential operator in the first line of \eqref{eq:operator_in_quasi_coords}. The claim \eqref{ModelOperator1} is also clear from this. For the behavior of $L_\infty$ as $s \to 0^{+}$ we can run a similar argument based on Proposition \ref{prop-L-s}.
\end{proof}    

\subsubsection{Fundamental solutions of the endpoint model operators}

These are computed in Lemma \ref{l:fundsolutions}. What makes our life difficult in this paper is precisely the fact that both fundamental solutions of $L_\infty^{-}$ are bounded as $s \to 0^+$: if one of them was at least $\geq s^{-\delta}$ for some $\delta > 0$, which might be one's first guess based on experience with other singularities, then the gluing would be unobstructed.

\begin{lemma}\label{l:fundsolutions}
Recall that if $h_1,h_2$ are two fundamental solutions of a second order linear ODE, then their Wronskian is defined as $w = h_1h_2' - h_1'h_2$. Then the following hold.
\begin{itemize}
    \item[$(1)$] Two fundamental solutions of $L_\infty^+$ and their Wronskian are
\begin{equation}
h^+_1(\eta)=\eta^{n+1},\;\,h^+_2(\eta)=\eta^{-1},\;\,w^+(\eta) = -(n+2)\eta^{n-1}.
\end{equation}
\item[$(2)$] Denote $\lambda = (\frac{4n}{(n+1)^2}\cdot d(n))^{1/2}$ with $d(n)$ defined in \eqref{ModelOperator2}. Recall the modified Bessel functions $I_{\frac{1}{n+1}}, K_{\frac{1}{n+1}}$ of order $\frac{1}{n+1}$. Write $\mathbb{R}\{y_1,\ldots,y_k\}$ to denote the ring of convergent power series in $y_1,\ldots,y_k$ with real coefficients. Two fundamental solutions of $L_\infty^-$ and their Wronskian are
\begin{align}
h_1^-(s) = s^{\frac{1}{2n}}I_{\frac{1}{n+1}}(\lambda s^{\frac{n+1}{2n}}) \in s^{\frac{1}{n}} \cdot \mathbb{R}\{s^{\frac{n+1}{n}}\} \subset \mathbb{R}\{s^{\frac{1}{n}},s^{\frac{n+1}{n}}\},\\ h_2^-(s) = s^{\frac{1}{2n}}K_{\frac{1}{n+1}}(\lambda s^{\frac{n+1}{2n}}) \in \mathbb{R}\{s^{\frac{n+1}{n}}\} + s^{\frac{1}{n}} \cdot \mathbb{R}\{s^{\frac{n+1}{n}}\} \subset \mathbb{R}\{s^{\frac{1}{n}},s^{\frac{n+1}{n}}\},\\
w^-(s) = -\frac{n+1}{2n}s^{\frac{1-n}{n}}.
\end{align}
\end{itemize}
\end{lemma}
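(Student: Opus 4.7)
For part (1), the plan is to observe that $L_\infty^+$ is a constant-coefficient Euler operator. The ansatz $h(\eta) = \eta^r$ reduces $L_\infty^+ h = 0$ to the indicial quadratic $r(r-1) - (n-1)r - (n+1) = r^2 - nr - (n+1) = 0$, which factors as $(r-(n+1))(r+1) = 0$. The two roots yield the claimed $h_1^+(\eta) = \eta^{n+1}$ and $h_2^+(\eta) = \eta^{-1}$, and a direct calculation gives
\begin{equation*}
w^+(\eta) = \eta^{n+1}(-\eta^{-2}) - (n+1)\eta^n \cdot \eta^{-1} = -(n+2)\eta^{n-1}.
\end{equation*}

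For part (2), the plan is to reduce $L_\infty^-$ to a modified Bessel equation via the substitution $u(s) = s^a y(x)$ with $x = \lambda s^b$, $a = \frac{1}{2n}$ and $b = \frac{n+1}{2n}$. The chain and product rules give
\begin{equation*}
u_s = a s^{a-1} y + \lambda b s^{a+b-1} y_x, \quad u_{ss} = a(a-1)s^{a-2} y + \lambda b(2a+b-1) s^{a+b-2} y_x + \lambda^2 b^2 s^{a+2b-2} y_{xx},
\end{equation*}
after which I would substitute into $L_\infty^- u = 0$ and collect the coefficients of $y_{xx}$, $y_x$, and $y$ separately. The choice $a = \frac{1}{2n}$ is precisely what is needed so that, after normalizing the leading coefficient of $y_{xx}$ to $1$, the coefficient of $y_x$ collapses to $1/x$. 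Requiring the $O(1)$ part of the coefficient of $y$ to equal $-1$ then fixes $\lambda^2 = d(n)/(nb^2) = 4n\,d(n)/(n+1)^2$, confirming the stated value of $\lambda$, while the remaining $x^{-2}$ contribution produces $\nu^2 = 1/(n+1)^2$. Thus $y$ satisfies the modified Bessel equation of order $\nu = 1/(n+1)$, whose canonical fundamental solutions $I_\nu$ and $K_\nu$ give the claimed $h_1^-$ and $h_2^-$.

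The power series statements are then a matter of substituting the classical expansion $I_\nu(x) = \sum_{k \geq 0}(x/2)^{\nu+2k}/(k!\,\Gamma(\nu+k+1))$ and, since $\nu = 1/(n+1) \notin \mathbb{Z}$, the identity $K_\nu = \frac{\pi}{2\sin(\nu\pi)}(I_{-\nu} - I_\nu)$. After replacing $x$ by $\lambda s^{(n+1)/(2n)}$ and multiplying by $s^{1/(2n)}$, an elementary exponent computation shows that the $I_\nu$ contribution lies in $s^{1/n}\,\mathbb{R}\{s^{(n+1)/n}\}$ and the $I_{-\nu}$ contribution in $\mathbb{R}\{s^{(n+1)/n}\}$, yielding the two claimed inclusions. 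For the Wronskian, applying the product and chain rules to $h_i^- = s^a \cdot (\text{Bessel})$, the $I_\nu K_\nu$ cross terms cancel, leaving
\begin{equation*}
W[h_1^-, h_2^-] = \lambda b s^{2a+b-1}(I_\nu K_\nu' - I_\nu' K_\nu) = -b s^{2a-1} = -\frac{n+1}{2n}\, s^{(1-n)/n},
\end{equation*}
where I invoked the standard Wronskian identity $I_\nu(x) K_\nu'(x) - I_\nu'(x) K_\nu(x) = -1/x$.

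The main, and essentially the only non-routine, step is identifying the correct Bessel substitution in part (2); once this substitution is in hand, the remainder of the argument is exponent bookkeeping combined with classical special-function identities.
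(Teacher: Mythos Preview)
Your proposal is correct and follows essentially the same route as the paper: reduce $L_\infty^+$ to its indicial equation, and for $L_\infty^-$ use the substitution $u(s)=s^{1/(2n)}v(\lambda s^{(n+1)/(2n)})$ to land on the modified Bessel equation of order $1/(n+1)$, then read off the Wronskian from the classical identity $W[I_\nu,K_\nu](x)=-1/x$. You actually supply more detail than the paper does, in particular the verification of the power-series inclusions via $K_\nu=\tfrac{\pi}{2\sin(\nu\pi)}(I_{-\nu}-I_\nu)$ for $\nu=1/(n+1)\notin\mathbb{Z}$, which the paper simply asserts.
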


\begin{proof}
The computations for $L_\infty^+$ are straightforward. For $L_\infty^-$, a lengthy computation shows that $u(s)$ solves $L_\infty^{-}u = 0$ if and only $u(s) = s^{\frac{1}{2n}}v(\lambda\cdot s^{\frac{n+1}{2n}})$, where $v(y)$ solves the modified Bessel equation
\begin{equation}
    y^2 v''(y) + y v'(y) - \left(\frac{1}{(n+1)^2} + y^2\right)v(y) = 0.
\end{equation}
This yields the fundamental solutions $h_1^-(s),h_2^-(s)$ above. Their Wronskian is easily calculated using the fact that the Wronskian of the modified Bessel functions $I(y),K(y)$ (of any order) is $-\frac{1}{y}$.
\end{proof}

\subsubsection{A Liouville theorem for the limit ODE}\label{sss:ode_liou}

The ODE $L_\infty u = 0$ on the whole limit neck $(0,1)$ has a $2$-dimensional vector space of solutions. Our goal is to prove that the weight $s^{-\delta}$ that we will impose as an upper bound in our blowup argument singles out a $1$-dimensional subspace. Again, experience with other gluing problems suggests that by choosing $0 < \delta \ll 1$ \emph{all} solutions to $L_\infty u = 0$ are ruled out, but here we are left with a $1$-dimensional obstruction space no matter how small we choose $\delta$.

\begin{lemma}\label{l:construct_uhat}
There is a unique solution $\hat{u}(s)$ to the homogeneous ODE $L_\infty \hat{u} =0$ on $(0,1)$ such that $\hat{u}(s) \to 1$ as $s \to 0^+$ and $\hat{u}(s) \to 0$ as $s \to 1^-$. This satisfies the following properties:
\begin{itemize}
    \item[$(1)$] $\hat{u}(s)$ is strictly decreasing in $s$.
    \item[$(2)$] As $\eta\to 0^+$ we have that $\hat{u}(1-\eta) = O(\eta^{n+1})$.
    \item[$(3)$] As $s \to 0^+$ we have that $\hat{u}(s) = 1 - C_1 s^{\frac{1}{n}} + C_2 s^{\frac{n+1}{n}} + R(s)$ with $C_1,C_2 > 0$ and
    \begin{equation}|R(s)| + s|R'(s)| + s^2|R''(s)| = O(s^{\frac{n+2}{n}}).\label{eq:remainder}\end{equation}
    \item[$(4)$] If $L_\infty u = 0$ and $|u(s)| \leq C s^{-\delta}$ for some $\delta \in (0,1)$, then $u$ is a scalar multiple of $\hat{u}$.
\end{itemize}
\end{lemma}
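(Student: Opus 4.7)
Write $L_\infty = \Delta_\infty - \mathrm{Id}$, where $\Delta_\infty$ is the smooth second order ODE operator on $(0,1)$ whose leading coefficient is strictly positive away from the endpoints. Then $L_\infty$ satisfies the standard (weak) maximum principle: a classical solution of $L_\infty u = 0$ cannot attain a positive interior maximum, since there $\Delta_\infty u \leq 0$ while $u > 0$ forces $L_\infty u < 0$; dually, $u$ cannot attain a negative interior minimum. Everything below combines this observation with the Frobenius data supplied by Lemmas \ref{l:modeloperators} and \ref{l:fundsolutions}.

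\textbf{Existence, uniqueness, and Properties (2), (4).} Near $s = 1$ the identity $L_\infty = \tfrac{1}{n+1}L_\infty^+ + O(\eta^{n+1})$ and Lemma \ref{l:fundsolutions}(1) identify a one dimensional subspace $V$ of solutions vanishing like $\eta^{n+1}(1+o(1))$ at $s = 1^-$; every other solution grows like $\eta^{-1}$, which already proves Property (2) for any $\hat u \in V$. Pick a nonzero $v \in V$. By Lemma \ref{l:fundsolutions}(2) both fundamental solutions are bounded at $s=0^+$, so $v(0^+)$ exists and is finite. If $v(0^+) = 0$, then $v$ vanishes at both endpoints and the maximum principle forces $v \equiv 0$, a contradiction. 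Hence $\hat u := v/v(0^+)$ is well defined and has the required boundary behavior. Uniqueness follows from the maximum principle applied to the difference of any two candidates. For Property (4), the bound $|u(s)| \leq C s^{-\delta}$ with $\delta < 1$ in particular forces $u$ to stay bounded as $s \to 1^-$, excluding the $\eta^{-1}$ component; hence $u \in V = \R\cdot\hat u$.

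\textbf{Property (1): strict monotonicity.} The maximum principle first yields $0 \leq \hat u \leq 1$ on $(0,1)$. If $\hat u(s_0) = 0$ for some $s_0 \in (0,1)$, then applying the maximum principle to $\hat u$ on $[s_0,1]$ with zero endpoint values gives $\hat u \equiv 0$ there, and ODE uniqueness at the regular point $s_0$ (with $\hat u(s_0) = \hat u'(s_0^+) = 0$) propagates this to all of $(0,1)$, contradicting $\hat u(0^+) = 1$. Hence $\hat u > 0$ on $[0,1)$. At any interior critical point $s_0 \in (0,1)$, the ODE reduces to $A(s_0)\hat u''(s_0) = \hat u(s_0) > 0$ with $A(s_0) > 0$, so $s_0$ is a strict local minimum. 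But a smooth function starting at $\hat u(0) = 1$, ending at $\hat u(1) = 0$, and whose interior critical points are only local minima cannot have any such point: after a local minimum $\hat u$ would have to strictly increase and then strictly decrease back to $0$, producing another critical point which would have to be a local maximum, contradiction. Hence $\hat u$ is strictly decreasing.

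\textbf{Property (3) and the main obstacle.} The spectrum of allowed exponents in the Frobenius expansion at $s=0$ is, by Lemma \ref{l:fundsolutions}(2), the set $\{k(n+1)/n\}_{k\geq 0} \cup \{1/n + k(n+1)/n\}_{k\geq 0}$, containing in increasing order $1, s^{1/n}, s^{(n+1)/n}, s^{(n+2)/n}, \dots$ with no intermediate powers $s^{2/n}, \dots, s^{n/n}$. Inserting the ansatz $\hat u(s) = 1 - C_1 s^{1/n} + C_2 s^{(n+1)/n} + R(s)$ into the equation via $L_\infty = \tfrac{1}{d(n)}L_\infty^- + O(s^{(n+1)/n})$ and using the explicit evaluations $L_\infty^- 1 = -d(n)$, $L_\infty^- s^{1/n} = -d(n) s^{1/n}$, $L_\infty^- s^{(n+1)/n} = (n+1) - d(n) s^{(n+1)/n}$, the $s^0$-balance forces $C_2 = d(n)/(n+1) > 0$. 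Since $\hat u(s) < 1$ for small $s>0$ by Property (1), the dominant correction $-C_1 s^{1/n}$ must be negative, so $C_1 > 0$. For the remainder estimate I would set $F := \hat u - (1 - C_1 s^{1/n} + C_2 s^{(n+1)/n})$, verify that the exponent gap combined with the above matching gives $L_\infty F = O(s^{(n+2)/n})$, and then invert $L_\infty$ on $(0, s_0)$ against the Frobenius basis $(h_1^-, h_2^-)$ to conclude $|F| + s|F'| + s^2|F''| = O(s^{(n+2)/n})$; alternatively one passes to the Bessel variable $\tau = s^{(n+1)/n}$ and invokes the standard Frobenius remainder theorem for analytically perturbed regular singular points. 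I expect this last bookkeeping, namely tracking precisely how the multiplicative perturbation $O(s^{(n+1)/n})$ interacts with the two Frobenius roots of $L_\infty^-$ and confirming that no spurious intermediate powers such as $s^{2/n}$ appear in the remainder, to be the main technical obstacle; all earlier steps reduce to the maximum principle and to the input Lemmas.
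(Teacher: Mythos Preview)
Your approach differs substantially from the paper's and contains some elegant shortcuts, but there is one recurring gap. At several key junctures you treat the asymptotic structure of solutions of $L_\infty$ at the singular endpoints as if it is immediately inherited from the model operators $L_\infty^\pm$: you assert without proof a one-dimensional subspace $V$ of solutions behaving like $\eta^{n+1}$ at $s=1^-$ (used for existence, (2), and (4)), and that both fundamental solutions are bounded at $s=0^+$ (used to define $v(0^+)$). Lemma~\ref{l:fundsolutions} concerns only the \emph{model} operators, and Lemma~\ref{l:modeloperators} gives $L_\infty$ as a perturbation of these only in the sense of Definition~\ref{def-pert}, i.e., each coefficient is multiplied by $(1+O(\eta^{n+1}))$ or $(1+O(s^{(n+1)/n}))$. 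This is \emph{not} the classical analytic regular-singular-point setting, so Frobenius theory does not apply off the shelf; justifying that the perturbed solutions have the same leading asymptotics is precisely the content you defer in Property~(3), and you use it uncritically elsewhere. The paper avoids this by constructing $\hat u$ as a limit of Dirichlet solutions on $[\varepsilon,1-\varepsilon]$ and verifying the boundary values with explicit sub/super-solution barriers, then establishing the expansions at both endpoints (and hence (2)--(4)) by a variation-of-parameters bootstrap against the model fundamental solutions, iterating to absorb the $O(\cdot)$ error into a controlled inhomogeneity.

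Where your argument is self-contained it is genuinely nicer: your proof of (1) via ``every critical point is a strict local minimum'' is cleaner than the paper's combination of an intermediate-value trick and interior analyticity, and your route to (4) (boundedness at $s=1^-$ forces $u\in V=\mathbb R\hat u$) is more direct than the paper's, which proves expansions at \emph{both} endpoints and then applies the maximum principle to $u-u(0)\hat u$. One minor omission: in (3), your argument ``$\hat u(s)<1$ forces $-C_1 s^{1/n}<0$'' does not exclude $C_1=0$; you need the separate observation (which the paper makes) that if $C_1=0$ then the leading correction is $+C_2 s^{(n+1)/n}>0$, again contradicting monotonicity.
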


\begin{proof}
The uniqueness of $\hat{u}$ is clear by the maximum principle.

If $\hat{u}$ exists, then (1) can be proved as follows: First note that $\hat{u} \geq 0$ because otherwise there is a global interior minimum with $\hat{u} < 0$ and this contradicts the maximum principle. Next, we prove that $\hat{u}$ is weakly decreasing. Otherwise there exist $s_1 < s_2$ in $(0,1)$ with $\hat{u}(s_1) < \hat{u}(s_2)$. Both of these values are nonnegative by what we said before. By the intermediate value theorem, there is an $s_2' \in (s_2,1)$ such that $\hat{u}(s_1) = \hat{u}(s_2')$, but then $\hat{u}$ attains a strictly positive interior maximum on $[s_1,s_2']$, contradicting the maximum principle. Thus, $\hat{u}$ is weakly decreasing. Next, we prove that $\hat{u}$ is strictly decreasing. If this is false, then $\hat{u}$ must be constant on some interval $[s_1,s_2] \subset (0,1)$ with $s_1 < s_2$. From the ODE, $\hat{u} = 0$ on $[s_1,s_2]$. Hence $\hat{u} = 0$ on $(0,1)$ by interior analyticity, which contradicts $\hat{u}(0) = 1$.

To prove the existence of $\hat{u}$, we solve $L_\infty\hat{u}_\varepsilon=0$ on $[\varepsilon,1-\varepsilon]$ with $\hat{u}_\varepsilon(\varepsilon)=1$ and  $\hat{u}_\varepsilon(1-\varepsilon) = 0$ for any fixed $\varepsilon \in (0,\frac{1}{2})$. This is possible by the standard Dirichlet problem for nonsingular ODEs. The same argument as above shows that $\hat{u}_\varepsilon$ is strictly decreasing, hence in particular bounded by $0$ and $1$. Then, also by standard ODE theory, $\hat{u}_\varepsilon$ satisfies uniform derivative bounds to all orders on every fixed compact interval contained in $(0,1)$. Thus, up to subsequences, $\hat{u}_\varepsilon\to\hat{u}$ locally smoothly on $(0,1)$, where $L_\infty\hat{u} = 0$. The issue is to prove that $\hat{u}$ is not identically zero, and, indeed, that it satisfies the correct boundary conditions as $s \to 0$ and $s \to 1$. This will be done using a barrier argument.

First consider $\hat{u}_{sub}(s) := 1 - C_1 s^{1/n} + C_2 s^\gamma$ for $C_1,C_2>0$ and $\gamma > 1/n$ to be determined. Fix an $s_* \in (0,1)$ such that the $O(s^{(n+1)/n})$ in \eqref{ModelOperator2} is explicitly bounded by $C_*s^{(n+1)/n}$ for all $s \in (0,s_*]$. Here $s_*, C_*$ can be chosen to depend only on $n$. By choosing $C_1,C_2$ such that $C_1 s_*{^{1/n}} = 2$ and $C_2 s_*{^\gamma} = 1$, we get $\hat{u}_{sub}(s_*) = 0$. It remains to check that $\hat{u}_{sub}$ is a subsolution on $(0,s_*]$ because then the maximum principle shows that $\hat{u}_\varepsilon \geq \hat{u}_{sub}$ on $[\varepsilon,s_*]$, and so $\hat{u} \geq \hat{u}_{sub}$, proving that $\hat{u}$ is not identically zero and in fact $\lim_{s\to 0^+} \hat{u}(s) = 1$. To this end we calculate on $(0,s_*]$, using \eqref{ModelOperator2} and the observation that $s^{1/n}$ is a homogeneous solution of the operator $L_\infty + {\rm Id}$:
\begin{equation}
L_\infty\hat{u}_{sub} \geq (-1+C_1 s^{\frac{1}{n}} - C_2 s^\gamma) + (0 - C_*'C_1 s^{\frac{1}{n}}) + (C_*''C_2(n\gamma-1)s^{\gamma-\frac{n+1}{n}}- C_*'''C_2 s^\gamma).\label{eq:helpme}
\end{equation}
Here, $C_*'>0$ depends only on $n$ while $C_*'',C_*'''>0$ depend on $n$ and $\gamma$. Thus, as long as $\frac{1}{n} < \gamma \leq \frac{n+1}{n}$, we can arrange that \eqref{eq:helpme} $\geq 0$ on $(0,s_*]$ by making $s_*$ smaller if necessary. To be precise, we first let $\gamma = \frac{n+1}{n}$ and then choose $s_*$ so small that $C_2 = 1/s_*^\gamma$ satisfies $C_*'' C_2 (n\gamma - 1) \geq 2 + 2C_*' + C_*'''$.

Next, we need to prove that $\hat{u}(s) \to 0$ as $s \to 1^-$. In fact, by applying another barrier argument to the approximating functions $\hat{u}_\varepsilon$, we will prove that (2) holds. For this we again choose a $C_*>0$ and an $\eta_* \in (0,1)$ depending only on $n$ such that the $O(\eta^{n+1})$ in \eqref{ModelOperator1} is bounded by $C_*\eta^{n+1}$ for all $\eta \in (0,\eta_*]$. Then we consider the function $\hat{u}^{sup}(1-\eta) := C_1 \eta^{n+1} - C_2\eta^\gamma$ for $C_1,C_2 > 0$ and $\gamma > n+1$ to be determined. We set $C_1 \eta_*^{n+1} = 2$ and $C_2 \eta_*^\gamma =1$, thus arranging that $\hat{u}^{sup}(1-\eta_*) = 1$. To apply the maximum principle on $[1-\eta_*,1)$, we need to check that $\hat{u}^{sup}$ is a supersolution for $\eta \in (0,\eta_*]$ after making $\eta_*$ smaller if necessary. For this we compute for $\eta \in (0,\eta_*]$ using \eqref{ModelOperator1}:
\begin{equation}
L_\infty \hat{u}^{sup} \leq (-C_1\eta^{n+1} + C_2 \eta^\gamma) + (C_1\eta^{n+1} + C_*'C_1\eta^{2n+2}) + \left(-\frac{\gamma(\gamma-n)}{n+1}C_2\eta^\gamma + C_*'''C_2\eta^{\gamma+n+1}\right),\label{eq:helpme2}
\end{equation}
where $C_*'>0$ depends only on $n$ and $C_*'''>0$ depends only on $n,\gamma$. Thus, as long as $\gamma \leq 2n+2$ and $1-\gamma(\gamma-n)/(n+1)<0$ (equivalently, $\gamma > n+1$), we can make \eqref{eq:helpme2} $\leq 0$. More precisely, it is enough to set $\gamma = 2n+2$ and choose $\eta_*$ so small that $(2C_*' + C_*''')\eta_*^{n+1} \leq \gamma(\gamma-n)/(n+1)-1$.

It remains to prove (3) and (4). The key point, carried out below, is to prove that if $|u| \leq Cs^{-\delta}$, then $u$ actually has an expansion as $s \to 0$ and as $s \to 1$. The expansion as $s \to 1$ proves (2) for a general $u$. The expansion as $s \to 0$ proves (3) for a general $u$, with some constant as the leading term ($=: u(0)$) but without the sign information on $C_1,C_2$. For $u = \hat{u}$ we already know that $\hat{u}(0) = 1$. In this case it is then easy to see that $C_2 > 0$ by plugging the expansion into the ODE $L_\infty\hat{u} = 0$ (here the estimate \eqref{eq:remainder} up to two derivatives is crucial). Then $C_1 > 0$ also follows: if $C_1 < 0$, then $\hat{u}$ would obviously be strictly increasing for a short time, which contradicts (1); if $C_1 = 0$, the same argument applies because we already know that $C_2 > 0$. Lastly, (4) follows by applying the maximum principle to $u-u(0)\hat{u}$ because the expansion tells us that $u - u(0)\hat{u}$ vanishes as $s \to 0^+$ and as $s \to 1^-$.

It remains to prove that a general solution $u$ to $L_\infty u = 0$ on $(0,1)$ with $|u| \leq C s^{-\delta}$ for some $\delta\in (0,1)$ satisfies the expansions (2) and (3). For this, we use the standard general formula
\begin{equation}\label{eq:var_of_par}
u(x) = c_1 h_1(x) + c_2 h_2(x) + h_1(x) \int_{x}^{x_0} \frac{h_2(t)f(t)}{w(t)a(t)}\,dt - h_2(x) \int_{x}^{x_0} \frac{h_1(t)f(t)}{w(t)a(t)}\,dt
\end{equation}
for solutions to $au'' + bu' + cu = f$, where $h_1,h_2$ are two fundamental solutions to the homogeneous equation, $w = h_1 h_2' - h_1' h_2$ is their Wronskian, $x_0$ is an arbitrary point and $c_1,c_2$ are constants.

We start with the easier case, $s \to 1^-$, which will give us (2) for a general $u$. By Lemmas \ref{l:modeloperators}--\ref{l:fundsolutions} and \eqref{eq:var_of_par}, for any $\eta_0 \in (0,1)$ there exist constants $c_1,c_2$ such that for all $\eta\in (0,1)$,
\begin{align}
u(1-\eta)=c_1\eta^{n+1} + c_2 \eta^{-1} + \eta^{n+1}\int_{\eta}^{\eta_0} t^{-n-2}f(t)\,dt - \eta^{-1}\int_{\eta}^{\eta_0} f(t)\,dt,\label{eq:rep}\\
|f(\eta)| \leq c\eta^{n+1}(\eta^2 |u''(1-\eta)| + \eta |u'(1-\eta)| + |u(1-\eta)|)\label{eq:est_inhomog}
\end{align}
for some dimensional constant $c$. We are assuming that $|u(1-\eta)| = O(1)$ as $\eta\to 0^+$.

\begin{claim}\label{claim:herrlohkamp}
    We actually have that
\begin{equation}\label{eq:rough_estimate}
|u(1-\eta)| + \eta|u'(1-\eta)| + \eta^2|u''(1-\eta)| = O(1)\;\,\text{as}\;\,\eta \to 0^+.
\end{equation}
\end{claim}

\begin{proof}[Proof of Claim \ref{claim:herrlohkamp}]
We will first show that there exist $\varepsilon \in (0,1)$ and $C > 0$ independent of $\eta$ such that for any fixed $0 < \eta \ll 1$ the following sub-claim is true.

\begin{subclaim}\label{subclaim:herrwilking}
Define $F: [0,1] \to [0,\infty)$ via $F(\rho) := \max_{[\eta,(1+\rho)\eta]} |u'(1-\cdot)|$. Then 
\begin{align}\label{eq:45429}
    F(\rho) \leq \varepsilon \cdot F(R) + \frac{C}{\eta}(R-\rho)^{-1}\;\,\text{for all}\;\,0\leq \rho < R \leq 1.
\end{align}
\end{subclaim}
If Sub-Claim \ref{subclaim:herrwilking} is true, then by a standard calculus iteration lemma (see \cite[Lemma 3.4]{HT20} based on \cite[Lemma 8.18]{GM}, or many other sources) there exists a universal $C_0=C_0(\epsilon)$ such that
\begin{align}\label{eq:herrwulkenhaar}
    F(\rho) \leq C_0\frac{C}{\eta}(1-\rho)^{-1}\;\text{for all}\;\,0 \leq \rho < 1.
\end{align}
Setting $\rho = 0$ in \eqref{eq:herrwulkenhaar} and using $L_\infty u = 0$ to solve for $u''$ in terms of $u,u'$, we obtain Claim \ref{claim:herrlohkamp}.\medskip\

\noindent \emph{Proof of Sub-Claim \ref{subclaim:herrwilking}.} We will prove \eqref{eq:45429} by a simple interpolation argument. Let $\tilde{u} := u(1-\cdot)$. For any $0 < \epsilon \leq \frac{1}{10}$ and any $\eta_* \in [\eta,(1+\rho)\eta]$, expand
\begin{align}
    \tilde{u}(\eta_* + \varepsilon(R-\rho)\eta) = \tilde{u}(\eta_*) + \int_{\eta_*}^{\eta_*+\varepsilon(R-\rho)\eta} \left[\tilde{u}'(\eta_*) + \int_{\eta_*}^\xi \tilde{u}''(\tilde{\xi})\,d\tilde\xi \right]d\xi.
\end{align}
Then solve this equation for $\tilde{u}'(\eta_*)$, express $\tilde{u}''$ in terms of $\tilde{u}'$ and $\tilde{u}$ using the ODE $L_\infty u = 0$, take the maximum over all $\eta_* \in [\eta,(1+\rho)\eta]$, and fix $\epsilon$ sufficiently small depending only on $n$.
\end{proof}

Plugging Claim \ref{claim:herrlohkamp} into \eqref{eq:est_inhomog}, we get that $|f(\eta)| = O(\eta^{n+1})$ as $\eta \to 0^+$. Inserting this into \eqref{eq:rep}, letting $\eta \to 0^+$ and using the boundedness of the left-hand side, we deduce that
\begin{equation}
c_2 - \int_0^{\eta_0} f(t)\,dt = 0.
\end{equation}
As a consequence, $|u(1-\eta)| = O(\eta^{n+1}|{\log \eta}|)$ as $\eta \to 0^+$. This would already be enough for us but another iteration easily yields the expected $|u(1-\eta)| = O(\eta^{n+1})$, i.e., (2) holds for a general $u$.

To finish the proof, we now deal with the more difficult case $s \to 0^+$, i.e., statement (3). As above, and using also the fact that $a(t) w(t) = -\frac{n+1}{2} = const$ in this case, we have that
\begin{align}
u(s) = c_1 h_1^-(s) + c_2 h_2^-(s) + h_1^-(s)\int_{s}^{s_0} h_2^-(t) f(t) \, dt - h_2^-(s)\int_{s}^{s_0} h_1^{-}(t)f(t) \,dt,\label{eq:rep2}\\
h_1^-(s) = c_{1,1}s^{\frac{1}{n}} + O(s^{\frac{n+2}{n}}),\;\,h_2^-(s) = c_{2,0} + c_{2,1}s^{\frac{1}{n}} + c_{2,2}s^{\frac{n+1}{n}} + O(s^{\frac{n+2}{n}}),\\
|f(s)| \leq c s^{\frac{n+1}{n}}\left(s^{\frac{n-1}{n}}|u''(s)| + s^{-\frac{1}{n}}|u'(s)| + |u(s)|\right)\label{eq:est_inhomog_2}
\end{align}
for some dimensional constants $c_{1,1},c_{2,0},c_{2,1},c_{2,2},c$. We are assuming that $|u(s)| = O(s^{-\delta})$ as $s \to 0^+$ for some fixed $\delta\in(0,1)$. From this and from the ODE $L_\infty u = 0$, we get
\begin{align}
|u(s)| + s |u'(s)| + s^2 |u''(s)| = O(s^{-\delta})\;\,\text{as}\;\,s \to 0^+
\end{align}
as in the proof of Claim \ref{claim:herrlohkamp}. This again directly fits into \eqref{eq:est_inhomog_2}, yielding an estimate $|f(s)| = O(s^{-\delta})$ as $s \to 0^+$. Because of this, the integrals in \eqref{eq:rep2} converge as $s \to 0^+$. In particular, $|u(s)| = O(1)$ as $s \to 0^+$. Again arguing as in the proof of Claim \ref{claim:herrlohkamp}, this implies that
\begin{align}\label{eq:der_bounds_u}
|u(s)| + s |u'(s)| + s^2 |u''(s)| = O(1)\;\,\text{as}\;\,s \to 0^+,
\end{align}
so $|f(s)| = O(1)$. Writing $\int_{s}^{s_0} = \int_{0}^{s_0} - \int_0^s$ in \eqref{eq:rep2}, we thus obtain with new constants $\tilde{c}_1,\tilde{c}_2$ that
\begin{align}\label{eq:almost_there}
u(s) = \tilde{c}_1 h_1^-(s) + \tilde{c}_2 h_2^-(s) - h_1^-(s)\int_0^s h_2^-(t) f(t)\,dt + h_2^-(s)\int_0^s h_1^{-}(t)f(t)\,dt,
\end{align}
where $|f(s)| = O(1)$. By itself this says that $u(s) = d_0 + d_1 s^{1/n} + O(s^{(n+1)/n})$, which is good but not enough. We now use this information to prove that $|f(s)| = O(s^{1/n})$. (Once we have this, we can feed it back into \eqref{eq:almost_there} to get that the two inhomogeneous terms are in fact $O(s^{(n+2)/n})$, and this is the pointwise part of the desired estimate \eqref{eq:remainder}.) Here we need to be a bit more careful than before.

Consider again \eqref{eq:est_inhomog_2}. The third term is negligible because $|u(s)| = O(1)$. For the first and second term, \eqref{eq:der_bounds_u} and its proof tell us nothing new because these arguments are based on upper bounds and the upper bound $|u(s)| = O(1)$ cannot be improved. However, we now have enough information to use \eqref{eq:almost_there} directly. In fact, we can simply take one derivative of \eqref{eq:almost_there} and use $|f(s)| = O(1)$ to bound $s |u'(s)| = O(s^{1/n})$. Then, from the ODE $L_\infty u = 0$,
\begin{align}
s^2 |u''(s)| \leq O(s)|u'(s)| + O(s^{\frac{n+1}{n}})|u(s)| + O(s^{\frac{n+1}{n}})|f(s)| = O(s^\frac{1}{n}),
\end{align}
using what we already know about $u',u,f$. Thus, from \eqref{eq:est_inhomog_2}, $|f(s)| = O(s^{1/n})$. By feeding this back into \eqref{eq:almost_there}, $|R(s)| = O(s^{(n+2)/n})$ in \eqref{eq:remainder}. This is the pointwise part of the desired expansion (3).

 We still need to prove the derivative estimates of $R(s)$ in \eqref{eq:remainder} (as, without these, we would not be able to plug the expansion of $\hat{u}$ back into the ODE and deduce the sign of $C_1,C_2$, which is crucial). First, $s|R'(s)| = O(s^{(n+2)/n})$ is easily proved by differentiating the inhomogeneous terms in \eqref{eq:almost_there} and using that $|f(s)| = O(s^{1/n})$. Notice that the terms obtained by letting $d/ds$ act on the integral signs cancel out. Thus, when we differentiate one more time to estimate $R''$, there is no need for a bound on $f'$, and $|f(s)| = O(s^{1/n})$ implies $s^2|R''(s)| = O(s^{(n+2)/n})$. So (3) is proved for a general $u$.
\end{proof}

\subsubsection{Solving \texorpdfstring{$L_\infty \hat{v} = \hat{u}$}{} with a Neumann boundary condition on the left}\label{sss:neumann}

The inhomogeneous ODE $L_\infty v = \hat{u}$ has a $2$-dimensional affine space of solutions on $(0,1)$. For the sake of transplanting $v$ to the approximating manifolds, we again require that $v(s) \to const$ as $s \to 0^+$ and $s \to 1^-$. As in Section \ref{sss:ode_liou} this singles out a $1$-dimensional affine subspace of solutions, any two members of which differ by a scalar multiple of $\hat{u}$. What makes things work in the end is the fact that there exists a canonical element $\hat{v}$ of this $1$-dimensional affine subspace that satisfies a \emph{Neumann} boundary condition at $s = 0$. We also need to know that $\hat{v} \leq 0$ and $\hat{v}(0) \neq 0$, but the precise value of $\hat{v}(0)$ is irrelevant.

\begin{lemma}\label{l:construct_vhat}
There exists a solution $\hat{v} \leq 0$ to the inhomogeneous ODE $L_\infty\hat{v} = \hat{u}$ on $(0,1)$ such that
\begin{align}
|\hat{v}(1-\eta)| + \eta|\hat{v}'(1-\eta)| + \eta^2|\hat{v}''(1-\eta)| = O(\eta^{n+1})\;\,\text{as}\;\,\eta \to 0^+,\label{eq:guess_who}\\
\label{eq:sol_inhomog_ode}
\hat{v}(s) = C_0 + C_2 s^{\frac{n+1}{n}} + R(s), \;\,|R(s)| + s|R'(s)| + s^2|R''(s)| = O(s^{\frac{n+2}{n}})\;\,\text{as}\;\,s \to 0^+,
\end{align}
where $C_0, C_2$ are constants and $C_0 \neq 0$.
\end{lemma}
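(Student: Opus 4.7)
The plan is to construct $\hat v$ as the unique solution of $L_\infty v = \hat u$ on $(0,1)$ that decays to order $\eta^{n+1}$ at $s = 1$ and has no $s^{1/n}$ term in its expansion at $s = 0$, then to use the maximum principle together with an integration-by-parts identity to establish both $\hat v \leq 0$ and $C_0 \neq 0$.

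For existence and uniqueness, I would run the Frobenius-type analysis already employed in the proof of Lemma \ref{l:construct_uhat}. Using the representation formula \eqref{eq:rep2} (near $s = 0$) and \eqref{eq:rep} (near $s = 1$) with inhomogeneity $\hat u$ in place of $0$, together with Lemmas \ref{l:modeloperators} and \ref{l:fundsolutions}, every solution $v$ of $L_\infty v = \hat u$ on $(0,1)$ admits asymptotic expansions $v(s) = d_0 + d_1 s^{1/n} + v_p^-(s) + O(s^{(n+1)/n})$ at $s \to 0^+$ and $v(1-\eta) = e_{-1}\eta^{-1} + v_p^+(\eta) + O(\eta^{n+1})$ at $\eta \to 0^+$, where $v_p^\pm$ are fixed particular expansions that depend only on $\hat u$. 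The map $v \mapsto (e_{-1}(v), d_1(v))$ is affine on the two-dimensional solution space, and its associated linear part on the homogeneous solution space is an isomorphism: the kernel of $v \mapsto e_{-1}(v)$ is spanned by $\hat u$ by Lemma \ref{l:construct_uhat}(4), and $d_1(\hat u) = -C_1 \neq 0$. Hence there is a unique $\hat v$ with $e_{-1} = 0$ and $d_1 = 0$, and the pointwise-plus-derivative refinements \eqref{eq:guess_who}--\eqref{eq:sol_inhomog_ode} follow by bootstrapping the ODE exactly as in the second half of the proof of Lemma \ref{l:construct_uhat}.

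To get $\hat v \leq 0$, I would apply the maximum principle: since $L_\infty = \Delta_\text{rad} - \mathrm{Id}$ and $\hat u > 0$, any interior point where $\hat v$ attained a strictly positive local maximum would force $\Delta_\text{rad}\hat v \leq 0$ and $-\hat v < 0$, giving $L_\infty \hat v < 0$, which contradicts $L_\infty \hat v = \hat u > 0$. Combined with the continuous boundary values $\lim_{s \to 0^+}\hat v = C_0$ and $\lim_{s\to 1^-}\hat v = 0$, this yields $\hat v \leq \max(C_0, 0)$ on $(0,1)$. Thus it suffices to prove $C_0 \leq 0$, after which the separate assertion $C_0 \neq 0$ (even $C_0 < 0$) will also complete the lemma.

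The main obstacle is precisely the proof that $C_0 \neq 0$, since the maximum principle alone gives no control on $C_0$. The key observation is that $L_\infty$ is formally self-adjoint in $L^2((0,1), \mu_\infty\,ds)$, where $\mu_\infty$ is the limiting radial volume density of $\omega_T$ from Lemma \ref{l:volumeform}: writing $L_\infty = a\partial_{ss}^2 + b\partial_s - \mathrm{Id}$, one verifies using the asymptotics \eqref{ModelOperator1}--\eqref{ModelOperator2} (and the smoothness of $\mu_\infty$ in the interior together with the fact that $\mu_\infty$ records the radial factor of the Riemannian volume form) that $(\mu_\infty a)' = \mu_\infty b$, so $\mu_\infty L_\infty$ has the Sturm--Liouville form $\partial_s(\mu_\infty a\,\partial_s) - \mu_\infty$. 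Green's identity then reads
\begin{equation}
\int_0^1 \mu_\infty\bigl[(L_\infty u)f - u(L_\infty f)\bigr]\,ds = \bigl[\mu_\infty a(u'f - uf')\bigr]_0^1
\end{equation}
for suitable $u, f$. Applying this with $u = \hat v$ and $f = \hat u$ and using $L_\infty\hat u = 0$, the left-hand side equals $\int_0^1 \mu_\infty \hat u^2\,ds > 0$. For the boundary term: at $s = 1$, $\mu_\infty a \sim \text{const}\cdot\eta^{1-n}$ while $\hat u, \hat v = O(\eta^{n+1})$ with derivatives $O(\eta^n)$, and in fact the leading terms in $\hat v'\hat u - \hat v\hat u'$ cancel so the contribution is $O(\eta^{n+3}) \to 0$; at $s = 0$, $\mu_\infty a \sim \text{const}\cdot s^{(n-1)/n}$, and using the expansions $\hat u = 1 - C_1 s^{1/n} + O(s^{(n+1)/n})$ and $\hat v = C_0 + O(s^{(n+1)/n})$ from Lemmas \ref{l:construct_uhat}(3) and \eqref{eq:sol_inhomog_ode}, the dominant term in $\hat v'\hat u - \hat v\hat u'$ is $-\hat v(0)\hat u'(s) \sim (C_0 C_1/n) s^{(1-n)/n}$, which multiplied by $\mu_\infty a$ has a finite nonzero limit proportional to $C_0 C_1$. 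Setting the two sides equal yields $0 < \int_0^1 \mu_\infty\hat u^2\,ds = -\kappa\, C_0 C_1$ with an explicit $\kappa > 0$, forcing $C_0 C_1 < 0$; since $C_1 > 0$ by Lemma \ref{l:construct_uhat}(3), we conclude $C_0 < 0$, which simultaneously gives $C_0 \neq 0$ and, via the previous paragraph, $\hat v \leq 0$.
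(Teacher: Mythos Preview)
Your argument is correct and takes a genuinely different route from the paper's proof.

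The paper proceeds constructively: it first builds a particular solution $v$ with $v(0)=-1$, $v(1)=0$, $-1\leq v\leq 0$ by solving Dirichlet problems on $[\varepsilon,1-\varepsilon]$ with boundary values $-1,0$ and applying barrier arguments (reusing the sub/supersolutions from the proof of Lemma~\ref{l:construct_uhat}) to control the limit. It then adds a multiple of $\hat u$ to kill the $s^{1/n}$ coefficient, and shows $\hat v(0)<0$ by a short contradiction: if $\hat v(0)\geq 0$, the ODE forces $C_2>0$, so $\hat v(s)>\hat v(0)$ for small $s$, contradicting $\hat v\leq \hat v(0)$ (maximum principle). This is entirely elementary but requires recycling the barrier constructions.

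Your approach is more structural: you pick out $\hat v$ directly by linear algebra on the two-dimensional affine solution space, and then exploit that $L_\infty$ is formally self-adjoint with respect to $\mu_\infty\,ds$ to obtain the exact identity $\int_0^1 \mu_\infty\hat u^2\,ds = -\kappa\,C_0C_1$, which pins down the sign of $C_0$ in one stroke. The self-adjointness can indeed be justified: writing the radial part of $\Delta_{\omega_T}$ in the $t$-variable as $(\psi_T'')^{-1}\partial_t^2 + (n-1)(\psi_T')^{-1}\partial_t$ with radial volume density proportional to $(\psi_T')^{n-1}\psi_T''$, one checks the Sturm--Liouville identity $((\psi_T')^{n-1})' = (n-1)(\psi_T')^{n-2}\psi_T''$ directly, and this passes to the limit $T\to -\infty$. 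Your boundary analysis at both endpoints is accurate (the $s=1$ contribution is $O(\eta^{n+2})$ even without the leading-order cancellation you mention). What your approach buys is an explicit integral formula for $C_0$ in terms of $\|\hat u\|^2_{L^2(\mu_\infty)}$ and avoidance of any barrier construction; what the paper's approach buys is that it needs neither $\mu_\infty$ nor the self-adjointness, staying entirely within maximum-principle arguments already set up in Lemma~\ref{l:construct_uhat}.
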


This satisfies a Neumann condition in the sense that the expected term $C_1 s^{1/n}$ in \eqref{eq:sol_inhomog_ode} vanishes.

\begin{proof}
We begin by constructing one particular solution $v$ to the ODE, which may not be the one we seek. For this we proceed as in the proof of Lemma \ref{l:construct_uhat}, i.e., by solving Dirichlet rather than Neumann problems. Thus, for any fixed $\varepsilon \in (0,1)$ let $v_\varepsilon: [\varepsilon,1-\varepsilon] \to \mathbb{R}$ be the unique solution to $L_\infty v_\varepsilon = \hat{u}|_{[\varepsilon,1-\varepsilon]}$ with $v(\varepsilon) = -1$ and $v(1-\varepsilon) = 0$. At an interior maximum we must have that $-v_\varepsilon \geq L_\infty v_\varepsilon = \hat{u}$, i.e., $v_\varepsilon \leq -\hat{u} \leq 0$, and at an interior minimum we must have that $-v_\varepsilon \leq L_\infty v_\varepsilon = \hat{u}$, i.e., $v_\varepsilon \geq -\hat{u} \geq -1$. Thus, by comparison with the boundary values, $-1 \leq v_\varepsilon \leq 0$. Up to a subsequence, we can now pass to a limit $v: (0,1) \to \mathbb{R}$ locally smoothly. Clearly $L_\infty v = \hat{u}$ and $-1 \leq v \leq 0$.

A barrier argument as in the proof of Lemma \ref{l:construct_uhat} yields $\lim_{s \to 0^+} v(s) = -1$ and $\lim_{s \to 1^-} v(s) = 0$. Indeed, consider the function $\hat{u}_{sub}(s) = 1 - C_1 s^{1/n} + C_2 s^{(n+1)/n}$ from the previous proof, which satisfies $\hat{u}_{sub}(s_*) = 0$ and $L_\infty\hat{u}_{sub} \geq 0$ on $(0,s_*]$ for some small dimensional $s_* \in (0,1)$. Then $v^{sup} := -\hat{u}_{sub}$ trivially satisfies $L_\infty v^{sup} \leq 0 \leq \hat{u}$, so we can use $v^{sup}$ as an upper barrier for $v_\varepsilon$ on $[\varepsilon,s_*]$, proving that $\lim_{s\to 0^+} v(s) = -1$. For $s \to 1^-$ almost the same trick works. From the proof of Lemma \ref{l:construct_uhat} we know that $\hat{u}^{sup}(1-\eta) = C_1 \eta^{n+1} - C_2\eta^{2n+2}$ satisfies $\hat{u}^{sup}(1-\eta_*) = 1$ and $L_\infty\hat{u}^{sup} \leq 0$ on $[1-\eta_*,1)$ for a small universal $\eta_* \in (0,1)$. Then we would like to use $v_{sub} := -\hat{u}^{sup}$ as a lower barrier for $v_\varepsilon$ on $[1-\eta_*,1-\epsilon]$. This does not work immediately because we only have $L_\infty v_{sub} \geq 0$ but not $L_\infty v_{sub} \geq \hat{u}$. However, we can improve the construction of $\hat{u}^{sup}$ slightly by choosing $\eta_*$ small enough so that $(2C_*' + C_*''')\eta_*^{n+1}$ $\leq$ $(1/2)(\gamma(\gamma-n)/(n+1)-1)$ in the previous proof, hence $L_\infty v_{sub} \geq (1/2)(\gamma(\gamma-n)/(n+1)-1) > 0$ on $[1-\eta_*,1)$. Since we already know that $\hat{u}(1-\eta) = O(\eta^{n+1})$ with a dimensional constant, this means we can arrange that $L_\infty v_{sub} \geq \hat{u}$ on $[1-\eta_*,1)$ by making $\eta_*$ even smaller if necessary.

To sum up, we have constructed a solution $v: (0,1) \to \mathbb{R}$ to $L_\infty v = \hat{u}$ with $-1\leq v \leq 0$ as well as $\lim_{s\to 0^+}v(s) = -1$ and $\lim_{s \to 1^-} v(s) = 0$. In fact, $v(1-\eta) = O(\eta^{n+1})$ as $\eta\to 0^+$, and by using Lemma \ref{l:construct_uhat}(2) and the interpolation argument proving \eqref{eq:rough_estimate} we can easily upgrade this to \eqref{eq:guess_who}. Also as in the proof of Lemma \ref{l:construct_uhat}, we can now prove that $v(s) = -1 + C_1s^{1/n} + C_2 s^{(n+1)/n} + R(s)$, where $R(s) = O(s^{(n+2)/n})$ with two derivatives as in \eqref{eq:remainder} or \eqref{eq:sol_inhomog_ode}. In fact, nothing needs to be changed in the proof except that $\hat{u}(s) = 1 + O(s^{1/n})$ needs to be added to $f$.

Having proved this expansion of $v$, we note that if $C_1 = 0$, then we are done with $\hat{v} := v$. Otherwise we can replace $v$ by $\hat{v} := v + \lambda \hat{u}$ for some $\lambda \neq 0$ to make the $s^{1/n}$ term vanish. Then we also need to make sure that $\hat{v} \leq 0$ and $\hat{v}(0) \neq 0$. By the maximum principle, it is enough to prove that $\hat{v}(0) < 0$, which we do by contradiction. If $\hat{v}(0) \geq 0$, then the constant term of $\hat{v}(s)$ is nonnegative and the $s^{1/n}$ term vanishes. Thus, in order to have $L_\infty \hat{v}(s) = \hat{u}(s) = 1 + O(s^{1/n})$, we need $C_2 > 0$ (again thanks to the fact that $R(s)$ is estimated with two derivatives). Thus, $\hat{v}(s) > \hat{v}(0)$ for $0 < s \ll 1$. But this contradicts the inequality $\hat{v} \leq \hat{v}(0)$, which holds by the maximum principle.
\end{proof}

\subsubsection{Definition of the obstruction function}

For all $0 < |\sigma| \ll 1$ we first choose an $s_{*,\sigma} \in (0,\frac{1}{5}]$ with
\begin{align}
\max\left\{3|T|^{\alpha-1},3\frac{\tau}{T}\right\} \leq s_{*,\sigma} \to 0\;\,\text{as}\;\,\sigma \to 0.\label{eq:pretty_obvious}
\end{align}
Then we choose a smooth function $\chi_\sigma: (0,1) \to [0,1]$ such that
\begin{align}\label{eq:define_chi}
\chi_\sigma(s) = \left.\begin{cases}\;0 &\text{for}\;\,s \in (0,s_{*,\sigma}]\\
\;1 &\text{for}\;\,s \in [2s_{*,\sigma},1)
\end{cases}\right\}
\;\text{and}\;\,s_{*,\sigma}|\chi_\sigma'| + s_{*,\sigma}^2|\chi_\sigma''| \leq C
\end{align}
for some constant $C$ independent of $\sigma$. Now recall Convention \ref{conv}: We identify $\mathfrak{R}_4 \subset \mathcal{X}_\sigma$ with the set $\{T + 2T_0 < t < 2\tau\} \subset TY_0$ using the diffeomorphism $\Phi_\sigma^{-1} \circ \Psi_\sigma$. In this way we can view (functions of) $s,\eta$ as functions on $\mathfrak{R}_4$. Thus, the following definition makes sense.

\begin{definition}\label{def:obstr}
We define the obstruction function $\hat{u}_\sigma \in C^\infty(\mathcal{X}_\sigma)$ via $\hat{u}_\sigma := L_\sigma \hat{v}_\sigma$, where
\begin{align}\label{eq:def:hatvsigma}
    \hat v_\sigma := [\chi_\sigma \hat v + (1 - \chi_\sigma) \hat v(0)]\chi_\sigma(1-\cdot) \in C^\infty(\mathcal{X}_\sigma).
\end{align}
Note that $\hat{v}_\sigma$ actually extends smoothly from $\mathfrak{R}_4$ to all of $\mathcal{X}_\sigma$ because it is constant equal to $\hat{v}(0)$ for $s < s_{*,\sigma}$ and zero for $s > 1-s_{*,\sigma}$. By \eqref{eq:pretty_obvious} this cutoff happens strictly within the boundaries of $\mathfrak{R}_4$. Also, $\hat{u}_\sigma$ is then constant equal to $-\hat{v}(0)$ to the left of $\mathfrak{R}_4$ and zero to the right of $\mathfrak{R}_4$.
\end{definition}

\begin{lemma}\label{l:conv_uhat}
The obstruction function $\hat{u}_\sigma$ satisfies the following properties:
\begin{itemize}
    \item[$(1)$] Fix $s_1 < s_2$ in $(0,1)$. Lift $\hat{u}_\sigma$ and $\hat{u} \circ s$ to functions on the domain of the chart \eqref{eq-quasi-10} on the universal cover of  $\{s_1 < s < s_2\}$. Then $\hat{u}_\sigma \to \hat{u} \circ s$ uniformly as $\sigma \to 0$.
    \item[$(2)$] There exists a uniform constant $C$ such that $|\hat{u}_{\sigma}| \leq C\eta^{3}$ as functions on $\mathfrak{R}_4$ for all $\sigma$.
\end{itemize}
\end{lemma}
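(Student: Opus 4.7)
The plan is to prove Part (1) via convergence of the radial coefficients of the Laplacian on compact subsets of $(0,1)$, and Part (2) by dividing $\mathfrak{R}_4$ into five subregions according to which cutoff in \eqref{eq:def:hatvsigma} is active.

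For Part (1), I would fix $s_1 < s_2$ in $(0,1)$ and observe that, for $\sigma$ sufficiently small, $s_{*,\sigma} < s_1$ and $1 - s_{*,\sigma} > s_2$, so both cutoffs equal $1$ on $\{s_1 < s < s_2\}$ and $\hat{v}_\sigma$ reduces to the radial function $\hat{v} \circ s$ there. Lifting to the quasi-coordinates \eqref{eq-quasi-10} on the universal cover, every tangential derivative of $\hat{v} \circ s$ is identically zero, so $L_\sigma(\hat{v} \circ s)$ depends only on the radial coefficients of $\Delta_{\omega_{glue,\sigma}}$. By Convention \ref{conv} one may replace $\omega_{glue,\sigma}$ by $\omega_T$ with a negligible $O(e^{-(\frac{1}{2}-\varepsilon)|T|^\alpha})$ error, and by \eqref{eq-T-c-Dk-2} the radial coefficients of $\Delta_{\omega_T}$ converge smoothly and uniformly on $[s_1,s_2]$ to those of $L_\infty + \mathrm{Id}$ from Lemma \ref{l:modeloperators}. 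Combined with the uniform smoothness of $\hat{v}$ on $[s_1,s_2] \subset (0,1)$, this yields $L_\sigma \hat{v}_\sigma \to L_\infty \hat{v} = \hat{u}$ uniformly.

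For Part (2), I would first dispose of the three easy regions. When $s \leq s_{*,\sigma}$, $\hat{v}_\sigma = \hat{v}(0)$ is constant, so $\hat{u}_\sigma = -\hat{v}(0) = O(1)$, which is bounded by $C\eta^{n+1}$ since $\eta \geq 1 - s_{*,\sigma}$ is close to $1$. When $s \geq 1 - s_{*,\sigma}$, $\hat{v}_\sigma = 0$ and $\hat{u}_\sigma = 0$. On the central region $s \in [2s_{*,\sigma},\, 1-2s_{*,\sigma}]$, $\hat{v}_\sigma = \hat{v} \circ s$ and the quantitative form of Part (1) combined with $L_\infty \hat{v} = \hat{u}$ and Lemma \ref{l:construct_uhat}(2) gives $|L_\sigma \hat{v}| \leq C\eta^{n+1}$ uniformly. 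This leaves the two cutoff transition regions. In the right cutoff region $\eta \in [s_{*,\sigma},2s_{*,\sigma}]$, I would expand $\Delta_{\omega_T}[\chi_\sigma(\eta)\hat{v}(1-\eta)]$ via Leibniz using the asymptotic form \eqref{eq:operator_in_quasi_coords} of $\Delta_{\omega_T}$ near $\eta = 0$; the combination of $|\eta^k \chi_\sigma^{(k)}(\eta)| = O(1)$ for $\eta \sim s_{*,\sigma}$ together with $|\eta^j \hat{v}^{(j)}(1-\eta)| = O(\eta^{n+1})$ from \eqref{eq:guess_who} makes each term of the expansion $O(\eta^{n+1})$. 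In the left cutoff region $s \in [s_{*,\sigma}, 2s_{*,\sigma}]$, where $\eta \approx 1$, it suffices to show that $L_\sigma \hat{v}_\sigma$ is uniformly bounded; here $\hat{v}_\sigma = \hat{v}(0) + \chi_\sigma(s)[\hat{v}(s) - \hat{v}(0)]$, and using Proposition \ref{prop-L-s} for the form of $\Delta_{\omega_T}$ near $s=0$ together with the expansion $\hat{v}(s) - \hat{v}(0) = C_2 s^{(n+1)/n} + R(s)$ with $|R| + s|R'| + s^2|R''| = O(s^{(n+2)/n})$ from Lemma \ref{l:construct_vhat}, a term-by-term Leibniz check shows that the singular coefficients $s^{(n-1)/n}$ and $s^{-1/n}$ of the model operator cancel exactly against both the cutoff derivatives and the vanishing orders of $\hat{v}(s) - \hat{v}(0)$, producing only $O(1)$ terms.

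The hard part will be the bookkeeping in the Leibniz expansions in the two cutoff regions: each term carries a specific negative power of $s_{*,\sigma}$ from the cutoff and a specific power of $\eta$ or $s$ from $\hat{v}$, and the estimate only closes because the leading vanishing orders of $\hat{v}$ are exactly those required, namely $\eta^{n+1}$ with matching two derivatives at $\eta = 0$ and the Neumann condition $\hat{v}(s) - \hat{v}(0) = O(s^{(n+1)/n})$ (i.e.\ the absence of an $s^{1/n}$ term) at $s = 0$. Any weakening of either vanishing order in Lemmas \ref{l:construct_uhat} and \ref{l:construct_vhat} would destroy the estimate, which is why the carefully engineered definition of $\hat{v}_\sigma$ through the auxiliary Neumann problem is essential.
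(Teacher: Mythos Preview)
Your approach is essentially the same as the paper's, and your arguments in the two cutoff transition regions are exactly right, including the crucial observation that the Neumann property of $\hat{v}$ at $s=0$ is what saves the estimate there. Part (1) is also handled correctly.

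There is, however, a structural gap in your five-region decomposition for Part (2). Your ``central region'' $s\in[2s_{*,\sigma},1-2s_{*,\sigma}]$ expands to all of $(0,1)$ as $\sigma\to 0$, so it is not a fixed compact subset, and Part (1) as stated (uniform convergence of $\hat{u}_\sigma$ to $\hat{u}$ on fixed $[s_1,s_2]\subset(0,1)$) does not yield $|L_\sigma\hat{v}|\leq C\eta^{n+1}$ there. Concretely: near $\eta=2s_{*,\sigma}\to 0$, you need the \emph{error} $(L_\sigma-L_\infty)\hat{v}$ to be $O(\eta^{n+1})$, not merely $o(1)$; this requires the specific structure of the operator perturbation near the endpoint (that it acts through $\eta^2\partial_{\eta\eta}$, $\eta\partial_\eta$, $\mathrm{Id}$-type terms), which you already exploit in the right cutoff region but not here. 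The paper avoids this by using three regions based on a \emph{fixed} universal $\delta\in(0,1)$: region (a) $s\in[\delta,1-\delta]$ where $\eta^{n+1}$ is bounded below so a uniform $C^0$ bound suffices, and regions (b) $\eta\in(0,\delta]$ and (c) $s\in(0,\delta]$ where the endpoint operator estimates \eqref{eq:2b1}--\eqref{eq:2b2} and \eqref{eq:c1}--\eqref{eq:c2} handle both the cutoff and non-cutoff portions simultaneously. Your cutoff-region computations are then special cases of the paper's (b) and (c). The fix is simply to replace $s_{*,\sigma}$ by a fixed $\delta$ in your decomposition and merge your five regions into three.
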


The main content of (2) is that $\hat{u}_\sigma$ remains bounded independently of $\sigma$ at the left boundary of $\mathfrak{R}_4$. This is not at all obvious due to the poor regularity of $\hat{u}(s)$ and $\hat{v}(s)$ as $s \to 0$. Here we crucially use the Neumann property of $\hat{v}$. (2) is also the key to the proof of our main result, Proposition \ref{p:construct_obstruct}.

\begin{proof}[Proof of Lemma \ref{l:conv_uhat}]
For item (1): From the proof of Lemma \ref{l:modeloperators} we know that for $|\sigma|\ll 1$,
\begin{align}\label{eq:345}
\begin{split}
        \hat{u}_\sigma &= [(\Delta_{\omega_T} - {\rm Id}) + (\Delta_{\omega_{glue,\sigma}}-\Delta_{\omega_T})]\hat{v}_\sigma \\
        &= (\Delta_{\omega_T} - {\rm Id})\hat{v}_\sigma + O_{s_1,s_2}(e^{-\frac{1}{2}|T|^\alpha}) \circledast (\check\partial \hat{v}_\sigma, \check\partial^2\hat{v}_\sigma)\\
        &= L_\infty \hat{v} + [O_{s_1,s_2}(|b|^{\frac{1}{3}}|\tau|) + O_{s_1,s_2}(e^{-\frac{1}{2}|T|^\alpha})] \circledast (\hat{v}',\hat{v}'').
        \end{split}
\end{align}
The point is that $\hat{v}_\sigma$ is radial in the chart \eqref{eq-quasi-10} and for $|\sigma| \ll 1$ we have $2s_{*,\sigma} < s_1 < s_2 < 1 - 2s_{*,\sigma}$, so the cutoffs in \eqref{eq:def:hatvsigma} are irrelevant and $\hat{v}_\sigma = \hat{v}$. Thus, according to the definition of $L_\infty$ as the first line of \eqref{eq:operator_in_quasi_coords}, $(\Delta_{\omega_T}-{\rm Id})\hat{v}_\sigma$ is equal to $L_\infty \hat{v}$ modulo the operator error $O_\delta(|b|^{1/3}|\tau|)\hat{v}_\sigma$ from the second line of \eqref{eq:operator_in_quasi_coords}. Moreover, $\check\partial$ reduces to $d/ds$. This explains \eqref{eq:345}. Of course, $L_\infty\hat{v} = \hat{u}$, and the rest of the third line of \eqref{eq:345} obviously goes to zero uniformly as $\sigma \to 0$.

For item (2): In this proof we prefer to use $n$-dimensional notation for clarity. We fix a sufficiently small but universal $\delta > 0$ and distinguish three subregions of $\mathfrak{R}_4$ as follows. \medskip\

\noindent \emph{Region $(\textup{a})$}: $s = 1-\eta \in [\delta,1-\delta]$. In this region, item (2) simply states that $\hat{u}_\sigma$ is bounded uniformly, independently of $\sigma$, and this statement is trivial from \eqref{eq:345} for $s_1 = \delta$ and $s_2 = 1-\delta$.\medskip\

\noindent \emph{Region $(\textup{b})$}: $\eta \in (0,\delta]$. Here the goal is to prove that $|\hat{u}_\sigma| \leq C\eta^{n+1}$ with $C$ independent of $\sigma$. We want to argue as in \eqref{eq:345} but $s_2$ is not bounded away from $1$, so we need to make the following changes:\smallskip

$\bullet$ Use the formula $\hat{v}_\sigma = \chi_\sigma(1-\cdot)\hat{v}$ instead of $\hat{v}_\sigma = \hat{v}$.\smallskip

$\bullet$ Instead of using the charts \eqref{eq-quasi-10}, which degenerate as $s_2 \to 1$, we estimate the difference between $\Delta_{\omega_{glue,\sigma}}$ and $\Delta_{\omega_T}$ covariantly. Recall from Convention \ref{conv} and Lemma \ref{Jerror} that $g_{glue,\sigma}$ and $g_T$ differ from each other by $O(e^{-(1/2-\epsilon)(t-T)}) = O(e^{-(1/2)|T|^\alpha})$ uniformly over $\mathfrak{R}_4$, including derivatives. Also, from Proposition \ref{prop-T-c}, $g_T$ and $g_{cusp}$ are uniformly equivalent over $\{0 < \eta \leq \delta\}$, including derivatives because both are Einstein. Thus, we can estimate
\begin{align}\label{eq:2b1}
\begin{split}
|(\Delta_{\omega_{glue,\sigma}}-\Delta_{\omega_T})\hat{v}_\sigma| \leq O_\delta(e^{-\frac{1}{2}|T|^\alpha})\sum_{k=1}^2 |\nabla^k_{\omega_{cusp}}\hat{v}_\sigma|_{\omega_{cusp}}
&\leq O_\delta(e^{-\frac{1}{2}|T|^\alpha})(\eta^2|\hat{v}_\sigma''| + \eta|\hat{v}_\sigma'|).
\end{split}
\end{align}

$\bullet$ Note that the operator error $O_\delta(|b|^{1/(n+1)}|\tau|)$ from \eqref{eq:operator_in_quasi_coords} only depends on an upper bound for $\eta$. Thus, using \eqref{eq:operator_in_quasi_coords} and the fact that $\hat{v}_\sigma$ is radial, we have on all of $\{0 < \eta \leq \delta\}$ that
\begin{align}\label{eq:2b2}
\begin{split}
|(\Delta_{\omega_T} - {\rm Id})\hat{v}_\sigma| = |(L_\infty  + O_\delta(|b|^{\frac{1}{n+1}}|\tau|))\hat{v}_\sigma|\leq O_\delta(1) (\eta^2|\hat{v}_\sigma''| + \eta|\hat{v}_\sigma'| + |\hat{v}_\sigma|).
\end{split}
\end{align}

\noindent The desired estimate of $\hat{u}_\sigma$ now easily follows from the estimates \eqref{eq:guess_who} for $\hat{v}$ and \eqref{eq:define_chi} for $\chi_\sigma$.\medskip\

\noindent \emph{Region $(\textup{c})$}: $s \in (0,\delta]$. Here we prove that $|\hat{u}_\sigma| \leq C$ with $C$ independent of $\sigma$. We again follow a similar pattern but now need to use \eqref{eq:666} instead of \eqref{eq-g-T-eta} or \eqref{eq:operator_in_quasi_coords}. Thus, we proceed as follows:\smallskip

$\bullet$ We use the formula $\hat{v}_\sigma = \chi_\sigma\hat{v} + (1-\chi_\sigma)\hat{v}(0)$.\smallskip

$\bullet$ The comparison of $g_{glue,\sigma}$ and $g_T$ is the same as in (b). Then we compare $\omega_T$ and $|b|^{1/2}\omega_{\mathcal{C},\sigma}$ using \eqref{eq-Psi-T-11}--\eqref{eq-Psi-T-12}. The comparison is uniform over $\{0 < s \leq \delta\}$ and each derivative costs at most a factor of $|T|^K (t-T)^K$ for some $K > 0$ as long as $t-T \geq 1$. Thus, 
\begin{align}\label{eq:c1}
\begin{split}
|(\Delta_{\omega_{glue,\sigma}}-\Delta_{\omega_T})\hat{v}_\sigma| \leq O_\delta(e^{-\frac{1}{2}|T|^\alpha}|T|^{2K})(s^{\frac{n-1}{n}}|\hat{v}_\sigma''| + s^{-\frac{1}{n}}|\hat{v}_\sigma'|).
\end{split}
\end{align}

$\bullet$ The operator error $O_\delta(|b|^{1/(n+1)}|\tau|)$ from \eqref{eq:666} only depends on an upper bound for $s$. Applying \eqref{eq:666}, we therefore get on all of $\{0 < s \leq \delta\}$ that
\begin{align}\label{eq:c2}
\begin{split}
|(\Delta_{\omega_T} - {\rm Id})\hat{v}_\sigma| = |(L_\infty  + O_\delta(|b|^{\frac{1}{n+1}}|\tau|))\hat{v}_\sigma|\leq O_\delta(1) (s^{\frac{n-1}{n}}|\hat{v}_\sigma''| + s^{-\frac{1}{n}}|\hat{v}_\sigma'| + |\hat{v}_\sigma|).
\end{split}
\end{align}
To estimate the right-hand sides of \eqref{eq:c1}--\eqref{eq:c2} we need to be a little more careful than in (b). First notice that the $|\hat{v}_\sigma|$ term in \eqref{eq:c2} is obviously uniformly bounded. Next, $\hat{v}_\sigma = \chi_\sigma(\hat{v} - \hat{v}(0)) + \hat{v}(0)$, so in the remaining terms with $\hat{v}_\sigma', \hat{v}_\sigma''$ we can replace $\hat{v}_\sigma$ by $\chi_\sigma(\hat{v} - \hat{v}(0))$. Using \eqref{eq:define_chi} to estimate $\chi_\sigma$ and \eqref{eq:sol_inhomog_ode} to estimate $\hat{v} - \hat{v}(0)$, we get the desired uniform bound. However, at this point it is crucially important that $\hat{v}$ satisfies a Neumann condition at $s = 0$, i.e., that the expansion of $\hat{v} - \hat{v}(0)$ starts with $s^{(n+1)/n}$ rather than $s^{1/n}$. (Otherwise the best bound we could get is $|\hat{u}_\sigma| \leq Cs^{-1}$, which is not $L^1_{loc}$, destroying the dominated convergence argument in the proof of Proposition \ref{p:construct_obstruct}.) 
\end{proof}

\subsubsection{Proof of Proposition \ref{p:construct_obstruct}}\label{sss:final_boss}

As $w_{\sigma_i}(x_i) |\psi_i| \leq w_{\sigma_i}$ on $\mathcal{X}_{\sigma_i}$ and $x_i$ lies in $\mathfrak{R}_4$ with $s(x_i) \to c \in (0,1)$, we can deduce from the definition, \eqref{eq:def_weight}, of the weight function $w_{\sigma_i}$ that
\begin{align}\label{eq:global_weighted_bounds}
    |\psi_i| \leq \begin{cases}
    C &\text{on}\;\,\mathfrak{R}_1, \\
    C \Psi_\sigma^*(\Phi_\sigma^{-1})^* s^{-\delta} &\text{on}\;\,\mathfrak{R}_2 \cup \mathfrak{R}_3 \cup \mathfrak{R}_4 \cup \mathfrak{R}_5 \cup \mathfrak{R}_6,\\
    C|T_i|^\delta &\text{on}\;\,\mathfrak{R}_7.
    \end{cases} 
\end{align}
Thus, RHS\eqref{eq:limit_orth} exists because \eqref{eq:global_weighted_bounds} implies $|\psi_\infty(s)| \leq C s^{-\delta}$ for all $s \in (0,1)$, and $|\hat{u}(1-\eta)| \leq C\eta^{3}$ from Lemma \ref{l:construct_uhat} and $|\mu_\infty(1-\eta)| \leq C \eta^{-3}$ from Lemma \ref{l:volumeform} for all $\eta \in (0,1)$. We now decompose $\mathcal{X}_{\sigma_i}$ into three regions and analyze their contributions to LHS\eqref{eq:limit_orth}.\medskip\

\noindent \emph{Region} $\mathfrak{R}_1 \cup \mathfrak{R}_2 \cup \mathfrak{R}_3$: Here we obviously have $\hat{u}_{\sigma_i}=0$.\medskip\

\noindent \emph{Region} $\mathfrak{R}_5 \cup \mathfrak{R}_6 \cup \mathfrak{R}_7$: Here $\hat{u}_{\sigma_i}$ is constant equal to $-\hat{v}(0)$. In the unrescaled Tian-Yau space, volume grows like $\log h$ as $h \to \infty$. Thus, the volume of $\mathfrak{R}_5 \cup \mathfrak{R}_6 \cup \mathfrak{R}_7$ with respect to $\omega_{glue,\sigma_i}$ is bounded by $C|b_i| |T_i|^\alpha$. Recalling that $|b_i| \sim |T_i|^{-3}$ and using the bound $|\psi_i| \leq C|T_i|^\delta$ from \eqref{eq:global_weighted_bounds}, we get that the total contribution of this region to LHS\eqref{eq:limit_orth} goes to zero provided that $\alpha + \delta < 1$. This is consistent with our standing convention that $\alpha,\delta$ are always chosen arbitrarily close to zero.\medskip

\noindent \emph{Region $\mathfrak{R}_4$}: By \eqref{eq:volumeform_T}, the volume form of $\omega_{T_i}$ satisfies for all $\eta = 1-s \in (0,1)$ that
\begin{align}
\omega_{T_i}^2|_{t=\eta T_i} = |T_i|^{-2} \mu_{T_i}(1-\eta)\, ds \wedge d\theta \wedge d{\rm Vol}_E.
\end{align}
Thus, by Fubini, and using Lemma \ref{Jerror} to compare the two volume forms,
    \begin{align}\label{eq:yetanotherone}
    |T_i|^2\int_{\mathfrak{R}_4}\psi_i \hat{u}_{\sigma_i}\,\omega_{glue,\sigma_i}^2 = 2\pi {\rm Vol}(E)\int_{2|T_i|^{\alpha-1}}^{1-2\frac{\tau_i}{T_i}} \overline{\psi_i \hat{u}_{\sigma_i}}(s)\,\mu_{T_i}(s)(1 + O(e^{-\frac{1}{2}|T_i|^\alpha}))\,ds,
    \end{align}
    where the bar denotes the average over the relevant level set of $s$ with respect to the fixed volume form $d\theta \wedge d{\rm Vol}_E$. By assumption and by Lemma \ref{l:conv_uhat}, $\psi_i \to \phi_\infty$ and $\hat{u}_{\sigma_i} \to \hat{u}$ uniformly on every fixed level set of $s$. Thus, we obviously have that $\overline{\psi_i \hat{u}_i}(s) \to \phi_\infty(s)\hat{u}(s)$ for every fixed $s$. Thus, the integrand on the right-hand side of \eqref{eq:yetanotherone} converges pointwise to $\psi_\infty(s)\hat{u}(s)\,\mu_\infty(s)$. It is also uniformly bounded by $Cs^{-\delta} \cdot C\eta^{3} \cdot C\eta^{-3} \leq Cs^{-\delta}$ thanks to \eqref{eq:global_weighted_bounds}, Lemma \ref{l:conv_uhat} and Lemma \ref{l:volumeform}. Since $Cs^{-\delta}$ is integrable on $(0,1)$, dominated convergence implies that \eqref{eq:yetanotherone} converges to the right-hand side of \eqref{eq:limit_orth}.
\hfill $\Box$

\subsection{Statement of the uniform estimate modulo obstructions and set-up of the proof}\label{ss:uniform-statement}
         
Here we state the uniform weighted Hölder estimate of the inverse linearized operator modulo obstructions and explain the strategy of the proof. This is the standard blowup-and-contradiction scheme common to this type of problem, and the aim is to obtain a contradiction to some Liouville theorem in each of the 7 cases. In Sections \ref{ss:4:3}--\ref{ss:4:7} we carry out the details of this scheme in each case.

\begin{definition}
Given the obstruction function $\hat{u}_\sigma = L_\sigma \hat{v}_\sigma$ from Definition \ref{def:obstr}, we define
\begin{align}
\langle \hat{u}_\sigma \rangle := \mathbb{R} \cdot \hat{u}_\sigma\;\,\text{and}\;\,\langle \hat{v}_\sigma \rangle := \mathbb{R} \cdot \hat{v}_\sigma
\end{align}
as $1$-dimensional subspaces of $C^\infty(\mathcal{X}_\sigma)$. We also write
\begin{align}
    L_\sigma^\perp: \langle \hat{u}_\sigma \rangle^\perp \to \langle \hat v_\sigma \rangle^\perp
\end{align}
to denote the restriction of $L_\sigma = \Delta_{\omega_{glue,\sigma}} - {\rm Id}$ to the orthogonal complements of $\langle \hat{u}_\sigma\rangle$ resp. $\langle \hat{v}_\sigma\rangle$ inside $C^{2,\bar\alpha}(\mathcal{X}_\sigma)$ resp. $C^{0,\bar\alpha}(\mathcal{X}_\sigma)$ with respect to the  $L^2(\mathcal{X}_\sigma, \omega_{glue, \sigma})$-inner product for any $\bar\alpha \in (0,1)$. By basic elliptic theory, $L_\sigma^\perp$ is properly defined and is an isomorphism of Banach spaces.
\end{definition}

\begin{theorem}\label{t:uniform_Linfty}
For all $\bar\alpha \in (0,1)$ there exists a constant $C(\bar\alpha)$ independent of $\sigma$ such that
\begin{align}
\|\phi\|_{C^{2,\bar\alpha}_w} \leq C(\bar\alpha)\|L_\sigma^\perp\phi\|_{C_{\tilde w}^{0,\bar\alpha}}
\end{align}
for all $0<|\sigma|\ll 1$ and for all $\phi \in \langle \hat{u}_\sigma \rangle^\perp \subset C^{2,\bar \alpha}(\mathcal{X}_\sigma)$.
\end{theorem}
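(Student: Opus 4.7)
The plan is a standard blow-up/contradiction argument. Assume the estimate fails: then there exist $\sigma_i \to 0$ and $\phi_i \in \langle \hat{u}_{\sigma_i}\rangle^\perp$ with $\|\phi_i\|_{C^{2,\bar\alpha}_w} = 1$ and $\varepsilon_i := \|L_{\sigma_i}\phi_i\|_{C^{0,\bar\alpha}_{\tilde w}} \to 0$. The weighted Schauder estimate \eqref{eq:weighted_schauder} forces $\|\phi_i\|_{C^0_w}\geq c>0$, so after a further normalization we may assume $\|\phi_i\|_{C^0_w} = 1$ and pick $x_i \in \mathcal{X}_{\sigma_i}$ with $w_{\sigma_i}(x_i)^{-1}|\phi_i(x_i)|\geq \tfrac{1}{2}$.

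The core of the proof is a region-by-region blow-up analysis based on where $x_i$ sits. In every case I would pull back to the universal cover of the $\omega_{glue,\sigma_i}$-ball of radius $\mathbf r_{\sigma_i}$ around $x_i$, apply weighted $L^p$ and Schauder estimates to obtain $C^{1,\beta}_{loc}$ precompactness, and extract a subsequential limit $\phi_\infty$ solving a model equation on the appropriate model space. For $x_i \in \mathfrak{R}_1$, or $x_i \in \mathfrak{R}_2$ with $t(x_i)$ bounded, the limit lives on $(\mathcal{X}_0^{reg},\omega_{KE,0})$ and satisfies $(\Delta_{\omega_{KE,0}}-\mathrm{Id})\phi_\infty = 0$; completeness and the maximum principle give $\phi_\infty \equiv 0$. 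If $x_i \in \mathfrak{R}_2$ with $t(x_i) \to -\infty$, the same conclusion follows after blowing up in the cuspidal quasi-coordinates for $\omega_{cusp}$, yielding a bounded solution on a cusp model where $(\Delta - \mathrm{Id})$ is again injective. If $x_i$ lies in $\mathfrak{R}_7$ or in the Tian--Yau-like part of $\mathfrak{R}_5\cup \mathfrak{R}_6$, rescaling by $\mathbf r_{\sigma_i}^{-2}\sim |b_i|^{-1/2}$ converts $L_{\sigma_i}$ into an operator whose identity term vanishes in the limit, so $\phi_\infty$ becomes a bounded harmonic function on the complete Ricci-flat space $(TY_1,\omega_{TY_1})$; Yau's Liouville theorem forces $\phi_\infty$ to be constant, and any nonzero constant is incompatible with the weight decay $w_{\sigma_i}=(t-T_i)^{-\delta}\to 0$ along the transition $t-T_i\sim |T_i|^\alpha$ into the neck.

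The decisive case is the middle neck, $x_i \in \mathfrak{R}_4$ with $s(x_i)\to c\in (0,1)$. Lemma \ref{l:modeloperators} produces a radial $C^{1,\beta}_{loc}$ limit $\phi_\infty(s)$ solving $L_\infty\phi_\infty = 0$ on $(0,1)$ with $|\phi_\infty(s)| \leq s^{-\delta}$, and by Lemma \ref{l:construct_uhat}(4) $\phi_\infty = \lambda\hat u$ for some $\lambda \in \mathbb{R}$. This is where the orthogonality condition $\int\phi_i\hat u_{\sigma_i}\omega_{glue,\sigma_i}^2 = 0$ enters: Proposition \ref{p:construct_obstruct} yields
\[\lambda\cdot 2\pi\mathrm{Vol}(E)\int_0^1 \hat u(s)^2\mu_\infty(s)\,ds = 0,\]
so $\lambda = 0$ and $\phi_\infty\equiv 0$, contradicting $|\phi_\infty(c)|\geq c^{-\delta}/2$. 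If instead $s(x_i)\to 0^+$ or $s(x_i)\to 1^-$ along a collapsing scale, I would blow up using the explicit model operators $L_\infty^\pm$ from \eqref{eq:DefMO-}--\eqref{eq:DefMO+}; the fundamental solutions of Lemma \ref{l:fundsolutions} combined with the growth restriction $|\phi_\infty|\leq s^{-\delta}$ (respectively the bound of Lemma \ref{l:construct_uhat}(2) near $s=1$) force $\phi_\infty\equiv 0$ directly.

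The main obstacle is precisely the middle neck case. Lemma \ref{l:fundsolutions}(2) says both fundamental solutions of the model $L_\infty^-$ are bounded at $s=0$, so no choice of weight by itself can eliminate the obstruction $\hat u$: the gluing really is obstructed in this one direction, and without orthogonality the argument would fail. The intricate construction of $\hat u_\sigma = L_\sigma \hat v_\sigma$ with $\hat v$ satisfying the Neumann-type expansion of Lemma \ref{l:construct_vhat} is tuned exactly so that Proposition \ref{p:construct_obstruct} delivers a nontrivial limit pairing with the radial weight $\mu_\infty$. The most delicate point is therefore to verify that the orthogonality to $\hat u_{\sigma_i}$, together with the $w$-weighted bound $|\phi_i|\leq w_{\sigma_i}$, survives passage to the radial limit in the presence of the cutoffs $\chi_\sigma$ used in Definition \ref{def:obstr}, producing exactly the $L^2(\mu_\infty\,ds)$-orthogonality needed against $\hat u$ on the collapsed interval.
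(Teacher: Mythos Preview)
Your overall architecture and the handling of the decisive case $s(x_i)\to c\in(0,1)$ match the paper exactly: normalize, extract a radial limit via Lemma~\ref{l:modeloperators}, apply Lemma~\ref{l:construct_uhat}(4) to get $\phi_\infty=\lambda\hat u$, and kill $\lambda$ using Proposition~\ref{p:construct_obstruct}. That is the heart of the proof and you have it right.

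The genuine gap is in the neck-boundary subcases $s(x_i)\to 0^+$ and $s(x_i)\to 1^-$. Your plan to ``blow up using $L_\infty^\pm$'' and conclude from Lemma~\ref{l:fundsolutions} plus the weight contradicts your own next paragraph: both fundamental solutions of $L_\infty^-$ are bounded at $s=0$, so $|\phi_\infty|\le s^{-\delta}$ rules out neither of them, and your appeal to Lemma~\ref{l:construct_uhat}(2) near $s=1$ concerns $\hat u$, not $\phi_\infty$. The paper does \emph{not} use $L_\infty^\pm$ for these subcases. Instead it observes that $s(x_i)\to 0^+$ places $x_i$ geometrically in the Tian--Yau end regime: rescaling by $\mu_i=(t(x_i)-T_i)^{-3/4}|b_i|^{-1/4}$ collapses to a half-line with the Calabi-model ODE, whose solutions $const$ and $(1+\tilde\rho)^{2/3}$ are both killed by the limiting weight $(1+\tilde\rho)^{-4\delta/3}$ as $\tilde\rho\to\infty$ (Section~\ref{ss:cases:TYend}). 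Analogously, $s(x_i)\to 1^-$ reduces to the cusp regime of Section~\ref{ss:cases:cusp}. The same correction applies to your treatment of $\mathfrak{R}_6$ with $t(x_i)-T_i\to\infty$ and of $\mathfrak{R}_2$ with $t(x_i)\to-\infty$: lumping them with $\mathfrak{R}_7$ resp.\ $\mathfrak{R}_1$ under a single rescaling is not enough; the correct models are collapsed lines/half-lines with the explicit ODEs whose fundamental solutions $(-t)^3,(-t)^{-1}$ resp.\ $1,(1+\tilde\rho)^{2/3}$ are excluded by the limiting weight $w_\infty$.
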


Using the standard weighted Schauder estimates \eqref{eq:weighted_schauder}, the theorem reduces to proving that
\begin{align}
    \|\phi\|_{C_w^0} \leq C\|L_\sigma^\perp\phi\|_{C_{\tilde w}^0}.
\end{align}
We prove this estimate by contradiction in the rest of Section \ref{sec-L-inf}, starting now.

Assume that there is a sequence of functions $\phi_i \in \langle \hat{u}_{\sigma_i} \rangle^\perp \subset C^{2,\bar\alpha} (\mathcal{X}_{\sigma_i})$ such that $\sigma_i \to 0$ and
\begin{align}
1 = \|\phi_i\|_{C^0_w} > i \|L_{\sigma_i} \phi_i\|_{C^0_{\tilde w}}.
\end{align}   
Let $x_i \in \mathcal{X}_{\sigma_i}$ such that $|\phi_i(x_i)| = w_{\sigma_i}(x_i)$. Set
    \begin{align}\label{eq-psi}
    \psi_i := \frac{\phi_i}{w_{\sigma_i}(x_i)}.
    \end{align}
    Then $\psi_i(x_i)=1$ and 
    \begin{align}
    |\psi_i|\leq \frac{w_{\sigma_i}}{w_{\sigma_i}(x_i)}.
    \end{align}
    
\begin{ass}\label{ass:I}
It is possible to choose scaling factors $\mu_i \geq 1$ such that if
\begin{align}\label{eq-g-tilde}
\tilde{g}_i := \mu_i^2 g_{glue,\sigma_i},
\end{align}
then the spaces $(\mathcal{X}_{\sigma_i}, \tilde{g}_i, x_i)$ have a pointed Cheeger-Gromov limit (possibly collapsed) and the functions $w_{\sigma_i}/w_{\sigma_i(x_i)}$ have a locally uniform limit $w_\infty$ under this pointed Cheeger-Gromov convergence.
\end{ass}

Then the rescaled operators
\begin{align}
\tilde{L}_i := \Delta_{\tilde{g}_i} -\mu_i^{-2} {\rm Id} = \mu_i^{-2} (\Delta_{g_{\sigma_i}} - {\rm Id})=\mu_i^{-2} L_{\sigma_i}
\end{align}    
satisfy
\begin{align}\label{eq-Lgi-psi}
|\tilde{L}_i\psi_i| =\mu_i^{-2} \frac{|L_{\sigma_i} \phi_i|}{w_{\sigma_i}(x_i)} < \mu_i^{-2}\frac{1}{i} \frac{\tilde w_{\sigma_i} }{w_{\sigma_i}(x_i)}.
\end{align}
This will converge to $0$ locally uniformly if we additionally make the following assumption:

\begin{ass}\label{ass:II}
The sequence
\begin{align}
\mu_i^{-2}\frac{ \tilde w_{\sigma_i}}{w_{\sigma_i}(x_i)}
\end{align}
is locally uniformly bounded under the Cheeger-Gromov limit of Assumption \ref{ass:I}.
\end{ass}
    
In this situation, we can pass $\psi_i$ to a subsequential limit $\psi_\infty$ weakly in $W^{2,p}_{loc}$ and strongly in $C^{1,\beta}_{loc}$ for all $p,\beta$, where $\psi_\infty$ is smooth and satisfies an elliptic equation $\tilde{L}_\infty \psi_\infty = 0$. This is clear from standard elliptic compactness and regularity if there is no collapsing in the Cheeger-Gromov limit, and in this case $\tilde L_\infty = \Delta_{\tilde{g}_\infty} - \mu_\infty^{-2} \, {\rm Id}$. In the collapsing cases, we first need to pass to a local universal cover and work in quasi-coordinates. Then $\psi_\infty$ will be radial and $\tilde L_\infty$ will reduce to an ODE operator. See the proof of Lemma \ref{l:modeloperators} for the details of this argument in the most complicated case (region $\mathfrak{R}_4$).

\begin{ass}\label{ass:III}
    There is a Liouville theorem: $\tilde L_{\infty} \psi_\infty =0$ and $|\psi_\infty|\leq w_\infty$ imply $\psi_\infty=0.$
\end{ass}

This contradicts the property $\psi_\infty (x_\infty) =1$, which holds due to strong $C^{1,\beta}_{loc}$ convergence.

In the following sections, we will show that up to passing to subsequences, the above Assumptions \ref{ass:I}, \ref{ass:II} and \ref{ass:III} are indeed satisfied in our situation. The resulting contradictions prove Theorem \ref{t:uniform_Linfty}.

\subsection{GH limit and Liouville on the Kähler-Einstein building block (region \texorpdfstring{$\mathfrak{R}_1$}{})}

Assume that after passing to a subsequence we have for all $i$ that $x_i \in \mathfrak{R}_1$, or $x_i \in \mathfrak{R}_2$ with $t(x_i)$ uniformly bounded from below. Then we set $\mu_i := 1$. The pointed limit space of the sequence $(\mathcal{X}_{\sigma_i}, \tilde{g}_i, x_i)$ is the complete cuspidal Kähler-Einstein manifold $(\mathcal{X}_0^{reg}, g_{KE,0})$. We now verify our three assumptions.

\ref{ass:I}: For simplicity we only consider the case $x,x_i \not\in \mathfrak{R}_1$, so that $t(x), t(x_i)$ are actually defined. They are then uniformly bounded above and below, so
\begin{align}
\begin{split}\label{eq-tends-1}
\frac{w_{\sigma_i}(x)}{w_{\sigma_i}(x_i)} &=\frac{(t(x)-T_i)^{-\delta}}{(t(x_i)-T_i)^{-\delta}}\rightarrow 1
\end{split}
\end{align}
locally uniformly. Hence $w_\infty =1$.

\ref{ass:II}: Again assuming $x,x_i \not\in \mathfrak{R}_1$ we have that
\begin{align}
    \mu_i^{-2}\frac{\tilde{w}_{\sigma_i}(x)}{w_{\sigma_i}(x_i)} = |b_i|^{-\frac{1}{2}}(t(x)-T_i)^{-\frac{3}{2}} \frac{w_{\sigma_i}(x)}{w_{\sigma_i}(x_i)} \to 1
\end{align}
locally uniformly because of \ref{ass:I} and because $|b_i| \sim |T_i|^{-3}$.

\ref{ass:III}: By the generalized maximum principle on complete Riemannian manifolds with Ricci curvature bounded below \cite[p.207, Cor 1]{CY2}, there is a sequence $y_i$ in $\mathcal X_0^{reg}$ such that $\lim_{i\rightarrow\infty} \psi_\infty(y_i)=\sup \psi_\infty$
and $\limsup_{i\rightarrow \infty} \Delta_{g_{KE,0}}\psi_\infty(y_i)\leq 0.$ 
From the equation $\tilde L_{\infty} \psi_\infty =0$ one has $\psi_\infty(y_i)=\Delta_{g_{KE,0}} \psi_\infty(y_i)$, hence $\sup\psi_\infty \leq 0$. Similarly, we can show $\inf \psi_\infty\geq 0$. Hence $\psi_\infty=0$.

\subsection{GH limit and Liouville on the cusp (regions \texorpdfstring{$\mathfrak{R}_2, \mathfrak{R}_3$}{})}\label{ss:cases:cusp}

\subsubsection{Region \texorpdfstring{$\mathfrak{R}_2\textup{:}$}{} The genuine cusp}

Here we assume that $x_i \in \mathfrak{R}_2$ for all $i$. Also, we assume that $x_i$ tends to have infinite distance from $\mathfrak{R}_1$ as well as from $\mathfrak{R}_3$. Precisely,
\begin{align}
t(x_i) \rightarrow -\infty, \quad t(x_i)/\tau_i \rightarrow 0.
\end{align}
We set $\mu_i:=1$. The Gromov-Hausdorff limit (better: the collapsed Cheeger-Gromov limit) in this case is a line. \ref{ass:I} and \ref{ass:II} work exactly as in the previous case because $|t(x_i)| = o(|\tau_i|) = o(|T_i|)$.

\ref{ass:III}: Proceeding as in the proof of Lemma \ref{l:modeloperators}, we obtain a smooth limit function $\psi_\infty(t)$ with
\begin{align}
\tilde{L}_{\infty}\psi_\infty = \frac{1}{3}\left( t^2\frac{d^2\psi_\infty}{d t^2} -t\frac{d\psi_\infty}{d t}\right) - \psi_\infty = 0.
\end{align}
Thus, $\psi_\infty(t) = a (-t)^{3}+b (-t)^{-1}$, where $a, b$ are constants.
$w_\infty=1$ rules out all possibilities. 

\subsubsection{Region \texorpdfstring{$\mathfrak{R}_3\textup{:}$}{} The green gluing region between the cusp and the new neck}

In this case, we consider the region within finite distance from $\mathfrak{R}_3$ with respect to $g_{glue,\sigma_i}$. More precisely, we assume that
\begin{align}
t(x_i) \rightarrow -\infty, \quad t(x_i)/\tau_i \rightarrow c \in (0, \infty).
\end{align}
Set $\mu_i:=1$ and apply Lemma \ref{CusptoNeckerror}, extended so that \eqref{eq-Ck-T-c} holds for $t \in [C\tau_i, C^{-1} \tau_i]$ given any fixed constant $C>1$.  The remainder of the discussion is similar to that in the case of region $\mathfrak{R}_2$.
         
\subsection{GH limit and Liouville on the Tian-Yau building block (region \texorpdfstring{$\mathfrak{R}_7$}{})}\label{ss:4:3}

Consider the case that the points $x_i$ stay on the Tian-Yau side, i.e., for all $i$ we either have that $x_i \in \mathfrak{R}_7$, or $x_i \in \mathfrak{R}_6$ and $t(x_i) - T_i = \log h(m_{\sigma_i}(x_i))$ remains uniformly bounded above. Set $\mu_i := |b_i|^{-1/4}$. Then $\tilde{g}_i =m_{\sigma_i}^* g_{TY_1}$ and the pointed limit space is $(TY_1,g_{TY_1})$ after applying the map $m_{\sigma_i}$. 

We now apply the map $m_{\sigma_i}$ without writing it explicitly. Thus, $x_i \in TY_1$ and $\log h(x_i) \leq N$.

For the three conditions, first, by taking a subsequence we can assume that $w_{\sigma_i}(x_i)\rightarrow c \in [N^{-\delta} ,1]$. Then $w_{\sigma_i}/w_{\sigma_i}(x_i)$ converges locally uniformly to $c^{-1}$ on $\mathfrak{R}_7$ and to $c^{-1}(\log h)^{-\delta}$ on $\mathfrak{R}_6$. Next,
\begin{align}
    \mu_i^{-2}\frac{\tilde w_{\sigma_i}}{w_{\sigma_i}(x_i)} = |b_i|^{\frac{1}{2}}\cdot \left(|b_i|^{\frac{1}{4}}(\log h)^{\frac{3}{4}}\right)^{-2} \cdot \frac{w_{\sigma_i}}{w_{\sigma_i}(x_i)}
\end{align}
is locally uniformly convergent by the previous step. Lastly, 
\begin{align}
    \tilde L_{\infty} \psi_\infty = \Delta_{g_{TY_1}} \psi_\infty = 0,\quad |\psi_\infty| \leq c^{-1} (\log h)^{-\delta}\;\,\text{as}\;\,h \to \infty.
\end{align}
Then the maximum principle implies that $\psi_\infty = 0$.

\subsection{GH limit and Liouville on the Tian-Yau end (regions \texorpdfstring{$\mathfrak{R}_6, \mathfrak{R}_5$}{})}\label{ss:cases:TYend}

\subsubsection{Region \texorpdfstring{$\mathfrak{R}_6\textup{:}$}{} The genuine Tian-Yau end}

In this case, we assume that the points $x_i$ lie strictly in the interior of region $\mathfrak{R}_6$ in the sense that 
\begin{align}
    t(x_i) - T_i \to \infty, \quad \rho(x_i)^{-1} |T_i|^{\frac{3}{4}\alpha} \rightarrow \infty,
\end{align}
where $\rho$ is a geometric distance function for the unrescaled Tian-Yau metric,
\begin{align}\label{eq:def_rho}
\rho(x) := (t(x) -T_i)^{\frac{3}{4}}.
\end{align}
Then we define our scaling factors by
\begin{align}
\mu_i := \rho(x_i)^{-1}|b_i|^{-\frac{1}{4}} \gg |T_i|^{\frac{3}{4}(1-\alpha)} \to \infty,
\end{align}
so that the pointed Gromov-Hausdorff limit is the tangent cone at infinity of the Tian-Yau space, i.e., a half-line $[-1,\infty)$ in the natural parametrization given by $\tilde{\rho} := (\rho - \rho(x_i))/\rho(x_i)$. Technically this is a Cheeger-Gromov limit only away from the endpoint $\tilde{\rho} = -1$ but this subtlety is irrelevant for us.

For properties \ref{ass:I} and \ref{ass:II} we compute
\begin{align}
\frac{w_{\sigma_i}}{w_{\sigma_i}(x_i)} &= \frac{(t - T_i)^{-\delta}}{(t(x_i)-T_i)^{-\delta}} = (1+\tilde \rho)^{-\frac{4}{3}\delta},\\
    \mu_i^{-2}\frac{\tilde w_{\sigma_i}}{w_{\sigma_i}(x_i)} &= \rho(x_i)^2|b_i|^\frac{1}{2} \cdot \left(|b_i|^{\frac{1}{4}}(t - T_i)^\frac{3}{4}\right)^{-2} \cdot \frac{w_{\sigma_i}}{w_{\sigma_i(x_i)}} = (1 + \tilde{\rho})^{-2-\frac{4}{3}\delta}.
\end{align}

For \ref{ass:III} we obtain a smooth limit function $\psi_\infty(\tilde\rho)$ of $\tilde{\rho} \in (-1,\infty)$ solving an ODE $\tilde{L}_\infty \psi_\infty = 0$ as in the proof of Lemma \ref{l:modeloperators}. Here we show only the computation of $\tilde{L}_\infty$. The key step is to find suitable coordinates on the universal cover of the annulus $\{\tilde{\rho}_1 < \tilde\rho < \tilde{\rho}_2\}$, where $\tilde\rho_1 < \tilde\rho_2$ in $(-1,\infty)$ are given. For this we follow our work in Lemma \ref{lem-Ca-coor}, although the $T_0$ of that lemma gets replaced by $\rho(x_i)^{4/3}$ and instead of $y = t-T_i\in [T_0,2T_0]$ we are now considering the range $y \in [a_1\rho(x_i)^{4/3}, a_2\rho(x_i)^{4/3}]$ with
$a_j = (1+\tilde{\rho}_j)^{4/3}$ ($j = 1,2$). We define $(\check z, \check w)$ and $(z, w)$ as in Lemma \ref{lem-Ca-coor}. In addition, we assume that $\varphi(z) = -|z|^2$ after scaling and translating $z$ if necessary. Then holomorphic quasi-coordinates for the scaled model metric $\mu_i^2 |b_i|^{1/2}\omega_{\mathcal{C},\sigma_i} = \rho(x_i)^{-2}\omega_{\mathcal{C},\sigma_i}$ are given by 
\begin{align}
 (\hat z, \hat w) := \biggl(\rho(x_i)^{-1}\check z, \rho(x_i)^{-1} \biggl(\check w -\rho(x_i)^{-\frac{1}{3}}\frac{T_i}{2}\biggr)\biggr) = 
\biggl(\rho(x_i)^{-\frac{2}{3}}  z, \rho(x_i)^{-\frac{4}{3}}\biggl(w-\frac{T_i}{2}\biggr)\biggr).
\end{align}
These are also (complex but slightly non-holomorphic) quasi-coordinates for our metric $\tilde{g}_i = \mu_i^2 g_{glue,\sigma_i}$ by Lemma \ref{l:psiJerror}. Thus, by \eqref{eq:formula666}, up to errors that decay exponentially as $i \to \infty$,
\begin{align}\begin{split}
 \tilde\omega_i \approx \tilde\omega_\infty = 
  \frac{1}{2} (1+\tilde\rho)^{-\frac{2}{3}} i(d \hat w + \overline{\hat{z}} \, d\hat z) \wedge
(d \overline {\hat w} + \hat{z}\, d \overline{{\hat z}})
+(1+\tilde\rho)^\frac{2}{3} id\hat z \wedge d\overline{\hat z}.
\end{split}
\end{align}
Now notice that, from \eqref{eq-t-w},  
\begin{align}
\hat{w}+\overline{\hat{w}} + |\hat{z}|^2 = (1 + \tilde\rho)^{\frac{4}{3}}.
\end{align}
To follow the proof of Lemma \ref{l:modeloperators} we would need to switch from $(\hat{z},\hat{w})$ to real coordinates, where one of these coordinates is $\tilde\rho$ and the other three parametrize the level sets of $\tilde\rho$ or of $\hat{w}+\overline{\hat{w}} + |\hat{z}|^2$, which are the orbits of an $\tilde\omega_\infty$-isometric action of the continuous Heisenberg group $H_3(\mathbb{R})$. For simplicity we skip this coordinate change and instead note that thanks to the $H_3(\mathbb{R})$-symmetry it is enough to work at $\hat{z} = 0$, where $\tilde\omega_\infty$ is diagonal in the coordinates $(\hat{z},\hat{w})$. Then, since $\mu_i\to \infty$,
\begin{align}\label{eq:formula}
\tilde{L}_\infty \psi_\infty = \Delta_{\tilde\omega_\infty}\psi_\infty = 2(1+\tilde\rho)^{\frac{2}{3}}(\psi_{\infty})_{\hat{w}\overline{\hat{w}}} +(1+\tilde\rho)^{-\frac{2}{3}}(\psi_\infty)_{\hat{z}\overline{\hat{z}}} = 0.
\end{align}
The trivial solution is $\psi_\infty(\tilde\rho) = const$, and working at $\hat{z}=0$ one checks that $\psi_\infty(\tilde\rho) = (1+\tilde\rho)^{2/3}$ is the second fundamental solution. Then $|\psi_\infty| \leq w_\infty = (1+\tilde\rho)^{-(4/3)\delta}$ implies $\psi_\infty = 0$, as desired.

\subsubsection{Region \texorpdfstring{$\mathfrak{R}_5\textup{:}$}{} The orange gluing region between the Tian-Yau end and the new neck}

Now assume either that the sequence $x_i$ is contained in region $\mathfrak{R}_5$, or that it stays within finite distance of $\mathfrak{R}_5$ from the Tian-Yau side in an appropriately rescaled metric. More precisely, we assume that
\begin{align}\label{eq-case6}
0\leq \rho(x_i)^{-1}((2|T_i|^\alpha)^\frac{3}{4} - \rho(x_i)) \leq C
\end{align}	
for some $C > 0$, where $\rho$ is defined as in \eqref{eq:def_rho}. Also as in the previous case we set
\begin{align}
    \mu_i :=  \rho(x_i)^{-1}|b_i|^{-\frac{1}{4}} \sim |T_i|^{\frac{3}{4}(1-\alpha)} \to \infty.
\end{align}
Using Proposition \ref{prop:estimate_of_E} to estimate the gluing errors, one then checks that the arguments go through as before. The essential point is that the pointed convergence behavior of the rescaled spaces does not change because for every fixed $N > 1$, due to the proof of Proposition \ref{prop:estimate_of_E}, we still have an excellent comparison of $g_{glue,\sigma_i}$ and $|b_i|^{1/2}g_{\mathcal{C},\sigma_i}$ in the range $t - T_i \in [T_{0,i},NT_{0,i}]$ (even though for $N > 2$ this set is not contained in the Tian-Yau side of $\mathcal{X}_{\sigma_i}$, i.e., in the region $\mathfrak{R}_5 \cup \mathfrak{R}_6 \cup \mathfrak{R}_7$). 

\subsection{GH limit and Liouville on the new neck (region \texorpdfstring{$\mathfrak{R}_4$}{})}\label{ss:4:7}

For $x_i\in \mathcal{X}_{\sigma_i}$ satisfying
	\begin{align}
	x_i \in \mathfrak{R}_4 \Longleftrightarrow T_i+2 T_{0,i}< t(x_i) =: t_i < \tau_i,
	\end{align}
	we define the quantity
\begin{align}\label{eq-F}
	F_i := e^{-\psi_{T_i}(t_i)-a}|b_i|.
	\end{align}
By \eqref{eq-psi-5} and by the monotonicity of $\psi_{T_i}$, this takes values in $(0,1)$. Thus, up to a subsequence,
\begin{align}
	F_i \rightarrow c \in [0,1].
\end{align}

\subsubsection{The subcase \texorpdfstring{$c = 1\textup{:}$}{c=1} Essentially the same as the Tian-Yau end} 

Then $\psi_{T_i} (t_i) - \log |b_i| \rightarrow - a$, or equivalently $\psi_{T_i}(t_i)-\psi_{T_i}(T_i)\rightarrow 0$.
As in \eqref{eq-psi-t-T},
\begin{align}\label{eq-psi-t-T-2}
\int_{0}^{\psi_{T_i}(t_i)-\psi_{T_i}(T_i)} (e^{s}-1 )^{-\frac{1}{3}}\,ds = \sqrt[3]{3}\,|b_i|^{\frac{1}{3}}(t_i-T_i),
\end{align}
so $t_i -T_i\rightarrow 0.$
Given any $A \in \mathbb{R}$, if $i \gg 1$ and $t = T_i +A(t_i -T_i)$,
then, as in \eqref{eq-Psi-t-T-T0},
\begin{align}
\psi_{T_i}(t)-\psi_{T_i}(T_i) = \mathfrak{c}|b_i|^{\frac{1}{2}}(t-T_i)^\frac{3}{2}+ O(|b_i||t-T_i|^3).
\end{align}
We then get a $C^{2}$ estimate as in \eqref{TY-neckError}. The rest of the proof is similar to the case of region $\mathfrak{R}_5$.
	
\subsubsection{The subcase \texorpdfstring{$c = 0\textup{:}$}{c=0} Essentially the same as the cusp}

If $c=0$, then $\psi_{T_i}(t_i)- \log |b_i| \rightarrow \infty$. By Proposition \ref{prop-psi-diff}, $\psi_{cusp}(t_i) - \log |b_i| \rightarrow \infty$. Thus
\begin{align}
	(-t_i)^{3}b_i \rightarrow 0,
	\end{align}
so for any finite constant $\varrho\in \mathbb{R}$, when $i$ is large, $\varrho t_i/ T_i$ is small.	By Proposition \ref{prop-T-c}, 
	\begin{align}
	\psi'_{T_i}(\varrho t_i) &= \psi_{cusp}' (\varrho t_i)  (1+O(|b_i|^\frac{1}{3}\tau_i) +O((\varrho t_i/ T_i)^{3} )),\\
\psi''_{T_i}(\varrho t_i) &=\psi_{cusp}''(\varrho t_i)  (1+O(|b_i|^\frac{1}{3}\tau_i) +O((\varrho t_i/ T_i)^{3} ))
	\end{align}
	as $i\rightarrow \infty$. So the pointed limit behavior of the (unrescaled) spaces $(\mathcal{X}_{\sigma_i},g_{glue,\sigma_i},x_i)$ is exactly the same as in the case of region $\mathfrak{R}_3$, and the rest of the discussion is then also the same.
	
\subsubsection{The subcase \texorpdfstring{$0 < c < 1\textup{:}$}{0<c<1} The genuine new neck, i.e., the obstructed case}
	
If $c \in (0,1)$, then by combining Proposition \ref{prop-psi-diff}, the convergence $F_i \rightarrow c$ and the fact that
\begin{align}
e^{-\psi_{cusp}(t_i)-a}|b_i| = e^{-a}|t_i^{3} b_i|,
\end{align}
we immediately obtain that after taking a subsequence,
\begin{align}
t_i/T_i \rightarrow \tilde c \in (0,1).
\end{align}
We set $\mu_i:=1$. With our usual reparametrization $s =1-t/T \in (0,1)$ we get
 \begin{align}\label{eq-psi-upbd}
\frac{w_{\sigma_i}}{w_{\sigma_i}(x_i)} &\to (1-\tilde{c})^\delta s^{-\delta},\\
	\mu_i^{-2}\frac{\tilde w_{\sigma_i}}{w_{\sigma_i}(x_i)} &\leq C \left(|b_i|^{\frac{1}{4}}(t - T_i)^{\frac{3}{4}}\right)^{-2} \frac{w_{\sigma_i}}{w_{\sigma_i}(x_i)} \leq C's^{-\frac{3}{2}-\delta},
 \end{align}
verifying \ref{ass:I} and \ref{ass:II}. Thus, as proved in Lemma \ref{l:modeloperators}, we get a smooth subsequential limit $\psi_\infty(s)$ solving the ODE $\tilde{L}_\infty\psi_\infty = L_\infty\psi_\infty= 0$ with $\psi_\infty(1-\tilde{c}) = 1$. Since $|\psi_\infty| \leq w_\infty = (1-\tilde{c})^\delta s^{-\delta}$, Lemma \ref{l:construct_uhat} tells us that $\psi_\infty = \lambda\hat{u}$ for some $\lambda \in \mathbb{R}$. We now use our assumption that $\phi_i \in \langle \hat{u}_{\sigma_i} \rangle^\perp$, which trivially implies $\psi_i \in \langle \hat{u}_{\sigma_i}\rangle^\perp$ and hence, by Proposition \ref{p:construct_obstruct}, $0 = \int_0^1 \psi_\infty\hat{u}\mu_\infty\,ds = \lambda \int_0^1 |\hat{u}|^2\mu_\infty\,ds$. Since $\mu_\infty$ is uniformly positive by Lemma \ref{l:volumeform}, we get $\lambda = 0$, verifying \ref{ass:III} and contradicting $\psi_\infty(1-\tilde{c}) = 1$. (This step is the \emph{only} reason why we need Proposition \ref{p:construct_obstruct}, which requires the hard work in Sections \ref{sss:neumann}--\ref{sss:final_boss}.)

\section{Obstruction theory and proof of the Main Theorem}\label{s:obstruction_theory}

We have now set up the pre-glued manifold $(\mathcal{X}_\sigma, \omega_{glue,\sigma})$ (Definition \ref{def:metric}) and the relevant complex Monge-Amp\`ere equation \eqref{mainMA}, estimated the right-hand side $f_\sigma$ (Theorem \ref{f:Riccipotential}), and proved a uniform weighted Hölder estimate for the inverse of the linearized operator $L_\sigma$ away from a certain obstruction space (Theorem \ref{t:uniform_Linfty}). It remains to explain the (not quite standard) inverse function theorem with obstructions which will be used to prove our Main Theorem. Throughout this section, it is helpful to refer to Table \ref{tab:weight} for the numerical behavior of the functions $\mathbf{r}_\sigma, w_\sigma, \tilde{w}_\sigma$ and $f_\sigma$.

\subsection{Fixed-point iteration on the orthogonal complement of the obstruction space}

This is a fairly standard argument using the weighted Hölder estimates of Theorem \ref{t:uniform_Linfty}, but it only solves the equation modulo some undetermined scalar multiple of the obstruction function.

We introduce the Monge-Amp\`ere operator
\begin{equation}
M_\sigma u := \log \biggl(\frac{(\omega_{glue,\sigma} + i\partial\overline\partial u)^2}{\omega_{glue,\sigma}^2}\biggr) - u,
\end{equation}
and we decompose $M_\sigma$ into its linearization at $u=0$ and a remainder as follows:
\begin{align}
L_\sigma = \Delta_{\omega_{glue,\sigma}}-{\rm Id},\quad Q_\sigma := M_\sigma - L_\sigma.
\end{align}
Recall the obstruction function $\hat{u}_\sigma = L_\sigma \hat{v}_\sigma \in C^\infty(\mathcal{X}_\sigma)$ introduced in Definition \ref{def:obstr}, and recall the result of Theorem \ref{t:uniform_Linfty}: the invertible operator 
\begin{equation}\label{eq-perp}
L_\sigma^\perp: \langle \hat{u}_\sigma \rangle^\perp \to \langle \hat v_\sigma \rangle^\perp,
\end{equation}
the restriction of $L_\sigma$ to the $L^2(\mathcal{X}_\sigma,\omega_{glue,\sigma})$-complements inside $C^{2,\bar\alpha}(\mathcal{X}_\sigma)$ resp. $C^{0,\bar\alpha}(\mathcal{X}_\sigma)$, satisfies
\begin{equation}
\|\phi\|_{C^{2,\bar\alpha}_w} \leq C\|L_\sigma^\perp \phi\|_{C^{0,\bar\alpha}_{\tilde{w}}}
\end{equation}
for all functions $\phi$ in its domain, with $C$ independent of $\sigma$.

We now introduce the following standard fixed-point iteration:
\begin{equation}\label{eq-def-iter}
\tilde{u}_{\sigma,0} := 0,\;\, \tilde{u}_{\sigma,i+1} := (L_\sigma^\perp)^{-1}[(f_\sigma - Q_\sigma(\tilde{u}_{\sigma,i}))^\perp].
\end{equation}
Here $f_\sigma$ is a fixed Ricci potential of $\omega_{glue,\sigma}$ (recall that such a potential is unique only up to constants) and $(f_\sigma - Q_\sigma(\tilde{u}_{\sigma,i}))^\perp$ denotes the $L^2(\mathcal{X}_\sigma,\omega_{glue,\sigma})$-orthogonal projection of $f_\sigma - Q_\sigma(\tilde{u}_{\sigma,i})$ onto $\langle \hat{v}_\sigma \rangle^\perp$.

\begin{lemma}\label{l:est-fperp}
Let $f_\sigma$ be the particular choice of Ricci potential in \eqref{riccipotential}. Then for all $\alpha,\bar\alpha \in (0,1)$ and for all $0<\delta \ll 1$ there exists a constant $C$ such that for all $0 < |\sigma| \ll 1$ we have that
\begin{align}\label{eq-f-perp-Ca}
\|f_\sigma^\perp\|_{C^{0,\bar\alpha}_{\tilde{w}}} \leq C|b|^{\frac{5}{6}(1-\alpha)-\frac{\delta}{3}}.    
\end{align}
\end{lemma}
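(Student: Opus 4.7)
The plan is to decompose $f_\sigma^\perp = f_\sigma - \lambda_\sigma \hat v_\sigma$ with
\[
\lambda_\sigma := \frac{\langle f_\sigma, \hat v_\sigma\rangle_{L^2(\mathcal{X}_\sigma,\omega_{glue,\sigma})}}{\|\hat v_\sigma\|_{L^2}^2},
\]
and then to estimate $\|f_\sigma\|_{C^{0,\bar\alpha}_{\tilde w}}$ and $|\lambda_\sigma|\,\|\hat v_\sigma\|_{C^{0,\bar\alpha}_{\tilde w}}$ separately. Both will turn out to satisfy the bound $\leq C|b|^{5(1-\alpha)/6 - \delta/3}$, and the conclusion follows from the triangle inequality.

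For the direct bound on $f_\sigma$, the idea is a pointwise weighted bookkeeping on Theorem \ref{f:Riccipotential}, region by region. The only regions contributing nontrivially are $\mathfrak{R}_3$ and $\mathfrak{R}_5\cup\mathfrak{R}_6\cup\mathfrak{R}_7$. On $\mathfrak{R}_5\cup\mathfrak{R}_6$, I would use $|f_\sigma|\leq C|b|^{(1-\alpha)/2}$ together with $\tilde w_\sigma = |b|^{-1/2}(\tilde t - T)^{-3/2-\delta}$ and $\tilde t - T \leq 2T_0 = 2|T|^\alpha$ to get
\[
|f_\sigma|/\tilde w_\sigma \leq C|b|^{1-\alpha/2}\,T_0^{3/2+\delta} = C|b|^{1-\alpha-\alpha\delta/3}.
\]
On $\mathfrak{R}_7$ the same argument without the $(\tilde t - T)^{3/2+\delta}$ factor gives $|f_\sigma|/\tilde w_\sigma \leq C|b|^{1-\alpha/2}$. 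On $\mathfrak{R}_3$, the equipartition $|b||\tau|^3 = e^{-\delta_0\sqrt{-\tau}}$ from \eqref{eq-fix-tau}, which is $\sim|b|\,|{\log|T|}|^6$, combined with $\tilde w_\sigma \sim |T|^{-\delta}$, gives $|f_\sigma|/\tilde w_\sigma \leq C|b|^{1-\delta/3}\,|{\log|T|}|^6$. In each case the $|b|$ exponent exceeds $\frac{5(1-\alpha)}{6} - \frac{\delta}{3}$ by at least $(1-\alpha)(1+2\delta)/6 > 0$, which comfortably absorbs the polylog factor. The Hölder seminorm part follows from $[f_\sigma]_{C^{0,\bar\alpha}_{\tilde w}} \leq \mathbf{r}_\sigma|\nabla f_\sigma|_{\omega_{glue,\sigma}}/\tilde w_\sigma$ on balls of radius $\mathbf{r}_\sigma$, together with the gradient bound packaged into Theorem \ref{f:Riccipotential}.

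For the correction term, Lemma \ref{l:volumeform} together with the observation that $\hat v$ is bounded away from zero on a definite portion of $(0,1)$ yields
\[
\|\hat v_\sigma\|_{L^2}^2 \;\gtrsim\; |T|^{-2}\int_0^{1-2\tau/T}\hat v(s)^2 \mu_\infty(s)\,ds\cdot 2\pi\,{\rm Vol}(E) \;\sim\; (\log|T|)^{-4},
\]
the dominant contribution coming from the $(1-s)^{-3}$ blowup of $\mu_\infty$ at the cusp-side endpoint. Since $|f_\sigma|$ is exponentially small on $\mathfrak{R}_4$ and since $\hat v_\sigma = \hat v(0)$ is constant on the Tian-Yau side $\mathfrak{R}_5\cup\mathfrak{R}_6\cup\mathfrak{R}_7$ of total volume $\sim |b||T|^\alpha$, I get $|\langle f_\sigma, \hat v_\sigma\rangle| \leq C|b|^{(3-\alpha)/2}|T|^\alpha$, hence $|\lambda_\sigma| \leq C|b|^{3/2-5\alpha/6}(\log|T|)^4$. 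Meanwhile $\|\hat v_\sigma\|_{C^{0,\bar\alpha}_{\tilde w}} \leq C|T|^\delta$: the supremum of $|\hat v_\sigma|/\tilde w_\sigma$ is achieved at the cusp-end boundary of $\mathfrak{R}_4$ where $\tilde t - T \sim |T|$ makes $\tilde w_\sigma \sim |T|^{-\delta}$, and the derivatives of the cutoffs $\chi_\sigma$ and $\chi_\sigma(1-\cdot)$, which are of order $s_{*,\sigma}^{-1}$, are absorbed by the $\mathbf{r}_\sigma^j$ factors in the weighted seminorms together with the expansions of $\hat v$ from Lemma \ref{l:construct_vhat}. Multiplying gives $|\lambda_\sigma|\,\|\hat v_\sigma\|_{C^{0,\bar\alpha}_{\tilde w}} \leq C|b|^{3/2-5\alpha/6-\delta/3}(\log|T|)^4$, which beats the target by a full power of $|b|^{2/3}$.

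The main obstacle is the region-by-region bookkeeping in the first step, particularly reconciling the factors of $T_0^\delta$ on $\mathfrak{R}_5\cup\mathfrak{R}_6$ and the polylog $|{\log|T|}|^6$ on $\mathfrak{R}_3$ with the single target exponent $\frac{5(1-\alpha)}{6} - \frac{\delta}{3}$ uniformly in all small $\alpha, \delta$; everything ultimately reduces to the numerical inequality $1-\alpha-\alpha\delta/3 \geq \frac{5(1-\alpha)}{6}-\frac{\delta}{3}$, which holds with the positive gap $(1-\alpha)(1+2\delta)/6$ for $\alpha < 1$.
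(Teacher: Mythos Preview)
Your overall strategy is the same as the paper's: split $f_\sigma^\perp = f_\sigma - \lambda_\sigma\hat v_\sigma$, estimate $\|f_\sigma\|_{C^{0,\bar\alpha}_{\tilde w}}$ region by region, then estimate $|\lambda_\sigma|$ and $\|\hat v_\sigma\|_{C^{0,\bar\alpha}_{\tilde w}}$ separately. Your estimates of $\|f_\sigma\|_{C^{0,\bar\alpha}_{\tilde w}}$, of $|\langle f_\sigma,\hat v_\sigma\rangle_{L^2}|$, and of $\|\hat v_\sigma\|_{C^{0,\bar\alpha}_{\tilde w}}$ all agree with the paper's.

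There is, however, a genuine error in your lower bound for $\|\hat v_\sigma\|_{L^2}^2$. You claim $\|\hat v_\sigma\|_{L^2}^2 \gtrsim (\log|T|)^{-4}$ with ``the dominant contribution coming from the $(1-s)^{-3}$ blowup of $\mu_\infty$ at the cusp-side endpoint.'' This is wrong: by Lemma~\ref{l:construct_vhat}, $\hat v(1-\eta)=O(\eta^{3})$, so near $\eta=0$ one has $\hat v^2\,\mu_\infty = O(\eta^{6})\cdot O(\eta^{-3}) = O(\eta^{3})$, which is perfectly integrable and contributes nothing divergent. The correct lower bound comes from the portion of $(0,1)$ where $\hat v$ is bounded away from zero (which you in fact mention), giving only
\[
\|\hat v_\sigma\|_{L^2}^2 \;\gtrsim\; |T|^{-2}\int_{0}^{1}\hat v^2\mu_\infty\,ds \;\sim\; |T|^{-2}\;\sim\;|b|^{2/3},
\]
as in \eqref{eq-Phi-1}. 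With this corrected denominator your estimate becomes
\[
|\lambda_\sigma|\,\|\hat v_\sigma\|_{C^{0,\bar\alpha}_{\tilde w}} \;\leq\; C\,\frac{|b|^{\frac{3}{2}-\frac{5}{6}\alpha}}{|b|^{\frac{2}{3}}}\cdot|b|^{-\frac{\delta}{3}} \;=\; C\,|b|^{\frac{5}{6}(1-\alpha)-\frac{\delta}{3}},
\]
which is \emph{exactly} the target bound, not ``beating it by a full power of $|b|^{2/3}$'' as you assert. In fact this correction term, not $f_\sigma$ itself, is what sets the exponent $\tfrac{5}{6}(1-\alpha)-\tfrac{\delta}{3}$ in the lemma. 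So your argument survives once this is fixed, but the margin you claim is illusory and the intermediate computation is incorrect.
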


\begin{proof}
This will follow from Theorem \ref{f:Riccipotential}. By definition,
\begin{align}\label{eq-def-fperp}
f^\perp_\sigma = f_\sigma - \frac{\langle f_\sigma, \hat v_\sigma\rangle_{L^2(\mathcal{X}_\sigma,\omega_{glue,\sigma})}}{\langle\hat v_\sigma, \hat v_\sigma\rangle_{L^2(\mathcal{X}_\sigma,\omega_{glue,\sigma})}}\hat v_\sigma.
\end{align}
Thus, we need to estimate four pieces: $f_\sigma$, the denominator and numerator of the fraction, and $\hat{v}_\sigma$.

(1) We claim that
\begin{align}\label{eq:fs_good}
\|f_\sigma\|_{C^{0,\bar\alpha}_{\tilde{w}}} \leq C|b|^{1-\alpha-\frac{\delta}{3}}. 
\end{align}
To prove this, note that since $f_\sigma$ is exponentially small in terms of $b$ everywhere else, we only need to estimate $f_\sigma$ on $\mathfrak{R}_3$ and on $\mathfrak{R}_5 \cup \mathfrak{R}_6 \cup \mathfrak{R}_7.$ 
On $\mathfrak{R}_3$ we have by Theorem \ref{f:Riccipotential} that
\begin{align}\label{est-0-1}
    \|f_\sigma\|_{C^0_{\tilde{w}}(\mathfrak{R}_3)}=O(|b||\tau|^3) \cdot |b|^\frac{1}{2} |T|^\frac{3}{2} \cdot |T|^\delta = O(|b|^{1-\frac{\delta}{3}}|\tau|^3).
\end{align}
For the $C^{0,\bar\alpha}_{\tilde{w}}$ seminorm, if $p, q\in \mathfrak{R}_3$ and $d_{\omega_{glue,\sigma}}(p,q)<\mathbf{r}_\sigma(p),$ then
\begin{align}\label{est-1}
\frac{\mathbf{r}_\sigma(p)^{\bar\alpha}}{\tilde{w}_\sigma(p)}
\frac{|f_\sigma(p)-f_\sigma(q)|}{d_{\omega_{glue,\sigma}}(p,q)^{\bar\alpha}} &\leq
C\frac{\mathbf{r}_\sigma(p)^{\bar\alpha}}{\tilde{w}_\sigma(p)} \left(\sup\nolimits_{B_{\omega_{glue,\sigma}}(p,\mathbf{r}_\sigma(p))} |\nabla_{\omega_{glue,\sigma}} f_\sigma|_{\omega_{glue,\sigma}}\right) \mathbf{r}_\sigma(p)^{1-\bar\alpha},
\end{align}
which has the same upper bound as \eqref{est-0-1} thanks to the gradient estimate in Theorem \ref{f:Riccipotential}. Lastly, 
on $\mathfrak{R}_5 \cup \mathfrak{R}_6 \cup \mathfrak{R}_7,$ we can similarly estimate
\begin{align}\label{est-2}
\|f_\sigma\|_{C^{0,\bar\alpha}_{\tilde{w}}(\mathfrak{R}_5 \cup \mathfrak{R}_6 \cup \mathfrak{R}_7)} \leq C|b|^{1-\alpha-\frac{\alpha\delta}{3}}.    
\end{align}
Combining \eqref{est-0-1}--\eqref{est-2}, we obtain the claim.

(2) It follows from Lemma \ref{l:volumeform} and \eqref{eq:sol_inhomog_ode} that for some $C>0$ independent of $\sigma$,  
\begin{equation}\label{eq-Phi-1}\langle\hat v_\sigma, \hat v_\sigma\rangle_{L^2(\mathcal{X}_\sigma,\omega_{glue,\sigma})}\geq C^{-1}|T|^{-2} \geq C^{-1}|b|^{\frac{2}{3}}.\end{equation}

(3) Since $f_\sigma$ is exponentially small in terms of $b$ everywhere else, we need to estimate $\langle f_\sigma, \hat v_\sigma\rangle_{L^2}$ only on $\mathfrak{R}_3$ and on $\mathfrak{R}_5 \cup \mathfrak{R}_6 \cup \mathfrak{R}_7$. On $\mathfrak{R}_3$, we have by Theorem \ref{f:Riccipotential}, \eqref{eq:guess_who} and Lemma \ref{l:volumeform} that
\begin{align}\label{eq:nothing_new}
\langle f_\sigma, \hat v_\sigma\rangle_{L^2(\mathfrak{R}_3,\omega_{glue,\sigma})} =O(|b||\tau|^3) \cdot |T|^{-2} \cdot \left|\frac{\tau}{T}\right| =O(|b|^{2} |\tau|^4).
\end{align}
On $\mathfrak{R}_5 \cup \mathfrak{R}_6 \cup \mathfrak{R}_7$, we can similarly estimate
\begin{align}\label{eq-f-v-IP}
\langle f_\sigma, \hat v_\sigma \rangle_{L^2(\mathfrak{R}_5 \cup \mathfrak{R}_6 \cup \mathfrak{R}_7,\omega_{glue,\sigma})} = O(|b|^\frac{1}{2} T_0^\frac{3}{2}) \cdot |b| \cdot T_0 = O(|b|^{\frac{3}{2} - \frac{5}{6}\alpha}).    
\end{align}

(4) We first estimate the weighted $C^0$ norm of $\hat{v}_\sigma$:
\begin{align}\label{eq:hatv-C0w}
\|\hat v_\sigma\|_{C^0_{\tilde w}(\mathcal{X}_\sigma)} \leq C\|\tilde w_\sigma^{-1}\|_{L^\infty(\mathcal{X}_\sigma)}\leq C|T|^\delta \leq C|b|^{-\frac{\delta}{3}}.
\end{align}
Now we estimate the weighted $C^{0,\bar\alpha}$ seminorm of $\hat v_\sigma$. We claim that, in fact,
\begin{align}\label{eq:hatv-Caw}
[\hat v_\sigma]_{C^{0,\bar\alpha}_{\tilde{w}}}
=\sup\left\{\frac{\mathbf{r}_\sigma(p)^{\bar\alpha}}{\tilde{w}_\sigma(p)}
\frac{|\hat{v}_\sigma(p)-\hat{v}_\sigma(q)|}{d_{\omega_{glue,\sigma}}(p,q)^{\bar\alpha}}: 0 < d_{\omega_{glue,\sigma}}(p,q) < \mathbf{r}_\sigma(p) \right\} \leq C|b|^{-\frac{\delta}{3}}
\end{align}
as well. This is an obvious consequence of the stronger claim
\begin{align}\label{eq:whatireallyreallywant}
\mathbf{r}_\sigma |\nabla_{\omega_{glue,\sigma}}\hat{v}_\sigma|_{\omega_{glue,\sigma}} \leq C.
\end{align}
To prove \eqref{eq:whatireallyreallywant}, hence the lemma, we fix a universal $0 < \varepsilon \ll 1$ and distinguish three cases.

(4a) $s=1-\eta\in[\epsilon,1-\epsilon]$. \eqref{eq:whatireallyreallywant} is clear thanks to the quasi-coordinates from Convention \ref{conv}.

(4b) $\eta\in (0,\epsilon]$. In this case, $\hat v_\sigma=\hat v\chi_\sigma(1-\cdot)$, where $\chi_\sigma$ is as in \eqref{eq:define_chi}. Then
\begin{align}
    |\nabla_{\omega_{glue,\sigma}}\hat v_\sigma|_{\omega_{glue,\sigma}} \leq C \eta |\hat{v}_\sigma'(1-\eta)| \leq C\eta^3,
\end{align}
using Proposition \ref{prop-T-c} to compare the $\omega_{glue,\sigma}$-gradient to the $\omega_{cusp}$-gradient and using Lemma \ref{l:construct_vhat} and \eqref{eq:define_chi} to estimate $\hat{v}$ and $\chi_\sigma$, respectively.

(4c) $s\in (0,\epsilon]$. In this case, $\hat v_\sigma=\chi_\sigma(\hat v-\hat v(0))+\hat v(0)$. From the proof of Proposition \ref{prop-L-s},
\begin{align}\label{eq:lastone!!}
    \mathbf{r}_\sigma |\nabla_{\omega_{glue,\sigma}}\hat v_\sigma|_{\omega_{glue,\sigma}} \leq C\mathbf{r}_\sigma s^{\frac{1}{4}}|\hat v_\sigma'(s)|\leq C s |\hat{v}_\sigma'(s)|.
\end{align}
Lemma \ref{l:construct_vhat} and \eqref{eq:define_chi} again tell us that this is bounded by $C s^{3/2}$. However, even the weaker version of Lemma \ref{l:construct_vhat} that allows for an $s^{1/2}$ term in the expansion of $\hat{v}$ would be sufficient to bound \eqref{eq:lastone!!} by $Cs^{1/2} = O(1)$ as $s \to 0$, which is still enough for the current lemma.
\end{proof}

We can now state the main result of this subsection.

\begin{proposition}\label{thm:solv_mod_obstr}
Let $f_\sigma$ be the particular choice of Ricci potential in \eqref{riccipotential}. Then for all $\alpha \in (0,\frac{1}{5})$, $\bar\alpha \in (0,1)$ and $0 < \delta \ll 1$, there exist constants $C,C'$ such that for all $0 < |\sigma| \ll 1$ the sequence $\tilde{u}_{\sigma,i}$ defined in \eqref{eq-def-iter} converges in the $C^{2,\bar\alpha}_w(\mathcal{X}_\sigma,\omega_{glue,\sigma})$ norm, its limit $\tilde{u}_\sigma \in \langle \hat{u}_\sigma \rangle^\perp$ satisfies
\begin{align}\label{weightedf}
\|\tilde{u}_\sigma\|_{C^{2,\bar\alpha}_w} \leq C\|f^\perp_\sigma\|_{C^{0,\bar\alpha}_{\tilde{w}}} \leq C'|b|^{\frac{5}{6}(1-\alpha)-\frac{\delta}{3}},
\end{align}
and $\tilde{u}_\sigma$ is a solution to the equation
\begin{align}\label{eq:what-the-theorem-says}
    M_\sigma \tilde{u}_\sigma = f_\sigma + \lambda_\sigma \hat{v}_\sigma
\end{align}
for some unknown $\lambda_\sigma \in \mathbb{R}$.
\end{proposition}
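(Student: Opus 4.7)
The plan is a Banach fixed-point argument applied to the map
\begin{equation*}
\mathcal{T}_\sigma: \langle \hat u_\sigma\rangle^\perp \cap C^{2,\bar\alpha}(\mathcal{X}_\sigma) \longrightarrow \langle \hat u_\sigma\rangle^\perp \cap C^{2,\bar\alpha}(\mathcal{X}_\sigma), \quad \mathcal{T}_\sigma(\phi) := (L_\sigma^\perp)^{-1}[(f_\sigma - Q_\sigma(\phi))^\perp],
\end{equation*}
so that $\tilde u_{\sigma,i+1} = \mathcal{T}_\sigma(\tilde u_{\sigma,i})$. Combining Theorem \ref{t:uniform_Linfty} with a uniform bound (in $\sigma$) on the $L^2$-orthogonal projection $(\cdot)^\perp$ acting on $C^{0,\bar\alpha}_{\tilde w}$, the problem reduces to showing that $\mathcal{T}_\sigma$ is a contraction on a suitable closed ball centered at the origin in $\langle \hat u_\sigma\rangle^\perp \cap C^{2,\bar\alpha}_w$.

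First I would verify the uniform boundedness of $(\cdot)^\perp$ in $C^{0,\bar\alpha}_{\tilde w}$ exactly along the lines of the proof of Lemma \ref{l:est-fperp}: for $g \in C^{0,\bar\alpha}_{\tilde w}$ one has $|\langle g, \hat v_\sigma\rangle_{L^2}| \leq \|g\|_{C^{0,\bar\alpha}_{\tilde w}}\int_{\mathcal{X}_\sigma} \tilde w_\sigma |\hat v_\sigma|\,\omega_{glue,\sigma}^2$, and the integral is dominated by its contribution on $\mathfrak{R}_5\cup\mathfrak{R}_6\cup\mathfrak{R}_7$, where it is $O(|b|^{1/2-5\alpha/6})$. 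Dividing by $\|\hat v_\sigma\|_{L^2}^2 \geq C^{-1}|b|^{2/3}$ from \eqref{eq-Phi-1} and multiplying by $\|\hat v_\sigma\|_{C^{0,\bar\alpha}_{\tilde w}} \leq C|b|^{-\delta/3}$ from \eqref{eq:hatv-C0w}--\eqref{eq:hatv-Caw} yields $\|g^\perp\|_{C^{0,\bar\alpha}_{\tilde w}} \leq C\|g\|_{C^{0,\bar\alpha}_{\tilde w}}$ uniformly in $\sigma$.

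The main ingredient is a quadratic estimate for $Q_\sigma$: whenever $\|\phi\|_{C^{2,\bar\alpha}_w}, \|\psi\|_{C^{2,\bar\alpha}_w} \ll |b|^{1/2}$,
\begin{equation*}
\|Q_\sigma(\phi) - Q_\sigma(\psi)\|_{C^{0,\bar\alpha}_{\tilde w}} \leq C|b|^{-1/2}(\|\phi\|_{C^{2,\bar\alpha}_w} + \|\psi\|_{C^{2,\bar\alpha}_w})\|\phi - \psi\|_{C^{2,\bar\alpha}_w}.
\end{equation*}
In complex dimension $2$ one has $Q_\sigma(u) = \log(1 + {\rm tr}(A) + \det(A)) - {\rm tr}(A)$ with $A := g_{glue,\sigma}^{-1}i\partial\overline\partial u$, a smooth function of $A$ vanishing to second order at $A = 0$. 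Working on the quasi-coordinate balls of radius $\mathbf{r}_\sigma$ on which $g_{glue,\sigma}$ is uniformly smoothly bounded (Definition \ref{def:reg_scale}), I would apply standard composition estimates in $C^{0,\bar\alpha}$ together with the pointwise bound $|A| \leq \tilde w_\sigma \|u\|_{C^{2,\bar\alpha}_w}$. The crucial point is that $\sup_{\mathcal{X}_\sigma} \tilde w_\sigma = O(|b|^{-1/2})$, attained on $\mathfrak{R}_7$ where $\mathbf{r}_\sigma = |b|^{1/4}$ and $w_\sigma = 1$; this single factor of $|b|^{-1/2}$ is the only source of nonuniformity in the quadratic estimate, and mirrors the regime where Savin's small perturbation theorem fails in \eqref{eq-insuf}.

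Putting these pieces together with Theorem \ref{t:uniform_Linfty}, the Lipschitz constant of $\mathcal{T}_\sigma$ on the ball of radius $r$ in $C^{2,\bar\alpha}_w$ is bounded by $C|b|^{-1/2}r$. Setting $\epsilon := \|f_\sigma^\perp\|_{C^{0,\bar\alpha}_{\tilde w}} \leq C'|b|^{5(1-\alpha)/6 - \delta/3}$ from Lemma \ref{l:est-fperp}, Picard iteration on the closed ball of radius $2C\epsilon$ in $\langle \hat u_\sigma\rangle^\perp \cap C^{2,\bar\alpha}_w$ converges provided $\epsilon \ll |b|^{1/2}$, i.e., $5(1-\alpha)/6 - \delta/3 > 1/2$; for $\delta$ sufficiently small this follows from our hypothesis $\alpha < 1/5$ (the arithmetic $5\cdot (4/5)/6 = 2/3 > 1/2$ allows plenty of slack, in fact as much as $\alpha < 2/5$). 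The limit $\tilde u_\sigma \in \langle \hat u_\sigma\rangle^\perp$ then satisfies $\|\tilde u_\sigma\|_{C^{2,\bar\alpha}_w} \leq 2C\epsilon$, and unwinding the fixed-point identity $L_\sigma^\perp \tilde u_\sigma = (f_\sigma - Q_\sigma(\tilde u_\sigma))^\perp$ gives $L_\sigma\tilde u_\sigma = f_\sigma - Q_\sigma(\tilde u_\sigma) + \lambda_\sigma \hat v_\sigma$ for some $\lambda_\sigma \in \mathbb{R}$, equivalently \eqref{eq:what-the-theorem-says}. The hardest step in practice will be the quadratic Hölder estimate for $Q_\sigma$ uniformly across all seven regions, where the uniform smoothness of $g_{glue,\sigma}$ in the quasi-coordinates of Definition \ref{def:reg_scale} is what makes the composition argument work.
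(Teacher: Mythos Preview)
Your overall architecture is correct and matches the paper's: Picard iteration via $\mathcal{T}_\sigma$, the uniform linear estimate from Theorem~\ref{t:uniform_Linfty}, and a quadratic bound on $Q_\sigma$ with constant $C|b|^{-1/2}$ coming from $\sup_{\mathcal{X}_\sigma}\tilde w_\sigma = |b|^{-1/2}$. The quadratic estimate and the deduction of \eqref{eq:what-the-theorem-says} from the fixed-point identity are fine.

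The gap is your claim that the projection $(\cdot)^\perp$ is \emph{uniformly} bounded on $C^{0,\bar\alpha}_{\tilde w}$. Your own ingredients do not give this. One has
\[
\|g^\perp\|_{C^{0,\bar\alpha}_{\tilde w}} \leq \|g\|_{C^{0,\bar\alpha}_{\tilde w}} + \frac{|\langle g,\hat v_\sigma\rangle_{L^2}|}{\|\hat v_\sigma\|_{L^2}^2}\,\|\hat v_\sigma\|_{C^{0,\bar\alpha}_{\tilde w}},
\]
and the paper's own computation of the integral (see the Claim producing \eqref{eq:worst_possible}) gives $\int_{\mathcal{X}_\sigma}\tilde w_\sigma|\hat v_\sigma|\,\omega_{glue,\sigma}^2 \leq C|b|^{1/2}$; your bound $O(|b|^{1/2-5\alpha/6})$ is in any case no smaller. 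Combined with $\|\hat v_\sigma\|_{L^2}^2 \geq C^{-1}|b|^{2/3}$ from \eqref{eq-Phi-1} and $\|\hat v_\sigma\|_{C^{0,\bar\alpha}_{\tilde w}}\leq C|b|^{-\delta/3}$ from \eqref{eq:hatv-C0w}--\eqref{eq:hatv-Caw}, the second term is of order $|b|^{1/2-2/3-\delta/3} = |b|^{-1/6-\delta/3}$, which blows up as $\sigma\to 0$. So the projection costs a factor $C|b|^{-1/6-\delta/3}$, exactly as in \eqref{eq:worst_possible}, not $C$.

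This does not kill the argument, but it tightens the arithmetic to precisely the stated hypothesis. The Lipschitz constant of $\mathcal{T}_\sigma$ on the ball of radius $r$ becomes $C|b|^{-1/6-\delta/3}\cdot|b|^{-1/2}\,r = C|b|^{-2/3-\delta/3}r$, and with $r\sim\|f_\sigma^\perp\|_{C^{0,\bar\alpha}_{\tilde w}}\leq C|b|^{5(1-\alpha)/6-\delta/3}$ the contraction condition is $|b|^{1/6-5\alpha/6-2\delta/3}\to 0$, i.e.\ $\alpha<1/5$ for $\delta$ small. Your claimed slack up to $\alpha<2/5$ is illusory and comes entirely from the incorrect uniform projection bound.
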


\begin{proof}
We first explain the structure of the argument, which is of course standard. In order to prove that the sequence $\tilde{u}_{\sigma,i}$ converges in the $C^{2,\bar\alpha}_w(\mathcal{X}_\sigma,\omega_{glue,\sigma})$ norm and to bound the norm of its limit $\tilde{u}_\sigma$, we prove that it is Cauchy by comparing it to a geometric sequence, as follows:
\begin{align}
\|\tilde{u}_{\sigma,i+1}-\tilde{u}_{\sigma,i}\|_{C^{2,\bar\alpha}_w} &\leq C\|(Q_\sigma(\tilde{u}_{\sigma,i}) - Q_\sigma(\tilde{u}_{\sigma,i-1}))^\perp\|_{C^{0,\bar\alpha}_{\tilde{w}}}\label{eq-clear}\\
&\leq C|b|^{-\frac{1}{6}-\frac{\delta}{3}}\|Q_\sigma(\tilde{u}_{\sigma,i}) - Q_\sigma(\tilde{u}_{\sigma,i-1})\|_{C^{0,\bar\alpha}_{\tilde{w}}} \label{eq-Ca-perp}\\
&\leq C|b|^{-\frac{2}{3}-\frac{\delta}{3}}(\|\tilde{u}_{\sigma,i}\|_{C^{2,\bar\alpha}_w}+\|\tilde{u}_{\sigma,i-1}\|_{C^{2,\bar\alpha}_w})\|\tilde{u}_{\sigma,i}-\tilde{u}_{\sigma,i-1}\|_{C^{2,\bar\alpha}_w}.\label{eq-clear-too}
\end{align}
Here, \eqref{eq-clear} follows directly from Theorem \ref{t:uniform_Linfty}, \eqref{eq-Ca-perp} is a little technical and is deferred to Claim \ref{claim:perp} below, and \eqref{eq-clear-too} then follows from elementary inequalities and from the fact that
\begin{align}
\frac{1}{\tilde{w}_\sigma}\biggl(\frac{w_\sigma}{\mathbf{r}_\sigma^2}\biggr)^2 = \tilde{w}_\sigma \leq |b|^{-\frac{1}{2}}.
\end{align}
Given \eqref{eq-clear-too}, we can finish the proof as follows: We have $\tilde{u}_{\sigma,0} = 0$ and $\tilde{u}_{\sigma,1} = (L_\sigma^\perp)^{-1}(f_\sigma^\perp)$, so
\begin{align}
\|\tilde{u}_{\sigma,1}\|_{C^{2,\bar\alpha}_w} &=  \|(L_\sigma^\perp)^{-1}(f_\sigma^\perp)\|_{C^{2,\bar\alpha}_w}\leq C \|f^\perp_\sigma\|_{C^{0,\bar\alpha}_{\tilde w}} \leq C|b|^{\frac{5}{6} - \frac{5}{6}\alpha -\frac{\delta}{3}}
\end{align}
by Lemma \ref{l:est-fperp}. Thus, we can aim to prove inductively that
\begin{align}\label{eq:7rc}
\|\tilde{u}_{\sigma,i}\|_{C^{2,\bar\alpha}_w}\leq C|b|^{\frac{5}{6} - \frac{5}{6}\alpha -\frac{\delta}{3}}.
\end{align}
This can be combined with \eqref{eq-clear-too}, resulting in the estimate 
\begin{align}
\|\tilde{u}_{\sigma,i+1}-\tilde{u}_{\sigma,i}\|_{C^{2,\bar\alpha}_w}
\leq C|b|^{\frac{1}{6} -\frac{5}{6}\alpha - \frac{2}{3}\delta}\|\tilde{u}_{\sigma,i}-\tilde{u}_{\sigma,i-1}\|_{C^{2,\bar\alpha}_w}.
\end{align}
Thus, if $\alpha<\frac{1}{5}$, then our sequence is Cauchy and we can also complete the inductive step for \eqref{eq:7rc}. Then, passing to the limit $i \to \infty$ in \eqref{eq-def-iter}, it is clear that $\tilde{u}_\sigma \in \langle \hat{u}_\sigma\rangle^\perp$ and
\begin{align}(M_\sigma \tilde{u}_\sigma)^\perp = L_\sigma^\perp \tilde{u}_\sigma + (Q_\sigma \tilde{u}_\sigma)^\perp = f_\sigma^\perp,\end{align}
which is equivalent to the claimed property \eqref{eq:what-the-theorem-says}.

It remains to prove the following claim, which directly implies \eqref{eq-Ca-perp}.

\begin{claim}\label{claim:perp}
For all $\Phi \in C^{0,\bar\alpha}(\mathcal{X}_\sigma,\omega_{glue,\sigma})$ we have that
\begin{align}\label{eq:worst_possible}
\|\Phi^\perp\|_{C^{0,\bar\alpha}_{\tilde{w}}} \leq C|b|^{-\frac{1}{6}-\frac{\delta}{3}}\|\Phi\|_{C^{0,\bar\alpha}_{\tilde{w}}}.
\end{align}
\end{claim}

\noindent \emph{Proof of Claim \ref{claim:perp}}. Most of the necessary steps were already done in the proof of Lemma \ref{l:est-fperp} in the special case $\Phi = f_\sigma$. In fact, the only part that needs to be redone is the estimate of $\langle \Phi, \hat{v}_\sigma\rangle_{L^2}$:
\begin{equation}\label{eq-Phi-2}
 \big|\langle\Phi, \hat v_\sigma\rangle_{L^2(\mathcal{X}_\sigma,\omega_{glue,\sigma})}\big|\leq C\|\Phi\|_{C_{\tilde w}^0}
 \|\hat v_\sigma \tilde w_\sigma\|_{L^1(\mathcal{X}_\sigma, \omega_{glue,\sigma})}
 \leq 
C\|\Phi\|_{C^0_{\tilde w}} |b|^{\frac{1}{2}},
\end{equation}
where the estimate of $ \|\hat v_\sigma \tilde w_\sigma\|_{L^1}$ is proved as follows. Recall that
\begin{align}
    \tilde{w}_\sigma = \mathbf{r}_\sigma^{-2}w_\sigma = \begin{cases}
|T|^{-\delta} &\text{on}\;\,\mathfrak{R}_1,\\
|b|^{-\frac{1}{2}}\Psi_\sigma^*(\Phi_\sigma^{-1})^*((t-T)^{-\frac{3}{2}-\delta})&\text{on}\;\,\mathfrak{R}_2 \cup \cdots \cup \mathfrak{R}_6,\\
|b|^{-\frac{1}{2}} &\text{on}\;\,\mathfrak{R}_7.
\end{cases}
\end{align}
Thus, using Lemma \ref{l:volumeform} to bound the volume form on the neck and Lemma \ref{l:construct_vhat} to bound $\hat{v}_\sigma$,
\begin{align}
\begin{split}
\int_{\mathcal{X}_\sigma}|\hat v_\sigma| \tilde w_\sigma \,\omega_{glue,\sigma}^2 &\leq C\biggl(|b| \cdot |b|^{-\frac{1}{2}} + 
|b| \cdot \int_1^{2|T|^\alpha} |b|^{-\frac{1}{2}}y^{-\frac{3}{2}-\delta} \, dy \\
&+ |T|^{-2}\int_{2|T|^{\alpha-1}}^{1-\frac{\tau}{T}} |b|^{-\frac{1}{2}}(|T|s)^{-\frac{3}{2}-\delta} \,ds \biggr) \leq C|b|^{\frac{1}{2}}.
\end{split}
\end{align}
Note that the decay $|\hat{v}(1-\eta)| = O(\eta^3)$ as $\eta \to 0$ compensates the blowup $\mu_T(1-\eta) = O(\eta^{-3})$ of the radial volume density from Lemma \ref{l:volumeform}. This observation will also be used several times below.

Then \eqref{eq-Phi-2} is proved, and Claim \ref{claim:perp} follows from this together with \eqref{eq-Phi-1} and \eqref{eq:hatv-C0w}--\eqref{eq:hatv-Caw}.
\end{proof}

\subsection{Killing the obstruction by varying the Ricci potential}\label{sec:kill}

Here we carry out the last step of the argument. Recall that for a fixed Ricci potential $f_\sigma$ we have solved the Monge-Amp\`ere equation modulo obstructions in Proposition \ref{thm:solv_mod_obstr}. Our final goal is to kill the obstruction by using the freedom of adding a constant to the Ricci potential. At first this seems contradictory because if we add a constant to $f_\sigma$, then the solution $u_\sigma$ to the Monge-Amp\`ere equation changes by the same constant. However, $\tilde{u}_\sigma$, the solution ``modulo obstructions,'' does not change in such an obvious way.

To be slightly more precise, we change $f_\sigma$ by a constant $s$ with $|s|$ not much bigger than the $C^{0,\bar\alpha}_{\tilde w}$ bound of $f_\sigma^\perp$ in \eqref{weightedf}. By running the same iteration as in the proof of Proposition \ref{thm:solv_mod_obstr}, we obtain $\tilde u_{\sigma,s} \in \langle \hat{u}_\sigma \rangle^\perp$ solving the Monge-Amp\`ere equation with right-hand side $f_\sigma + s$ modulo obstructions. Moreover, $\tilde{u}_{\sigma,s}$ satisfies a $C^{2,\bar\alpha}_{w}$ bound like \eqref{weightedf} independently of $s$. The obstruction coefficient $\lambda_{\sigma,s}$ depends continuously on $s$. It turns out that we can calculate $\lambda_{\sigma,s}$ up to errors that are negligible at the boundary of the allowed range for $s$, and this ``leading term'' of $\lambda_{\sigma,s}$ is proportional to $s$. So by the intermediate value theorem, $\tilde{u}_{\sigma,s}$ is the true solution $u_\sigma$ for some $s$ in the allowed range.

The precise statement is as follows. After the proof we will deduce our Main Theorem from this.

\begin{theorem}\label{thm:kill_obstr}
Let $f_\sigma$ be the choice of Ricci potential in \eqref{riccipotential}. For all $\alpha \in (0,\frac{1}{5})$, $\mathbf{a} \in (\alpha,\frac{1}{5})$, $\bar\alpha \in (0,1)$ and $0 < \delta \ll 1$, there exists a constant $C$ such that for all $0 < |\sigma| \ll 1$ the following holds. Set
\begin{equation}\label{eq:def_A}
A:= |b|^{\frac{5}{6}(1 - \mathbf{a})}.
\end{equation}
Then for all $s \in [-A,A]$ there exists a
$\tilde u_{\sigma,s}\in \langle \hat{u}_\sigma\rangle^\perp \subset C^{2,\bar\alpha}_w(\mathcal{X}_\sigma)$ solving the equation
\begin{align}
    M_\sigma \tilde u_{\sigma,s}=f_\sigma+s+\lambda_{\sigma,s}\hat v_\sigma
\end{align}
for some $\lambda_{\sigma,s} \in \mathbb{R}$ and satisfying the estimate
\begin{align}\label{eq:estimate_of_soln}
\|\tilde{u}_{\sigma,s}\|_{C^{2,\bar\alpha}_w} \leq C|b|^{\frac{5}{6}(1-\mathbf{a})-\frac{\delta}{3}}.
\end{align}
Moreover, there exists an $s_\sigma\in [-A,A]$ such that $\lambda_{\sigma,s_\sigma}=0$.
\end{theorem}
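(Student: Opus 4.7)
The plan is to adapt the iteration of Proposition~\ref{thm:solv_mod_obstr} to the $1$-parameter family of equations $M_\sigma u = f_\sigma + s$ with $s \in [-A,A]$, obtaining a continuous family $(\tilde u_{\sigma,s}, \lambda_{\sigma,s})$, and then to kill $\lambda_{\sigma,s}$ by an intermediate value argument that exploits the fact that the $L^2$-pairing of $\hat v_\sigma$ with the constant function $1$ does not vanish.

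First I would check that $\|(f_\sigma + s)^\perp\|_{C^{0,\bar\alpha}_{\tilde w}} \leq C|b|^{(5/6)(1-\mathbf{a}) - \delta/3}$ uniformly in $s \in [-A,A]$. By linearity $(f_\sigma + s)^\perp = f_\sigma^\perp + s^\perp$, and Lemma~\ref{l:est-fperp} gives $\|f_\sigma^\perp\|_{C^{0,\bar\alpha}_{\tilde w}} \leq C|b|^{(5/6)(1-\alpha)-\delta/3}$, which is absorbed since $\alpha < \mathbf{a}$. For the constant piece, the definition of $\tilde w_\sigma$ gives $\sup_{\mathcal{X}_\sigma} \tilde w_\sigma^{-1} \leq C|T|^\delta \leq C|b|^{-\delta/3}$, so $\|s\|_{C^{0,\bar\alpha}_{\tilde w}} \leq C|s||b|^{-\delta/3}$; and the $L^2$-projection onto $\hat v_\sigma$ contributes at most $C|s||b|^{-\delta/3}$ using the crude bound $|\Lambda_\sigma| \leq C|T|^{-2}$ (where $\Lambda_\sigma := \int_{\mathcal{X}_\sigma}\hat v_\sigma\,\omega_{glue,\sigma}^2$), the lower bound \eqref{eq-Phi-1}, and \eqref{eq:hatv-C0w}--\eqref{eq:hatv-Caw}. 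With this uniform bound in hand, the iteration \eqref{eq-def-iter} applied to $f_\sigma + s$ converges by the same contractivity argument as in the proof of Proposition~\ref{thm:solv_mod_obstr} (still valid for $\mathbf{a} < 1/5$ and $\delta$ small), producing $\tilde u_{\sigma,s} \in \langle \hat u_\sigma\rangle^\perp$ satisfying \eqref{eq:estimate_of_soln} and $M_\sigma\tilde u_{\sigma,s} = f_\sigma + s + \lambda_{\sigma,s}\hat v_\sigma$. The Banach fixed-point theorem with a uniform contraction constant forces $s \mapsto \tilde u_{\sigma,s}$, and hence $s \mapsto \lambda_{\sigma,s}$, to be continuous on $[-A,A]$.

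Next I would extract a usable formula for $\lambda_{\sigma,s}$ by pairing the equation against $\hat v_\sigma$ in $L^2(\mathcal{X}_\sigma,\omega_{glue,\sigma})$. Writing $M_\sigma = L_\sigma + Q_\sigma$, the self-adjointness of $\Delta_{\omega_{glue,\sigma}}$, the identity $L_\sigma\hat v_\sigma = \hat u_\sigma$, and $\tilde u_{\sigma,s}\perp \hat u_\sigma$ kill the linear term and yield
\begin{equation}\label{eq:plan_lambda}
\lambda_{\sigma,s}\|\hat v_\sigma\|^2_{L^2} = \langle Q_\sigma\tilde u_{\sigma,s},\hat v_\sigma\rangle_{L^2} - \langle f_\sigma,\hat v_\sigma\rangle_{L^2} - s\Lambda_\sigma.
\end{equation}
By Lemma~\ref{l:construct_vhat}, $\hat v \leq 0$ and $\hat v(0) < 0$, while by construction $\hat v_\sigma \equiv \hat v(0)$ on the sub-region $\{0 < s \leq s_{*,\sigma}\} \subset \mathfrak{R}_4$ of $\omega_{glue,\sigma}$-volume $\asymp |T|^{-2}s_{*,\sigma}$ (Lemma~\ref{l:volumeform}); since the remaining contributions to $\Lambda_\sigma$ share this sign, $\Lambda_\sigma \leq -c|T|^{-2}s_{*,\sigma}$ for a universal $c > 0$.

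Finally I would bound the error terms in \eqref{eq:plan_lambda}. The quadratic structure of $Q_\sigma$ in second derivatives together with \eqref{eq:estimate_of_soln} gives $\|Q_\sigma\tilde u_{\sigma,s}\|_{C^0_{\tilde w}} \leq C|b|^{-1/2}\|\tilde u_{\sigma,s}\|^2_{C^{2,\bar\alpha}_w} \leq C|b|^{7/6 - 5\mathbf{a}/3 - 2\delta/3}$ (using $\sup \tilde w_\sigma = |b|^{-1/2}$ on $\mathfrak{R}_7$), and pairing against $\hat v_\sigma$ via $\|\hat v_\sigma\tilde w_\sigma\|_{L^1}\leq C|b|^{1/2}$ as in \eqref{eq-Phi-2} produces $|\langle Q_\sigma\tilde u_{\sigma,s},\hat v_\sigma\rangle_{L^2}|\leq C|b|^{5/3 - 5\mathbf{a}/3 - 2\delta/3}$; the Ricci potential term is controlled by $|\langle f_\sigma,\hat v_\sigma\rangle_{L^2}|\leq C|b|^{3/2 - 5\alpha/6}$ as in \eqref{eq:nothing_new} and \eqref{eq-f-v-IP}. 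Setting $s_{*,\sigma} := |b|^\kappa$ with a positive constant $\kappa$ satisfying
\begin{equation}
\kappa < \tfrac{1}{6} - \tfrac{5\mathbf{a}}{6} - \tfrac{2\delta}{3},\quad \kappa < \tfrac{5(\mathbf{a}-\alpha)}{6},\quad \kappa \leq \tfrac{1-\alpha}{3}
\end{equation}
(the three right-hand sides are all strictly positive thanks to $\alpha<\mathbf{a}<1/5$ and $0<\delta\ll 1$, and the third inequality guarantees compatibility with \eqref{eq:pretty_obvious}), both error terms become asymptotically negligible compared to $A|\Lambda_\sigma| \asymp |b|^{3/2 - 5\mathbf{a}/6 + \kappa}$. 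Consequently $\lambda_{\sigma,A} > 0 > \lambda_{\sigma,-A}$ for $0 < |\sigma| \ll 1$, and the continuity from the first step produces the desired $s_\sigma \in [-A,A]$ with $\lambda_{\sigma,s_\sigma} = 0$ by the intermediate value theorem. The main obstacle in this plan is precisely this exponent bookkeeping: one must verify that a \emph{single} choice of $\kappa$ makes both error terms subordinate to $A|\Lambda_\sigma|$ while remaining consistent with all previous uses of $s_{*,\sigma}$, which is possible because every upstream estimate involving $\hat v_\sigma$ or $\hat u_\sigma$ depends on $s_{*,\sigma}$ only through \eqref{eq:pretty_obvious}.
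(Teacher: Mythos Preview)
Your proof is correct and follows the same architecture as the paper's: run the fixed-point iteration of Proposition~\ref{thm:solv_mod_obstr} with $f_\sigma$ replaced by $f_\sigma+s$, extract $\lambda_{\sigma,s}$ by pairing against $\hat v_\sigma$ and using $\langle L_\sigma\tilde u_{\sigma,s},\hat v_\sigma\rangle_{L^2}=\langle \tilde u_{\sigma,s},\hat u_\sigma\rangle_{L^2}=0$, bound the $f_\sigma$ and $Q_\sigma$ contributions, and conclude by the intermediate value theorem.

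The one genuine difference is your lower bound $|\Lambda_\sigma|\geq c|T|^{-2}s_{*,\sigma}$, obtained by integrating only over $\{0<s\leq s_{*,\sigma}\}$ where $\hat v_\sigma\equiv\hat v(0)$. The paper obtains the sharper bound $|\langle 1,\hat v_\sigma\rangle_{L^2}|\geq C^{-1}|T|^{-2}$ with no $s_{*,\sigma}$ factor, simply by integrating over a \emph{fixed} compact subinterval of $(0,1)$: for $|\sigma|$ small one has $\hat v_\sigma=\hat v$ there, and $\hat v<0$ strictly on $(0,1)$ by the maximum principle (since $L_\infty\hat v=\hat u>0$). With this stronger bound the exponent comparison reduces to
\[
\tfrac{5}{6}(1-\mathbf{a})+\tfrac{2}{3}<\tfrac{3}{2}-\tfrac{5}{6}\alpha
\quad\text{and}\quad
\tfrac{5}{6}(1-\mathbf{a})+\tfrac{2}{3}<\tfrac{5}{3}(1-\mathbf{a})-\tfrac{2}{3}\delta,
\]
which hold for $\alpha<\mathbf{a}<\tfrac{1}{5}$ and $\delta\ll 1$ without any tuning of $s_{*,\sigma}$. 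So the ``main obstacle'' you highlight---finding a single $\kappa$ consistent with all previous estimates---is self-imposed; the paper's route avoids introducing $\kappa$ altogether. Your approach does work (the three inequalities on $\kappa$ are compatible, and it is true that upstream estimates use $s_{*,\sigma}$ only through \eqref{eq:pretty_obvious}), but it is more fragile than necessary.
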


\begin{proof}
For all $s \in [-A,A]$ the proof of Proposition \ref{thm:solv_mod_obstr} yields a $\tilde u_{\sigma,s}\in \langle u_\sigma\rangle^\perp$ such that
\begin{align}
M_\sigma \tilde u_{\sigma,s}=f_\sigma+s+\lambda_{\sigma,s}\hat v_\sigma
\end{align}
and such that \eqref{eq:estimate_of_soln} holds. To see this carefully, we only need to redo the estimate of Lemma \ref{l:est-fperp}:
\begin{align}\label{eq:redo}
\begin{split}
\|(f_\sigma + s)^\perp\|_{C^{0,\bar\alpha}_{\tilde{w}}} &\leq \|f_\sigma ^\perp\|_{C^{0,\bar\alpha}_{\tilde{w}}} + |s| \left\|1 - \frac{\langle 1, \hat{v}_\sigma\rangle_{L^2}}{\langle \hat{v}_\sigma,\hat{v}_\sigma\rangle_{L^2}}\hat{v}_\sigma\right\|_{C^{0,\bar\alpha}_{\tilde{w}}}\\
&\leq C|b|^{\frac{5}{6}(1-\alpha)-\frac{\delta}{3}} + |b|^{\frac{5}{6}(1-\mathbf{a})}\left(\|\tilde{w}_\sigma^{-1}\|_{L^\infty}+
\frac{|\langle 1, \hat{v}_\sigma\rangle_{L^2}|}{\langle \hat{v}_\sigma,\hat{v}_\sigma\rangle_{L^2}}\|\hat{v}_\sigma\|_{C^{0,\bar\alpha}_{\tilde{w}}}\right)\\
&\leq C|b|^{\frac{5}{6}(1-\mathbf{a})-\frac{\delta}{3}}
\end{split}
\end{align}
thanks to \eqref{eq-f-perp-Ca}, \eqref{eq:hatv-C0w}--\eqref{eq:hatv-Caw}, \eqref{eq-Phi-1}, and the following upper bound of $|\langle 1, \hat{v}_\sigma\rangle_{L^2}|$: for $\alpha \leq \frac{1}{3}$,
\begin{align}\label{eq:hatvs-upper}
\begin{split}
|\langle 1, \hat{v}_\sigma\rangle_{L^2}|\leq C\left(|b| \cdot |T|^\alpha + |T|^{-2}\int_{2|T|^{\alpha-1}}^{1-\frac{\tau}{T}} ds \right)\leq C|b|^{\frac{2}{3}} \leq C \langle \hat{v}_\sigma,\hat{v}_\sigma\rangle_{L^2}.
\end{split}
\end{align}
Here we have used Lemmas \ref{l:volumeform} and \ref{l:construct_vhat} to bound the volume form on the middle neck and $\hat{v}_\sigma$. This proves \eqref{eq:redo}, hence, as in the proof of Proposition \ref{thm:solv_mod_obstr}, the existence of $\tilde{u}_{\sigma,s}$ and the estimate \eqref{eq:estimate_of_soln}. Note that for $s = \pm A$ the operator $^\perp$ did \emph{not} change our estimate of the $C^{0,\bar\alpha}_{\tilde{w}}$ norm of $f_\sigma + s$ except for an arbitrarily small power of $|b|$. This is a key improvement over the case $s = 0$, where we almost lost a factor of $|b|^{-1/6}$ (compare \eqref{eq-f-perp-Ca} to \eqref{eq:fs_good}), which is close to the worst possible loss \eqref{eq:worst_possible}.

By taking the $L^2(\mathcal{X}_\sigma, \omega_{glue,\sigma})$-inner product with $\hat{v}_\sigma$,
\begin{align}\label{lambdasigmas}
   \lambda_{\sigma,s} = \frac{\langle L_\sigma\tilde{u}_{\sigma,s} + Q_\sigma\tilde{u}_{\sigma,s} - f_\sigma-s, \hat{v}_\sigma\rangle_{L^2}}{\|\hat{v}_\sigma\|_{L^2}^2} = \frac{\langle Q_\sigma\tilde{u}_{\sigma,s} - f_\sigma-s, \hat{v}_\sigma\rangle_{L^2}}{\|\hat{v}_\sigma\|_{L^2}^2},
\end{align}
where we have used $\langle L_\sigma\tilde u_{\sigma,s},\hat v_\sigma\rangle_{L^2}=\langle \tilde u_{\sigma,s}, L_\sigma\hat v_\sigma\rangle_{L^2}=\langle \tilde u_{\sigma,s},\hat{u}_\sigma\rangle_{L^2}=0$. Our goal is to show that as $s$ goes from $-A$ to $A$, the numerator of $\lambda_{\sigma,s}$ will have a sign change. One can prove directly from \eqref{eq-def-iter} that, for a fixed $\sigma$, the solution $\tilde{u}_{\sigma,s}$ depends continuously on $s$ in the $C^{2,\bar\alpha}$ topology on $\mathcal{X}_\sigma$. Thus, if there is indeed a sign change, then $\lambda_{\sigma,s}$ must vanish for some $s \in [-A,A]$, as desired.

To prove that the numerator of $\lambda_{\sigma,s}$ changes sign, we bound $\langle s,\hat v_\sigma\rangle_{L^2}$, $\langle f_\sigma,\hat v_\sigma\rangle_{L^2}$ and $\langle Q_\sigma \tilde u_{\sigma,s}, \hat v_\sigma\rangle_{L^2}$ separately. This is done in the following three steps.

(1) From Lemmas \ref{l:volumeform} and \ref{l:construct_vhat},
\begin{equation}\label{1dotv}
|\langle 1, \hat{v}_\sigma\rangle_{L^2}|\geq C^{-1}|T|^{-2}\geq C^{-1}|b|^{\frac{2}{3}},
\end{equation}
i.e., we have a lower bound matching the upper bound \eqref{eq:hatvs-upper} up to a constant.  This is the good term that will enforce a sign change. Here we crucially use the fact that $\hat{v}$ has a sign, so that it suffices to integrate over a small neighborhood of the left endpoint $s = 0$, and that $\hat{v}(0) = C_0 \neq 0$.

(2) We have already proved in \eqref{eq:nothing_new}--\eqref{eq-f-v-IP} that
\begin{align}
|\langle f_\sigma, \hat{v}_\sigma\rangle_{L^2}| \leq C|b|^{\frac{3}{2}-\frac{5}{6}\alpha}.\label{fdotv2}
\end{align}

(3) Lastly, we claim that
\begin{align}\label{udotv}
|\langle Q_\sigma\tilde{u}_{\sigma,s},\hat{v}_\sigma\rangle_{L^2} | 
\leq C|b|^{\frac{5}{3}(1-\mathbf{a})-\frac{2}{3}\delta}.
\end{align}
To prove this, we first make a pointwise estimate using \eqref{eq:estimate_of_soln}:
\begin{align}\label{eq:est_of_hess_utilde}
\begin{split}
\left|\nabla^2_{\omega_{glue,\sigma}}\tilde{u}_{\sigma,s}\right|_{\omega_{glue,\sigma}} &\leq  \|\tilde{u}_{\sigma,s}\|_{C^{2,\bar{\alpha}}_w} \cdot \mathbf{r}_\sigma^{-2} w_\sigma\\
&\leq C|b|^{\frac{5}{6}(1-\mathbf{a})-\frac{\delta}{3}}
\cdot 
\begin{cases}
|T|^{-\delta} &\text{on}\;\,\mathfrak{R}_1,\\
|b|^{-\frac{1}{2}}\Psi_\sigma^*(\Phi_\sigma^{-1})^*((t-T)^{-\frac{3}{2}-\delta})&\text{on}\;\,\mathfrak{R}_2 \cup \cdots \cup \mathfrak{R}_6,\\
|b|^{-\frac{1}{2}} &\text{on}\;\,\mathfrak{R}_7.
\end{cases}
\end{split}
\end{align}
Thus, using again Lemmas \ref{l:volumeform} and \ref{l:construct_vhat}, 
\begin{align}\begin{split}
|\langle Q_\sigma\tilde{u}_{\sigma,s},\hat{v}_\sigma\rangle_{L^2} | 
&\leq C|b|^{\frac{5}{3}(1-\mathbf{a})-\frac{2}{3}\delta} \cdot \biggl( |b| \cdot |b|^{-1} + |b| \cdot \int_1^{2|T|^\alpha} |b|^{-1} y^{-3-2\delta} \,dy\\  
&+ |T|^{-2} \int_{2|T|^{\alpha-1}}^{1-\frac{\tau}{T}} |b|^{-1}(|T|s)^{-3-2\delta}\,ds\biggr).
 \end{split}\end{align}
All terms in the big parenthesis are uniformly bounded as $\sigma \to 0$, which proves \eqref{udotv}.

Finally, let us prove the theorem using these estimates. Choosing $\alpha<\mathbf{a}$ we get
\begin{align}
    \frac{5}{6}(1-\mathbf{a}) +\frac{2}{3} < \frac{3}{2}-\frac{5}{6}\alpha.
\end{align}
Thus, from \eqref{1dotv} and \eqref{fdotv2} we have that
\begin{equation}|\langle A,\hat v_\sigma\rangle_{L^2}| \gg |\langle f_\sigma,\hat{v}_\sigma\rangle_{L^2}|.\end{equation}
Similarly, if $\mathbf{a}<\frac{1}{5}$ and $\delta \ll 1$, then
\begin{align}
    \frac{5}{6}(1-\mathbf{a}) +\frac{2}{3} <\frac{5}{3}(1-\mathbf{a}) -\frac{2}{3}\delta,
\end{align}
so from \eqref{1dotv} and \eqref{udotv},
\begin{equation}|\langle A,\hat v_\sigma\rangle_{L^2}| \gg |\langle Q_\sigma\tilde{u}_{\sigma,s},\hat{v}_\sigma\rangle_{L^2}|.\end{equation}
It is now clear from \eqref{lambdasigmas} that $\lambda_{\sigma,s}$ changes sign at the boundary of the interval $s \in [-A,A]$.
\end{proof}

We now deduce the statements of the Main Theorem.

We begin by applying Theorem \ref{thm:kill_obstr} with $\alpha < \mathbf{a}$ both very small and with $\bar\alpha \in (0,1)$ arbitrary. This yields that for all $0 < |\sigma| \ll 1$ there exists an $s_\sigma \in [-A,A]$ such that $\lambda_{\sigma,s_\sigma} = 0$, so $\tilde{u}_{\sigma,s_\sigma}$ coincides with the (unique) solution $u_\sigma$ to our Monge-Amp\`ere equation. Thus, $u_\sigma$ satisfies estimate \eqref{eq:estimate_of_soln}, i.e., 
\begin{align}\label{eq:estimate_u_part2}
    \|u_\sigma\|_{C^{2,\bar\alpha}_w} = O_\varepsilon(|b|^{\frac{5}{6}-\varepsilon})
\end{align}
for any $\varepsilon>0$. The first part of the Main Theorem concerns the behavior of $\omega_{KE,\sigma} = \omega_{glue,\sigma}+\ddbar u_\sigma$ on the Tian-Yau cap, region $\mathfrak{R}_7$. The definition of $\mathfrak{R}_7$ depends on a parameter $R$ via the definition of $\Phi_\sigma$, \eqref{eq-Phis}. Up to replacing the $R$ of the Main Theorem by $R/C$ and assuming that this is $> C$ for some universal constant $C \gg 1$, the $R$ of the Main Theorem can be identified with the $R$ of \eqref{eq-Phis}. Fixing $R$, the weight function $w_\sigma$ equals $1$ and the regularity scale $\mathbf{r}_\sigma$ equals $|b|^{1/4}$ on $\mathfrak{R}_7$. So it follows from \eqref{eq:estimate_u_part2} and from the definition of the weighted norm, \eqref{eq:def_wtd_norm}, that 
\begin{align}\label{eq:estimate_u_part3}
    \sup\nolimits_{\mathfrak{R}_7} |\ddbar u_\sigma|_{\omega_{glue,\sigma}} = O(|b|^{-\frac{1}{2}}) \cdot O_\varepsilon(|b|^{\frac{5}{6}-\varepsilon}) = O_\varepsilon(|{\log |\sigma|}|^{-1+\varepsilon}).
\end{align}
The left-hand side of \eqref{eq:estimate_u_part3} is invariant under diffeomorphisms and rescalings applied simultaneously to $\omega_{KE,\sigma}$ and $\omega_{glue,\sigma}$. This implies the first statement of the Main Theorem.

\begin{remark}
Given \eqref{eq:estimate_u_part3} and the estimate $\sup_{\mathfrak{R}_7} |u_\sigma| = O_{\varepsilon}(|b|^{(5/6)-\varepsilon})$ that also follows from \eqref{eq:estimate_u_part2}, the standard theory of the Monge-Amp\`ere equation tells us that in fact, the $C^{1,\beta}(\mathfrak{R}_7,\omega_{glue,\sigma})$ norm of $\omega_{KE,\sigma} - \omega_{glue,\sigma}$ goes to zero for $\beta < \frac{1}{3}$. Clearly, the optimal result would be to have this for all $\beta < 1$ because $\omega_{glue,\sigma}$ is Ricci-flat on $\mathfrak{R}_7$ whereas $\omega_{KE,\sigma}$ has Ricci $= -1$. This suggests that the best possible exponent in \eqref{eq:estimate_u_part2} is $1$ rather than $\frac{5}{6}$ (and $-\frac{3}{2}$ rather than $-1$ in \eqref{eq:estimate_u_part3} and in the Main Theorem).
\end{remark}

\begin{remark}
Note the following subtlety related to the asymptotic expansion of $\omega_{KE,0}$ in \eqref{eq:KE-CUSP-2}. As explained in Remark \ref{scalelambda}, we need to replace $\Psi_\sigma$ by $scale_{\lambda^{-1}}\circ \Psi_\sigma$ to be able to carry out our gluing construction, where $\lambda = e^{\mathfrak{s}}$ for the constant $\mathfrak{s}$ from \eqref{eq:KE-CUSP-2}, which is uniquely determined by the global geometry of $(\mathcal{X}_0^{reg}, \omega_{KE,0})$ but is not known explicitly. We have assumed that $\mathfrak{s} = 0$ for simplicity, but if $\mathfrak{s} \neq 0$ then $\mathfrak{s}$ or $\lambda$ could in principle appear in the statement of the Main Theorem. However, in the Main Theorem we consider the pullback $(m_\sigma \circ \Psi_\sigma)^{-*}(\omega_{KE,\sigma})$, and $\omega_{KE,\sigma}$ is approximated by
\begin{equation}
    (scale_{\lambda^{-1}} \circ \Psi_\sigma)^*[\sqrt{2|b|}\, (m_{\sigma \lambda^{-3}})^*(\omega_{TY_1})]
\end{equation}
in the bubbling region. One easily checks that the unknown constant $\lambda$ cancels out.
\end{remark}

As for the rest of the Main Theorem, the Euler numbers of $\mathcal{X}_0^{reg}$ and of $\mathcal{X}_\sigma$ ($0 < |\sigma| \ll 1$) obviously differ by the Euler number of the Tian-Yau space. The Euler number of a smooth sextic in $\mathbb{CP}^3$ is $108$ by a standard computation. As a smoothing of an isolated cubic cone singularity, the Tian-Yau space is homotopy equivalent to $\bigvee_{i=1}^8 S^2$ and so has Euler number $9$ \cite[Thm 1]{MO}. The $L^2$-curvature identity follows from this by using the Chern-Gauß-Bonnet theorem for Einstein $4$-manifolds and the standard fact that the Chern-Gauß-Bonnet formula holds without boundary terms both on $(TY_1, \omega_{TY_1})$ and on $(\mathcal{X}_0^{reg},\omega_{KE,0})$. Because there is no loss of $L^2$-curvature, there cannot be any other bubbles.

\end{document}